\tikzset{->-/.style={decoration={
     markings,
     mark=at position #1 with {\arrow{>}}},postaction={decorate}}}
\newcommand{\Z}{\mathbb{Z}}
\newcommand{\Q}{\mathbb{Q}}
\newcommand{\R}{\mathbb{R}}
\newcommand{\BB}{\mathbb{B}}
\newcommand{\BC}{\mathbf{C}}
\newcommand{\F}{\mathbf{F}}
\newcommand{\CA}{\mathcal{A}}
\newcommand{\B}{\mathcal{B}}
\newcommand{\CC}{\mathcal{C}}
\newcommand{\CF}{\mathcal{F}}
\newcommand{\CH}{\mathcal{H}}
\newcommand{\I}{\mathcal{I}}
\newcommand{\CP}{\mathcal{P}}
\newcommand{\orb}{\mathcal{O}}
\newcommand{\CW}{\mathcal{W}}
\newcommand{\M}{\mathcal{M}}
\newcommand{\CL}{\mathcal{L}}
\newcommand{\K}{\mathcal{K}}
\newcommand{\res}{\mathcal{R}}
\newcommand{\fA}{\mathbf{A}}
\newcommand{\CX}{\mathfrak{X}}
\newcommand{\hI}{\widehat{\I}}
\newcommand{\hsigma}{\widehat{\sigma}}
\newcommand{\hrho}{\widehat{\rho}}
\newcommand{\bgamma}{\boldsymbol{\gamma}}
\newcommand{\btheta}{\vartheta}
\newcommand{\ba}{\mathbf{a}}
\newcommand{\bb}{\mathbf{b}}
\newcommand{\bc}{\mathbf{c}}
\newcommand{\bx}{\mathbf{x}}
\newcommand{\by}{\mathbf{y}}
\newcommand{\Mod}{\mathop{\mathrm{Mod}}\nolimits}
\newcommand{\PMod}{\mathop{\mathrm{PMod}}\nolimits}
\newcommand{\Tot}{\mathop{\mathrm{Tot}}\nolimits}
\newcommand{\Arf}{\mathop{\rm Arf}\nolimits}
\newcommand{\Sp}{\mathrm{Sp}}
\newcommand{\SL}{\mathrm{SL}}
\newcommand{\Hom}{\mathop{\mathrm{Hom}}\nolimits}
\newcommand{\Stab}{\mathop{\mathrm{Stab}}\nolimits}
\newcommand{\rank}{\mathop{\mathrm{rank}}}
\newcommand{\pr}{\mathrm{pr}}
\newcommand{\Int}{\mathop{\mathrm{Int}}}
\newtheorem{theorem}{Theorem}[section] 
\newtheorem{propos}[theorem] {Proposition}
\newtheorem{cor}[theorem] {Corollary}
\newtheorem{lem}[theorem]{Lemma}
\newtheorem{fact}[theorem] {Fact}
\theoremstyle{definition}
\newtheorem{remark}[theorem]{Remark}
\numberwithin{equation}{section}
\author{Alexander A. Gaifullin}
\address{Steklov Mathematical Institute of Russian Academy of Sciences, Moscow, Russia}
\address{Skolkovo Institute of Science and Technology, Skolkovo, Russia}
\address{Institute for the Information Transmission Problems of the Russian Academy of Sciences (Kharkevich Institute), Moscow, Russia}
\email{agaif@mi-ras.ru}
\title[On spectral sequence for genus~$3$ Torelli group]{On spectral sequence for the action of genus~$3$ Torelli group on the complex of cycles}
\date{}
\begin{document}
\begin{abstract}
The Torelli group of a genus~$g$ closed oriented surface~$S_g$ is the subgroup~$\I_g$ of the mapping class group~$\Mod(S_g)$ consisting of all mapping classes that act trivially on the homology of~$S_g$. One of the most intriguing open problems concerning Torelli groups is the question of whether the group~$\I_3$ is finitely presented or not. A possible approach to this problem relies upon the study of the second homology group of~$\I_3$ using the spectral sequence~$E^r_{p,q}$ for the action of~$\I_3$ on the complex of cycles. In this paper we obtain a partial result towards the conjecture that~$H_2(\I_3;\Z)$ is not finitely generated and hence~$\I_3$ is not finitely presented. Namely, we prove that the term~$E^3_{0,2}$ of the spectral sequence is infinitely generated, that is, the group~$E^1_{0,2}$ remains infinitely generated after taking quotients by images of the differentials~$d^1$ and~$d^2$. If one proceeded with the proof that it also remains infinitely generated after taking quotient by the image of~$d^3$, he would complete the proof of the fact that $\I_3$ is not finitely presented. 
\end{abstract}

\maketitle

\begin{flushright}
\textit{In memory of Sergei Ivanovich Adian,\\ a bright and deep mathematician,\\ to whom I am very grateful for bringing me\\ into the marvelous world of Torelli groups}
\end{flushright}

\section{Introduction}

Let $S_g$ be a closed oriented surface of genus~$g$, and let $\Mod(S_g)$ be the mapping class group of it. By definition, the \textit{Torelli group} $\I_g=\I(S_g)$ is the subgroup of~$\Mod(S_g)$ consisting of all mapping classes that act trivially on~$H_1(S_g;\Z)$. In other words, $\I_g$ is the kernel of the natural surjective homomorphism 
\begin{equation*}
\Mod(S_g)\to \Sp(2g,\Z).
\end{equation*}

It is well known that $\Mod(S_1)= \SL(2,\Z)$, so the group~$\I_1$ is trivial. McCullough and Miller~\cite{MCM86} proved that the group~$\I_2$ is not finitely generated, and then Mess~\cite{Mes92} proved that in fact it is an infinitely generated free group. On the other hand, Johnson~\cite{Joh83} showed that $\I_g$ is finitely generated, provided that $g\ge 3$.

One of the most intriguing open problems concerning Torelli groups is the question of whether the groups~$\I_g$, $g\ge 3$, are finitely presented. This problem is contained in Kirby's list of problems in low-dimensional topology~\cite[Problem~2.9(A)]{Kir97} and is attributed there to Mess. A usual expectation is that $\I_g$ are finitely presented for all~$g\ge 4$ but $\I_3$ is not. This problem is closely related to the question of whether the second homology group~$H_2(\I_g;\Z)$ is finitely generated.  Indeed, if $H_2(\I_g;\Z)$ were proved to be not finitely generated, this would immediately imply that $\I_g$ is not finitely presented. In this paper, we suggest an approach towards proving that~$H_2(\I_3;\Z)$ is not finitely generated and hence~$\I_3$ is not finitely presented, and obtain some partial result on this way.

Bestvina, Bux, and Margalit~\cite{BBM07} introduced a contractible $(2g-3)$-dimensional CW complex~$\B_g$ called the \textit{complex of cycles} on which the Torelli group~$\I_g$ acts cellularly and without rotations. This action yields the spectral sequence
\begin{equation}\label{eq_SpSeqIntro}
E^1_{p,q}\cong \bigoplus_{\sigma\in\mathfrak{X}_p}H_q\bigl(\Stab_{\I_g}(\sigma);\Z\bigr)\quad\Longrightarrow \quad H_{p+q}(\I_g;\Z),
\end{equation}
where $\mathfrak{X}_p$ is a set of representatives for $\I_g$-orbits of $p$-cells of~$\B_g$, see Section~\ref{section_CL} for details. Recall that $E^r_{p,q}$ is a first-quadrant spectral sequence, its differentials $d^r$ have bi-degrees~$(-r,r-1)$, respectively, and the limiting sheet $E^{\infty}_{p,q}$ is the graded group associated with certain filtration in~$H_{p+q}(\I_g)$. In particular, $E^{\infty}_{0,2}$, $E^{\infty}_{1,1}$, and~$E^{\infty}_{2,0}$ are the associated graded parts of~$H_2(\I_g;\Z)$. So $H_2(\I_g;\Z)$ is not finitely generated if and only if at least one of the three groups~$E^{\infty}_{0,2}$, $E^{\infty}_{1,1}$, and~$E^{\infty}_{2,0}$ is not finitely generated. In this paper, we provide some evidence for the conjecture that, in case~$g=3$, the group~$E^{\infty}_{0,2}$ and hence $H_2(\I_3;\Z)$ are infinitely generated, and thus, $\I_3$ is not finitely presented.

Suppose that $g\ge 3$. It is not hard to see that, for each vertex $v$ of~$\B_g$, the group $H_2\bigl(\Stab_{\I_g}(v);\Z\bigr)$ is nontrivial. Indeed,  this group contains an abelian cycle $\CA(T_{\gamma},T_{\delta})$ corresponding to two commuting Dehn twists~$T_{\gamma}$ and~$T_{\delta}$ about disjoint separating simple closed curves, and non-triviality of this abelian cycle can be deduced from a result by Brendle and Farb~\cite{BrFa07}, see Proposition~\ref{propos_theta_main}(a) of the present paper for a detailed proof in case $g=3$. Since there are  infinitely many $\I_g$-orbits of vertices in~$\B_g$, we obtain that the group~$E^1_{0,2}$ is not finitely generated.
The group $E^{\infty}_{0,2}=E^4_{0,2}$, which injects into~$H_2(\I_g;\Z)$, is obtained from~$E^1_{0,2}$ by taking consecutive quotients by the images of the differentials~$d^1$, $d^2$, and~$d^3$. So the question is whether the group~$E^1_{0,2}$ remains infinitely generated after taking these three quotients. The aim of this paper is to prove that in case $g=3$ this group remains infinitely generated after taking the quotients by the images of the first two differentials, $d^1$ and~$d^2$.

\begin{theorem}\label{theorem_main}
Suppose that $E^r_{p,q}$ is spectral sequence~\eqref{eq_SpSeqIntro} for the action of~$\I_3$ on~$\B_3$. Then the group~$E^3_{0,2}$ is not finitely generated.
\end{theorem}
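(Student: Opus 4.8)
The plan is to exhibit an infinite family of vertices $v_n$ of $\B_3$, lying in distinct $\I_3$-orbits, together with nonzero classes $\btheta_n \in H_2(\Stab_{\I_3}(v_n);\Z)$ inside the corresponding summands of $E^1_{0,2}$, and to prove that the classes they determine in $E^3_{0,2}$ remain linearly independent (in particular, infinitely many of them are nonzero). A natural choice is to take $v_n$ to be the vertex associated to a separating simple closed curve $\gamma$ that cuts $S_3$ into a genus-$1$ piece and a genus-$2$ piece, and to build $\btheta_n = \CA(T_{\gamma}, T_{\delta_n})$ from $T_\gamma$ together with Dehn twists $T_{\delta_n}$ about a suitable infinite sequence of disjoint separating curves $\delta_n$, lying in different mapping class orbits relative to $\gamma$; the non-triviality of each such abelian cycle should follow from Proposition~\ref{propos_theta_main}(a) (the Brendle--Farb input), exactly as in the $g=3$ discussion in the introduction.

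The core of the argument is then a bookkeeping computation on the $E^1$-page near the corner. First I would analyze the complex of cycles $\B_3$ locally: describe the $1$-cells and $2$-cells of $\B_3$ incident to the chosen vertices, identify the stabilizers $\Stab_{\I_3}(\sigma)$ for $\sigma \in \CX_1$ and $\sigma \in \CX_2$ meeting this region, and compute (or at least bound) $H_1$ and $H_0$ of these stabilizers. This feeds into an explicit description of the differentials $d^1 \colon E^1_{1,q} \to E^1_{0,q}$ and $d^2 \colon E^2_{2,q} \to E^2_{0,q}$ landing in the relevant summands. The goal is to show that the images of $d^1$ and $d^2$ cannot kill (a cofinite subfamily of) the classes $\btheta_n$: since $E^1_{1,1}$ and $E^1_{2,0}$ are assembled from $H_1$'s and $H_0$'s of edge- and face-stabilizers, one needs to control how much of $E^1_{0,2}$ they can hit. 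I expect this to require a careful separation of the $\btheta_n$ into "independent directions" — e.g. pairing each $\btheta_n$ against a homomorphism or a Johnson-type invariant of $\Stab_{\I_3}(v_n)$ that is visibly unaffected by the contributions coming from curves/splittings other than $\delta_n$, so that no finite combination of images of differentials can involve all but finitely many of them.

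The main obstacle, and the step requiring the most work, is controlling the image of $d^2$: unlike $d^1$, which is essentially a boundary map among stabilizer homologies along edges and can be handled by a direct local analysis of $\B_3$, the differential $d^2$ involves the more subtle secondary structure of the spectral sequence (it is defined on the kernel of $d^1$ modulo its image, via a zig-zag through $E^1_{1,1}$), and its target lands precisely in $E^2_{0,2}$ where our classes live. Bounding $\mathrm{Im}(d^2)$ will likely force a genuinely global understanding of the $1$-skeleton of $\B_3/\I_3$ and of $H_1$ of a whole family of edge-stabilizers, rather than a purely local computation. A secondary difficulty is verifying that the $v_n$ genuinely lie in distinct $\I_3$-orbits and that the $\btheta_n$ are not merely nonzero individually but remain independent after the quotients — this is where the choice of the invariants used to detect them must be made robust, presumably by exploiting the Brendle--Farb description of the relevant subgroups of $\Mod$ finely enough to see infinitely many independent $\Z$-summands surviving in $E^3_{0,2}$.
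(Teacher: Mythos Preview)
There are several genuine gaps.

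First, your description of the vertices of $\B_3$ is incorrect. Vertices of the complex of cycles are not ``associated to a separating simple closed curve''; a vertex $P_N$ corresponds to a basic cycle, i.e.\ an oriented multicurve $N$ of \emph{non-separating} curves whose homology classes are linearly independent and sum (with positive integer coefficients) to the chosen primitive class~$x$. The infinitely many $\I_3$-orbits of vertices the paper uses are indexed by the three-element sets $A=\{a_1,a_2,a_3\}\in\CH_0'$, not by separating curves. The abelian cycles $\CA(T_{\delta_1},T_{\delta_2})$ do live in the vertex stabilizers $\I_N$, but the separating curves $\delta_i$ are auxiliary (disjoint from~$N$), not part of the vertex data.

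Second, your bookkeeping of the differentials is off by one in~$q$. The differential $d^1$ hitting $E^1_{0,2}$ comes from $E^1_{1,2}=\bigoplus H_2(\text{edge stabilizers})$, not from $E^1_{1,1}$; and $d^2$ hitting $E^2_{0,2}$ comes from $E^2_{2,1}$, a subquotient of $\bigoplus H_1(\text{$2$-cell stabilizers})$, not from $E^2_{2,0}$. So the groups you propose to analyze ($H_1$ of edges, $H_0$ of faces) are the wrong ones.

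Third, and most seriously, your strategy of pushing specific cycles through and arguing they are ``not in the image'' is the hard direction. The paper goes the dual route: it builds, for each $A\in\CH_0'$, a homomorphism $\btheta_A\colon E^1_{0,2}\to\Z/2$ using cup products of Birman--Craggs homomorphisms, and then proves that each $\btheta_A$ \emph{vanishes on the images} of $d^1$ and $d^2$. Vanishing on an image can be checked on generators, which is far more tractable than describing the full image. Even so, vanishing on $\mathrm{im}\,d^2$ is highly nontrivial: the paper explicitly observes that there appears to be no local obstruction, and the argument (Propositions~\ref{propos_1} and~\ref{propos_2}) hinges on global finiteness arguments using two different weight functions on $\CH_2'$ (the Hatcher--Margalit function $n$ and an auxiliary function built from a rank-$5$ linear map $f\colon H\to\R$). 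Your expectation that a ``direct local analysis'' suffices for $d^1$ and that the difficulty with $d^2$ is merely a ``global understanding of the $1$-skeleton'' underestimates what is actually required; the missing idea is the construction of the detecting functionals $\btheta_A$ themselves and the intricate machinery (the subgroups $\Gamma$, $\Delta$ and the homomorphisms $\Psi_C$, $\Phi_{C,A}$, $\sigma_D$, $\nu_D$) needed to track them through the double complex.
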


To prove this theorem we will construct infinitely many linearly independent homomorphisms $\btheta_A\colon E^1_{0,2}\to\Z/2$ that vanish on the images of the differentials~$d^1$ and~$d^2$.
Certainly, the most interesting question is whether the homomorphisms~$\btheta_A$, or at least some infinite number of linear combinations of them, vanish on the image of the differential~$d^3$. If this was the case, we would obtain that $H_2(\I_3;\Z)$ is not finitely generated and hence~$\I_3$ is not finitely presented. The main difficulty on this way is that we lack good description of the group~$E^3_{3,0}$, from which~$d^3$ maps to~$E^3_{0,2}$. In fact, it is not so hard to describe explicitly the group~$E^2_{3,0}=H_3(\B_3/\I_3;\Z)$. The desired group~$E^3_{3,0}$ is then the kernel of the differential $d^2$ that maps~$E^2_{3,0}$ to the group~$E^2_{1,1}$, the structure of which is also completely unclear. So we are rather far from computing the group~$E^3_{3,0}$. Actually, the author even does not know how to present at least one nonzero element of it.

Since we always deal with homomorphisms $\btheta_A\colon E^r_{0,2}\to\Z/2$, it may seem that we could work over~$\Z/2$ from the very beginning and study the spectral sequence with coefficients in the group~$\Z/2$ rather than in~$\Z$. However, this is not the case. In fact, our proof uses considerably divisibility properties of  certain  homomorphisms modulo~$4$, see, e.\,g., Propositions~\ref{propos_1} and~\ref{propos_2}. So the author does not know whether Theorem~\ref{theorem_main} remains true for the spectral sequence with coefficients in~$\Z/2$. At least our proof cannot be carried over to this case without changes.

Now, let us give a brief overview of some known results on homology of the Torelli groups. The result of  Mess~\cite{Mes92} mentioned above yields that $H_1(\I_2;\Z)$ is a  free abelian group of infinite rank. Johnson and Millson developed the approach by Mess and proved that the group~$H_3(\I_3;\Z)$ contains a free abelian subgroup of infinite rank, cf.~\cite{Mes92}. Later Hain~\cite{Hai02} showed that the group~$H_4(\I_3;\Z)$ also contains a free abelian subgroup of infinite rank. The first result for general genus was obtained by Akita~\cite{Aki01} who proved that the total homology group $H_*(\I_g;\Q)$ is an infinite-dimensional vector space, provided that  $g\ge 7$. 

Most of further results in this direction use spectral sequence~\eqref{eq_SpSeqIntro} for the action of~$\I_g$ on the complex of cycles~$\B_g$. In the original paper~\cite{BBM07}, in which the complex~$\B_g$ was first constructed, Bestvina, Bux, and Margalit used it to show that the cohomological dimension of~$\I_g$ is equal to $3g-5$ and the top homology group $H_{3g-5}(\I_g;\Z)$ contains a free abelian subgroup of  infinite rank. Recently, the author~\cite{Gai18} has used the same spectral sequence to show that every group $H_k(\I_g;\Z)$, where $2g-3\le k<3g-5$, also contains a free abelian subgroup of  infinite rank. This method has also proven to be useful for study of homology of another important subgroup of~$\Mod(S_g)$, the \textit{Johnson kernel}~$\K_g$, which is the group generated by all Dehn twists about separating simple closed curves (see~\cite{Joh85a}). Namely, it has been proven that the cohomological dimension of~$\K_g$ is equal to~$2g-3$ (Bestvina, Bux, and Margalit~\cite{BBM07}) and the top homology group~$H_{2g-3}(\K_g;\Z)$ contains a free abelian subgroup of  infinite rank (the author~\cite{Gai19}). Nevertheless, still nothing is known on the infiniteness properties of the homology of~$\I_g$ beneath the dimension~$2g-3$ (excluding dimension~$1$), in particular, in dimension~$2$. In contrast, the first homology group~$H_1(\I_g;\Z)$ is finitely generated whenever $g\ge 3$, and was computed explicitly by Johnson~\cite{Joh85b}. Note also that Hain~\cite{Hai97} proved that the Malcev Lie algebra associated with the Torelli group~$\I_g$ is finitely presented, provided that $g\ge 3$. 

Let us also mention several recent results on representation-theoretic aspects of the homology groups~$H_2(\I_g;\Z)$. Kassabov and Putman proved that, for $g\ge 3$, $H_2(\I_g;\Z)$ is a finitely generated  $\Sp(2g,\Z)$-module and moreover the Torelli group~$\I_g$ has finite $\Mod(S_g)$-equivariant presentation, see~\cite[Theorems~A and~C]{KaPu18}. Miller, Patzt, and Wilson~\cite{MPW19} proved central stability for the series of groups~$H_2(\I_g;\Z)$. Kupers and Randal-Williams~\cite{KuRW20} calculated  $H^2(\I_{g,1};\Q)^{\mathrm{alg}}$ as an $\Sp(2g,\Z)$-module in a stable range $g\gg 0$. Here $\I_{g,1}$ is the Torelli group of an oriented genus~$g$ surface with one boundary component and $H^2(\I_{g,1};\Q)^{\mathrm{alg}}$ is the algebraic part of the cohomology group~$H^2(\I_{g,1};\Q)$, i.\,e., the sum of all finite-dimensional $\Sp(2g,\Z)$-subrepresentations that extend to representations of the algebraic group $\Sp(2g)$.

\smallskip

\textbf{Acknowledgements.} This work was started in 2008--2010 during my research visits to Bielefeld as a member of research group headed by Sergei Ivanovich Adian. I am highly indebted to him for introducing me to this wonderful field of study, as well as for his constant interest to my work, and valuable fruitful discussions.  I am grateful to J.~Mennicke for his hospitality during those visits, to I.\,A.~Spiridonov, A.\,L.~Talambutsa, and D.\,S.~Ulyumdzhiev for useful discussions and comments, and to the anonymous reviewer for his remarks that have improved the paper considerably.

\section{Scheme of the proof of Theorem~\ref{theorem_main}}\label{section_outline}

In this section we outline the proof of Theorem~\ref{theorem_main}, which takes up the rest of the present paper. All necessary definitions, constructions, etc. will be given later throughout the paper. We consider the genus~$3$ surface only and conveniently put $S=S_3$, $\I=\I_3$, and~$\B=\B_3$.

The construction of the complex of cycles~$\B$ depends on the choice of a primitive homology class $x\in H$, where $H=H_1(S;\Z)$. A precise construction of~$\B$ will be given in Section~\ref{subsection_complex_cycles}. At the moment, we recall only that cells of~$\B$ are indexed by certain oriented multicurves in~$S$; we denote by~$\M_p$ the set of oriented multicurves that index $p$-dimensional cells of~$\B$. 

To an oriented multicurve~$M$ is assigned the multiset~$[M]$ of homology classes of components of~$M$. Let~$\CH_p$ be the set of all multisets~$[M]$, where $M$ runs over~$\M_p$. Then $\I$-orbits of cells of~$\B$ correspond to multisets in~$\CH_p$. This correspondence is not one-to-one. Namely,  one, or two, or infinitely many $\I$-orbits of cells may correspond to a multiset in~$\CH_p$, see Proposition~\ref{propos_orbits} and Remark~\ref{remark_infinite_orbits} for details. Nevertheless, decomposition~\eqref{eq_SpSeqIntro} for~$E^1_{p,q}$ can be rewritten in the form
\begin{equation}\label{eq_CL_2}
E^1_{p,q} = \bigoplus_{C\in\CH_p}E^1_{p,q}(C),
\end{equation}
where $E^1_{p,q}(C)$ is the direct sum of the groups $H_q(\I_M;\Z)$ over representatives of all orbits of oriented multicurves~$M$ with $[M]=C$. (Hereafter, we use notation $\I_M=\Stab_{\I}(M)$.)

Consider a subset $A=\{a_1,a_2,a_3\}$ of~$H$  that satisfies the following conditions:
\begin{itemize}
\item $a_1$, $a_2$,~$a_3$ are linearly independent and generate a direct summand of~$H$,
\item $a_i\cdot a_j=0$ for all~$i$ and~$j$,
\item $x=n_1a_1+n_2a_2+n_3a_3$ for some positive integers~$n_1$, $n_2$, and~$n_3$.
\end{itemize}
Any such~$A$ belongs to~$\CH_0$; let~$\CH_0'$ be the subset of~$\CH_0$ consisting of all $3$-element sets~$A$ that satisfy these three conditions. (Such $3$-element sets do not exhaust the set~$\CH_0$: it also contains the $1$-element set~$\{x\}$ and infinitely many $2$-element sets, which are not so important for our construction.)  For each $A\in\CH_0'$, all $3$-component oriented multicurves~$M$ in~$S$ with $[M]=A$ lie in the same $\I$-orbit and hence $E^1_{0,q}(A)\cong H_q(\I_M;\Z)$ for any such multicurve~$M$.

Recall that there are \textit{Birman--Craggs homomorphisms}  $\rho_{\omega}\colon \I\to\Z/2$ indexed by $\Sp$-quadratic functions $\omega\colon H_1(S;\Z/2)\to\Z/2$ with zero Arf invariant, see Section~\ref{subsection_BC} for details and references. For each set $A=\{a_1,a_2,a_3\}\in \CH_0'$, there are exactly four $\Sp$-quadratic functions~$\omega$ that satisfy $\Arf(\omega)=0$ and $\omega(a_1)=\omega(a_2)=\omega(a_3)=1$; let $\rho_1,\ldots,\rho_4$ be the corresponding Birman--Craggs homomorphisms. These homomorphisms can be considered as elements of the cohomology group~$H^1(\I;\Z/2)$. A central role in our proof will be played by the cohomology class
$$
\theta_A=\sum_{1\le i<j\le 4}\rho_i\rho_j\in H^2(\I;\Z/2),
$$
where multiplication is the cup-product. Using this cohomology class, we can construct the homomorphism
\begin{equation*}
\btheta_A\colon E^1_{0,2}=\bigoplus_{C\in\CH_0}E^1_{0,2}(C)\xrightarrow{\mathrm{projection}} E^1_{0,2}(A)=H_2(\I_M;\Z)\xrightarrow{i_*}
 H_2(\I;\Z)\xrightarrow{\langle\theta_A,\cdot\rangle} \Z/2,
 \end{equation*}
where $M$ is an oriented multicurve with~$[M]=A$ and $i_*$ is the homomorphism induced by the inclusion $\I_M\subset\I$. In Section~\ref{section_theta}, we will show that $\bigl\langle\theta_A,\CA(T_{\gamma},T_{\delta})\bigr\rangle=1$ whenever $\gamma$ and~$\delta$ are two non-homotopic separating simple closed curves that are disjoint from~$M$ and from each other (see Proposition~\ref{propos_theta_main}(a)). This immediately implies that the homomorphisms~$\btheta_A\colon E^1_{0,2}\to\Z/2$, where $A\in\CH_0'$, are non-trivial and therefore linearly independent.

The core of our proof is the fact that the constructed homomorphisms~$\btheta_A$ vanish on the images of the differentials~$d^1$ and~$d^2$ and hence induce linearly independent homomorphisms $\btheta_A\colon E^3_{0,2}\to\Z/2$, $A\in\CH_0'$. Since the set $\CH_0'$ is infinite, this immediately implies Theorem~\ref{theorem_main}.

Vanishing of~$\btheta_A$ on the image of the differential $d^1\colon E^1_{1,2}\to E^1_{0,2}$ is proved as follows. The group~$E^1_{1,2}$ is the direct sum of the groups~$H_2(\I_M;\Z)$, where $M$ runs over some set of representatives of $\I$-orbits in~$\M_1$. So it is sufficient to prove that $\btheta_A(d^1y)=0$ for $y$ in every summand~$H_2(\I_M;\Z)$. Let $P_{N_1}$ and~$P_{N_2}$ be the endpoints of the $1$-cell~$P_M$. Then $N_1$ and~$N_2$ are oriented multicurves that are contained in~$M$ and belong to~$\M_0$. If  $[N_1]\ne A$ and $[N_2]\ne A$, then the component of~$d^1y$ in the summand~$E^1_{0,2}(A)$  vanishes and hence $\btheta_A(d^1y)=0$. If $[N_1]=[N_2]=A$, then we again have $\btheta_A(d^1y)=0$, since the endpoints of~$P_M$ give equal impacts to~$\btheta_A(d^1y)$. In Section~\ref{section_4component} we study in detail the stabilizers $\I_M$ for all $M\in\M_1$ such that exactly one of the two conditions $[N_1]=A$ and~$[N_2]=A$ holds, and in particular prove that the restriction of~$\theta_A$ to~$\I_M$ vanishes for any such~$M$ (see Proposition~\ref{propos_theta_main}(b)). This implies that $\btheta_A(d^1y)=0$ in this case, too.

The proof of the vanishing of~$\btheta_A$ on the image of the differential $d^2\colon E^2_{2,1}\to E^2_{0,2}$ is much harder. The main difficulty consists in the fact that we have no good description of the kernel of the differential $d^1\colon E^1_{2,1}\to E^1_{1,1}$ and hence of the group~$E^2_{2,1}$. The reason is that the differential~$d^1$ mixes different summands in the direct sum decomposition~\eqref{eq_CL_2} for~$E^1_{2,1}$. This difficulty will be overcome as follows.

First of all, the spectral sequence~$E^*_{*,*}$ we consider is the spectral sequence associated with the filtration by columns in the double complex 
$$
B_{p,q}=C_p(\B;\Z)\otimes_{\I}\res_q, 
$$ 
where $C_*(\B;\Z)$ is the cellular chain complex of~$\B$ and $\res_*$ is the bar resolution for~$\Z$ over~$\Z\I$, see Section~\ref{section_CL} for details. Let $\partial'$ and~$\partial''$ be the differentials of this double complex induced by the differentials of~$C_*(\B;\Z)$ and~$\res_*$, respectively. The sheet $E^1_{*,*}$ is the homology of the double complex~$B_{*,*}$ with respect to~$\partial''$. We have the following analog of direct sum decomposition~\eqref{eq_CL_2}:
\begin{equation*}%\label{eq_B_dirsum}
B_{p,q}=\bigoplus_{C\in\CH_p}B_{p,q}(C),
\end{equation*}
where $B_{p,q}(C)$ is the subgroup generated by all elements~$P_M\otimes \xi$ with $[M]=C$.

To study the differential~$d^2$ we have to work directly with the double complex~$B_{*,*}$.  Namely, an element~$Y\in B_{2,1}$ represents a class  $y\in E^2_{2,1}$ if and only if there exists an element $X\in B_{1,2}$ satisfying $\partial'Y+\partial''X=0$. Further, if this condition is satisfied, then~$\partial'X$ represents the class~$d^2y$ in~$E^2_{0,2}$.

Nevertheless, working on the level of the double complex~$B_{*,*}$ does not solve our problem: we still do not know how to characterize those~$Y\in B_{2,1}$ for which there exists an $X\in B_{1,2}$ satisfying $\partial'Y+\partial''X=0$. We avoid such characterization in the following rather cumbersome way. At the moment, we just list the objects, which will be then constructed throughout the paper, and claim their properties that will be important for us. We will construct the following objects:

\begin{itemize}
\item  Subgroups  $\Gamma\subseteq B_{0,2}$ and $\Delta\subseteq B_{1,1}$. Their description will be effective  in the following sense: $\Gamma$ (respectively, $\Delta$) is the direct sum of certain explicitly defined subgroups $\Gamma_A\subseteq B_{0,2}(A)$, where $A$ runs over~$\CH_0$ (respectively, $\Delta_C\subseteq B_{1,1}(C)$, where $C$ runs over~$\CH_1$).
\item Homomorphisms $\Theta_A\colon \Gamma\to\Z/2$, where $A\in\CH_0'$, such that $\btheta_A(z)=\Theta_A(Z)$ whenever an element $Z\in\Gamma$ represents a class $z\in E^1_{0,2}$.
\item Homomorphisms $\Phi_{C,A}\colon \Delta\to\Z/2$ indexed by pairs $(C,A)$ such that $C\in\CH_1$, $A\in\CH_0'$, and $C\supset A$.
\item Homomorphisms $\sigma_D$ of~$E^1_{2,1}$ to a certain vector space over~$\Z/2$ and homomorphisms $\nu_D\colon E^1_{2,1}\to\Z$ indexed by $D\in\CH_2$.
\item Homomorphisms $\kappa_A\colon E^1_{1,1}\to \Z/2$, where $A\in\CH_0'$.
\end{itemize}

These groups and homomorphisms will have the following properties:
\begin{enumerate}
\item[(P1)] Suppose that $U\in\Delta\cap \partial''(B_{1,2})$; then there exists an element $X\in B_{1,2}$ such that $\partial'' X=U$, $\partial' X\in \Gamma$, and for all $A\in \CH_0'$,
\begin{equation}\label{eq_Delta_main_prelim}
\Theta_A(\partial' X)=\sum_{C\in\CH_1,\, C\supset A}\Phi_{C,A}(U).
\end{equation}
\item[(P2)] Suppose that $y\in E^1_{2,1}$ and $d^1y=0$; then  $\sigma_D(y)=0$ and $\nu_D(y)$ is divisible by~$4$ for all~$D\in \CH_2$.
\item[(P3)] Suppose that $y\in E^1_{2,1}$ is an element such that $\sigma_D(y)=0$ and $\nu_D(y)$ is divisible by~$4$ for all~$D\in \CH_2$. Then there exist elements $Y\in B_{2,1}$ and $X\in B_{1,2}$ satisfying the following conditions:
\begin{itemize}
\item $\partial''Y=0$ and $Y$ represents~$y$ in~$E^1_{2,1}$, 
\item $\partial'Y+\partial''X\in\Delta$, 
\item $\partial'X\in\Gamma$, 
\item for all $A\in\CH_0'$, we have $\Theta_A(\partial'X)=0$  and
\begin{equation}\label{eq_key2_prelim}
\sum_{C\in\CH_1,\, C\supset A}\Phi_{C,A}(\partial'Y+\partial''X)=\kappa_A(d^1y).
\end{equation}
\end{itemize}
\end{enumerate}

Once the groups and the homomorphisms with these properties are constructed, vanishing of~$\btheta_A$ on the image of~$d^2$ will follow immediately. Indeed, any element in~$E^2_{2,1}$ can be represented by a cycle $y\in E^1_{2,1}$ satisfying $d^1y=0$. By properties~(P2) and~(P3) there exist elements $Y\in B_{2,1}$ and~$X\in B_{1,2}$ satisfying all conditions in property~(P3). Since $d^1y=0$, we see that $\partial'Y\in\partial''(B_{1,2})$ and hence  $\partial'Y+\partial''X\in\Delta\cap\partial''(B_{1,2})$. Now, by property~(P1) there exists an element $X_1\in B_{1,2}$ such that $\partial''X_1=\partial'Y+\partial''X$, $\partial' X_1\in \Gamma$, and for all $A\in \CH_0'$,
\begin{equation*}
\Theta_A(\partial' X_1)=\sum_{C\in\CH_1,\, C\supset A}\Phi_{C,A}(\partial'Y+\partial''X)=\kappa_A(d^1y)=0.
\end{equation*}
Then the element $\partial'(X-X_1)$ represents the class~$d^2y$. Therefore, for all $A\in \CH_0'$,
$$
\btheta_A(d^2y)=\Theta_A\bigl(\partial'(X-X_1)\bigr)=0.
$$

\begin{remark}
There are several types of oriented multicurves in the set~$\M_1$, see Section~\ref{section_cells} for details. Correspondingly, there are several types of multisets $C\in\CH_1$. In fact, the homomorphisms $\Phi_{C,A}$ will be introduced only for some (not all) types of multisets $C\in\CH_1$ that contain~$A$. Similarly, the homomorphisms~$\sigma_D$ and~$\nu_D$ will be introduced only for some (not all) types of multisets $D\in\CH_2$.  
(In all other cases, we may by definition assume that these homomorphisms are trivial.) Besides, the homomorphisms~$\Phi_{C,A}$ will have different nature for two different types of~$C$. Because of that we will conveniently use different notation for them, namely, $\Psi_C$ for the first type and~$\Phi_{C,A}$ for the second type. (In the former case index~$A$ is omitted, since the homomorphism is in fact independent of~$A$.) Precise forms  of equations~\eqref{eq_Delta_main_prelim} and~\eqref{eq_key2_prelim} are equations~\eqref{eq_Delta_main} and~\eqref{eq_key2} in Propositions~\ref{propos_Delta} and~\ref{propos_2}, respectively. Note also that the homomorphism $\kappa_A$ will be made up of several different homomorphisms, so $\kappa_A(y)$ is just a short notation for the right-hand side of~\eqref{eq_key2}. In fact, we will never use notation~$\kappa_A$ in the sequel.
\end{remark}

The most difficult and intriguing part of the paper is the proof of property~(P2). The matter  is that it seems likely that there is no local obstruction to the existence of a class $y\in E^1_{2,1}$ satisfying $d^1y=0$ but violating some of the conditions $\sigma_D(y)=0$ and $\nu_D(y)\equiv 0\pmod 4$. This means that if we completed the group~$E^1_{2,1}$ by allowing infinite (but locally finite with respect to the topology in~$\B/\I$) sums of the elements of summands~$E^1_{2,1}(D)$, then in the obtained group such class~$y$ would likely exist. So the proof of the non-existence of such class~$y$ in the initial group~$E^1_{2,1}$ must use some finiteness arguments. 

To show that $\sigma_D(y)=0$ for all~$D$ whenever $y\in E^1_{2,1}$ and $d^1y=0$, we will introduce a function $w$ on~$\CH_2$ and prove the following assertion:

\smallskip
\textit{Suppose that $y\in E^1_{2,1}$, $d^1y=0$, and $\sigma_{D_0}(y)\ne 0$ for some $D_0\in\CH_2$; then there exists $D\in\CH_2$ such that $\sigma_{D}(y)\ne 0$ and $w(D)>
w(D_0)$.} 

\smallskip
This assertion implies that if the condition $\sigma_D(y)=0$ is violated for at least one $D$, then it is violated for infinitely many $D$'s, which is impossible.  The function~$w$ we use is as follows. For each $A=\{a_1,a_2,a_3\}\in\CH_0'$, we put $n(A)=n_1+n_2+n_3$, where $n_i$ are the coefficients in the decomposition $x=n_1a_1+n_2a_2+n_3a_3$. Then we take for $w(D)$ the maximum of~$n(A)$ over all subsets $A\subseteq D$ belonging to~$\CH_0'$. Note that the idea of using the function~$n$ to study the topology of the complex of cycles is not new --- the same function was used by Hatcher and Margalit to study the relative topology of the complex of homologous curves in the complex of cycles, see~\cite[Section~3]{HaMa12}. 

To show that $\nu_D(y)\equiv 0\pmod 4$ for all~$D$ whenever $y\in E^1_{2,1}$ and $d^1y=0$, we will proceed in a similar way. Nevertheless, the Hatcher--Margalit weight function $n$ does not help us any more. Instead of it we need to use a function~$F$ of completely different nature. Namely, we consider a linear function $f\colon H\to\R$ such that $f(x)=0$ and the image of~$f$ is a subgroup of~$\R$ of rank~$5$, and then roughly speaking put  $F(A)=\max|f(a_i)-f(a_j)|$, for more details see Section~\ref{subsection_b}.

This paper is organized as follows. Section~\ref{section_prelim} contains preliminary constructions and results, which then are used throughout the paper.  Namely, we recall necessary definitions and facts on the complex of cycles (Subsection~\ref{subsection_complex_cycles}), the spectral sequence associated with cellular action of a group (Subsection~\ref{section_CL}), the stabilizers of multicurves under the action of the Torelli group (Subsection~\ref{subsection_stabilizers}), the Birman--Craggs homomorphisms  (Subsection~\ref{subsection_BC}), and the fundamental cohomological exact sequence associated with a short exact sequence of groups (Subsection~\ref{subsection_fundamental}). All results in this section either are known or follow easily from known results.

In Section~\ref{section_theta} we construct infinitely many homomorphisms $\btheta_A\colon E^1_{0,2}\to \Z/2$, prove their linear independence (Proposition~\ref{propos_li}), and state our many result on their vanishing on the images of the differentials~$d^1$ and~$d^2$, Theorem~\ref{theorem_main_section}. The rest of the paper contains the proof of this theorem. In Section~\ref{section_cells} we describe various types of oriented multicurves in~$\M$ and obtain several results on decomposition of~$\M$ into $\I$-orbits. In Section~\ref{section_nu} we define several useful homomorphisms of the stabilizers~$\I_{\gamma}$ and~$\I_{\gamma\cup\gamma'}$ to~$\Z$, where $\gamma$ is a simple closed curve and~$\{\gamma,\gamma'\}$ is a bounding pair. These homomorphisms are then used throughout the whole paper. Section~\ref{section_4component} contains a detailed study of the stabilizers~$\I_M$ of $4$- and $5$-component multicurves~$M$. This serves two purposes. First, in this section we complete the proof  of Proposition~\ref{propos_theta_main}, which is the main ingredient of the proof of vanishing of~$\btheta_A$ on the image of~$d^1$. Second, the results on stabilizers obtained in Section~\ref{section_4component} are then used in Sections~\ref{section_several_hom} and~\ref{section_Delta}  to construct homomorphisms that enter formulae~\eqref{eq_Delta_main_prelim} and~\eqref{eq_key2_prelim}. Namely, in Section~\ref{section_several_hom} we construct necessary homomorphisms of~$E^1_{1,1}$ and~$E^1_{2,1}$ to abelian groups, and in Section~\ref{section_Delta} we construct the groups $\Gamma\subseteq B_{0,2}$ and~$\Delta\subseteq B_{1,1}$, the homomorphisms $\Theta_A$ of~$\Gamma$ to~$\Z/2$, and the homomorphisms~$\Phi_{C,A}$ and~$\Psi_C$ of~$\Delta$ to~$\Z/2$. Also, in this section we prove Proposition~\ref{propos_Delta}, which is a more precise version of the above property~(P1). In Section~\ref{section_scheme} we formulate Propositions~\ref{propos_1} and~\ref{propos_2}, which are more precise versions of the above properties~(P2) and~(P3), respectively. These propositions are proved in Sections~\ref{section_1} and~\ref{section_2}, respectively, which completes the proof of vanishing of~$\btheta_A$ on the image of~$d^2$.

\section{Preliminaries}\label{section_prelim}

We use the standard group-theoretic notation
$$
[g,h]=g^{-1}h^{-1}gh,\qquad g^{h}=h^{-1}gh.
$$

For an element~$g$ of a group~$G$, we denote by~$[g]$ the homology class of~$g$ in~$H_1(G;\Z)$.

Suppose that $G$ is a group and $g$ and~$h$ are commuting elements of it. Consider the homomorphism $\varphi\colon\Z\times\Z\to G$ that takes the generators of the factors~$\Z$ to~$g$ and~$h$, respectively. Then the \textit{abelian cycle}~$\CA(g,h)\in H_2(G;\Z)$ is, by definition, the image of the standard generator of~$H_2(\Z\times\Z;\Z)$ under~$\varphi_*$.

We always mean that all simple closed curves we consider are homotopically non-trivial unless otherwise stated explicitly. We denote by~$T_{\gamma}$ the left Dehn twist about a simple closed curve~$\gamma$.  A \textit{bounding pair} is a pair~$\{\gamma,\gamma'\}$  of simple closed curves such that $\gamma$ and~$\gamma'$ are non-separating, disjoint from each other, not isotopic to each other, and their union divides the surface into two components. We use notation $T_{\gamma,\gamma'}=T_{\gamma}T_{\gamma'}^{-1}$; this mapping class is called the twist about the bounding pair~$\{\gamma,\gamma'\}$. We denote by~$\Mod(S)$ the mapping class group of an oriented surface~$S$, which can be closed, or with punctures, or with boundary components. In the last case, we mean that all mapping classes fix every boundary component pointwise. For a surface~$S$ with punctures, the \textit{pure mapping class group} of~$S$ is the subgroup~$\PMod(S)$ of~$\Mod(S)$ consisting of all mapping classes that do not permute punctures.  Necessary background material on simple closed curves and mapping class groups can be found in~\cite{FaMa12}.

Suppose that the Torelli group~$\I_g$ acts on a set~$\mathcal{Y}$. Then in order to simplify notation, we will denote the stabilizer~$\Stab_{\I_g}(Y)$ of an element~$Y\in\mathcal{Y}$ by~$\I_Y$. Similar notation~$\hI_Y$,~$\CC_Y$ will be used for other groups~$\hI_g$, $\CC_g$, which will be introduced throughout the paper.

We always put $H=H_1(S_g;\Z)$ and~$H_{\Z/2}=H\otimes(\Z/2)=H_1(S_g;\Z/2)$. We denote by~$[\gamma]$ the integral homology class of an oriented simple closed curve~$\gamma$.   We denote by $x\cdot y$ the intersection form on~$H$. A basis $a_1,\ldots,a_g,b_1,\ldots,b_g$ of~$H$ is called \textit{symplectic} if it satisfies $a_i\cdot a_j=0$, $b_i\cdot b_j=0$, and~$a_i\cdot b_j=\delta_{ij}$, where $\delta_{ij}$ is the Kronecker delta. A \textit{symplectic basis} of~$H_{\Z/2}$ is defined similarly. We denote modulo~$2$ homology classes by bold letters~$\bx,\by$, etc. to distinguish them from integral homology classes.

For a sugbroup~$U$ of~$H$, we denote by~$U^{\bot}$ the orthogonal complement of~$U$ with respect to the intersection form. Under an orthogonal splitting of~$H$, we always mean a splitting into a direct sum of subgroups so that each pair of them are orthogonal to each other with respect to the intersection form.
A subgroup~$L$ of~$H$ is called \textit{isotropic} if the restriction of the  intersection form to~$L$ is trivial. A \textit{Lagrangian subgroup} is a maximal with respect to inclusion isotropic subgroup. It is easy to see that an isotropic subgroup $L$ is Lagrangian if and only if it has rank~$g$ and is a direct summand of~$H$. Besides, $L$ is Lagrangian if and only if $L^{\bot}=L$. We denote by $\langle c_1,\ldots,c_k\rangle$ the subgroup of~$H$ spanned by~$c_1,\ldots,c_k$.

\subsection{Complex of cycles}\label{subsection_complex_cycles}

Bestvina, Bux, and Margalit~\cite{BBM07} constructed a contractible cell complex called the \textit{complex of cycles} on which the Torelli group~$\I_g$ acts cellularly and \textit{without rotations}. The latter means that an element $h\in\I_g$ fixes a cell of~$\B_g$ pointwise whenever~$h$ fixes this cell setwise. In this section we recall their construction. More details can be found in~\cite{HaMa12} and~\cite{Gai18}.  

Consider a closed oriented surface~$S_g$, where $g\ge 2$.  A  class $x\in H=H_1(S_g;\Z)$ is called \textit{primitive} if it is not divisible by any integer greater than~$1$. Choose a primitive homology class $x\in H$ and fix this choice throughout the whole construction. (Bestvina, Bux, and Margalit required only that $x$ is nonzero. However,  it is convenient that $x$ be primitive.)

Let $\BC$ be the set of all isotopy classes of oriented non-separating simple closed curves in~$S_g$. Consider the infinite-dimensional vector space~$\R^{\BC}$ consisting of formal finite linear combinations of elements of~$\BC$. Note that if $\bar\gamma$ is a curve $\gamma$ with the orientation reversed, then $\gamma$ and~$\bar\gamma$ are two different basis elements of~$\R^{\BC}$, and $\bar\gamma$ is not supposed to be equal to~$-\gamma$. Hereafter, we usually say `a curve' meaning `an isotopy class of curves'. In particular, elements of~$\BC$ will be called curves, and saying `disjoint curves' we mean that the corresponding isotopy classes contain disjoint representatives. 

The \textit{complex of cycles}~$\B_g=\B_g(x)$ is, by definition, the subset of~$\R^{\BC}$ consisting of all finite linear combinations $\bgamma=\sum_{i=1}^k n_i\gamma_i$, which will be called \textit{cycles}, such that 
\begin{itemize}
\item $n_1,\ldots,n_k$ are positive reals,
\item $\gamma_1,\ldots,\gamma_k$ are pairwise disjoint and pairwise non-isotopic oriented simple closed curves,
\item $n_1[\gamma_1]+\cdots+n_k[\gamma_k]=x$,
\item the homology classes $[\gamma_1],\ldots,[\gamma_k]$ satisfy no \textit{one-sided relations} i.\,e. relations of the form 
$\sum_{i\in I}[\gamma_i]=0$, where $I$ is a nonempty subset of $\{1,\ldots,k\}$.
\end{itemize} 
Note that \textit{two-sided relations} i.\,e. relations of the form $\sum_{i\in I}[\gamma_i]=\sum_{j\in J}[\gamma_j]$ are allowed. In particular, the last condition in the definition of~$\B_g$ prohibits for two oriented curves~$\gamma_i$ and~$\gamma_j$ to have opposite homology classes. A cycle~$\bgamma\in\B_g$ is called a \textit{basic cycle} for~$x$ if the homology classes of the curves $\gamma_1,\ldots,\gamma_k$ are linearly independent. For a basic cycle, the coefficients $n_i$ are positive integers, since the class~$x$ is integral.  

A union of a finite number of pairwise disjoint and pairwise non-isotopic simple closed curves in~$S_g$ is called a \textit{multicurve}. A multicurve is said to be \textit{oriented} if all its components are endowed with orientations. In this paper we always assume that all multicurves being considered contain no separating components. As for curves, we consider multicurves up to isotopies and do not distinguish between a multicurve and its isotopy class. Let us agree that, for oriented multicurves $M$ and~$N$, notation $N\subset M$ means that $N$ is the union of several components of~$M$ with the same orientations as in~$M$. For a cycle~$\bgamma\in\B_g$, the union of oriented curves~$\gamma_i$ that enter~$\bgamma$ with nonzero coefficients is an oriented multicurve. It will be called the \textit{support} of~$\bgamma$.

Let $\M$ denote the set of all oriented multicurves $M$ in~$S_g$ such that 
\begin{enumerate}
\item for each component $\gamma$ of~$M$, there exists a basic cycle for $x$ whose support is contained in~$M$ and contains~$\gamma$,
\item the homology classes of the components of~$M$ satisfy no one-sided relations. 
\end{enumerate}
Bestvina, Bux, and Margalit showed that an oriented multicurve $M$ belongs to~$\M$ if and only if it is the support of a cycle $\bgamma\in\B_g$. Moreover, for each $M\in\M$, the set~$P_M$ consisting of all cycles $\bgamma\in\B_g$ with supports contained in~$M$ is a finite-dimensional convex polytope in~$\R^{\BC}$ whose vertices are exactly all basic cycles for~$x$ with supports contained in~$M$. Besides, they proved the following formula for the dimension of~$P_M$ (cf.~\cite[Lemma~2.1]{BBM07}):
\begin{equation}\label{eq_dimension}
\dim P_M=|M|-\rank M=|S\setminus M|-1,
\end{equation}
where $|M|$ is the number of components of~$M$, $|S\setminus M|$ is the number of components of~$S\setminus M$ and $\rank M$ is the rank of the subgroup of~$H$ spanned by the homology classes of components of~$M$. The maximum of~$\dim P_M$ is attained when $M$ is a pants decomposition of~$S_g$. Then $|M|=3g-3$ and $\rank M=g$. Hence $\dim\B_g=2g-3$, cf.~\cite[Proposition~2.2]{BBM07}. In particular, $\dim\B_3=3$.

Bestvina, Bux, and Margalit proved that the CW complex~$\B_g$ is contractible, see~\cite[Theorem~E]{BBM07}. The fact that~$\I_g$ acts on~$\B_g$ without rotations follows from a result of Ivanov~\cite[Corollary~1.8]{Iva92}.

The convex polytopes~$P_M$, where $M$ runs over~$\M$, form a regular cell decomposition of~$\B_g$. Let $C_*(\B_g;\Z)$ denote the cellular chain complex of~$\B_g$. For $M,N\in\M$, it is easy to see that $P_N$ is a face of~$P_M$ if and only if $N\subset M$. Hence the differential in~$C_*(\B_g;\Z)$ is given by
\begin{equation*}
\partial P_M=\sum_{N\subset M,\,N\in\M,\,\dim P_N=\dim P_M-1,}\pm P_N,
\end{equation*}
where signs depend on the chosen orientations of the cells.

Alongside with the Torelli group~$\I_g$, we need to consider the \textit{extended Torelli group}~$\hI_g$. By definition, $\hI_g$ is the subgroup of~$\Mod(S_g)$ consisting of all mapping classes~$h$ that act on~$H$ either trivially or by multiplication by~$-1$. Obviously, $\I_g$ is an index~$2$ subgroup of~$\hI_g$, and $\hI_g$ is generated by~$\I_g$ and a hyperelliptic involution (see~Fig.~\ref{fig_eBC} in Section~\ref{subsection_BC}).  

If we try to extend naively the action of~$\I_g$ on~$\B_g$ to the action of~$\hI_g$, we fail, since a mapping class $h\in\hI_g\setminus\I_g$ takes any cycle~$\bgamma$ for~$x$ to a cycle for~$-x$, which does not lie in~$\B_g$. Nevertheless, we may define an action of~$\hI_g$ on~$\B_g$ by artificially reversing the orientation of every curve entering the cycle whenever we act on it by a mapping class~$h\in\hI_g\setminus\I_g$. In other words, if a  mapping class~$h\in\hI_g\setminus\I_g$ takes oriented curves $\gamma_1,\ldots,\gamma_k$ to oriented curves $\delta_1,\ldots,\delta_k$, respectively, then we put 
$$
h(n_1\gamma_1+\cdots+n_k\gamma_k)=n_1\overline{\delta}_1+\cdots+n_k\overline{\delta}_k.
$$
Similarly, if $h\in\hI_g\setminus\I_g$ and~$M$ is an oriented multicurve in~$\M$, then we denote by~$h(M)$ the multicurve obtained by applying naively~$h$ to~$M$ and then reversing the orientation of every component. Then $h(M)$ again belongs to~$\M$.

For an oriented multicurve $M\in\M$, we denote by~$[M]$ the multiset of integral homology classes of components of~$M$. Generally, $[M]$ is a multiset rather than a set, since $M$ may contain homologous components. We will use square brackets~$[c_1,\ldots,c_k]$ to list elements of a multiset. However, we prefer to use a more common notation~$\{c_1,\ldots,c_k\}$ whenever a multiset is in fact a set. We denote by~$\CH$ the set consisting of all multisets~$[M]$, where $M$ runs over~$\M$. For each $p$, we denote by~$\M_p$ the subset of~$\M$ consisting of all~$M$ such that $\dim P_M=p$, and we denote by~$\CH_p$ the set consisting of all multisets~$[M]$, where $M$ runs over~$\M_p$. It follows from~\eqref{eq_dimension} that all multisets in~$\CH_0$ are sets.

\begin{propos}\label{propos_H0}
An $m$-element set $A=\{a_1,\ldots,a_m\}$ belongs to~$\CH_0$ if and only if it satisfies the following conditions:
\begin{enumerate}
\item $a_1,\ldots,a_m$ is a basis of a rank~$m$ isotropic direct summand of~$H_1(S_g;\Z)$,
\item $x=n_1a_1+\cdots+n_ma_m$ for some positive integers $n_1,\ldots,n_m$.
 \end{enumerate}
\end{propos}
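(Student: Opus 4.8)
The plan is to unwind the definition of $\M_0$ using the dimension formula~\eqref{eq_dimension} and the description of the cells $P_M$ as convex polytopes. First I would observe that an element $A\in\CH_0$ is by definition the multiset $[M]$ of an oriented multicurve $M\in\M_0$, i.e.\ one with $\dim P_M=0$; equivalently, $P_M$ is a single point, so $\bgamma$ is the unique cycle with support $M$ and $\bgamma$ is a basic cycle for $x$. By the last remark before the proposition, all multisets in $\CH_0$ are in fact sets, so write $M=\gamma_1\cup\cdots\cup\gamma_m$ with distinct $a_i=[\gamma_i]$ and $A=\{a_1,\dots,a_m\}$. From $\dim P_M=0$ and~\eqref{eq_dimension} we get $|M|=\rank M$, i.e.\ $m=\rank\langle a_1,\dots,a_m\rangle$, so $a_1,\dots,a_m$ are linearly independent. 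Since $\bgamma=n_1\gamma_1+\cdots+n_m\gamma_m$ is a basic cycle for $x$, the coefficients $n_i$ are positive integers (as noted after the definition of basic cycle, because $x$ is integral and the $[\gamma_i]$ are linearly independent), so $x=n_1a_1+\cdots+n_ma_m$ with $n_i>0$. This gives condition~(2). For condition~(1), linear independence is already established; I would then use the fact that the $[\gamma_i]$ are homology classes of the components of a multicurve with no separating components to conclude $a_i\cdot a_j=0$ for all $i,j$ (disjoint simple closed curves have zero algebraic intersection number), so $\langle a_1,\dots,a_m\rangle$ is isotropic; and I would invoke the standard fact (e.g.\ that the homology classes of disjoint non-separating simple closed curves whose union is part of a configuration supporting a basic cycle span a direct summand — or more elementarily, that the classes of a collection of disjoint non-isotopic non-separating curves can be completed to a part of a symplectic basis once they are linearly independent and pairwise orthogonal) that they form a basis of a direct summand. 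That these classes span a direct summand also follows from condition~(1) of the definition of $\M$ combined with the classification of how multicurves sit in $S_g$; in any case this is a known fact about multicurves on surfaces and I would cite~\cite{FaMa12}.

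Conversely, given a set $A=\{a_1,\dots,a_m\}$ satisfying (1) and (2), I would realize $A$ as $[M]$ for some $M\in\M_0$. Since $a_1,\dots,a_m$ span a rank-$m$ isotropic direct summand, by the change-of-coordinates principle for surfaces there is an oriented multicurve $M=\gamma_1\cup\cdots\cup\gamma_m$ with $[\gamma_i]=a_i$, the $\gamma_i$ pairwise disjoint, pairwise non-isotopic (they have distinct homology classes), and non-separating (each $a_i\neq 0$, being part of a basis of a direct summand). The linear combination $\bgamma=n_1\gamma_1+\cdots+n_m\gamma_m$ with the $n_i$ from~(2) then satisfies $\sum n_i[\gamma_i]=x$ with positive integer coefficients, and the linear independence of the $a_i$ means the classes satisfy no one-sided (indeed no nontrivial) relations; hence $\bgamma\in\B_g$ and $M$ is its support, so $M\in\M$. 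Linear independence of the $a_i$ forces $\rank M=m=|M|$, so $\dim P_M=0$ by~\eqref{eq_dimension}, i.e.\ $M\in\M_0$ and $A=[M]\in\CH_0$.

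I expect the main obstacle to be the "direct summand" half of condition~(1) in the forward direction: linear independence and isotropy of $\langle a_1,\dots,a_m\rangle$ are immediate from $\dim P_M=0$ and disjointness, but showing the span is a \emph{direct summand} (equivalently, that the quotient $H/\langle a_1,\dots,a_m\rangle$ is torsion-free) requires knowing that the homology classes of a multicurve realizing part of a basic-cycle configuration generate a primitive subgroup — this is exactly the content of condition~(1) in the definition of $\M$, and it is the place where one must argue (or cite) that one can extend a sub-basis of such a multicurve to a symplectic-type basis adapted to a pants decomposition. Everything else is a routine translation between the polytope language of $\B_g$ and linear algebra in $H$.
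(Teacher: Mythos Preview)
Your argument is correct and close in spirit to the paper's, but the paper takes a slightly slicker route by using the other half of the dimension formula~\eqref{eq_dimension}: $\dim P_M=|S\setminus M|-1$, so $M\in\M_0$ iff $S\setminus M$ is connected. Then both directions reduce to the single well-known surface fact that an $m$-tuple $(a_1,\dots,a_m)$ is the set of homology classes of an oriented multicurve with connected complement iff it is a basis of an isotropic direct summand of~$H$; condition~(2) is exactly what makes such a multicurve carry a basic cycle for~$x$. Your route via $|M|=\rank M$ gives linear independence directly, and the ``obstacle'' you flag dissolves once you notice that linear independence of the~$[\gamma_i]$ already forces $S\setminus M$ to be connected (any disconnection would give a one-sided relation $\sum_{i\in I}\pm[\gamma_i]=0$), after which the direct-summand claim is the same standard fact. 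So nothing is wrong, but you can shorten the forward direction and remove the hedging by making that observation explicit, or simply switch to the $|S\setminus M|-1$ form from the start.
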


\begin{proof}
It is well known that~(1) is a necessary and sufficient conditions for the existence of an oriented multicurve~$N$ with~$[N]=A$ such that $S\setminus N$ is connected. This multicurve belongs to~$\M_0$ if and only if (2) holds.  
\end{proof}

\begin{propos}\label{propos_H}
Suppose that $g=3$. Let $A=\{a_1,a_2,a_3\}$ be a three-element set belonging to~$\CH_0$ and $C$ be a multiset that contains~$A$. Then $C$ belongs to~$\CH$ if and only if it is contained in one of the following $102$ multisets: 
\begin{itemize}
\item $3$ sets $\{a_1,a_2,a_3,a_i+a_j,a_j+a_k,a_1+a_2+a_3\}$,
\item $12$ sets $\{a_1,a_2,a_3,a_i+a_j,a_k-a_j,\epsilon (a_i+a_j-a_k)\}$, where $\epsilon\in\{-1,1\}$,
\item $6$ sets $\{a_1,a_2,a_3,a_i+a_j,a_j-a_k,a_i+a_j-a_k\}$,
\item $12$ sets $\{a_1,a_2,a_3,\epsilon_1(a_i-a_k),\epsilon_2(a_j-a_k),a_i+a_j-a_k\}$, where $\epsilon_1,\epsilon_2\in\{-1,1\}$,
\item $3$ sets $\{a_1,a_2,a_3,a_k-a_i,a_k-a_j,a_k-a_i-a_j\}$,
\item $6$ sets $\{a_1,a_2,a_3,a_i+a_j,a_j+a_k,a_i-a_k\}$,
\item $6$ sets $\{a_1,a_2,a_3,a_i-a_j,a_j-a_k,a_i-a_k\}$,
\item $3$ multisets $[a_1,a_2,a_3,a_i+a_j,a_i+a_j,a_1+a_2+a_3]$,
\item $6$ multisets $[a_1,a_2,a_3,a_i+a_j,a_i+a_j,\epsilon(a_i+a_j-a_k)]$, where $\epsilon\in\{-1,1\}$,
\item $12$ multisets $[a_1,a_2,a_3,a_i-a_j,a_i-a_j,\epsilon(-a_i+a_j+a_k)]$, where $\epsilon\in\{-1,1\}$,
\item $6$ multisets $[a_1,a_2,a_3,a_i-a_j,a_i-a_j,a_i-a_j+a_k]$,
\item $3$ multisets $[a_i,a_i,a_j,a_k, a_i+a_j,a_i+a_k]$,
\item $12$ multisets $[a_i,a_i,a_j,a_k, a_i+a_j,\epsilon(a_i-a_k)]$, where $\epsilon\in\{-1,1\}$,
\item $12$ multisets $[a_i,a_i,a_j,a_k,\epsilon_1(a_i-a_j),\epsilon_2(a_i-a_k)]$, where $\epsilon_1,\epsilon_2\in\{-1,1\}$.
\end{itemize}
Here in all cases $i,j,k$ is an arbitrary permutation of~$1,2,3$.
\end{propos}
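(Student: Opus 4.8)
The plan is to determine, for a given isotropic rank-$3$ direct summand $A=\{a_1,a_2,a_3\}$ with $x=n_1a_1+n_2a_2+n_3a_3$, which multisets $C\supseteq A$ can occur as $[M]$ for some $M\in\M$. By Proposition~\ref{propos_H0} and formula~\eqref{eq_dimension}, the maximal cells correspond to pants decompositions, so $C$ can have at most $|M|=3g-3=6$ elements (counted with multiplicity), and the question reduces to classifying the possible $6$-element multisets $[M]$ (with $M$ a pants decomposition of $S=S_3$ containing a multicurve representing $A$) together with all their sub-multisets. Thus I would first reduce to the case $|C|=6$: the listed $102$ multisets are precisely the maximal ones, and $C\in\CH$ iff it is a sub-multiset of one of them, which is exactly the form of the statement.

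To enumerate the maximal multisets, I would fix an oriented multicurve $N$ with $[N]=A$ and $S\setminus N$ connected; since $g=3$, cutting along the three curves of $N$ yields a planar surface (a sphere with $6$ boundary circles, the boundaries coming in three pairs). A pants decomposition $M\supseteq N$ is obtained by adding three more curves $\gamma_4,\gamma_5,\gamma_6$ cutting this $6$-holed sphere into four pairs of pants. The homology classes of curves on a $6$-holed sphere are $\Z$-linear combinations of the boundary classes $\pm a_1,\pm a_2,\pm a_3$ (using the symplectic structure, each new curve separates the planar piece and so its class is a partial sum of boundary classes); the no-one-sided-relations condition of $\B_g$ and the requirement that each $\gamma_\ell$ be a genuine simple closed curve in the planar surface constrain which partial sums can simultaneously appear. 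I would work this out combinatorially: a pants decomposition of a $6$-holed sphere corresponds to a trivalent tree with $6$ leaves (labelled by the boundary circles), and the class of an internal edge is $\pm$ the sum of the boundary classes on one side. Running through the (finitely many) trees and the identifications of boundary circles into the pairs $(\pm a_i)$, and discarding configurations that violate the defining conditions of $\M$ (in particular isotopic duplicates and one-sided relations), produces exactly the fourteen families listed, with the stated multiplicities $3+12+6+12+3+6+6+3+6+12+6+3+12+12=102$.

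The converse direction — that each of the $102$ listed maximal multisets is actually realized by some $M\in\M_3$ (a pants decomposition) — I would handle by exhibiting, for a representative of each family, an explicit pants decomposition of $S_3$: place the three curves realizing $a_1,a_2,a_3$ in standard position (so $S\setminus N$ is a $6$-holed sphere), then draw the three additional curves in the planar surface realizing the prescribed partial sums, and check that no one-sided relation among the six classes holds and that condition~(1) in the definition of $\M$ (every component lies in the support of a basic cycle with support in $M$) is satisfied — the latter is automatic here since $a_1,a_2,a_3$ already span a rank-$3$ summand carrying $x$, so $N$ itself is the support of the basic cycle $n_1a_1+n_2a_2+n_3a_3$, and adding curves only adds faces. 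By symmetry under the $S_3$-action permuting $a_1,a_2,a_3$ and under orientation reversals, it suffices to treat one representative per $S_3$-orbit, which cuts the case analysis down substantially.

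The main obstacle will be the bookkeeping in the enumeration step: making sure the list of fourteen families is exhaustive and has no overlaps, i.e. that every pants decomposition of the $6$-holed sphere (with the three boundary-pair identifications) gives a multiset on the list, and that the stated multiplicities correctly count distinct multisets rather than distinct geometric configurations (several non-isotopic $M$ can share the same $[M]$, and conversely the same multiset can arise from different trees). I expect the cleanest way to control this is to organize the enumeration by the multiset of the three new classes $\{[\gamma_4],[\gamma_5],[\gamma_6]\}$ viewed inside the rank-$3$ lattice $\langle a_1,a_2,a_3\rangle$ modulo sign and the $S_3$-symmetry, checking in each case that the three new curves together with $N$ can be simultaneously realized disjointly — equivalently that the corresponding partial sums form a laminar (nested/disjoint) family of subsets of $\{1,2,3\}$-boundaries when one tracks the planar picture. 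A secondary subtlety is the treatment of multisets in which some $a_i$ occurs twice (the last three families and a few others), which come from pants decompositions where two of the boundary circles carrying the class $a_i$ are not separated from the rest by curves of class depending on them; these require a slightly different planar picture, but the same tree-enumeration covers them once one allows the two "copies" of $a_i$ to be leaves of the tree attached at possibly distant vertices.
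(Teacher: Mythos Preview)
Your overall strategy---cut along a multicurve $N$ with $[N]=A$ to obtain a six-holed sphere and then enumerate which homology classes the additional curves can carry---is exactly what the paper does; the paper simply adds curves one at a time instead of passing through trivalent trees, and writes out one representative case in detail.

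There is, however, a real error in your treatment of the ``if'' direction. You write that condition~(1) in the definition of~$\M$ ``is automatic here since $a_1,a_2,a_3$ already span a rank-$3$ summand carrying $x$, so $N$ itself is the support of the basic cycle $n_1a_1+n_2a_2+n_3a_3$, and adding curves only adds faces.'' This does not work. Condition~(1) requires that \emph{every} component of $M$ lie in the support of some basic cycle for~$x$ whose support is contained in~$M$; the basic cycle carried by~$N$ covers $\alpha_1,\alpha_2,\alpha_3$ but says nothing about the added curves $\gamma_4,\gamma_5,\gamma_6$. What must actually be checked---and what the paper checks---is that every class $c$ occurring in one of the $102$ multisets can be completed to a basic cycle using only $c$ and at most two of the $a_i$: concretely, that $x$ is a linear combination of $c$ and two of $a_1,a_2,a_3$ with nonnegative coefficients and strictly positive coefficient on~$c$. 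This is routine but not vacuous, and it is exactly what makes the passage to sub-multisets go through as well: the basic cycle witnessing condition~(1) for $c$ uses only $c$ and elements of $A\subseteq C$, so it survives in any $C$ with $A\subseteq C\subseteq E$. Your phrase ``adding curves only adds faces'' presupposes that the enlarged multicurve already lies in~$\M$, which is precisely the point at issue.

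A smaller point: your reduction to the case $|C|=6$ tacitly assumes that every $M\in\M$ containing~$N$ extends to a pants decomposition still lying in~$\M$. This is true here, but it needs a word of justification (one must add a curve without creating a one-sided relation and then invoke the condition~(1) check above for the new class). The paper sidesteps this by directly enumerating all multicurves obtainable from~$N$ by successively adding curves, not only the maximal ones.
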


\begin{proof}
First, it can be checked by a direct case analysis that for each of the $102$ multisets~$E$ listed, there exists an oriented multicurve~$K$ in~$S_3$ with $[K]=E$.  Suppose that $C$ is a multiset contained in~$E$ and containing~$A$. Then any multicurve $M\subset K$ such that  $[M]=C$ belongs to~$\M$. Indeed, condition~(2) in the definition of~$\M$ holds, since there are no one-sided relations, and condition~(1) holds, since every element $c$ of each of the listed multisets has the following property: $x$ is a linear combination with nonnegative coefficients of $c$ and certain two classes~$a_i$ and $a_j$  such that the coefficient of~$c$ is strictly positive. Thus, $C\in\CH$.

Vice versa, suppose that $C$ is an arbitrary multiset that belongs to~$\CH$ and contains~$A$. Let $M$ be a multicurve in~$\M$ with~$[M]=C$ and~$N\subset M$ be a multicurve with~$[N]=A$. Then $S_3\setminus N$ is a six-punctured sphere. Then the multicurve~$M$ can be obtained from~$N$ by consecutive adding new oriented simple curves each of which divides one of the existing components of the surface into two parts. An easy case analysis shows that, for any muticurve that can be obtained in this way, the multiset of its components is contained in one of the~$102$ listed multisets, provided that it is not allowed to create one-sided relations. 

Just to provide an example, let us consider in more detail the case of~$M$ containing two components~$\alpha_1$ and~$\alpha_1'$ in the homology class~$a_1$. Let $\alpha_2$ and~$\alpha_3$ be the components of~$M$ in the homology classes~$a_2$ and~$a_3$, respectively. Then $N=\alpha_1\cup\alpha_2\cup\alpha_3$ is a submulticurve of~$M$ with~$[N]=A$ and $S_3\setminus N$ is a $6$-punctured sphere. The curve $\alpha_1'$ divides this $6$-punctured sphere into two $4$-punctured spheres~$S'$ and~$S''$ so that $S'$ is adjacent to~$\alpha_2$ from both sides and to each of the curves~$\alpha_1$ and~$\alpha_1'$ from one of its sides, and $S''$ is adjacent to~$\alpha_3$ from both sides and to each of the curves~$\alpha_1$ and~$\alpha_1'$ from the other of its sides. Suppose that $M$ contains one more component~$\gamma$ that divides~$S'$ into two $3$-punctured spheres. Since $\gamma$ is non-separating in~$S_3$, we see that $[\gamma]$ is one of the four classes $\pm a_1\pm a_2$. However, the case $-a_1-a_2$ must be excluded, since one-sided relations are forbidden. So $[\gamma]$ is one of the three classes $a_1+a_2$, $a_1-a_2$, and~$a_2-a_1$. Similarly, if $M$ contains a component $\delta$ that lies in~$S''$, then $[\delta]$ is one of the three classes  $a_1+a_3$, $a_1-a_3$, and~$a_3-a_1$.  Thus, $[M]$ is contained in (at least) one of the $9$ six-element multisets obtained from~$[a_1,a_1,a_2,a_3]$ by adding one of the three classes $a_1+a_2$, $a_1-a_2$, and~$a_2-a_1$ and one of the three classes  $a_1+a_3$, $a_1-a_3$, and~$a_3-a_1$. All these $9$ multisets are in our list. Moreover, they are exactly all multisets in the list that contain~$[a_1,a_1,a_2,a_3]$.

Other cases are handled similarly. 
\end{proof}

\subsection{Spectral sequence}\label{section_CL}

Suppose that a group~$G$ acts on a CW-complex~$X$ cellularly and without rotations. Denote by~$G_P$ the stabilizer of a cell~$P$.

In this section we recall the construction and basic properties of the spectral sequence
\begin{equation}\label{eq_SpSeq}
E^1_{p,q}\cong \bigoplus_{P\in\mathfrak{X}_p}H_q(G_P;\Z)\quad\Longrightarrow \quad H_{p+q}(G;\Z).
\end{equation}
Here $\CX_p$ is a set of $p$-cells of~$X$ containing exactly one representative in every $G$-orbit. 

Spectral sequence~\eqref{eq_SpSeq} is exactly spectral sequence~(7.7) in~\cite[Section~VII.7]{Bro82}. In fact, there are two spectral sequences associated with the action of a group on a cell complex.  The spectral sequence~$E^r_{p,q}$ we are interested in is called the \textit{second spectral sequence} in~\cite{Bro82}.

By definition, the tensor product $M\otimes_GN$ of two left $G$-modules~$M$ and~$N$ is the quotient of the abelian group~$M\otimes_{\Z}N$ by all relations of the form $(ga)\otimes (gb)=a\otimes b$, where $g\in G$.

The  required spectral sequence~\eqref{eq_SpSeq} is the spectral sequence~$E^r_{p,q}$ associated with the filtration by columns of the double complex 
\begin{equation}\label{eq_K_double_complex}
B_{p,q}=C_p(X;\Z)\otimes_G \res_q,
\end{equation}
where $C_*(X;\Z)$ is the cellular chain complex of~$X$ and $\res_*$ is a projective resolution for~$\Z$ over~$\Z G$. We denote by~$\partial'$ and~$\partial''$ the differentials of the double complex $B_{*,*}$ induced by the differentials in~$C_*(X;\Z)$ and~$\res_*$, respectively. The bi-degrees of~$\partial'$ and~$\partial''$ are $(-1,0)$ and~$(0,-1)$, respectively. We use the following sign convention:
$$
\partial'(\sigma\otimes\xi)=(\partial\sigma)\otimes\xi,\qquad \partial''(\sigma\otimes\xi)=(-1)^{\dim\sigma}\sigma\otimes\partial\xi.
$$
The construction of the spectral sequence of a double complex can be found in~\cite[Section~VII.3]{Bro82} or \cite[Section~7.1]{Eve91}. This is a first-quadrant spectral sequence with differentials~$d^r$ of bi-degrees $(-r,r-1)$, respectively, such that  $E^0_{p,q}=B_{p,q}$, $d^0=\partial''$, and $d^1$ is induced by~$\partial'$. We will conveniently take for~$\res_*$ the standard bar resolution. Note that, starting from page~$E^1$, the spectral sequence is independent of the choice of the projective resolution. We need the following standard facts on this spectral sequence.

\begin{fact}\label{fact_E1}
There is a canonical isomorphism 
\begin{equation}\label{eq_E1pq}
E^1_{p,q}\cong\bigoplus_{P\in \CX_p}H_q(G_P;\Z),
\end{equation}
where $\CX_p$ is a set of $p$-cells of~$X$ containing exactly one representative in every $G$-orbit. Besides, the summands in decomposition~\eqref{eq_E1pq} do not depend on the choice of representatives. Namely, if we choose another set of representatives $\CX'_p=\{g_P\cdot P\}_{P\in\CX_p}$, where $g_{P}\in G$, then the two corresponding isomorphisms~\eqref{eq_E1pq} will differ by the direct sum of the isomorphisms $H_q(G_P;\Z)\to H_q(G_{g_{P}\cdot P};\Z)$ induced by the conjugations by~$g_P$.
\end{fact}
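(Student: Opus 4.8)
The plan is to deduce the statement from Shapiro's lemma, applied to each column of the double complex $B_{*,*}$ separately. First I would use the hypothesis that $G$ acts on $X$ \emph{without rotations}. Since the action is cellular, $C_p(X;\Z)$ is free abelian on the set of (oriented) $p$-cells and an element $g\in G$ acts on a basis element $P$ by $g\cdot P=\pm g(P)$, the sign recording whether $g$ respects orientations. If $g\in G_P$, then $g$ fixes $P$ setwise, hence (no rotations) pointwise, hence preserves its orientation; thus $G_P$ acts trivially on the generator $P$. Consequently, choosing one representative $P\in\CX_p$ in each $G$-orbit, the cellular chain group splits as a $\Z G$-module into a direct sum of permutation modules
\[
C_p(X;\Z)\cong\bigoplus_{P\in\CX_p}\Z[G/G_P],
\]
with no sign twisting.

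Next, since $\res_*$ is a free, in particular flat, resolution, the functor $(-)\otimes_G\res_q$ commutes with this direct sum, so the $p$-th column of $B_{*,*}$ decomposes, compatibly with the differential $\partial''$, as
\[
B_{p,*}=\bigoplus_{P\in\CX_p}\bigl(\Z[G/G_P]\otimes_G\res_*\bigr).
\]
For a single summand, unwinding the definition of $\otimes_G$ gives the chain-level identity $\Z[G/G_P]\otimes_G\res_*\cong(\res_*)_{G_P}=\Z\otimes_{\Z G_P}\res_*$. The restriction $\res_*|_{G_P}$ of the bar resolution is again a free $\Z G_P$-resolution of~$\Z$, so the $\partial''$-homology of this summand is $\operatorname{Tor}^{\Z G_P}_q(\Z,\Z)=H_q(G_P;\Z)$. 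Taking $\partial''$-homology of the whole column and recalling that $E^1_{p,q}=H_q(B_{p,*},\partial'')$ yields the canonical isomorphism~\eqref{eq_E1pq}, with the summands indexed by the orbit representatives.

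Finally, for the independence-of-representatives assertion, I would trace the construction through a change of representative from $P$ to $g_P\cdot P$. Here $G_{g_P\cdot P}=g_PG_Pg_P^{-1}$, and the isomorphism of permutation modules $\Z[G/G_P]\xrightarrow{\ \sim\ }\Z[G/G_{g_P\cdot P}]$ sending the base coset to the coset of $g_P^{-1}$ induces, after applying $(-)\otimes_G\res_*$ and passing to homology, precisely the map $H_q(G_P;\Z)\to H_q(g_PG_Pg_P^{-1};\Z)$ induced by conjugation by $g_P$ (which at the chain level is the map $(\res_*)_{G_P}\to(\res_*)_{g_PG_Pg_P^{-1}}$ covering $\mathrm{id}_{\Z}$ that conjugation by $g_P$ determines). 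This is a routine diagram chase; it is exactly the content of the discussion preceding spectral sequence~(7.7) in~\cite[Section~VII.7]{Bro82}, and the only mild obstacle is keeping the bookkeeping of cosets, conjugations, and the two differentials consistent. In particular, all of Fact~\ref{fact_E1} is standard material included here only for completeness and to fix notation for the sequel.
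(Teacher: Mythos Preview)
Your argument is correct and is exactly the standard proof via Shapiro's lemma; the paper itself does not give a proof of Fact~\ref{fact_E1} at all but simply refers to~\cite[Section~VII.7]{Bro82}, noting that~\eqref{eq_E1pq} is isomorphism~(7.7) there. Your write-up thus fills in precisely what Brown does, so there is nothing to compare.
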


Suppose that $P$ is an oriented $p$-cell of~$X$ and $h\in G_P$. Consider a set of representatives~$\CX_p$ so that $\sigma\in\CX_p$, and let $[h]_P\in E^1_{p,1}$ be the element~$[h]$ in the summand~$H_1(G_P;\Z)$ in decomposition~\eqref{eq_E1pq}. Note that the element~$[h]_P$ changes sign whenever we reverse the orientation of~$P$. 
By Fact~\ref{fact_E1}, we have $[ghg^{-1}]_{g\cdot P}=[h]_P$ for all $g\in G$. 

\theoremstyle{theorem}
\newtheorem*{add} {Addendum to Fact~\ref{fact_E1}}

\begin{add}
For any $p$-cell~$P$ and any $h\in G_P$, the class $[h]_P\in E^1_{p,1}$ is represented by the element $P\otimes [h]\in B_{p,1}=C_p(X;\Z)\otimes_G \res_1$, provided that the bar resolution is taken for~$\res_*$.
\end{add}

\begin{fact}\label{fact_differential}
Suppose that elements 
$$Y_{p,q}\in B_{p,q},\ Y_{p-1,q+1}\in B_{p-1,q+1},\ \ldots, \ Y_{p-r+1,q+r-1}\in B_{p-r+1,q+r-1}$$ satisfy $\partial''Y_{p,q}=0$ and
$$\partial'Y_{p-i,q+i}=-\partial''Y_{p-i-1,q+i+1},\qquad i=0,\ldots,r-2.$$
Then $Y_{p,q}$ lies in the kernels of the differentials $d^0,\ldots,d^{r-1}$ and hence represents a class~$y\in E^r_{p,q}$. Besides, $d^ry$ is the class in~$E^r_{p-r,q+r-1}$ represented by~$\partial'Y_{p-r,q+r-1}$.
\end{fact}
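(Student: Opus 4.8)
The plan is to deduce the statement from the standard description of the spectral sequence of a filtered complex, applied to the total complex $\Tot B$ with the filtration by columns. Recall that $\Tot B$ carries the total differential $\partial=\partial'+\partial''$, which squares to zero precisely because the sign convention $\partial''(\sigma\otimes\xi)=(-1)^{\dim\sigma}\sigma\otimes\partial\xi$ forces $\partial'$ and $\partial''$ to anticommute, and that the filtration is $F_p(\Tot B)_n=\bigoplus_{i\le p}B_{i,n-i}$. For this filtered complex one has the classical description (see \cite[Section~VII.3]{Bro82} or \cite[Section~7.1]{Eve91}): $E^r_{p,q}$ is a subquotient $Z^r_{p,q}/B^r_{p,q}$, where $Z^r_{p,q}$ is the group of classes represented by elements $w\in F_p(\Tot B)_{p+q}$ with $\partial w\in F_{p-r}(\Tot B)_{p+q-1}$, $B^r_{p,q}$ collects the usual lower-filtration and boundary terms, and $d^r$ is induced by $\partial$. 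Under the identification of $E^1_{p,q}$ with the $\partial''$-homology of the $p$-th column, the class in $E^1_{p,q}$ of a $\partial''$-cycle $Y\in B_{p,q}$ corresponds to the class of $Y$ regarded as an element of $F_p(\Tot B)_{p+q}$, and $Y$ survives to $E^r_{p,q}$ (equivalently lies in $\ker d^0\cap\cdots\cap\ker d^{r-1}$) exactly when it admits a staircase $Y=Y_{p,q},Y_{p-1,q+1},\dots,Y_{p-r+1,q+r-1}$ of the type in the statement.

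Given such a staircase, I would set $W=Y_{p,q}+Y_{p-1,q+1}+\cdots+Y_{p-r+1,q+r-1}\in F_p(\Tot B)_{p+q}$ and compute
\[
\partial W=\sum_{i=0}^{r-1}\bigl(\partial' Y_{p-i,q+i}+\partial'' Y_{p-i,q+i}\bigr).
\]
Here $\partial'' Y_{p,q}=0$ by hypothesis, and for $i=0,\dots,r-2$ the relation $\partial' Y_{p-i,q+i}=-\partial'' Y_{p-i-1,q+i+1}$ makes the summand $\partial' Y_{p-i,q+i}$ cancel against $\partial'' Y_{p-i-1,q+i+1}$. Thus all terms cancel in consecutive pairs except the very last, leaving $\partial W=\partial' Y_{p-r+1,q+r-1}\in B_{p-r,q+r-1}\subseteq F_{p-r}(\Tot B)_{p+q-1}$. (For $r=1$ the range of $i$ is empty, $W=Y_{p,q}$, and $\partial W=\partial' Y_{p,q}$, recovering the definition of $d^1$.)

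This computation shows precisely that $W$ represents a class $y\in E^r_{p,q}$; since $W-Y_{p,q}\in F_{p-1}(\Tot B)_{p+q}$, this class coincides with the one represented by $Y_{p,q}$, and in particular $Y_{p,q}$ indeed lies in $\ker d^0\cap\cdots\cap\ker d^{r-1}$. By the description of $d^r$, the class $d^r y\in E^r_{p-r,q+r-1}$ is represented by $\partial W=\partial' Y_{p-r+1,q+r-1}$, which under the column identification is the class of the $\partial''$-cycle $\partial' Y_{p-r+1,q+r-1}$, as claimed. The only step needing care is the bookkeeping identifying the two descriptions of $E^r$ — as a subquotient of $E^1$ built page by page versus as a subquotient of the filtered total complex — together with the signs; this is entirely formal, and I would either carry it out directly or invoke \cite[Section~VII.3]{Bro82}.
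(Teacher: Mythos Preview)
Your proof is correct and is precisely the standard staircase argument. The paper does not actually prove this fact; it merely states it and refers to \cite[Section~7.1]{Eve91}, so your write-up supplies more detail than the paper's own treatment while following exactly the approach indicated by that reference. (Note also that you correctly use $\partial' Y_{p-r+1,q+r-1}$, which is what the paper intends and uses in applications; the subscript in the displayed statement is a minor typo.)
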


By definition, the equivariant homology groups $H_*^G(X;\Z)$ are the homology group of the total complex for double complex~\eqref{eq_K_double_complex} i.\,e. the chain complex consisting of the abelian groups $\Tot_s(B_{*,*})=\bigoplus_{p+q=s}B_{p,q}$ with respect to the differential $\partial_{\Tot}=\partial'+\partial''$.

 \begin{fact}\label{fact_Einfty}
The group $\bigoplus_{p+q=s}E^{\infty}_{p,q}$ is the graded group associated with certain filtration 
$$
0=\CF_{-1,s+1}\subseteq \CF_{0,s}\subseteq \CF_{1,s-1}\subseteq \cdots \subseteq \CF_{s,0}=H_s^G(X;\Z)
$$
i.\,e. $E^{\infty}_{p,q}=\CF_{p,q}/\CF_{p-1,q+1}$. 
\end{fact}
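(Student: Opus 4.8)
The plan is to obtain Fact~\ref{fact_Einfty} as a direct specialization of the standard convergence theorem for the spectral sequence of a bounded filtered chain complex, applied to the column filtration of the total complex of $B_{*,*}$. Recall from the construction referenced in \cite[Section~VII.3]{Bro82} and \cite[Section~7.1]{Eve91} that $E^r_{p,q}$ is, by definition, the spectral sequence of the filtration by columns of $B_{*,*}$. Concretely, set
$$
\CF_p\bigl(\Tot_s(B_{*,*})\bigr)=\bigoplus_{\substack{i+j=s\\ i\le p}}B_{i,j}\subseteq \Tot_s(B_{*,*}).
$$
Since $\partial'$ has bi-degree $(-1,0)$ and $\partial''$ has bi-degree $(0,-1)$, the differential $\partial_{\Tot}=\partial'+\partial''$ carries $\CF_p(\Tot_s)$ into $\CF_p(\Tot_{s-1})$, so $\{\CF_p(\Tot_*)\}_p$ is an increasing filtration of the chain complex $(\Tot_*(B_{*,*}),\partial_{\Tot})$ by subcomplexes. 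On the associated graded in filtration degree $p$ the contribution of $\partial'$ vanishes, leaving the single column $(B_{p,*},\partial'')$; hence the spectral sequence of this filtration has $E^0_{p,q}=B_{p,q}$, $d^0=\partial''$, and $d^1$ induced by $\partial'$, i.e.\ it is exactly the spectral sequence $E^r_{p,q}$ of the statement, with differentials $d^r$ of bi-degree $(-r,r-1)$.

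Next I would invoke boundedness. Because $B_{i,j}=0$ whenever $i<0$ or $j<0$, for each fixed total degree $s$ one has $\CF_p(\Tot_s)=0$ for $p<0$ and $\CF_p(\Tot_s)=\Tot_s(B_{*,*})$ for $p\ge s$; thus the filtration $0=\CF_{-1}(\Tot_s)\subseteq\CF_0(\Tot_s)\subseteq\cdots\subseteq\CF_s(\Tot_s)=\Tot_s(B_{*,*})$ is finite. The general convergence theorem for the spectral sequence of a filtered complex (see \cite{Bro82}) then gives that for each $(p,q)$ the groups $E^r_{p,q}$ stabilize as $r\to\infty$, and the common value $E^\infty_{p,q}$ is canonically identified with
$$
E^\infty_{p,q}\cong\frac{\mathrm{im}\bigl(H_{p+q}(\CF_p\Tot)\to H_{p+q}(\Tot)\bigr)}{\mathrm{im}\bigl(H_{p+q}(\CF_{p-1}\Tot)\to H_{p+q}(\Tot)\bigr)}.
$$

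Finally, using that $H_s^G(X;\Z)$ is defined (as recalled just before the statement) to be $H_s(\Tot_*(B_{*,*}))$, I would set $\CF_{p,q}$ to be the image of the map $H_{p+q}(\CF_p\Tot)\to H_{p+q}(\Tot)$ inside $H_{p+q}^G(X;\Z)$. The inclusions $\CF_{p-1}\Tot\subseteq\CF_p\Tot$ yield $\CF_{p-1,q+1}\subseteq\CF_{p,q}$; boundedness gives $\CF_{-1,s+1}=0$ and $\CF_{s,0}=H_s^G(X;\Z)$; and the displayed formula becomes precisely $E^\infty_{p,q}=\CF_{p,q}/\CF_{p-1,q+1}$, which is the assertion. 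The only thing requiring care — and the nearest thing to an obstacle — is purely bookkeeping: matching the increasing-versus-decreasing and $(p,q)$-indexing conventions of \cite{Bro82} and \cite{Eve91} (in particular identifying our $E^r_{p,q}$ with the ``second'' spectral sequence there) against those fixed in the present paper, and confirming that the differential bi-degrees $(-r,r-1)$ correspond to the column filtration rather than the row filtration. There is no genuine mathematical difficulty beyond this.
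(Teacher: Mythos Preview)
Your argument is correct and is exactly the standard convergence argument for the spectral sequence of a bounded filtered complex. The paper does not give its own proof of this fact at all; it simply cites \cite[Section~VII.7]{Bro82}, so your write-up is in effect supplying the details that the paper defers to the reference.
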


The proofs of Facts~\ref{fact_E1} and~\ref{fact_Einfty} can be found in~\cite[Section~VII.7]{Bro82}, and~\eqref{eq_E1pq} is exactly isomorphism~(7.7) in~\cite{Bro82}. The description of differentials given in Fact~\ref{fact_differential} can be found in~\cite[Section~7.1]{Eve91}.

The most important for us is the case of a contractible complex~$X$. Then the groups $H_s^G(X;\Z)$ are naturally isomorphic to the groups~$H_s(G;\Z)$.

\begin{remark}
In~\cite{BBM07} spectral sequence~\eqref{eq_SpSeq} is called the \textit{Cartan--Leray spectral sequence} for the action of~$G$ on~$X$. However, this terminology is somewhat misleading because the name `Cartan--Leray specral sequence' is  usually reserved for another spectral sequence, namely, for the spectral sequence associated with the filtration by \textit{rows} of the double complex~\eqref{eq_K_double_complex} in the case of a free action.
\end{remark}

\subsection{Stabilizers of multicurves}\label{subsection_stabilizers}

In this section we prove the following fact on generators for stabilizers of multicurves in a genus~$3$ surface~$S_3$. Though this fact follows rather easily from known results, the author has not found it in the literature. 

\begin{propos}\label{propos_stab_generate}
Suppose that $M$ is a multicurve in~$S_3$ without separating components. Then the stabilizer~$\I_M$ is generated by twists about separating simple closed curves disjoint from~$M$ and twists about bounding pairs of simple closed curves disjoint from~$M$.
\end{propos}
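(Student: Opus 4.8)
The plan is to reduce the statement to known generation results for stabilizers of multicurves inside the mapping class group $\Mod(S_3)$, and then to cut down to the Torelli subgroup using the fact that cutting along $M$ produces surfaces of low genus whose mapping class groups are well understood. First I would recall that if $M = \gamma_1 \cup \cdots \cup \gamma_k$ is a multicurve without separating components in $S_3$, then the image of $\Stab_{\Mod(S_3)}(M)$ in $\Mod(S_3 \setminus M)$ — more precisely in the pure mapping class group of the cut surface, after forgetting the permutation data, which is trivial here because $\I_M$ fixes each component and side — has a well-known generating set coming from the components of $S_3 \setminus M$: twists about the $\gamma_i$ and the ``internal'' mapping classes supported on the pieces. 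The subtlety is that $\I_M$ consists of those mapping classes in the stabilizer that act trivially on $H_1(S_3;\Z)$, so I need a generating set adapted to that constraint.

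The key reduction is that $S_3$ has genus $3$, so whenever we cut along a nonempty multicurve without separating components the pieces are surfaces with boundary of genus $0$, $1$, or $2$ (and total genus dropping by the rank of $M$), and crucially the \emph{number of pieces is small}. The strategy is then: (i) observe that any $h \in \I_M$, being supported away from $M$, is a product of mapping classes each supported on a single complementary piece $R$ (this uses the change-of-coordinates / disjoint-support decomposition and the fact that the $\gamma_i$-twists are already of the allowed type, being twists about non-separating curves disjoint from $M$ — but a single such twist need not lie in $\I$, so one must be careful: the genuinely new input is that the subgroup of $\Stab_{\Mod(S_3)}(M)$ acting trivially on homology is generated by the genus-$\le 2$ Torelli-type elements on the pieces together with bounding-pair maps $T_{\gamma_i,\gamma_j}$ built from twist curves); (ii) invoke the classification of $\I(S_{h,b})$ for small genus: $\I(S_{0,b})$ is trivial, $\I(S_{1,b})$ is generated by bounding-pair twists (and separating twists) lying in the piece, and $\I(S_{2,b})$ is generated by bounding-pair twists and separating twists by the Birman--Powell / Johnson generating results for genus $2$ with boundary. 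A separating curve in a piece $R \subseteq S_3 \setminus M$ is separating in $S_3$ and disjoint from $M$; a bounding pair in $R$ is a bounding pair in $S_3$ (the complement of the pair in $S_3$ still has two components since $M$ lies entirely on the $R$-side relative to the pair, using genus bookkeeping) and is disjoint from $M$. That accounts for all generators except those coming from twists about the $\gamma_i$ themselves.

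The remaining point — and I expect this to be the main obstacle — is handling the ``homology-twisting'' directions: elements of $\Stab_{\Mod(S_3)}(M)$ that do not lie in $\I$ individually but whose products do, specifically products of twists $T_{\gamma_i}^{\pm 1}$ about components of $M$ that annihilate their homological action. The resolution is that if $\gamma_i$ and $\gamma_j$ are homologous components of $M$, or if a combination $\prod T_{\gamma_i}^{n_i}$ acts trivially on $H_1(S_3;\Z)$, then this combination is itself a product of bounding-pair twists $T_{\gamma_i,\gamma_j}$ — a standard fact, since $T_{\alpha} T_{\beta}^{-1}$ acts trivially on homology exactly when $[\alpha] = \pm[\beta]$, and a diagonal twist with trivial homological action decomposes into such. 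Each $\{\gamma_i, \gamma_j\}$ with $[\gamma_i] = [\gamma_j]$ that occurs as a sub-multicurve of $M$ is a bounding pair disjoint from (the rest of) $M$, and these $T_{\gamma_i,\gamma_j}$ are among the allowed generators. So I would first split off a factor supported on the ``abelian'' part generated by the $T_{\gamma_i}$, show that factor lies in the group generated by the $T_{\gamma_i,\gamma_j}$, and then the residual factor lies in the direct product of the piecewise Torelli groups, which is handled by step (ii).

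\textbf{Where the real work is.} The honest difficulty is not any single one of these ingredients but the bookkeeping that a bounding pair or separating curve \emph{within a complementary piece} remains a bounding pair or separating curve \emph{in $S_3$}, together with the reverse assertion that the stabilizer really is generated by piecewise-supported elements plus $M$-twists — i.e., that there is no ``global'' element of $\I_M$ not visible piece-by-piece. For genus $3$ this can be forced by a finite case analysis on the topological type of $M$ (there are only finitely many such types up to the action of $\Mod(S_3)$, since $|M| \le 3g-3 = 6$), checking in each case that the complementary pieces have genus $\le 2$ and invoking the corresponding known Torelli generating set; the author presumably organizes the argument around Putman's or Johnson's theorems on generators of Torelli groups of surfaces with boundary rather than a brute-force enumeration, but the enumeration is available as a fallback. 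I would present it via the general ``Torelli group of a surface with boundary is generated by separating twists and bounding-pair twists'' principle applied piecewise, and then reassemble.
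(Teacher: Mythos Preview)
Your overall architecture matches the paper's: reduce to Putman's Torelli groups of the complementary pieces via the Birman--Lubotzky--McCarthy sequence, then argue that the resulting generators are of the required form. But two concrete gaps would prevent your proposal from going through as written.

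\medskip
\textbf{The genus-$0$ pieces are not trivial.} Your step~(ii) asserts ``$\I(S_{0,b})$ is trivial''. This is false for the relevant notion. The group you need is Putman's Torelli group $\I(\Sigma,\CP)$ of a partitioned surface, and for a planar $\Sigma$ with a nontrivial partition this group is far from trivial. Putman's generation theorem in genus~$0$ requires, in addition to $\CP$-separating twists and $\CP$-bounding-pair twists, the \emph{commutators $[T_{\gamma_1},T_{\gamma_2}]$ of simply intersecting pairs}. These are not among your allowed generators, and getting rid of them is a genuine step: the paper devotes a separate lemma to showing that in the $S_3$ setting each such commutator can be rewritten as a product of bounding-pair twists disjoint from~$M$, via a lantern relation. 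Your proposal contains no mechanism for this.

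\medskip
\textbf{The piecewise decomposition is not automatic.} You write that after stripping off the $G(M)$-part, ``the residual factor lies in the direct product of the piecewise Torelli groups''. This is exactly what needs proof: if $h\in\I_M$ decomposes as $h=i_*(q_1\cdots q_m)$ with $q_j\in\Mod(\Sigma_j)$, there is no a priori reason each $q_j$ lies in $\I(\Sigma_j,\CP_j)$; the homological actions of the $q_j$ may cancel across pieces. The paper's Lemma~\ref{lem_torelli_difference} handles this by a case split. When $M$ contains no bounding pair, one shows (Claim~$(*)$) that every non-separating curve disjoint from $M$ is homologous to a curve in a single ``large'' component $\Sigma_1$, so every Dehn twist can be traded, up to a bounding-pair twist, for one supported in $\Sigma_1$; then the residual is supported in one piece and lies in its partitioned Torelli group by definition. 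When $M$ contains a bounding pair $\{\gamma,\gamma'\}$, an explicit symplectic-basis computation shows the cross-piece homological defects are multiples of $T_\gamma$, which can be absorbed into one piece. Your ``split off a factor supported on the abelian part'' idea is pointing in the right direction for the second case, but your claim that a product $\prod T_{\gamma_i}^{n_i}\in\I$ is automatically a product of $T_{\gamma_i,\gamma_j}$'s with $[\gamma_i]=[\gamma_j]$ is not correct when the $[\gamma_i]$ satisfy a linear relation but no two are equal (e.g.\ three curves bounding a pair of pants); more to the point, in Case~1 there is \emph{no} bounding pair inside $M$, so this mechanism is unavailable and you need the Claim~$(*)$ trick instead.

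\medskip
So: same skeleton, but you are missing the lantern-relation lemma eliminating simply-intersecting-pair commutators, and your piecewise reduction needs the case analysis of the paper's Lemma~\ref{lem_torelli_difference} (or an equivalent argument) rather than the assertion that it is automatic.
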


\begin{remark}
Saying that a bounding pair~$\{\gamma_1,\gamma_2\}$ is disjoint from~$M$, we allow that one or both of the curves~$\gamma_1$ and~$\gamma_2$ are components of~$M$, since any component of~$M$ is isotopic to a curve disjoint from~$M$. 
\end{remark}

The main ingredient we need to prove Proposition~\ref{propos_stab_generate} is theory of Torelli groups for surfaces with several boundary components, which is due to Putman~\cite{Put07}. Recall some definitions and results in this paper. 

Let $\Sigma$ be a compact oriented surface with $m$ boundary components $C_1,\ldots,C_m$ and $\CP$ be a partition of the set of boundary components of~$\Sigma$. The pair $(\Sigma,\CP)$ is called a \textit{partitioned surface}. A \textit{capping} of~$(\Sigma,\CP)$ is an embedding $i:\Sigma\hookrightarrow S_g$ such that, for each connected component~$R$ of~$S_g\setminus\Int(\Sigma)$, the set of boundary components of~$R$ is exactly an element of~$\CP$. Putman proved that the subgroup $\I(\Sigma,\CP)=i_*^{-1}\bigl(\I(S_g)\bigr)$ is independent of the choice of a capping~$i$ of~$(\Sigma,\CP)$. He called this subgroup the \textit{Torelli group} of the partitioned surface~$(\Sigma,\CP)$.

A curve in~$\Sigma$ is called \textit{$\CP$-separating} if it is separates~$S_g$ for some (and then for any) capping of~$\Sigma$. Similarly a pair of disjoint curves in~$\Sigma$ is called a \textit{$\CP$-bounding pair} if it is a bounding pair in~$S_g$ for some (and then for any) capping of~$\Sigma$. (In these definitions we allow a separating curve in~$S_g$ to be  homotopically trivial and allow curves in a bounding pair in~$S_g$ to be isotopic to each other.)  A \textit{simply intersecting pair} of curves in~$\Sigma$ is a pair~$\{\gamma_1,\gamma_2\}$ of simple closed curves whose geometric intersection number is~$2$ and whose algebraic intersection number is~$0$. It is easy to see that twists~$T_{\gamma}$ about $\CP$-separating curves~$\gamma$, twists~$T_{\gamma,\gamma'}$ about $\CP$-bounding pairs~$\{\gamma,\gamma'\}$, and commutators~$[T_{\gamma_1},T_{\gamma_2}]$ of simply intersecting pairs~$\{\gamma_1,\gamma_2\}$ lie in the Torelli group~$\I(\Sigma,\CP)$.

\begin{theorem}[{Putman~\cite[Theorems~1.3 and~1.5]{Put07}}]\label{theorem_putman} 
\textnormal{(a)} For a partitioned surface~$(\Sigma,\CP)$ of positive genus, the group~$\I(\Sigma,\CP)$ is generated by twists about $\CP$-separating curves and twists about $\CP$-bounding pairs. 

\textnormal{(b)} For a partitioned surface~$(\Sigma,\CP)$ of zero genus, the group~$\I(\Sigma,\CP)$ is generated by twists about $\CP$-separating curves,  twists about $\CP$-bounding pairs, and commutators of simply intersecting pairs.
\end{theorem}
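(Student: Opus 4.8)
Suppose we have established well-definedness of $\I(\Sigma,\CP)$ (independence of the capping) and recorded the classical structure theorems for closed surfaces: $\I(S_g)$ is generated by bounding pair maps for $g\ge 3$ (Johnson), is a free group on separating twists for $g=2$ (Mess), and is trivial for $g\le 1$. The plan is then to prove both parts simultaneously by induction, the outer induction being on the genus of~$\Sigma$ and, within a fixed genus, a secondary induction on $-\chi(\Sigma)$ (equivalently, on the number of boundary components). The decisive feature of the partitioned-surface formalism, already visible in the statement, is that cutting a surface along a simple closed curve naturally produces a \emph{partitioned} surface: if $\gamma\subset\Sigma$ is a simple closed curve and $\Sigma_\gamma$ is the result of cutting along it, the two new boundary components should be declared a single block (if $\gamma$ was non-separating) or distributed among the blocks of the two pieces (if $\gamma$ was $\CP$-separating). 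With this convention, $\Stab_{\I(\Sigma,\CP)}(\gamma)$ --- where one fixes an orientation and a side of~$\gamma$, to kill the ``reflection'' ambiguity and the $\langle T_\gamma\rangle$ ambiguity --- is isomorphic to the Torelli group of the cut partitioned surface, which has strictly smaller genus when $\gamma$ is non-separating and splits as pieces with strictly fewer boundary components when $\gamma$ is $\CP$-separating.

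For part~(a), when $\Sigma$ has positive genus, I would let $\I(\Sigma,\CP)$ act on the simplicial complex whose vertices are isotopy classes of non-separating simple closed curves whose complement is connected (and ``admissible'', i.e.\ whose block-decorated complement is genuinely simpler), with simplices spanned by disjoint such curves. By Harer's connectivity theorem for the complex of non-separating curves --- checked to persist after restricting to the admissible sub-family and to have connected quotient --- this complex is connected, so the standard lemma on generating a group from its action on a connected complex applies: $\I(\Sigma,\CP)$ is generated by the stabilizer of one vertex~$\gamma$ together with elements carrying $\gamma$ to the vertices adjacent to it. The vertex stabilizer is generated by allowed elements by the outer inductive hypothesis applied to the lower-genus cut surface, and a $\CP$-separating curve or $\CP$-bounding pair disjoint from~$\gamma$ manifestly remains such in~$\Sigma$; the edge elements move~$\gamma$ to a disjoint curve and, in positive genus, can be taken to be bounding pair maps. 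This settles~(a) down to the base case of genus zero.

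In genus zero one proves part~(b) by the secondary induction on the number of boundary components, the base case (three or fewer boundary components, so that $\I(\Sigma,\CP)$ is abelian and visible by hand) being direct. For the inductive step one plays the same game with a complex of $\CP$-separating curves (or, equivalently, a suitable arc complex) adapted to~$(\Sigma,\CP)$: cutting along such a curve~$\delta$ splits $\Sigma$ into two planar partitioned pieces with strictly fewer boundary components, and $\Stab(\delta)$ is built out of the Torelli groups of the two pieces and the $\CP$-separating twist~$T_\delta$ (amalgamated over $\langle T_\delta\rangle$), all of which are generated by allowed elements by the inductive hypothesis. The novelty lies in the edge elements: carrying one $\CP$-separating curve to another across a planar surface cannot in general be achieved by $\CP$-separating twists and $\CP$-bounding pair maps alone, and the shortfall is precisely realized by commutators $[T_{\gamma_1},T_{\gamma_2}]$ of simply intersecting pairs --- this is the structural reason a third family of generators appears exactly in the planar case.

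The step I expect to be the main obstacle is the connectivity analysis: one must verify that the complexes of curves/arcs used at each stage are connected (indeed, for the cleanest form of the generation lemma, that the relevant quotients are connected as well), \emph{and}, crucially, that the admissibility conditions imposed on vertices --- needed so that cutting always lands on a strictly simpler partitioned surface, allowing the induction to turn over --- do not destroy this connectivity. Interwoven with this are the genuinely low-complexity base cases (small genus, few boundary components), where the classical structure theorems for closed surfaces must be invoked and carefully interfaced with the partitioned-surface bookkeeping, and the verification that the inductive isomorphism $\Stab_{\I(\Sigma,\CP)}(\gamma)\cong\I(\Sigma_\gamma,\CP_\gamma)$ really does intertwine the two notions of ``allowed generator''.
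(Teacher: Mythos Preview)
The paper does not prove this theorem; it is quoted verbatim as a result of Putman~\cite{Put07} and used as a black box in the proof of Proposition~\ref{propos_stab_generate}. There is therefore no ``paper's own proof'' to compare your proposal against.

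That said, your outline is broadly in the spirit of Putman's actual argument in~\cite{Put07}: he proceeds by induction using actions on suitable curve/arc complexes, with the partitioned-surface formalism precisely designed so that cutting a surface yields a partitioned surface to which the inductive hypothesis applies. Your identification of the main technical issues --- connectivity of the relevant complexes (and their quotients) under the admissibility constraints, the low-complexity base cases, and the verification that stabilizers of curves are Torelli groups of the cut partitioned surfaces --- is accurate. Be aware, though, that what you have written is a high-level strategy rather than a proof: the connectivity results require genuine work (Putman devotes considerable effort to them), the precise bookkeeping of how partitions behave under cutting is more delicate than your sketch suggests, and the claim that edge elements in the positive-genus case ``can be taken to be bounding pair maps'' needs justification. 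As a roadmap for reading~\cite{Put07} your proposal is sound, but it is not a self-contained proof.
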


Now, suppose that $M$ is a multicurve (without separating components) in a closed surface~$S_g$. Then there is the following commutative diagram with exact rows:
\begin{equation}\label{eq_BLM}
\begin{tikzcd}
1 \ar{r} & BP(M)\ar[hook]{d} \ar{r} &  \I_M \ar{r}{\eta} \ar[hook]{d} & \PMod(S_g\setminus M) \ar[equals]{d} & \\
1 \ar{r} & G(M) \ar{r} & \Stab_{\Mod(S_g)}\bigl(\overrightarrow{M}\bigr) \ar{r}{\eta} & \PMod(S_g\setminus M) \ar{r} & 1
\end{tikzcd}
\end{equation}
Here $\Stab_{\Mod(S)}\bigl(\overrightarrow{M}\bigr)$ is the subgroup of $\Stab_{\Mod(S)}(M)$ consisting of all mapping classes that stabilize every component of~$M$ and preserve the orientation of every component of~$M$,  $G(M)$ is the group generated by Dehn twists about components of~$M$, and $BP(M)$ is the group generated by twists about bounding pairs contained in~$M$. The lower row of~\eqref{eq_BLM} is a version of the Birman--Lubotzky--McCarthy exact sequence (see~\cite[Lemma~2.1]{BLMC83}) and the upper row is its restriction to the Torelli group (see~\cite[Section~6.2]{BBM07}).

Let  $\Sigma$ be the compact surface with boundary obtained from~$S_g\setminus M$ by replacing punctures with boundary components.
To be precise, $\Sigma = S_g\setminus\mathcal{N}$, where $\mathcal{N}$ is a small open regular neighborhood of~$M$. The inclusion $i\colon \Sigma\hookrightarrow S_g$ induces the homomorphism
\begin{equation}\label{eq_iieta}
i_*\colon \Mod(\Sigma)\to \Stab_{\Mod(S)}\bigl(\overrightarrow{M}\bigr).
\end{equation}
This homomorphism is onto, since every mapping class in~$\Stab_{\Mod(S)}\bigl(\overrightarrow{M}\bigr)$ can be represented by a homeomorphism that fixes~$\mathcal{N}$ pointwise.

Let $\Sigma_1,\ldots,\Sigma_m$ be the connected components of~$\Sigma$. The capping $i_j\colon \Sigma_j\hookrightarrow S_g$ induces a partition~$\CP_j$ for each~$\Sigma_j$. Consider the corresponding Torelli groups $\I(\Sigma_j,\CP_j)$. Let $\I(\Sigma,\CP)\subset \Mod(\Sigma)$ be the direct product of the subgroups $\I(\Sigma_j,\CP_j)\subset \Mod(\Sigma_j)$. It follows immediately from the definition that the group $i_*\bigl(\I(\Sigma,\CP)\bigr)$ is contained in~$\I_M$. Nevertheless, it may happen that this inclusion is strict.

\begin{lem}\label{lem_torelli_difference}
Suppose that $g=3$. Then the group~$\I_M$ is generated by its subgroup $i_*\bigl(\I(\Sigma,\CP)\bigr)$, twists about separating simple closed curves  disjoint from~$M$, and twists  about bounding pairs disjoint from~$M$.
\end{lem}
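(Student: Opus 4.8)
The strategy is to understand exactly how the subgroup $i_*\bigl(\I(\Sigma,\CP)\bigr)$ sits inside $\I_M$ by comparing it with $\eta^{-1}$ of the image of $i_*\bigl(\I(\Sigma,\CP)\bigr)$ in $\PMod(S_g\setminus M)$, and then to account for the discrepancy using the exact sequences in diagram~\eqref{eq_BLM}. First I would analyze the homomorphism $\eta\colon\I_M\to\PMod(S_g\setminus M)=\PMod(\Sigma)$. Its kernel is $BP(M)$, which is generated by twists about bounding pairs contained in~$M$; these are in particular twists about bounding pairs disjoint from~$M$, so they are among the allowed generators. Thus it suffices to show that the image $\eta(\I_M)\subseteq\PMod(\Sigma)$ is generated by the images of the three allowed families of generators together with $\eta\bigl(i_*(\I(\Sigma,\CP))\bigr)$. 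Since $i_*$ restricted to $\Mod(\Sigma)$ maps onto $\Stab_{\Mod(S)}(\overrightarrow M)$, and $\eta\circ i_*$ is (essentially) the identification of $\PMod(\Sigma)$ with $\PMod(S_g\setminus M)$, the image $\eta(\I_M)$ is precisely the set of $\phi\in\PMod(\Sigma)$ admitting a lift to $\Stab_{\Mod(S)}(\overrightarrow M)$ that lies in~$\I(S_g)$.

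\textbf{Key step: controlling the genus-zero defect.} The inclusion $i_*\bigl(\I(\Sigma,\CP)\bigr)\subseteq\I_M$ can be strict, and the point is to measure by how much. Decompose $\Sigma=\Sigma_1\sqcup\cdots\sqcup\Sigma_m$. A mapping class $\phi$ supported on a single positive-genus piece $\Sigma_j$ lies in $\I_M$ if and only if it lies in $\I(\Sigma_j,\CP_j)$, because acting trivially on $H_1(S_g)$ forces acting trivially on $H_1$ of the subsurface relative to its boundary pieces — here Putman's definition of $\I(\Sigma_j,\CP_j)$ via an arbitrary capping is exactly what is needed. So the only source of discrepancy is the genus-zero pieces. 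For a genus-zero piece $\Sigma_j$ (a multi-punctured sphere embedded in $S_3$), Putman's Theorem~\ref{theorem_putman}(b) says $\I(\Sigma_j,\CP_j)$ needs, besides $\CP_j$-separating twists and $\CP_j$-bounding-pair twists, the commutators $[T_{\gamma_1},T_{\gamma_2}]$ of simply intersecting pairs. The $\CP_j$-separating curves and $\CP_j$-bounding pairs map to separating curves and bounding pairs disjoint from~$M$ in~$S_3$, so those contributions are already among the allowed generators. What must be dealt with is: (i) the commutators of simply intersecting pairs inside genus-zero pieces, and (ii) elements of $\I_M$ whose image in $\PMod(\Sigma)$ permutes or twists things in a way not realized inside a single $\I(\Sigma_j,\CP_j)$ — i.e.\ elements that ``mix'' the boundary behavior across pieces via the kernel $G(M)$ of the Birman--Lubotzky--McCarthy sequence.

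\textbf{The main obstacle.} I expect the hard part to be item~(i): showing that for a simply intersecting pair $\{\gamma_1,\gamma_2\}$ lying in a genus-zero component $\Sigma_j\hookrightarrow S_3$, the commutator $[T_{\gamma_1},T_{\gamma_2}]$ can be rewritten, modulo the already-allowed generators, as a product of twists about separating curves and bounding pairs that are disjoint from~$M$ in~$S_3$ (not merely disjoint from~$M$ inside~$\Sigma_j$). In $S_3$ there is comparatively little room, so one must exhibit explicit separating curves and bounding pairs in the complement of $M$ realizing $[T_{\gamma_1},T_{\gamma_2}]$; concretely, the standard ``lantern-type'' relations and the fact that in a low-genus closed surface a simply intersecting pair commutator is a product of bounding-pair maps (because the relevant four-holed or five-holed subsurface, once capped in $S_3$, has small genus) should do it, but the bookkeeping of which curves remain disjoint from all of~$M$ is delicate and will require a case analysis according to how $\Sigma_j$ is capped. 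For item~(ii), the key observation is that $G(M)\cap\I_M = BP(M)$ (from the upper row of~\eqref{eq_BLM} being a genuine restriction of the lower row), so no extra ``mixing'' elements arise beyond bounding-pair twists in~$M$; combined with surjectivity of $i_*$ and the genus-$\ge 1$ argument above, this closes the gap. Assembling these: given $h\in\I_M$, project to $\eta(h)\in\PMod(\Sigma)$, lift it back through $i_*$ to $\Mod(\Sigma)$, decompose the lift over the components $\Sigma_j$, apply Putman's theorem to each component (using the genus-$\ge 1$ rigidity for positive-genus pieces and rewriting the simply-intersecting-pair commutators for genus-zero pieces), and finally correct by an element of $BP(M)$; all the generators produced are twists about separating curves or bounding pairs disjoint from~$M$, or lie in $i_*\bigl(\I(\Sigma,\CP)\bigr)$, which is what the lemma asserts.
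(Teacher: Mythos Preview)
Your proposal has a genuine gap in item~(ii), and a misdirection in item~(i).

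\textbf{Item~(i) is not needed.} The lemma lets you use $i_*\bigl(\I(\Sigma,\CP)\bigr)$ itself as a generating family. Commutators of simply intersecting pairs in a genus-zero component $\Sigma_j$ are already in $\I(\Sigma_j,\CP_j)$, hence in $i_*\bigl(\I(\Sigma,\CP)\bigr)$; there is nothing to rewrite. (Eliminating these commutators is the content of the \emph{next} lemma in the paper, not this one.)

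\textbf{Item~(ii) is the whole lemma, and your resolution does not work.} Write $h\in\I_M$ as $h=i_*(q_1\cdots q_m)$ with $q_j\in\Mod(\Sigma_j)$. Your ``genus-$\ge 1$ rigidity'' says that \emph{if} $i_*(q_j)\in\I_M$ then $q_j\in\I(\Sigma_j,\CP_j)$; but nothing guarantees $i_*(q_j)\in\I_M$ for individual $j$ --- the actions on $H_1(S_3)$ of the pieces can cancel. This cancellation has nothing to do with $G(M)\cap\I_M=BP(M)$; that identity only tells you the ambiguity in the choice of a lift $q$, not that each $q_j$ can be taken in $\I(\Sigma_j,\CP_j)$.

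The paper closes this gap by a case split. If $M$ contains no bounding pair, it proves a geometric claim $(*)$: every non-separating curve disjoint from $M$ is homologous to a curve in one fixed component $\Sigma_1$. Using $(*)$, every Dehn twist in the other components is replaced, modulo a bounding-pair twist, by a twist supported in $\Sigma_1$; hence $h=h'h''$ with $h''$ a product of separating twists and bounding-pair twists disjoint from $M$ and $h'$ supported in $\Sigma_1$, forcing $h'\in i_*\bigl(\I(\Sigma_1,\CP_1)\bigr)$. If $M$ does contain a bounding pair, the paper does an explicit symplectic-basis computation showing the cross-component homology actions must already cancel, and in fact $\I_M=i_*\bigl(\I(\Sigma,\CP)\bigr)$ in this case. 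Your outline contains neither the claim $(*)$ nor the homology computation, and without one of them the decomposition step fails.
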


\begin{proof}
Assume that all connected components of~$S_3\setminus M$ are homeomorphic to a three-punctured sphere. Then the group~$\PMod(S_3\setminus M)$ is trivial.  Hence,  $\I_M=BP(M)$, which implies  the assertion of the lemma. 

So we may assume that at least one connected component of~$S_3\setminus M$ is not homeomorphic to a three-punctured sphere. Hence at least one connected component of~$\Sigma$, say~$\Sigma_1$, is not a genus~$0$ surface with $3$ boundary components. We consider the following two cases:

\textsl{Case 1: $M$ does not contain a bounding pair.} Let us prove the following claim:
\begin{itemize}
\item[$(*)$] \textit{Any non-separating simple closed curve disjoint from~$M$ is homologous to a simple closed curve contained in~$\Sigma_1$.}
\end{itemize}

Recall that~$\Sigma_1$ is a compact connected subsurface of~$S_3$ and~$\Sigma_1$ is not a genus~$0$ surface with $3$ boundary components. Since $M$ contains neither a separating curve nor a bounding pair, we see that every connected component of $S_3\setminus \Sigma_1$ either is homeomorphic to an open cylinder (i.\,e. a two-punctured sphere) or is an oriented surface with at least $3$ punctures. If $m=1$, that is, $\Sigma_1$ is a unique connected component of~$\Sigma$, then Claim~$(*)$ is obvious. 
If $m\ge 2$, then that at least one component of~$S_3\setminus \Sigma_1$ is not a cylinder. By case analysis we see that there are only $3$ possibilities: (a) $\Sigma_1$ is a genus~$0$ surface with $4$ boundary components and $S_3\setminus \Sigma_1$ is a four-punctured sphere, (b) $\Sigma_1$ is a genus~$0$ surface with $5$ boundary components and $S_3\setminus \Sigma_1$ is a disjoint union of a three-punctured sphere and a cylinder, and (c) $\Sigma_1$ is a genus~$1$ surface with $3$ boundary components and $S_3\setminus \Sigma_1$ is a three-punctured sphere. In each of these three cases Claim~$(*)$ is obvious. 

Now, let us deduce the assertion of the lemma from this claim. The group~$\Mod(\Sigma)$ is generated by Dehn twists about simple closed curves contained in~$\Sigma$. Hence any element $h\in\Stab_{\Mod(S_g)}(\overrightarrow{M})$ can be written as a product of (left and right) Dehn twists about simple closed curves contained in~$\Sigma$. Further, if $\gamma$ is a non-separating simple closed curve that is contained in~$\Sigma_2\cup\cdots\cup\Sigma_m$, then by Claim~$(*)$ there exists  a simple closed curve~$\delta$ in~$\Sigma_1$ such that $\delta$ is homologous to~$\gamma$ in~$S_3$. Then  $T_{\gamma} = T_{\gamma,\delta}T_{\delta}$. Thus, $h$ can be written as a product of twists of the following three types:
\begin{enumerate}
\item left and right twists about simple closed curves contained in~$\Sigma_1$,
\item left and right twists about separating simple closed curves that are disjoint from~$M$,
\item twists about bounding pairs that are disjoint from~$M$.
\end{enumerate}
Moreover, we may swap any pair of twists of different types replacing one of them by an appropriate conjugate of it. Hence, we may achieve that $h=h'h''$, where $h'$ is a product of type~1 twists and $h''$ is a product of type~2 and type~3 twists. 

Now, suppose that $h\in \I_M$. To prove the assertion of the lemma, we need to show that $h' \in i_*\bigl(\I(\Sigma,\CP)\bigr)$. Obviously, $h''\in\I_M$; hence, $h'\in\I_M$. On the other hand, $h'$ is a product of twists about simple closed curves contained in~$\Sigma_1$ and therefore can be represented by a homeomorphism that fixes $S_3\setminus\Sigma_1$ pointwise. Thus, $h'=i_*(q)$ for certain $q\in\Mod(\Sigma_1)$. It follows immediately from Putman's definition that $q\in \I(\Sigma_1,\CP_1)$ and hence $h'\in i_*\bigl(\I(\Sigma,\CP)\bigr)$.

\textsl{Case 2: $M$ contains a bounding pair~$\{\gamma,\gamma'\}$.} In this case we will in fact prove that  $\I_M= i_*\bigl(\I(\Sigma,\CP)\bigr)$. Suppose that $h\in\I_M$. Since homomorphism~\eqref{eq_iieta} is onto, we see that $h=i_*(q_1\cdots q_m)$ for certain $q_j\in \Mod(\Sigma_j)$. Our aim is to prove that $h\in i_*\bigl(\I(\Sigma,\CP)\bigr)$.

The multicurve~$\gamma\cup\gamma'$ divides~$S_3$ into two pieces~$R_1$ and~$R_2$ each of which is homeomorphic to a two-punctured torus. We may assume that~$\Sigma_1\subseteq R_1$. Then $\Sigma_1$ is either a genus~$1$ surface with $2$~boundary components or a genus~$0$ surface with $4$~boundary components. In both cases no other component~$\Sigma_j$ lies in~$R_1$. We have two subcases:

\textsl{Subcase 2a: $m=2$.} Then $\Sigma_2$ is also either a genus~$1$ surface with $2$~boundary components or a genus~$0$ surface with $4$~boundary components. Choose a symplectic basis $a_1,a_2,a_3,b_1,b_2,b_3$ of~$H$ so that $[\gamma]=[\gamma'] = a_3$ and $a_j,b_j,a_3$ is a basis of the subgroup $H_1(R_j;\Z)\subset H$ for $j=1,2$. Put $h_1=i_*(q_1)$ and~$h_2=i_*(q_2)$. The mapping class~$h_1$ can be represented by a homeomorphism that fixes~$R_2$ pointwise. Hence $h_1$ stabilizes the homology classes $a_2$, $b_2$, and~$a_3$ and takes $b_3$ to a homology class of the form $b_3+k_1a_3+r_1a_1+s_1b_1$. Similarly, $h_2$ stabilizes the homology classes $a_1$, $b_1$, and~$a_3$ and takes $b_3$ to a homology class of the form $b_3+k_2a_3+r_2a_2+s_2b_2$. Since the mapping class $h=h_1h_2$ belongs to the Torelli group~$\I_3$, we see that $k_2=-k_1$ and $r_1=s_1=r_2=s_2=0$. Then the mapping classes $h_1T_{\gamma}^{k_1}$ and $T_{\gamma}^{-k_1}h_2$ also belong to~$\I_3$. Let $\gamma_1$ and~$\gamma_2$ be simple closed curves homotopic to~$\gamma$ such that $\gamma_1\subset\Sigma_1$ and $\gamma_2\subset \Sigma_2$. Then the mapping classes $q_1'=q_1T_{\gamma_1}^{k_1}$ and $q_2'=T_{\gamma_2}^{-k_1}q_2$ belong to the groups~$\I(\Sigma_1,\CP_1)$ and~$\I(\Sigma_2,\CP_2)$, respectively. Thus, the mapping class $h=i_*(q_1'q_2')$ lies in~$i_*\bigl(\I(\Sigma,\CP)\bigr)$.

\textsl{Subcase 2b: $m=3$.} Then both $\Sigma_2$ and~$\Sigma_3$ are genus~$0$ surfaces with $3$~boundary components. We may assume that $\Sigma_2$ is adjacent to~$\gamma$ and~$\Sigma_3$ is adjacent to~$\gamma'$.
Let $\delta_1$ and~$\delta_2$ be the components of~$M$ that separate~$\Sigma_2$ from~$\Sigma_3$. The mapping class group of a genus~$0$ compact surface with three boundary component is the rank~$3$ free abelian group generated by twists about simple closed curves homotopic to the boundary components. Hence $i_*(q_2q_3)=T_{\gamma}^nT_{\gamma'}^{n'}T_{\delta_1}^{l_1}T_{\delta_2}^{l_2}$ for certain integers~$n$, $n'$, $l_1$, and~$l_2$. Choose a symplectic basis $a_1,a_2,a_3,b_1,b_2,b_3$ as in the previous subcase, with the additional requirement that $[\delta_1]=a_2$ and~$[\delta_2]=a_2+a_3$. Again, the mapping class $h_1=i_*(q_1)$ stabilizes the homology classes $a_2$, $b_2$, and~$a_3$ and takes $b_3$ to a homology class of the form $b_3+ka_3+ra_1+sb_1$. The element $h_1^{-1}h=T_{\gamma}^nT_{\gamma'}^{n'}T_{\delta_1}^{l_1}T_{\delta_2}^{l_2}$ stabilizes the homology classes $a_1$, $b_1$, $a_2$, and~$a_3$, and takes $b_2$ and~$b_3$ to the homology classes $b_2-(l_1+l_2)a_2-l_2a_3$ and $b_3-l_2a_2-(n+n'+l_2)a_3$, respectively. Since $h\in\I_3$, it follows that $r=s=l_1=l_2=0$ and $k=n+n'$; hence $h=h_1T_{\gamma}^{n}T_{\gamma'}^{n'}$. If $\gamma_1$ and~$\gamma_1'$ are simple closed curves in~$\Sigma_1$ homotopic to~$\gamma$ and~$\gamma'$, respectively, then $h=i_*\bigl(q_1T_{\gamma_1}^{n}T_{\gamma'_1}^{n'}\bigr)$. Therefore,  $q_1T_{\gamma_1}^{n}T_{\gamma'_1}^{n'}\in \I(\Sigma_1,\CP_1)$ and hence $h\in i_*\bigl(\I(\Sigma,\CP)\bigr)$.
\end{proof}

Theorem~\ref{theorem_putman}  and Lemma~\ref{lem_torelli_difference} immediately imply that the stabilizer~$\I_M$ is generated by twists about separating simple closed curves, twists about bounding pairs, and commutators of simply intersecting pairs (with all curves disjoint from~$M$ in all the three cases). To complete the proof of Proposition~\ref{propos_stab_generate} we need to get rid of commutators of simply intersecting pairs. So, we suffice to prove the following lemma.

\begin{lem}
Suppose that $M$ is a multicurve in~$S_3$ without separating components, and $\{\gamma_1,\gamma_2\}$ is a simply intersecting pair disjoint from~$M$. Then the commutator~$[T_{\gamma_1},T_{\gamma_2}]$ can be represented as a product of twists about separating simple closed curves disjoint from~$M$ and twists about bounding pairs of simple closed curves disjoint from~$M$.
\end{lem}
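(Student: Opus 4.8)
The plan is to analyze the local picture around the simply intersecting pair $\{\gamma_1,\gamma_2\}$ inside a small subsurface and reduce everything to a known relation there. First I would take a closed regular neighborhood $\Sigma_0$ of $\gamma_1\cup\gamma_2$ in $S_3$; since the geometric intersection number of $\gamma_1$ and $\gamma_2$ is $2$ and the algebraic intersection number is $0$, the subsurface $\Sigma_0$ is homeomorphic either to a genus-$1$ surface with two boundary components or to a genus-$0$ surface with four boundary components (these are the only two surfaces carrying such a pair; the genus-$1$, two-boundary case occurs when the two intersection points have opposite local signs and is the generic case, while the planar case does not actually support two simple closed curves meeting twice with algebraic intersection $0$ — so in fact $\Sigma_0$ is a twice-holed torus). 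Because $\gamma_1$ and $\gamma_2$ are disjoint from $M$, we may assume $\Sigma_0$ is disjoint from $M$ as well, so $\partial\Sigma_0$ consists of two simple closed curves $\delta_1,\delta_2$ disjoint from $M$.

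Next I would record the relevant relation in $\Mod(\Sigma_0)$. Inside a twice-holed torus, the commutator $[T_{\gamma_1},T_{\gamma_2}]$ of a simply intersecting pair is well known to equal $T_{\delta_1}T_{\delta_2}^{-1}$ up to orientation conventions — this is essentially the lantern-type / chain relation recorded for simply intersecting pairs by Johnson and reproved by Putman; more precisely $[T_{\gamma_1},T_{\gamma_2}] = T_{\delta_1}^{\varepsilon}T_{\delta_2}^{-\varepsilon}$ for an appropriate sign $\varepsilon$. Now there are two cases according to the topological type of $\delta_1\cup\delta_2$ in $S_3$. If $\{\delta_1,\delta_2\}$ is a bounding pair in $S_3$, then $T_{\delta_1}T_{\delta_2}^{-1}=T_{\delta_1,\delta_2}$ is by definition a single bounding-pair twist, and both $\delta_1,\delta_2$ are disjoint from $M$, so we are done in this case. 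If instead $\delta_1$ (hence also $\delta_2$) is separating in $S_3$, then each of $T_{\delta_1}$ and $T_{\delta_2}$ is itself a twist about a separating simple closed curve disjoint from $M$, and again we are done. The only remaining possibility is that some $\delta_i$ is null-homotopic in $S_3$ or that $\delta_1$ is isotopic to $\delta_2$; in the latter case $[T_{\gamma_1},T_{\gamma_2}]$ is a power of a single separating twist (if $\delta_i$ separates) or is trivial (if $\delta_i$ bounds a disk), and in the former case one boundary component bounds a disk and $\Sigma_0$ sits inside a once-holed torus, forcing $\gamma_1,\gamma_2$ to be homologous and hence (being simple and meeting twice) isotopic — impossible for a genuine simply intersecting pair — so this degenerate subcase does not arise.

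The main obstacle, and where care is needed, is verifying that when we enlarge $\Sigma_0$ to a possibly larger subsurface (if the naive neighborhood has a boundary component bounding a disk in $S_3\setminus M$, or if a boundary component is isotopic to a component of $M$) the curves $\delta_1,\delta_2$ can still be chosen disjoint from $M$ and of one of the admissible types; this is a finite check using that $M$ has no separating components and that $S_3$ has genus $3$, so the complement of $M$ has limited topological complexity. One should also be slightly careful about the orientation/sign conventions relating $[T_{\gamma_1},T_{\gamma_2}]$ to $T_{\delta_1}^{\pm 1}T_{\delta_2}^{\mp 1}$, but since we only need a decomposition as a product of the allowed twists (and their inverses, which are again such twists), the precise signs are immaterial. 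Putting the cases together gives the lemma, and combined with Theorem~\ref{theorem_putman} and Lemma~\ref{lem_torelli_difference} this completes the proof of Proposition~\ref{propos_stab_generate}.
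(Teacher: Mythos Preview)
Your argument has a fundamental topological error at the very first step. The regular neighborhood $\Sigma_0$ of a simply intersecting pair $\{\gamma_1,\gamma_2\}$ (geometric intersection~$2$, algebraic intersection~$0$) is a four-holed sphere, not a twice-holed torus. You have the sign analysis backwards: when the two intersection points have \emph{opposite} signs, the plumbing of the two annuli produces the planar surface. The four-holed sphere certainly does carry such a pair --- the two ``lantern'' curves separating the four boundary components into the two different pairings meet in exactly two points with algebraic intersection~$0$, and their regular neighborhood fills the whole sphere. This is precisely the situation the paper uses.

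More seriously, the relation you invoke, $[T_{\gamma_1},T_{\gamma_2}] = T_{\delta_1}^{\pm 1}T_{\delta_2}^{\mp 1}$, does not exist. Boundary Dehn twists are central in the mapping class group of any compact surface with boundary, so the right-hand side is central in $\Mod(\Sigma_0)$, while the commutator of a simply intersecting pair is not. The relation you may be half-remembering is the one for curves meeting \emph{once} in a one-holed torus (where $(T_{\gamma_1}T_{\gamma_2})^6$ equals the boundary twist), or perhaps the two-holed torus chain relation, but neither gives what you wrote.

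The paper's proof proceeds quite differently. It first disposes of the easy case where some curve $\varepsilon$ disjoint from $M\cup\gamma_1\cup\gamma_2$ is homologous to $\gamma_1$ or $\gamma_2$, writing $[T_{\gamma_1},T_{\gamma_2}]$ as a product of two bounding-pair twists. In the remaining case it analyzes the four boundary curves $\alpha_0,\dots,\alpha_3$ of the four-holed sphere $R$ and the complementary four-punctured sphere $S_3\setminus R$, shows that $M$ must contain a component $\delta$ homologous to a third lantern curve $\gamma_3$, and then uses the lantern relation $T_{\gamma_1}T_{\gamma_2}T_{\gamma_3}=T_{\alpha_0}T_{\alpha_1}T_{\alpha_2}T_{\alpha_3}$ to rewrite $[T_{\gamma_1},T_{\gamma_2}]=[T_{\gamma_2}^{-1},T_{\gamma_3}^{-1}]=T_{T_{\gamma_2}(\gamma_3),\delta}\,T_{\gamma_3,\delta}^{-1}$, a product of two bounding-pair twists disjoint from~$M$.
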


\begin{proof}
Note that if there is a simple closed curve $\varepsilon$ such that $\varepsilon$ is disjoint from~$M\cup\gamma_1\cup\gamma_2$, and  $\varepsilon$ is homologous to one of the curves~$\gamma_1$ and~$\gamma_2$, then the assertion of the lemma will follow immediately, since~$[T_{\gamma_1},T_{\gamma_2}]$ will be a product of two twists about bounding pairs that are disjoint from~$M$. Say, if  $\varepsilon$ is homologous to~$\gamma_1$, then $[T_{\gamma_1},T_{\gamma_2}]=T_{\gamma_1,\varepsilon}^{-1}T_{T_{\gamma_2}^{-1}(\gamma_1),\varepsilon}$. So we may assume that there are no such curves~$\varepsilon$.

 Let $R$ be a closed regular neighborhood of~$\gamma_1\cup\gamma_2$. Then~$R$ is a genus~$0$ surface with four boundary components.  We denote these components by~$\alpha_0,\dots, \alpha_3$ so that $\gamma_1$ separates~$\alpha_0\cup\alpha_1$ from~$\alpha_2\cup\alpha_3$ and~$\gamma_2$ separates~$\alpha_0\cup\alpha_2$ from~$\alpha_1\cup\alpha_3$. If some~$\alpha_i$ were separating in~$S_3$, then one of the other three curves~$\alpha_j$ would be homologous to~$\gamma_1$, which is not the case by the assumption made. So none of the curves~$\alpha_i$ is separating. Since the curves~$\gamma_1$ and~$\gamma_2$ are also non-separating, we see that $\alpha_0,\dots, \alpha_3$ are pairwise non-homologous and $[\alpha_0]+\cdots+[\alpha_3]=0$ for certain choice of orientations of them. It follows easily that $S_3\setminus R$ is homeomorphic to a $4$-punctured sphere.

The curves $\alpha_0,\dots, \alpha_3$ may or may not be components of~$M$. Except for them, $M$ may contain at most one component~$\delta$ that divides~$S_3\setminus R$ into two parts each of which is homeomorphic to a $3$-punctured sphere. If $M$ did not contain such a component~$\delta$, then $S_3\setminus R$ would contain a simple closed curve~$\varepsilon$ that is disjoint from~$M$ and homologous to~$\gamma_1$, which is impossible. Similarly, if $\delta$ divided~$S_3\setminus R$ so that the puncture corresponding to~$\alpha_0$ was in the same part with one of the two punctures corresponding to~$\alpha_1$ and~$\alpha_2$, then $\delta$ itself would be homologous to either~$\gamma_1$ or~$\gamma_2$, which is again impossible. Hence $\delta$ divides~$S_3\setminus R$ so that the punctures corresponding to~$\alpha_0$ and~$\alpha_3$ are in the same part and the punctures corresponding to~$\alpha_1$ and~$\alpha_2$ are in the other part. Therefore $[\delta]=[\alpha_0]+[\alpha_3]$.  We may choose a simple closed curve~$\gamma_3$ in~$R$ so that the curves~$\alpha_0$, $\alpha_1$, $\alpha_2$, $\alpha_3$, $\gamma_1$, $\gamma_2$, and~$\gamma_3$ are arranged as the curves shown in Fig.~\ref{fig_lantern}. It is well known that whenever $7$ curves are arranged in such a way, the following \textit{lantern relation} holds in the mapping class group (see~\cite[Proposition~5.1]{FaMa12}):
\begin{equation}\label{eq_lantern}
T_{\gamma_1}T_{\gamma_2}T_{\gamma_3}=T_{\alpha_0}T_{\alpha_1}T_{\alpha_2}T_{\alpha_3}.
\end{equation}
Hence, we have
$$
[T_{\gamma_1},T_{\gamma_2}]=\bigl[T_{\gamma_2}^{-1}, T_{\gamma_1}T_{\gamma_2}\bigr]=\bigl[T_{\gamma_2}^{-1}, T_{\gamma_3}^{-1}\bigr]=
T_{T_{\gamma_2}(\gamma_3),\delta}T_{\gamma_3,\delta}^{-1},
$$
which completes the proof of the lemma.
\end{proof}

\begin{figure}
\tikzset{->-/.style={decoration={
     markings,
     mark=at position #1 with {\arrow{>}}},postaction={decorate}}}

  %  \draw[->-=.5] (0,0) to [bend left] (2,4);
    %\draw[->-=.8] (0,0) to [bend right] (2,4);

\definecolor{mygreen}{rgb}{0, 0.4, 0}
\definecolor{myblue}{rgb}{0, 0, 0.7}
\definecolor{mypink}{rgb}{1, 0.3, 0.8}
\definecolor{myyellow}{rgb}{1,1, 0}

\colorlet{cola}{violet}
\colorlet{cold}{red}
\colorlet{colg}{myblue}
\colorlet{colkf}{mypink}
\colorlet{colks}{violet}
\colorlet{coll}{orange}
\colorlet{colx}{mygreen}
\colorlet{coly}{orange}
\colorlet{colm}{mygreen}
\colorlet{colz}{brown}

\begin{tikzpicture}
\small
\tikzset{every path/.append style={line width=.3mm}}

\filldraw [fill=black!4, very thick] (0,0) circle (1.9); 
%\fill [color=] (0,0) circle (2);
\filldraw [fill=white, very thick] (0,1) circle (.3);
\filldraw [fill=white, very thick] (0.866,-0.5) circle (.3);
\filldraw [fill=white, very thick] (-0.866,-0.5) circle (.3);

\draw [thick] (0.1,-.5) ellipse (1.4 and .7);
\draw [thick, rotate=120] (0.1,-.5) ellipse (1.4 and .7);
\draw [thick, rotate=240] (0.1,-.5) ellipse (1.4 and .7);
\path (0.03,1) node {$\alpha_3$};
\path (-0.836,-0.5) node {$\alpha_2$};
\path (0.896,-0.5) node {$\alpha_1$};
\path (0.13,-1.45) node {$\gamma_3$};
\path (1.27,0.7) node {$\gamma_2$};
\path (-1.27,0.7) node {$\gamma_1$};
\path (1.56,-1.56) node {$\alpha_0$};

\end{tikzpicture}
\caption{Lantern relation}\label{fig_lantern}
\end{figure}

\subsection{Birman--Craggs  homomorphisms}\label{subsection_BC}

A map $\omega\colon H_{\Z/2}\to\Z/2$ is called an \textit{$\Sp$-quadratic function} if, for all $\bx,\by\in H_{\Z/2}$,
\begin{equation}\label{eq_Sp}
\omega(\bx+\by)=\omega(\bx)+\omega(\by)+\bx\cdot \by.
\end{equation}
By definition, the \textit{Arf invariant} of~$\omega$, $\Arf(\omega)\in\Z/2$, is given by
$$
\Arf(\omega)=\sum_{i=1}^g\omega(\ba_i)\omega(\bb_i),
$$
where $\ba_1,\ldots,\ba_g,\bb_1,\ldots,\bb_g$ is a symplectic basis of~$H_{\Z/2}$. It is an easy well-known fact that $\Arf(\omega)$ is independent of the choice of a symplectic basis.

To each $\Sp$-quadratic form~$\omega$ with $\Arf(\omega)=0$ is assigned a \textit{Birman--Craggs homomorphism} $\rho_{\omega}\colon\I_g\to\Z/2$. Initially homomorphisms of~$\I_g$ to~$\Z/2$ were constructed by Birman and Craggs in~\cite{BiCr78} using the Rokhlin invariant of three-dimensional homology spheres. Johnson~\cite{Joh80a} proved that these homomorphisms are indexed by $\Sp$-quadratic forms with zero Arf invariant, and wrote explicit formulae for their values on generators of~$\I_g$. These formulae are as follows.

First, suppose that $\gamma$ is a separating simple closed curve on~$S_g$. Let $S'$ and~$S''$ be the connected components of~$S_g\setminus \gamma$. Consider the splitting $H_{\Z/2}=H_1(S';\Z/2)\oplus H_1(S'';\Z/2)$. Then 
\begin{equation}\label{eq_BC1}
\rho_{\omega}(T_{\gamma})=\sum_{i=1}^{g'}\omega(\ba_i)\omega(\bb_i),
\end{equation}
where $g'$ is the genus of~$S'$ and $\ba_1,\ldots,\ba_{g'},\bb_1,\ldots,\bb_{g'}$ is a symplectic basis of~$H_1(S';\Z/2)$. Since $\Arf(\omega)=0$, the value in the right-hand side of~\eqref{eq_BC1} is independent of which of the two components of~$S_g\setminus \gamma$ is taken for~$S'$.

Second, suppose that $\{\gamma,\delta\}$ is a bounding pair in~$S_g$. Let $\bc$ be the modulo~$2$ homology class of~$\gamma$ and~$\delta$. Again let $S'$ and~$S''$ be the connected components of~$S_g\setminus \gamma$; then 
\begin{align*}
H_1(S';\Z/2)\cap H_1(S'';\Z/2)&=\langle\bc\rangle,\\
H_1(S';\Z/2)+H_1(S'';\Z/2)&=\langle\bc\rangle^{\bot}.
\end{align*}
Obviously, $S'$ is a surface of some genus $g'>0$ with two punctures. Consider a subsurface $\Sigma\subset S'$ homeomorphic to a genus~$g'$ surface with one puncture; then 
$$H_1(S';\Z/2)=H_1(\Sigma;\Z/2)\oplus\langle \bc\rangle.$$
Let $\ba_1,\ldots,\ba_{g'},\bb_1,\ldots,\bb_{g'}$ be a symplectic basis of~$H_1(\Sigma;\Z/2)$. Then
\begin{equation}\label{eq_BC2}
\rho_{\omega}\left(T_{\gamma,\delta}\right)=\bigl(\omega(\bc)+1\bigr)\sum_{i=1}^{g'}\omega(\ba_i)\omega(\bb_i).
\end{equation}
Again it is not hard to check that the value in the right-hand side is independent of all choices made.

Johnson~\cite{Joh80a} also showed that the Birman--Craggs homomorphisms $\rho_{\omega}$ can be combined into one homomorphism in the following way. Let $\Omega$ be the affine space over~$\Z/2$ consisting of all $\Sp$-quadratic forms and $\Omega_0$ be the codimension~$1$ subspace of~$\Omega$ consisting of all $\Sp$-quadratic forms with zero Arf invariant. Each homology class $\bx\in H_{\Z/2}$ determines an affine linear function $\overline{\bx}\colon \Omega\to\Z/2$ by $\overline{\bx}(\omega)=\omega(\bx)$. Formula~\eqref{eq_Sp} reads as 
$$
\overline{\bx+\by}=\overline{\bx}+\overline{\by}+\bx\cdot\by.
$$
Now, let $\BB$ be the algebra of Boolean polynomials on~$\Omega$, that is, the quotient of the algebra of polynomials on~$\Omega$ with coefficients in~$\Z/2$ by all relations~$q^2=q$. Note that, for each basis $\bc_1,\ldots,\bc_{2g}$ of~$H$, $\Omega$ is the free Boolean algebra in variables $\overline{\bc}_1,\ldots,\overline{\bc}_{2g}$.
Further, let $\BB_k\subset\BB$ be the subspace consisting of all Boolean polynomials of degrees not exceeding~$k$. The Arf invariant is a quadratic Boolean polynomial; for each symplectic basis $\ba_1,\ldots,\ba_{g},\bb_1,\ldots,\bb_{g}$, it is given by
$\Arf = \sum_{i=1}^g\overline{\ba}_i\overline{\bb}_i.$ So $\BB'=\BB/(\Arf)$ is the algebra of Boolean polynomials on~$\Omega_0$. Similarly to~$\BB_k$, we denote by $\BB_k'$ the subset of~$\BB'$ consisting of all cosets by the ideal~$(\Arf)$ that contain representatives of degree $\le k$. 

Now, the homomorphisms~$\rho_{\omega}$ can be combined into one $\Sp(2g,\Z)$-equivariant homomorphism $\sigma\colon \I_g\to\BB_3'$, which is called the \textit{Birman--Craggs--Johnson homomorphism}, so that $\sigma(h)(\omega)=\rho_{\omega}(h)$ for all $h\in\I_g$ and all $\omega\in\Omega_0$. Formulae~\eqref{eq_BC1} and~\eqref{eq_BC2} read as 
\begin{align}
\sigma(T_{\gamma})&=\sum_{i=1}^{g'}\overline{\ba}_i\overline{\bb}_i&&\text{if $\gamma$ is separating,}\label{eq_BCJ_sep}\\
\sigma\left(T_{\gamma,\delta}\right)&=\bigl(\overline{\bc}+1\bigr)\sum_{i=1}^{g'}\overline{\ba}_i\overline{\bb}_i&& \text{if $\{\gamma,\delta\}$ is a bounding pair.}\label{eq_BCJ_BP}
\end{align}
It follows easily that the homomorphism $\sigma\colon\I_g\to\BB_3'$ is surjective. Also it is easy to compute that $\dim\BB_3'=\binom{2g}{3}+\binom{2g}{2}=g(4g^2-1)/3$. 

Let $\CC_g$ be the kernel of~$\sigma$; then $\CC_g\triangleleft\I_g$ is a normal subgroup of index~$2^{g(4g^2-1)/3}$. In particular, $\CC_3\triangleleft\I_3$ is a normal subgroup of index~$2^{35}$.

In~\cite{Gai17}, the author showed that the Birman--Craggs--Johnson homomorphism $\sigma\colon\I_g\to \BB_3'$ can be extended to a $\Mod(S_g)$-equivariant homomorphism $\hsigma\colon\hI_g\to \BB_4'$. This extension has the following property. 

If we arrange~$S_g$ as the surface shown in Fig.~\ref{fig_eBC} and consider the corresponding hyperelliptic involution~$\iota$, i.\,e., the rotation of~$S_g$ by angle~$\pi$ around the horizontal axis, then
 \begin{equation}\label{eq_eBC}
 \hsigma(\iota)=\sum_{1\le i<j\le g}\overline{\ba}_i\overline{\bb}_i\bigl(\overline{\ba}_j+1\bigr)\overline{\bb}_j,
 \end{equation}
where $\ba_1,\ldots,\ba_g,\bb_1,\ldots,\bb_g$ are the homology classes of the curves $\alpha_1,\ldots,\alpha_g,\beta_1,\ldots,\beta_g$  in Fig.~\ref{fig_eBC}, respectively.

\begin{figure}
\begin{tikzpicture}[scale=.45]
\small
\definecolor{myblue}{rgb}{0, 0, 0.7}
\definecolor{mygreen}{rgb}{0, 0.4, 0}
\tikzset{every path/.append style={line width=.2mm}}
\tikzset{my dash/.style={dash pattern=on 2pt off 1.5pt}}

\begin{scope}
% Invisible lines

%\draw[color=myblue,my dash, very thick] (0,-3.03) .. controls (-0.3,-2) .. (0,-.97); % e_0
\draw[color=myblue,my dash, very thick] (-3,3.03) .. controls (-3.3,2) .. (-3,.97); % e_0'
\draw[color=myblue,my dash, very thick] (3,3.03) .. controls (2.7,2) .. (3,.97); % e_0'
\draw[color=myblue,my dash, very thick] (-10.03,0) .. controls (-9,0.3) .. (-7.97,0); %e_1
\draw[color=myblue,my dash, very thick] (10.03,0) .. controls (9,0.3) .. (7.97,0); %e_2
%\draw[color=myblue,my dash, very thick] (-3.03,0) .. controls (-2,0.3) .. (-.97,0); % e_0+e_1
%\draw[color=red,my dash ] (3.03,0) .. controls (2,0.3) .. (.97,0); % e_0+e_2
%\draw[color=red, my dash] (2, -3.03) .. controls (1.7,-2) and (1.7,2) .. (2, 3.03);
%\draw[color=myblue, very thick,my dash] (-3.293,-.707) .. controls (-1.5,-2.2) and (1.5,-2.2) .. (3.293,-.707);
%\draw[color=red,my dash] (-3.293,.707) .. controls (-1.5,2.8) and (1.5,2.8) .. (3.293,.707);

% Border lines

\draw [fill] (0,0) circle (.05);
\draw [fill] (0.35,0) circle (.05);
\draw [fill] (-0.35,0) circle (.05);

\draw [very thick] (3,0) circle (1);
\draw [very thick] (-3,0) circle (1);
\draw [very thick] (7,0) circle (1);
\draw [very thick] (-7,0) circle (1);
\draw [very thick]  (-7,3)--(7,3) arc (90:-90:3) -- (-7,-3) arc (270:90:3);

\draw [] (-12.5,0) -- (-10.3,0);
\draw [] (12.5,0) -- (10.3,0);

\draw (11.5,0) + (-55: .3 and 1) arc (-55: -10: .3 and 1);
\draw [-stealth] (11.5,0) + (10: .3 and 1) arc (10: 160: .3 and 1) node [pos=.4, right] {$\iota$};

% Visible lines 

\draw [color=red, very thick] (-7,0) + (1.3,0) arc (0:360:1.3) node [pos=.75, below=-1pt] {$\beta_1$};

\draw [color=red, very thick] (-3,0) + (1.3,0) arc (0:360:1.3) node [pos=.75, below=-1pt] {$\beta_2$};

\draw [color=red, very thick] (3,0) + (1.3,0) arc (0:360:1.3) node [pos=.8, below=0pt] {$\beta_{g-1}$};

\draw [color=red, very thick] (7,0) + (1.3,0) arc (0:360:1.3) node [pos=.75, below=-1pt] {$\beta_g$};

\draw[color=myblue, very thick] (3,.97) .. controls (3.3,2) .. (3,3.03) node [pos=0.45, right=-2pt] {$\alpha_{g-1}$};  %e_0
\draw[color=myblue, very thick] (-3,3.03) .. controls (-2.7,2) .. (-3,.97) node [pos=0.45, right=-2pt]{$\alpha_2$};  %e_0'

\draw[color=myblue, very thick] (-7.97,0) .. controls (-9,-0.3) .. (-10.03,0) node [pos=.4,below=-1pt] {$\alpha_1$};%e_1
%\draw[color=red,->-=.55] (4.97,0) .. controls (6,-0.4) .. (7.03,0) node [pos=.3,below=-1pt] {$\alpha_2$}; %e_2
%\draw[color=red,->-=.55] (-7.03,0) .. controls (-6,-0.3) .. (-4.97,0)node [pos=0.4,below] {$\alpha_1$};%e_1 -
\draw[color=myblue, very thick] (10.03,0) .. controls (9,-0.3) .. (7.97,0) node [pos=.4,below=-1pt] {$\alpha_g$};%e_2 -

%\draw[color=myblue, very thick] (-3.03,0) .. controls (-2,-0.3) .. (-.97,0);%e_0+e_1
%\draw[color=red] (3.03,0) node [right=-2pt] {$\delta$} .. controls (2,-0.3) .. (.97,0) ;%e_0+e_2

%\draw[color=red] (2, -3.03) .. controls (2.3,-2) and (2.3,2) .. (2, 3.03) node [pos=0.35, right=-1pt] {$\delta$};  %e_0

%\draw[color=red] (-2,-3.03) .. controls (-1.7,-1.5) and (-1.7,1.5) .. (-2,3.03) node [pos=.25,right=-2pt] {$u$};
%\draw[color=red,my dash] (-2,-3.03) .. controls (-2.3,-1.5) and (-2.3,1.5) .. (-2,3.03);

%\draw[color=red] (-3.293,-.707) .. controls (-1,-2.8) and (1,-2.8) .. (3.293,-.707) node [pos=.8,below=1pt] {$\delta_2$};
%\draw[color=red] (-3.293,.707) .. controls (-1,2.2) and (1,2.2) .. (3.293,.707)node [pos=.8,below] {$\delta$};
\end{scope}

\end{tikzpicture}
\caption{Curves $\alpha_1,\ldots,\alpha_g,\beta_1,\ldots,\beta_g$ and hyperelliptic involution~$\iota$}\label{fig_eBC}
\end{figure}
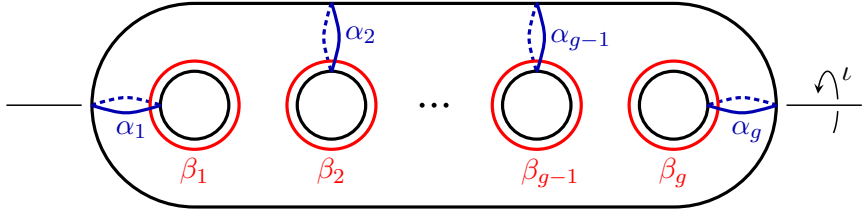

We shall refer to~$\hsigma$ as to the \textit{extended Birman--Craggs--Johnson homomorphism}. Now, to each $\Sp$-quadratic form~$\omega$ on~$H_{\Z/2}$ with zero Arf invariant is assigned an \textit{extended Birman--Craggs homomorphism} $\hrho_{\omega}\colon\hI_g\to\Z/2$ given by $\hrho_{\omega}(h)=\hsigma(h)(\omega)$.

\begin{propos}\label{propos_eBC}
Suppose that $g\ge 3$. Then the kernel of~$\hsigma$ coincides with $\ker\sigma=\CC_g$.
\end{propos}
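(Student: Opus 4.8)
The plan is to reduce Proposition~\ref{propos_eBC} to a single statement about Boolean polynomials and then to prove that statement by a parity computation. Recall that $\hsigma$ restricts to $\sigma$ on~$\I_g$ under the inclusion $\BB_3'\subseteq\BB_4'$. Hence $\CC_g=\ker\sigma\subseteq\ker\hsigma$, and moreover $\ker\hsigma\cap\I_g=\ker\sigma=\CC_g$. It therefore suffices to show that $\ker\hsigma$ contains no element of $\hI_g\setminus\I_g$. Since $\iota\in\hI_g\setminus\I_g$ and $[\hI_g:\I_g]=2$, we have $\hI_g\setminus\I_g=\I_g\iota$, so any element outside $\I_g$ has the form $h\iota$ with $h\in\I_g$, and $\hsigma(h\iota)=\sigma(h)+\hsigma(\iota)$. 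As $\sigma$ maps onto~$\BB_3'$, this is nonzero for all $h\in\I_g$ if and only if $\hsigma(\iota)\notin\BB_3'$. Thus the whole proposition comes down to proving that, for $g\ge3$,
\begin{equation*}
\hsigma(\iota)=\sum_{1\le i<j\le g}\overline{\ba}_i\overline{\bb}_i\bigl(\overline{\ba}_j+1\bigr)\overline{\bb}_j\ \notin\ \BB_3'.
\end{equation*}

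To prove this I would construct a linear functional on $\BB_4'$ that vanishes on~$\BB_3'$ but not on the class of~$\hsigma(\iota)$. Identify $\Omega$ with $(\Z/2)^{2g}$ via the symplectic basis $\ba_1,\dots,\ba_g,\bb_1,\dots,\bb_g$ of Figure~\ref{fig_eBC}, so that $\overline{\ba}_i,\overline{\bb}_i$ become the coordinate functions $x_i,y_i$ and $\Arf=\sum_{i=1}^g x_iy_i$. Let
\begin{equation*}
W=\bigl\{\,\omega\in\Omega:\ \Arf(\omega)=0,\ \ x_i(\omega)=y_i(\omega)=1\ \text{for } 4\le i\le g\,\bigr\},
\end{equation*}
which is the intersection of the quadric $\{\Arf=0\}$ with an affine $6$-dimensional subspace of~$\Omega$, and define $\lambda\colon\BB\to\Z/2$ by $\lambda(f)=\sum_{\omega\in W}f(\omega)$. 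Because $W\subseteq\Omega_0$, the functional $\lambda$ kills the ideal $(\Arf)$, hence descends to $\BB'=\BB/(\Arf)$ and in particular restricts to $\BB_4'\subseteq\BB'$. The essential point is that $\lambda$ also kills~$\BB_3$: for a squarefree monomial~$m$ of degree $\le3$ the set $\{m=1\}\cap W$ is the zero set of the quadratic function~$\Arf$ restricted to an affine subspace of dimension $\ge3$, and any quadratic function on $(\Z/2)^n$ with $n\ge3$ has an even number of zeros. Hence $\lambda$ induces a linear functional on $\BB_4'/\BB_3'$.

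It then remains to compute $\lambda(\hsigma(\iota))$. On~$W$ the functions $x_i,y_i$ with $i\ge4$ are identically~$1$, so every summand of~$\hsigma(\iota)$ with $\max(i,j)\ge4$ restricts to~$0$, and
\begin{equation*}
\hsigma(\iota)\big|_{W}=\sum_{1\le i<j\le3}x_iy_ix_jy_j\ +\ \sum_{1\le i<j\le3}x_iy_iy_j .
\end{equation*}
The second sum consists of monomials of degree~$3$ and contributes~$0$ by the previous paragraph. For each of the three pairs $i<j$ in $\{1,2,3\}$, the number $\sum_{\omega\in W}x_iy_ix_jy_j(\omega)$ equals $\bigl|\{(x_l,y_l)\in(\Z/2)^2:\ x_ly_l=c\}\bigr|$ for a fixed $c\in\{0,1\}$ independent of the pair (namely $c\equiv g-1\pmod2$), which is~$3$ if $c=0$ and~$1$ if $c=1$; in either case the sum of these three equal numbers is odd. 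Hence $\lambda(\hsigma(\iota))=1$, so $\hsigma(\iota)\notin\BB_3'$, and the proposition follows.

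The main obstacle is the parity bookkeeping in the last two paragraphs: everything has to be carried out correctly modulo the ideal $(\Arf)$, and one must track the parities of the intersections of various affine subspaces with the quadric~$\Omega_0$. These computations are elementary once one records the lemma that every quadratic function on $(\Z/2)^n$ with $n\ge3$ has an even number of zeros (this follows from the classification of quadratic forms over~$\Z/2$, since an odd number of zeros would force the nondegenerate part of the form to have rank $2(n-1)$, impossible for $n\ge3$); and the device of freezing the handles $4,\dots,g$ in the definition of~$W$ is what lets the argument run uniformly in~$g$.
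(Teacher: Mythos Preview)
Your proof is correct. The reduction to showing $\hsigma(\iota)\notin\BB_3'$ is exactly the same as in the paper, but from there the two arguments diverge.

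The paper proceeds purely algebraically inside~$\BB$: it first reduces to $g=3$ by substituting $\overline{\ba}_i=\overline{\bb}_i=0$ for $i>3$, and then observes that if a relation
\[
\sum_{1\le i<j\le 3}\overline{\ba}_i\overline{\bb}_i\overline{\ba}_j\overline{\bb}_j=q\cdot\Arf+r,\qquad r\in\BB_3,
\]
existed, multiplying both sides by $(1+\Arf)$ would produce the degree-$6$ monomial $\overline{\ba}_1\overline{\bb}_1\overline{\ba}_2\overline{\bb}_2\overline{\ba}_3\overline{\bb}_3$ on one side and an element of degree~$\le 5$ on the other, a contradiction.

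Your argument is a point-counting construction of a separating linear functional. Your freezing of the handles $4,\dots,g$ at the value~$1$ (rather than~$0$) is the analogue of the paper's specialization step; the factor $(\overline{\ba}_j+1)$ then kills the unwanted summands. The key lemma --- that a quadratic function on $(\Z/2)^n$ with $n\ge 3$ has an even number of zeros --- is correct; in fact there is a one-line proof: for any $f\in\BB$ the parity of $|\{f=1\}|$ equals the coefficient of the full monomial $x_1\cdots x_n$, which vanishes whenever $\deg f<n$. This also subsumes your justification via the classification of quadratic forms. Both approaches are elementary; the paper's is slightly shorter, while yours makes explicit a witnessing functional on $\BB_4'/\BB_3'$ and works uniformly in~$g$ without first passing to $g=3$.
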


\begin{proof}
Since $\I_g$ is an index~$2$ subgroup of~$\hI_g$, we just need to show that the Boolean polynomial in the right-hand side of~\eqref{eq_eBC} does not lie in~$\BB_3'$. Equivalently, we need to prove that in~$\BB$ there are no relations of the form
\begin{equation}\label{eq_Bool_rel}
\sum_{1\le i<j\le g}\overline{\ba}_i\overline{\bb}_i\overline{\ba}_j\overline{\bb}_j=q\sum_{i=1}^g\overline{\ba}_i\overline{\bb}_i+r,\qquad q\in\BB,\, r\in\BB_3.
\end{equation}

Since the variables~$\overline{\ba}_1,\ldots,\overline{\ba}_g,\overline{\bb}_1,\ldots,\overline{\bb}_g$ are  independent, it is sufficient to prove that there are no relations of form~\eqref{eq_Bool_rel} in case $g=3$. Indeed, if such relation existed for some $g>3$, then, substituting zeroes for all variables~$\overline{\ba}_i$ and~$\overline{\bb}_i$ with $i>3$, we would obtain a relation of form~\eqref{eq_Bool_rel} for $g=3$.

If a relation of form~\eqref{eq_Bool_rel} existed for $g=3$, then, multiplying both sides of it by $\sum_{i=1}^3\overline{\ba}_i\overline{\bb}_i+1$, we would obtain that
\begin{equation*}
\overline{\ba}_1\overline{\bb}_1\overline{\ba}_2\overline{\bb}_2\overline{\ba}_3\overline{\bb}_3+\sum_{1\le i<j\le 3}\overline{\ba}_i\overline{\bb}_i\overline{\ba}_j\overline{\bb}_j=\left(\sum_{i=1}^3\overline{\ba}_i\overline{\bb}_i+1\right)r,
\end{equation*}
which is impossible, since a product of~$6$ independent variables cannot be written as a Boolean polynomial of degree less than~$6$.
\end{proof}

Note that in case $g=3$, which is the only important for us in this paper, formula~\eqref{eq_eBC} reads as
\begin{equation}\label{eq_eBC3}
\hsigma(\iota)=\overline{\ba}_1\overline{\bb}_1\bigl(\overline{\ba}_2+1\bigr)\overline{\bb}_2.
\end{equation}
Besides, in this case the algebra~$\BB'$ is additively generated by~$\BB_3'$ and the monomial $\overline{\ba}_1\overline{\bb}_1\overline{\ba}_2\overline{\bb}_2$. Thus, $\BB'=\BB'_4$ and the homomorphism $\hsigma\colon\hI_3\to\BB'$ is surjective.

\subsection{Fundamental exact sequence.}\label{subsection_fundamental}
 Suppose that 
\begin{equation}\label{eq_short}
1\to N\xrightarrow{i} G\xrightarrow{j} Q\to 1
\end{equation}
is a short exact sequence of groups. Then there is the following $5$-term exact sequence of cohomology groups, see~\cite[Corollary~7.2.3]{Eve91}:
\begin{equation}\label{eq_fund}
0\to H^1(Q;\Z/2)\xrightarrow{j^*} H^1(G;\Z/2)\xrightarrow{i^*}H^1(N;\Z/2)^Q\xrightarrow{d_2} H^2(Q;\Z/2)\xrightarrow{j^*} H^2(G;\Z/2).
\end{equation}  
Here $H^1(N;\Z/2)^Q$ is the $Q$-invariant part of~$H^1(N;\Z/2)$.
The homomorphism~$d_2$ is called the \textit{transgression}. It is denoted by~$d_2$, since in fact it is the differential of the Lyndon--Hochschild--Serre spectral sequence.

We will deal with exact sequence~\eqref{eq_fund} in the following special case. Suppose that $Q=H_1(G;\Z/2)$ and hence $N$ is the kernel of the natural surjective homomorphism $G\to  H_1(G;\Z/2)$. (Nevertheless, we conveniently use multiplicative notation for~$Q$.) It is a standard fact that~$N$ is generated by commutators of elements of~$G$ and squares of elements of~$G$. Obviously, in this case the pullback homomorphism $$j^*\colon H^1(Q;\Z/2)\to H^1(G;\Z/2)$$ is an isomorphism, so the exact sequence reads as
\begin{equation}\label{eq_fund2}
0\to H^1(N;\Z/2)^Q\xrightarrow{d_2} H^2(Q;\Z/2)\xrightarrow{j^*} H^2(G;\Z/2).
\end{equation} 

Suppose that $\kappa_1,\ldots,\kappa_m$ and~$\lambda_1,\ldots,\lambda_m$ are  homomorphisms of~$G$ to~$\Z/2$ satisfying 
$$
\sum_{s=1}^m\kappa_s\lambda_s=0
$$ 
in $H^2(G;\Z/2)$, where multiplication is the cup-product in cohomology. Let $k_1,\ldots,k_m$ and $l_1,\ldots,l_m$ be the corresponding homomorphisms of~$Q$ to~$\Z/2$ so that $\kappa_s=k_s\circ j$ and $\lambda_s=l_s\circ j$, $s=1,\ldots,m$.  Consider the cohomology class 
$$
\varphi=\sum_{s=1}^mk_sl_s\in H^2(Q;\Z/2).
$$
Then $j^*\varphi=0$. Since sequence~\eqref{eq_fund2} is exact, we obtain that there is a unique $Q$-invariant homomorphism $\psi\colon N\to\Z/2$ such that $d_2\psi=\varphi$.  

\begin{propos}\label{propos_d2_LHS}
For all $g_1,g_2\in G$,
\begin{equation}\label{eq_d2_gen1}
\psi\bigl([g_1,g_2]\bigr)=\sum_{s=1}^m\bigl(\kappa_s(g_1)\lambda_s(g_2)+\kappa_s(g_2)\lambda_s(g_1)\bigr),
\end{equation}
and for all $g\in G$,
\begin{equation}\label{eq_d2_gen2}
\psi\bigl(g^2\bigr)=\sum_{s=1}^m \kappa_s(g)\lambda_s(g).
\end{equation}
\end{propos}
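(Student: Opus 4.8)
The plan is to exhibit an explicit representative of~$\psi$ on the level of cochains and then simply evaluate it on commutators and squares. Since $\sum_{s}\kappa_s\lambda_s=0$ in~$H^2(G;\Z/2)$, and the cup product of the degree-one classes $\kappa_s$ and~$\lambda_s$ is represented by the normalized $2$-cocycle $(g_1,g_2)\mapsto\kappa_s(g_1)\lambda_s(g_2)$, the $2$-cocycle $u(g_1,g_2)=\sum_{s=1}^m\kappa_s(g_1)\lambda_s(g_2)$ on~$G$ is a coboundary, so there is a function $\mu\colon G\to\Z/2$ with $\mu(1)=0$ and $\mu(g_1g_2)=\mu(g_1)+\mu(g_2)+u(g_1,g_2)$ for all $g_1,g_2\in G$. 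Because each $\kappa_s$ and each $\lambda_s$ vanishes on~$N=\ker j$, restricting this identity to $N\times N$ shows that $\psi_0:=\mu|_N$ is a homomorphism $N\to\Z/2$. Using the identity to express $\mu(g^{-1})=\mu(g)+u(g^{-1},g)=\mu(g)+\sum_s\kappa_s(g)\lambda_s(g)$ (here $\kappa_s(g^{-1})=\kappa_s(g)$ mod~$2$, as $\kappa_s$ is a homomorphism to~$\Z/2$), together with the facts that the $\kappa_s,\lambda_s$ factor through~$Q$, a short computation gives $\mu(g^{-1}ng)=\mu(n)$ for all $g\in G$ and $n\in N$; thus $\psi_0$ is $Q$-invariant.

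Next I would show $d_2\psi_0=\varphi$; since $d_2$ is injective in~\eqref{eq_fund2}, this gives $\psi=\psi_0=\mu|_N$. I use the standard cocycle description of the transgression: choosing a set-theoretic section $s\colon Q\to G$ of~$j$ with $s(1)=1$, the class $d_2\psi_0$ is represented by the $2$-cocycle $(q_1,q_2)\mapsto\psi_0\bigl(s(q_1)s(q_2)s(q_1q_2)^{-1}\bigr)$ on~$Q$ --- this is, up to a sign irrelevant mod~$2$, the cocycle of the central extension $1\to\Z/2\to G/\ker\psi_0\to Q\to 1$ obtained by pushing out $1\to N\to G\to Q\to 1$ along~$\psi_0$, where $\ker\psi_0\triangleleft G$ precisely because $\psi_0$ is $Q$-invariant. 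Since $s(q_1)s(q_2)s(q_1q_2)^{-1}\in N$, I may compute $\psi_0$ of it as $\mu$ of it; applying the defining identity for~$\mu$ twice and using $\kappa_s(s(q))=k_s(q)$, $\lambda_s(s(q))=l_s(q)$, I find that the ``cross'' contributions cancel modulo~$2$, leaving $\psi_0\bigl(s(q_1)s(q_2)s(q_1q_2)^{-1}\bigr)=\sum_s k_s(q_1)l_s(q_2)+\mu(s(q_1))+\mu(s(q_2))+\mu(s(q_1q_2))$. The last three terms form the coboundary of the $1$-cochain $\mu\circ s$, so this cocycle is cohomologous to the standard representative of $\varphi=\sum_s k_sl_s$, i.e.\ $d_2\psi_0=\varphi$.

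Finally I read off the formulas from $\psi=\mu|_N$. Putting $g_1=g_2=g$ into the relation for~$\mu$ gives $\psi(g^2)=\mu(g^2)=u(g,g)=\sum_s\kappa_s(g)\lambda_s(g)$, which is~\eqref{eq_d2_gen2}. For~\eqref{eq_d2_gen1}, write $[g_1,g_2]=(g_2g_1)^{-1}(g_1g_2)$ and apply the relation for~$\mu$ repeatedly; since $\kappa_s$ and~$\lambda_s$ are homomorphisms, $\kappa_s(g_2g_1)=\kappa_s(g_1g_2)$ and $\lambda_s(g_2g_1)=\lambda_s(g_1g_2)$, so the terms $u\bigl((g_2g_1)^{-1},g_2g_1\bigr)$ and $u\bigl((g_2g_1)^{-1},g_1g_2\bigr)$ cancel modulo~$2$, and one is left with $\psi\bigl([g_1,g_2]\bigr)=\mu(g_1g_2)+\mu(g_2g_1)=u(g_1,g_2)+u(g_2,g_1)=\sum_s\bigl(\kappa_s(g_1)\lambda_s(g_2)+\kappa_s(g_2)\lambda_s(g_1)\bigr)$.

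The step I expect to require the most care is pinning down the cocycle description of the transgression~$d_2$ with the correct conventions and verifying that, with $\psi_0=\mu|_N$, it returns exactly~$\varphi$ up to a coboundary; once $\psi$ has been identified with the restriction of~$\mu$, formulas~\eqref{eq_d2_gen1} and~\eqref{eq_d2_gen2} are routine bookkeeping with the cocycle identity for~$\mu$.
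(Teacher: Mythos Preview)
Your argument is correct and takes a genuinely different route from the paper's proof. The paper starts from the given~$\psi$ (characterized by $d_2\psi=\varphi$), writes down the transgression cocycle $w(q_1,q_2)=\psi\bigl(f(q_1)f(q_2)f(q_1q_2)^{-1}\bigr)$ and the standard cup-product cocycle $v(q_1,q_2)=\sum_s k_s(q_1)l_s(q_2)$, and then uses that cohomologous cocycles agree on cycles in the bar complex of~$Q$; the specific cycles $[q_1|q_2]+[q_2|q_1]$ and $[q|q]+[1|1]$ (which are cycles precisely because $Q$ is an elementary abelian $2$-group) yield the desired formulae for $g_1,g_2,g$ in the image of a section, after which one bootstraps to arbitrary $g_1,g_2,g$ by varying the section. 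Your approach is more constructive: you first produce the cobounding $1$-cochain~$\mu$ on~$G$, observe that $\psi_0=\mu|_N$ is a $Q$-invariant homomorphism, and then identify $\psi_0$ with~$\psi$ via the transgression. The payoff is that formulae~\eqref{eq_d2_gen1} and~\eqref{eq_d2_gen2} then drop out for \emph{all} $g_1,g_2,g$ at once from the cocycle identity for~$\mu$, without the section-varying step or the appeal to particular cycles in~$Q$. Both arguments rest on the same cocycle description of the transgression (which the paper extracts from~\cite[Theorem~7.3.1]{Eve91}), and your verification that the ``cross'' terms $u(c^{-1},c)$ and $u(ab,c^{-1})$ cancel mod~$2$ is exactly the computation needed to see $d_2\psi_0=\varphi$ up to the coboundary of~$\mu\circ s$.
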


\begin{proof}
First, recall that the transgression~$d_2$ has the following description, see~\cite[Theorem~7.3.1]{Eve91}. Let $N'$ be the commutator subgroup of~$N$. Exact sequence~\eqref{eq_short} gives rise to an action of~$Q$ on~$N/N'$. Recall that cohomology classes in~$H^2(Q;N/N')$ are in one-to-one correspondence with extensions of~$Q$ by~$N/N'$ giving rise to the same action of~$Q$ on~$N/N'$. Let $\varepsilon\in H^2(Q;N/N')$ be the class of the extension 
\begin{equation*}
1\to N/N'\xrightarrow{\tilde\imath}G/N'\xrightarrow{\tilde\jmath} Q\to 1,
\end{equation*}
where $\tilde\imath$ and~$\tilde\jmath$ are induced by~$i$ and~$j$, respectively.
Then $d_2\psi$ is the image of~$\varepsilon$ under the homomorphism 
$$
\psi_*\colon H^2(Q;N/N')\to H^2(Q;\Z/2)
$$
induced by~$\psi \in\Hom(N,\Z/2)=\Hom(N/N',\Z/2)$.

Second, recall that if $\tilde{f}\colon Q\to G/N'$ is a set-theoretic section of~$\tilde\jmath$, then the function $e\colon Q\times Q\to N/N'$ given by
$$
\tilde\imath\bigl(e(q_1,q_2)\bigr)=\tilde{f}(q_1)\tilde{f}(q_2)\tilde{f}(q_1q_2)^{-1}
$$
is a cocycle representing~$\varepsilon$ in the bar resolution, see~\cite[Section~2.3]{Eve91} and~\cite[Section~IV.3]{Bro82}. Hence, if $f\colon Q\to G$ is a set-theoretic section of~$j$, then the function $w\colon Q\times Q\to\Z/2$ given by
\begin{equation*}
w(q_1,q_2)=\psi\left(f(q_1)f(q_2)f(q_1q_2)^{-1}\right)
\end{equation*}
is a cocycle representing the class $d_2\psi=\varphi$. On the other hand, by the definition of the cup-product (see~\cite[Section~V.3]{Bro82}), the class~$\varphi$ can be also represented by the cocycle 
$$
v(q_1,q_2)=\sum_{s=1}^m k_s(q_1)l_s(q_2).
$$

Cocycles~$v$ and~$w$ are homologous and hence take equal to each other values on every cycle, i.\,e. every element in the kernel of the differential of the complex~$(\Z/2)\otimes_{Q}R_*$, where $R_*$ is the bar resolution for~$Q$. Since $Q$ is an abelian group consisting of elements of order~$2$, all chains  $[q_1|q_2]+[q_2|q_1]$ and $[q|q]+[1|1]$ lie in the kernel of~$\partial$. Putting $g_1=f(q_1)$, $g_2=f(q_2)$, and~$g=f(q)$ and equating with each other the values of~$w$ and~$v$ on the cycles $[q_1|q_2]+[q_2|q_1]$ and $[q|q]+[1|1]$, we obtain that
\begin{multline*}
\psi\bigl([g_1,g_2]\bigr)=\psi\bigl(g_1^{-1}g_2^{-1}g_1g_2\bigr)=\psi\left(g_1g_2f(q_1q_2)^{-1}\right)+\psi\left(g_2g_1f(q_1q_2)^{-1}\right)\\
{}=w(q_1,q_2)+w(q_2,q_1)=v(q_1,q_2)+v(q_2,q_1)=\sum_{s=1}^m \kappa_s(g_1)\lambda_s(g_2)
\end{multline*}
and
\begin{multline*}
\psi\bigl(g^2\bigr)=\psi\bigl(g^2s(1)^{-1}\bigr)+\psi\bigl(s(1)^{-1}\bigr)=w(q,q)+w(1,1)\\
{}=v(q,q)+v(1,1)=\sum_{s=1}^m\kappa_s(g)\lambda_s(g),
\end{multline*}
which are exactly the required formulae~\eqref{eq_d2_gen1} and~\eqref{eq_d2_gen2} for those particular~$g_1$, $g_2$, and~$g$.

Now, for each~$g\in G$, we could choose~$f$ so that $f(j(g))=g$. Hence, equality~\eqref{eq_d2_gen2} holds for all $g\in G$. Similarly, for any~$g_1,g_2\in G$ satisfying $j(g_1)\ne j(g_2)$, we could choose~$f$ so that $f(j(g_1))=g_1$ and~$f(j(g_2))=g_2$. Hence equality~\eqref{eq_d2_gen1} holds for all $g_1,g_2\in G$ such that $j(g_1)\ne j(g_2)$. If $j(g_1)=j(g_2)\ne 1$, then equality~\eqref{eq_d2_gen1} for the pair~$\{g_1,g_2\}$ follows immediately from  equality~\eqref{eq_d2_gen1} for the pair~$\{g_1,g_1g_2\}$. Finally, if $j(g_1)=j(g_2)= 1$, i.\,e. $g_1,g_2\in N$, then both sides of~\eqref{eq_d2_gen1} obviously vanish.
\end{proof}

\section{Cohomology classes $\theta_{\fA}$ and homomorphisms~$\btheta_A$}\label{section_theta}

Starting from this point, we always consider  a genus~$3$ surface~$S=S_3$ and the Torelli group~$\I=\I_3$ of it, and conveniently omit the index~$3$ in notation. (Similarly for $\CC=\CC_3$, $\hI=\hI_3$, $\B=\B_3$). Let $x\in H$ be the chosen primitive homology class that was used in the construction of the complex of cycles. We fix it throughout the whole paper.

Suppose that $\fA=\{\ba_1,\ba_2,\ba_3\}$ is a three-element subset of~$H_{\Z/2}$ that satisfies the following condition:
\begin{itemize}
\item[$(*)$] $\ba_1,\ba_2,\ba_3$ are linearly independent over~$\Z/2$ and have zero pairwise intersection numbers. 
\end{itemize}
It is easy to see that there are exactly four $\Sp$-quadratic functions~$\omega$ satisfying $\Arf(\omega)=0$ and 
$$\omega(\ba_1)=\omega(\ba_2)=\omega(\ba_3)=1.$$ 
If we extend the set~$\fA$ to a symplectic basis $\ba_1,\ba_2,\ba_3,\bb_1,\bb_2,\bb_3$ of~$H_{\Z/2}$, then these four $\Sp$-quadratic functions~$\omega_0,\ldots,\omega_3$ are given by
\begin{equation}\label{eq_omega_numeration}
\omega_i(\bb_j)=\left\{
\begin{aligned}
&0,&&\text{if $i=0$ or $i=j$,}\\
&1,&&\text{if $i\ne 0$ and~$i\ne j$.}
\end{aligned}
\right.
\end{equation} 
Though the numeration of~$\omega_i$'s depends on the numeration of the elements of~$\fA$ and on the choice of~$\bb_1,\bb_2,\bb_3$, the set of these four functions depend only on~$\fA$.

We consider the Birman--Craggs homomorphism $\rho_i=\rho_{\omega_i}$, $i=0,1,2,3$, as elements of~$H^1(\I;\Z/2)$, and define a cohomology class $\theta_{\fA}\in H^2(\I;\Z/2)$ by
\begin{equation*}
\theta_{\fA}=\sum_{0\le i<j\le 3}\rho_i\rho_j,
\end{equation*}
where multiplication is the cup-product. In the sequel, we will write many formulae involving classes~$\rho_i$. We agree that their indices always run from~$0$ to~$3$, and usually do not indicate this explicitly.

Since the multiplication in $H^*(\I;\Z/2)$ is commutative, we see that the class~$\theta_{\fA}$ is independent of the chosen numeration of $\omega_i$'s. Hence, $\theta_{\fA}$ is independent of the numeration of elements of~$\fA$ and of the choice of $\bb_1$, $\bb_2$, and~$\bb_3$. Besides, since $\omega_i(\ba_1+\ba_2+\ba_3)=1$, $i=0,1,2,3$, we have
\begin{equation}\label{eq_theta_equal}
\theta_{\{\ba_1+\ba_2+\ba_3,\ba_1,\ba_2\}}=
\theta_{\{\ba_1+\ba_2+\ba_3,\ba_2,\ba_3\}}=
\theta_{\{\ba_1+\ba_2+\ba_3,\ba_3,\ba_1\}}=
\theta_{\{\ba_1,\ba_2,\ba_3\}}.
\end{equation}

\begin{propos}\label{propos_theta_main}
\textnormal{(a)} Suppose that $N$ is a three-component multicurve whose components have modulo~$2$ homology classes~$\ba_1$, $\ba_2$, and~$\ba_3$. Let $\delta_1$ and~$\delta_2$ be two non-isotopic separating simple  closed curves that are disjoint from~$N$ and from each other, see Fig.~\ref{fig_2separate}. Then $\langle \theta_{\fA},\CA(T_{\delta_1},T_{\delta_2})\rangle=1$.

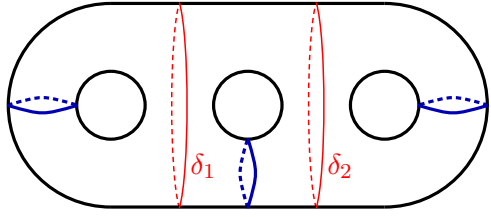
\begin{figure}
\begin{tikzpicture}[scale=.45]
\small
\definecolor{myblue}{rgb}{0, 0, 0.7}
\definecolor{mygreen}{rgb}{0, 0.4, 0}
\tikzset{every path/.append style={line width=.2mm}}
\tikzset{my dash/.style={dash pattern=on 2pt off 1.5pt}}

\begin{scope}
% Invisible lines

\draw[color=myblue,my dash, very thick] (0,-3.03) .. controls (-0.3,-2) .. (0,-.97); % e_0
%\draw[color=myblue,my dash, very thick] (0,3.03) .. controls (-0.3,2) .. (0,.97); % e_0'
\draw[color=myblue,my dash, very thick] (-7.03,0) .. controls (-6,0.3) .. (-4.97,0); %e_1
\draw[color=myblue,my dash, very thick] (7.03,0) .. controls (6,0.3) .. (4.97,0); %e_2
%\draw[color=myblue,my dash, very thick] (-3.03,0) .. controls (-2,0.3) .. (-.97,0); % e_0+e_1
%\draw[color=red,my dash ] (3.03,0) .. controls (2,0.3) .. (.97,0); % e_0+e_2
\draw[color=red, my dash] (2, -3.03) .. controls (1.7,-2) and (1.7,2) .. (2, 3.03);
\draw[color=red, my dash] (-2, -3.03) .. controls (-2.3,-2) and (-2.3,2) .. (-2, 3.03);
%\draw[color=myblue, very thick,my dash] (-3.293,-.707) .. controls (-1.5,-2.2) and (1.5,-2.2) .. (3.293,-.707);
%\draw[color=red,my dash] (-3.293,.707) .. controls (-1.5,2.8) and (1.5,2.8) .. (3.293,.707);

% Border lines

\draw [very thick] (0,0) circle (1);
\draw [very thick] (4,0) circle (1);
\draw [very thick] (-4,0) circle (1);
\draw [very thick]  (-4,3)--(4,3);
\draw [very thick]  (-4,-3)--(4,-3);
\draw [very thick] (-4,3) arc (90:270:3); 
\draw [very thick] (4,3) arc (90:-90:3); 

% Visible lines 

\draw[color=myblue, very thick] (0,-3.03) .. controls (0.3,-2) .. (0,-.97);  %e_0
%\draw[color=myblue, very thick] (0,3.03) .. controls (0.3,2) .. (0,.97) node [pos=0.5, right=-2pt]{$\alpha_1$};  %e_0'

\draw[color=myblue, very thick] (-4.97,0) .. controls (-6,-0.3) .. (-7.03,0);%e_1
%\draw[color=red,->-=.55] (4.97,0) .. controls (6,-0.4) .. (7.03,0) node [pos=.3,below=-1pt] {$\alpha_2$}; %e_2
%\draw[color=red,->-=.55] (-7.03,0) .. controls (-6,-0.3) .. (-4.97,0)node [pos=0.4,below] {$\alpha_1$};%e_1 -
\draw[color=myblue, very thick] (7.03,0) .. controls (6,-0.3) .. (4.97,0);%e_2 -

%\draw[color=myblue, very thick] (-3.03,0) .. controls (-2,-0.3) .. (-.97,0);%e_0+e_1
%\draw[color=red] (3.03,0) node [right=-2pt] {$\delta$} .. controls (2,-0.3) .. (.97,0) ;%e_0+e_2

\draw[color=red] (2, -3.03) .. controls (2.3,-2) and (2.3,2) .. (2, 3.03) node [pos=0.25, right=-2pt] {$\delta_2$};  %e_0

\draw[color=red] (-2, -3.03) .. controls (-1.7,-2) and (-1.7,2) .. (-2, 3.03) node [pos=0.25, right=-2pt] {$\delta_1$}; 

%\draw[color=red] (-2,-3.03) .. controls (-1.7,-1.5) and (-1.7,1.5) .. (-2,3.03) node [pos=.25,right=-2pt] {$u$};
%\draw[color=red,my dash] (-2,-3.03) .. controls (-2.3,-1.5) and (-2.3,1.5) .. (-2,3.03);

%\draw[color=red] (-3.293,-.707) .. controls (-1,-2.8) and (1,-2.8) .. (3.293,-.707) node [pos=.8,below=1pt] {$\delta_2$};
%\draw[color=red] (-3.293,.707) .. controls (-1,2.2) and (1,2.2) .. (3.293,.707)node [pos=.8,below] {$\delta$};

\end{scope}

\end{tikzpicture}
\caption{Abelian cycle~$\CA(T_{\delta_1},T_{\delta_2})$}\label{fig_2separate}
\end{figure}

\textnormal{(b)} Suppose that $M$ is a four-component multicurve without a pair of homologous components such that some three components of~$M$ have modulo~$2$ homology classes~$\ba_1$, $\ba_2$, and $\ba_3$. Then the restriction of~$\theta_{\fA}$ to the stabilizer~$\I_M$ vanishes.
\end{propos}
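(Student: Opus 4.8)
The plan is to prove the two parts by quite different means: part~(a) by an explicit computation with Birman--Craggs homomorphisms, and part~(b) by reducing, via the fundamental exact sequence of Subsection~\ref{subsection_fundamental}, to a statement about the internal structure of the stabilizer $\I_M$.

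\emph{Part~(a).} The only general fact I would use is that for $\kappa,\lambda\in H^1(G;\Z/2)$ and commuting $g,h\in G$ one has $\langle\kappa\lambda,\CA(g,h)\rangle=\kappa(g)\lambda(h)+\kappa(h)\lambda(g)$, which is immediate from the fact that $H^*(\Z\times\Z;\Z/2)$ is an exterior algebra on two one-dimensional classes together with naturality of the cup-product. Applying this to $\theta_{\fA}=\sum_{i<j}\rho_i\rho_j$ turns the claim $\langle\theta_{\fA},\CA(T_{\delta_1},T_{\delta_2})\rangle=1$ into the identity
\[
\Bigl(\sum_i\rho_i(T_{\delta_1})\Bigr)\Bigl(\sum_i\rho_i(T_{\delta_2})\Bigr)+\sum_i\rho_i(T_{\delta_1})\,\rho_i(T_{\delta_2})=1 .
\]
To evaluate the left-hand side I would first analyse how $N$ sits with respect to $\delta_1$ and $\delta_2$. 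The curves $\delta_1,\delta_2$ bound disjoint genus-one subsurfaces $X_1,X_2$ of $S$, and the complementary piece $X_3=S\setminus(X_1\cup X_2)$ also has genus one; this is the only configuration of two disjoint non-isotopic separating curves in a genus-$3$ surface. Each component of $N$ lies in some $X_j$, and no two of them lie in the same one, for that would make two of $\ba_1,\ba_2,\ba_3$ equal; so, after renumbering, the component of $N$ with class $\ba_j$ lies in $X_j$. Choosing $\bb_j\in H_1(X_j;\Z/2)$ with $\ba_j\cdot\bb_j=1$ makes $\ba_1,\ba_2,\ba_3,\bb_1,\bb_2,\bb_3$ a symplectic basis, which I may use in~\eqref{eq_omega_numeration} to label $\omega_0,\dots,\omega_3$; then~\eqref{eq_BC1}, applied with $S'$ the genus-one side, gives $\rho_l(T_{\delta_1})=\omega_l(\ba_1)\omega_l(\bb_1)=\omega_l(\bb_1)$ and likewise $\rho_l(T_{\delta_2})=\omega_l(\bb_2)$. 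Finally~\eqref{eq_omega_numeration} yields $\sum_l\omega_l(\bb_i)=0$ for every $i$ and $\sum_l\omega_l(\bb_1)\omega_l(\bb_2)=1$ (there is exactly one index $l\in\{0,1,2,3\}$ outside $\{0,1,2\}$, namely $l=3$, and only it contributes), so the left-hand side equals $0\cdot 0+1=1$.

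\emph{Part~(b).} Here I would exploit that each $\rho_i$ factors through the Birman--Craggs--Johnson homomorphism $\sigma$ (indeed $\rho_i(h)=\sigma(h)(\omega_i)$). Consequently each $\rho_i|_{\I_M}$ is a homomorphism and factors through $Q:=H_1(\I_M;\Z/2)$ as $\bar\rho_i\circ j$, so that $\theta_{\fA}|_{\I_M}=j^{*}\bar\theta$ with $\bar\theta=\sum_{i<j}\bar\rho_i\bar\rho_j\in H^2(Q;\Z/2)$. Writing $N=\ker j$ (generated by commutators and squares in $\I_M$), the fundamental exact sequence~\eqref{eq_fund2} shows that $\theta_{\fA}|_{\I_M}=0$ if and only if $\bar\theta$ lies in the image of the transgression $d_2\colon H^1(N;\Z/2)^Q\to H^2(Q;\Z/2)$; and, by the argument in the proof of Proposition~\ref{propos_d2_LHS} (the cocycles there being cohomologous exactly when they agree on the cycles $[q_1|q_2]+[q_2|q_1]$ and $[q|q]+[1|1]$, which span $H_2(Q;\Z/2)$), this is equivalent to the existence of a $Q$-invariant homomorphism $\psi\colon N\to\Z/2$ such that
\[
\psi\bigl([g_1,g_2]\bigr)=\sum_{i<j}\bigl(\rho_i(g_1)\rho_j(g_2)+\rho_i(g_2)\rho_j(g_1)\bigr)\qquad\text{and}\qquad\psi\bigl(g^{2}\bigr)=\sum_{i<j}\rho_i(g)\rho_j(g)
\]
for all $g_1,g_2,g\in\I_M$. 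Thus part~(b) comes down to producing such a $\psi$.

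To do that I would begin from Proposition~\ref{propos_stab_generate}: $\I_M$ is generated by twists $T_\gamma$ about separating curves disjoint from $M$ and twists $T_{\gamma,\gamma'}$ about bounding pairs disjoint from $M$. Since three components of $M$ carry the classes $\ba_1,\ba_2,\ba_3$, which span a Lagrangian subgroup of $H$, any genus-one subsurface cut off by such a $\gamma$, or the genus-one piece attached to such a bounding pair, is forced to contain exactly one of those three components; plugging this into~\eqref{eq_BC1} and~\eqref{eq_BC2} (and using that $\gamma$, $\gamma'$ being disjoint from $M$ forces their homology classes into $\langle\ba_1,\ba_2,\ba_3\rangle$) shows that the vectors $\bigl(\rho_0(g),\dots,\rho_3(g)\bigr)$, as $g$ ranges over the generators, take only a small, explicitly describable set of values whose precise shape is determined by the topological type of $M$ and of the curves disjoint from it. Combined with the relations among the generators imposed by the decomposition of $S\setminus M$, this should allow one to check directly that the two right-hand sides above are simultaneously induced by a single homomorphism on $N$. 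This is precisely what the detailed study of the stabilizers of four- and five-component multicurves in Section~\ref{section_4component} is for, and it is the main obstacle of the whole argument: it is not a formal manipulation but a finite case analysis over the topological types of $M$, in each case pinning down enough of the commutator-and-square structure of $\I_M$ — equivalently, enough of the transgression $d_2$ — to conclude that $\bar\theta$ lies in the image of $d_2$.
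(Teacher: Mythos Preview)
Part~(a) is correct and is the paper's computation, repackaged: your identity $\sum_{i\ne j}x_iy_j=(\sum_i x_i)(\sum_j y_j)+\sum_i x_iy_i$ over~$\Z/2$ is just a rearrangement of the paper's direct evaluation of $\sum_{i\ne j}\rho_i(T_{\delta_1})\rho_j(T_{\delta_2})$, and the placement of the components of~$N$ into the three pieces cut off by~$\delta_1,\delta_2$ is exactly what the paper does.

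For part~(b), your reduction via the transgression is valid, but you have the logic inverted relative to the paper and you do not carry out the essential step. The paper does \emph{not} construct~$\psi$ in order to deduce $\theta_M=0$; it proves $\theta_M=0$ directly by structural arguments about~$\I_M$, and only afterwards (Proposition~\ref{propos_psi1}) invokes Proposition~\ref{propos_d2_LHS} to obtain~$\psi_M$, which is needed for a different purpose in Section~\ref{section_Delta}. For type~1 multicurves (Proposition~\ref{propos_theta_main1}) the key ingredients are the explicit embedding $\I_M\hookrightarrow\F_2\times\F_2$, the relation $\sum_i\rho_{i,M}=0$, and integral lifts $\xi_1,\xi_2$ of $\rho_{0,M}+\rho_{2,M}$ and $\rho_{0,M}+\rho_{1,M}$ that factor through the first~$\F_2$; these give $\rho_{i,M}^2=\rho_{j,M}^2$ for all $i,j$, whence $\theta_M=(\rho_{0,M}+\rho_{1,M})(\rho_{0,M}+\rho_{2,M})$ is the mod~$2$ reduction of $\xi_1\xi_2$, a pullback from $H^2(\F_2;\Z)=0$. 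For type~2 (Proposition~\ref{propos_theta_main2}), conjugation by a Dehn twist commuting with~$\I_M$ forces $\rho_{0,M}=\rho_{3,M}$ and $\rho_{1,M}=\rho_{2,M}$, so $\theta_M=\rho_{0,M}^2+\rho_{1,M}^2=\beta(\rho_{0,M}+\rho_{1,M})$, which vanishes because $\rho_{0,M}+\rho_{1,M}$ lifts to the integral class~$\nu_{\alpha_0}$. Your proposal identifies none of these ingredients; the claim that knowing the $\rho_i$-values on generators ``should allow one to check directly'' the commutator-and-square identities for~$\psi$ is not a proof, since those identities must hold for \emph{all} elements of~$\I_M$, and verifying consistency requires precisely the kind of global information (presentations, integral lifts, factorizations through free groups) that the paper supplies.
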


\begin{proof}[Proof of part~\textnormal{(a)}  of Proposition~\ref{propos_theta_main}]
The curves~$\delta_1$ and~$\delta_2$ divide~$S$ into three parts; we denote them by~$\Sigma_1$, $\Sigma_2$, and~$\Sigma_3$ so that $\Sigma_1$ and~$\Sigma_2$ are one-punctured tori bounded by~$\delta_1$ and~$\delta_2$, respectively, and~$\Sigma_3$ is a two-punctured torus between~$\delta_1$ and~$\delta_2$. Put $V_i=H_1(\Sigma_i;\Z/2)$, $i=1,2,3$; then $H_{\Z/2}=V_1\oplus V_2\oplus V_3$. The multicurve $N$ has exactly one component in every~$\Sigma_i$; renumbering $\ba_i$'s, we may achieve that the component of~$N$ lying in~$\Sigma_i$ has modulo~$2$ homology class~$\ba_i$, $i=1,2,3$.
Then $\ba_i\in V_i$. For every $i=1,2,3$, extend~$\ba_i$ to a basis $\ba_i,\bb_i$ of~$V_i$. Now, number the functions~$\omega_i$ so that~\eqref{eq_omega_numeration} holds. By~\eqref{eq_BC1} we have
$$
\rho_i(T_{\delta_k})=\omega_i(\ba_k)\omega_i(\bb_k)=
\left\{
\begin{aligned}
&0,&&\text{if $i=0$ or $i=k$,}\\
&1,&&\text{if $i\ne 0$ and~$i\ne k$.}
\end{aligned}
\right.
$$
The standard generator of the group $H_2(\Z\times\Z;\Z)$ can be represented in the bar resolution by the cycle $[e_1|e_2]-[e_2|e_1]$, where $e_1$ and~$e_2$ are the standard generators of~$\Z\times\Z$. Hence, the abelian cycle~$\CA(T_{\delta_1},T_{\delta_2})$ can be represented in the bar resolution by the cycle $[T_{\delta_1}|T_{\delta_2}]-[T_{\delta_2}|T_{\delta_1}]$. Therefore 
$$
\bigl\langle\theta_{\fA},\CA(T_{\delta_1},T_{\delta_2})\bigr\rangle=\sum_{i<j}\bigl\langle \rho_i\rho_j,\CA(T_{\delta_1},T_{\delta_2})\bigr\rangle = \sum_{i\ne j}\rho_i(T_{\delta_1})\rho_j(T_{\delta_2})=1,
$$
which is exactly the required equality.
\end{proof}

\begin{remark}
The idea to distinguish abelian cycles in Torelli groups by means of cup-products of Birman--Craggs homomorphisms is due to Brendle and Farb~\cite{BrFa07}. In a sense, we use this idea in the opposite direction: we use abelian cycles to prove non-triviality of the quadratic polynomials~$\theta_{\fA}$ in Birman--Craggs homomorphisms. 
\end{remark}

We postpone the proof of part~(b) to Section~\ref{section_4component}.

Now, let us consider spectral sequence~\eqref{eq_SpSeq} for the action of~$\I$ on the complex of cycles~$\B$ with coefficients in~$\Z$:
\begin{equation}\label{eq_CL_final}
E^1_{p,q}=\bigoplus_{M\in\mathfrak{M}_p}H_q(\I_M;\Z) \  \Longrightarrow \ H_{p+q}(\I;\Z),
\end{equation}
where $\mathfrak{M}_p$ is a set of representatives of all $\I$-orbits of multicurves in~$\M_p$.

For $M\in\M_p$ and $z\in H_q(\I_M;\Z)$, we denote by~$z_M$ the element of~$E^1_{p,q}$ equal to the homology class $z$ in the summand~$H_q(\I_M;\Z)$. (Note that $z_M$ changes its sign whenever we reverse the orientation of the cell~$P_M$.) Recall that by Fact~\ref{fact_E1}, the summands in decomposition~\eqref{eq_CL_final} are independent of the choice of the set of representatives~$\mathfrak{M}_p$, and $(g_*z)_{g(M)}=z_M$, provided that the cells~$P_M$ and~$P_{g(M)}$ are endowed with orientations that are taken one into the other by the action of~$g$. Here $g_*\colon H_q(\I_M;\Z)\to  H_q(\I_{g(M)};\Z)$ is the homomorphism induced by the conjugation by~$g$. 

By Proposition~\ref{propos_H0} each $A\in\CH_0$ is a set consisting of~$1$, $2$, or~$3$ linearly independent elements that form a basis of a direct summand of~$H$. We denote by~$\CH_0'$ the subset of~$\CH_0$ consisting of all $3$-element sets. Then~$\CH_0'$ is exactly the set of all subsets $A=\{a_1,a_2,a_3\}$ of~$H$ that satisfy the following conditions:
\begin{itemize}
\item $a_1$, $a_2$, and~$a_3$ is a basis of a Lagrangian subgroup of~$H$,
\item $x=n_1a_1+n_2a_2+n_3a_3$ for some positive integers~$n_1$, $n_2$, and~$n_3$.  
\end{itemize}
Obviously, the set $\CH'_0$ is infinite.

Take $A\in\CH'_0$ and let $\fA=\{\ba_1,\ba_2,\ba_3\}\subset H_{\Z/2}$ be the modulo~$2$ reduction of~$A$. Then~$\fA$ satisfies condition~$(*)$, so the homology class~$\theta_{\fA}\in H^2(\I;\Z/2)$ is well defined. 

We define a homomorphism $\btheta_A\colon E^1_{0,2}\to\Z/2$ as follows. For any $N\in\M_0$ and any $z\in H_2(\I_N;\Z)$, we put
\begin{equation*}
\btheta_A(z_N)=\left\{
\begin{aligned}
&\langle \theta_{\fA}, i_{N*}z\rangle&&\text{if }[N]=A,\\
&0&&\text{if }[N]\ne A,\\
\end{aligned}
\right.
\end{equation*}
where $i_N\colon \I_N\to\I$ is the inclusion. By Fact~\ref{fact_E1} the homomorphism~$\btheta_A$ is independent of the choice of the set of representatives~$\mathfrak{M}_0$ and hence well defined.

Note that there are infinitely many homomorphisms~$\btheta_A$, though there are only finitely many cohomology classes~$\theta_{\fA}$. The main result of the present paper is as follows.

\begin{theorem}\label{theorem_main_section}
The homomorphisms~$\btheta_A$, where $A\in\CH_0'$, vanish on the images of the differentials~$d^1$ and $d^2$ and hence yield well-defined homomorphisms $\btheta_A\colon E^3_{0,2}\to\Z/2$. Moreover, these resulting homomorphisms are linearly independent over~$\Z/2$, so the group~$E^3_{0,2}$ is not finitely generated.
\end{theorem}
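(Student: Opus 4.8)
The plan is to verify the three parts of the statement in turn: that each $\btheta_A$ is nontrivial (hence, jointly, linearly independent), that each $\btheta_A$ annihilates $\operatorname{im}d^1$, and that it annihilates $\operatorname{im}d^2$. The first two points are short and rest on Proposition~\ref{propos_theta_main}; the third carries essentially all the difficulty and is what the remaining sections of the paper establish.

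\emph{Linear independence.} Fix $A=\{a_1,a_2,a_3\}\in\CH_0'$ with modulo-$2$ reduction $\fA=\{\ba_1,\ba_2,\ba_3\}$, choose a $3$-component multicurve $N$ with $[N]=A$ (so $S\setminus N$ is a $6$-punctured sphere), and pick two non-isotopic separating curves $\delta_1,\delta_2$ disjoint from $N$ and from each other as in Figure~\ref{fig_2separate}. The abelian cycle $\CA(T_{\delta_1},T_{\delta_2})\in H_2(\I_N;\Z)$ defines an element $\CA(T_{\delta_1},T_{\delta_2})_N\in E^1_{0,2}(A)\subseteq E^1_{0,2}$ on which, by Proposition~\ref{propos_theta_main}(a), $\btheta_A$ takes the value $\langle\theta_{\fA},\CA(T_{\delta_1},T_{\delta_2})\rangle=1$, whereas every $\btheta_B$ with $B\ne A$ kills it since $\btheta_B$ factors through the projection of $E^1_{0,2}$ onto the complementary summand $E^1_{0,2}(B)$. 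Thus a relation $\sum_A c_A\btheta_A=0$ forces all $c_A=0$; once the $\btheta_A$ are shown to descend to $E^3_{0,2}$, the same evaluation shows the induced maps $E^3_{0,2}\to\Z/2$ are linearly independent, and since $\CH_0'$ is infinite, $\Hom(E^3_{0,2},\Z/2)$ is infinite-dimensional, so $E^3_{0,2}$ is not finitely generated.

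\emph{Vanishing on $\operatorname{im}d^1$.} Write $E^1_{1,2}=\bigoplus_M H_2(\I_M;\Z)$ over representatives of $\I$-orbits in $\M_1$; it suffices to treat $y$ in one summand. Let $N_1,N_2\subset M$ be the two $0$-faces of the edge $P_M$. Unwinding the description of $d^1$ from Fact~\ref{fact_differential} and using that inner automorphisms act trivially on $H_*(\I;\Z)$, one gets $\btheta_A(d^1y)=\bigl(\sum_{i\,:\,[N_i]=A}\varepsilon_i\bigr)\langle\theta_{\fA},i_{M*}y\rangle$, where $\varepsilon_i=\pm1$ is the incidence of $N_i$ in $M$. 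If neither $[N_i]$ equals $A$ the sum is empty; if both equal $A$ the two incidences are $+1$ and $-1$, so the sum vanishes mod $2$; and if exactly one equals $A$, then by the analysis of $4$- and $5$-component stabilizers in Section~\ref{section_4component} such an $M$ has no pair of homologous components, so Proposition~\ref{propos_theta_main}(b) gives $\theta_{\fA}|_{\I_M}=0$ and again $\btheta_A(d^1y)=0$.

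\emph{Vanishing on $\operatorname{im}d^2$.} Here I would work directly in the double complex $B_{*,*}$ of Subsection~\ref{section_CL} and construct the auxiliary data announced in Section~\ref{section_outline}: subgroups $\Gamma\subseteq B_{0,2}$ and $\Delta\subseteq B_{1,1}$, each a direct sum of explicitly described pieces indexed by $\CH_0$ respectively $\CH_1$; homomorphisms $\Theta_A\colon\Gamma\to\Z/2$ computing $\btheta_A$ on classes of $E^1_{0,2}$ represented inside $\Gamma$; homomorphisms $\Psi_C,\Phi_{C,A}\colon\Delta\to\Z/2$; homomorphisms $\sigma_D$ and $\nu_D$ on $E^1_{2,1}$; all satisfying properties (P1)--(P3) of Section~\ref{section_outline}. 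Granting these, the conclusion is the bookkeeping already sketched after (P3): represent a class of $E^2_{2,1}$ by $y\in E^1_{2,1}$ with $d^1y=0$; (P2) gives $\sigma_D(y)=0$ and $4\mid\nu_D(y)$ for all $D$; (P3) produces $Y\in B_{2,1}$ and $X\in B_{1,2}$ with $\partial''Y=0$, $Y$ representing $y$, $\partial'Y+\partial''X\in\Delta$, $\partial'X\in\Gamma$, $\Theta_A(\partial'X)=0$, and $\sum_{C\supset A}\Phi_{C,A}(\partial'Y+\partial''X)=\kappa_A(d^1y)=0$; since $d^1y=0$ we have $\partial'Y+\partial''X\in\Delta\cap\partial''(B_{1,2})$, so (P1) yields $X_1\in B_{1,2}$ with $\partial''X_1=\partial'Y+\partial''X$, $\partial'X_1\in\Gamma$ and $\Theta_A(\partial'X_1)=\sum_{C\supset A}\Phi_{C,A}(\partial'Y+\partial''X)=0$; then $\partial'(X-X_1)\in\Gamma$ represents $d^2y$, whence $\btheta_A(d^2y)=\Theta_A(\partial'X)-\Theta_A(\partial'X_1)=0$.

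\emph{Main obstacle.} The genuinely hard point is property~(P2), and within it the congruence $\nu_D(y)\equiv0\pmod4$ even more than $\sigma_D(y)=0$. There is no local obstruction here — a completed version of $E^1_{2,1}$ allowing locally finite infinite sums would admit $d^1$-cycles violating these conditions — so the argument must be global: one shows that if one of the conditions fails at some $D_0$, then it fails at some $D$ of strictly larger weight, the weight being the Hatcher--Margalit function $n$ for the $\sigma_D$-part and a different function $F$, built from a rank-$5$ real linear functional $f$ with $f(x)=0$, for the $\nu_D$-part; an infinite strictly increasing chain of ``bad'' $D$'s being impossible, all conditions hold. Constructing $\Gamma$, $\Delta$ and the homomorphisms so that (P1) and (P3) hold — which rests on the detailed stabilizer computations of Sections~\ref{section_nu}--\ref{section_4component} together with the mod-$4$ divisibility properties of the homomorphisms $\nu$ — is laborious but, once those stabilizers are understood, essentially mechanical.
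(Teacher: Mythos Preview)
Your proposal is correct and follows essentially the same route as the paper: the linear independence and $d^1$-vanishing arguments coincide with Propositions~\ref{propos_li} and~\ref{propos_d1}, and your treatment of $d^2$ reproduces exactly the scheme of Section~\ref{section_outline} (properties (P1)--(P3), the subgroups $\Gamma,\Delta$, the homomorphisms $\Theta_A,\Psi_C,\Phi_{C,A},\sigma_D,\nu_D$, and the global weight-increasing argument for (P2)), which the paper carries out in Sections~\ref{section_several_hom}--\ref{section_2}. Your identification of the main obstacle---the mod-$4$ congruence in (P2) and the need for the rank-$5$ functional~$F$---is accurate.
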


A hard part of this theorem is vanishing on the images of differentials. The proof of the linear independence is easy; let us start with it.

\begin{propos}\label{propos_li}
The homomorphisms~$\btheta_A\colon E^1_{0,2}\to\Z/2$, where $A\in\CH_0'$, are linearly independent over~$\Z/2$.
\end{propos}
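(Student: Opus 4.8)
The plan is to exploit that, by its very definition, each homomorphism $\btheta_A$ is \emph{supported on a single piece} of the direct sum decomposition~\eqref{eq_CL_2}: it sends $z_N$ to $\langle\theta_{\fA},i_{N*}z\rangle$ when $[N]=A$ and to $0$ whenever $[N]\ne A$. So I would start from an arbitrary finite relation $\sum_{A\in F}c_A\btheta_A=0$, with $F\subseteq\CH_0'$ finite and $c_A\in\Z/2$, fix $A_0\in F$, and evaluate this relation on elements $z_N$ with $[N]=A_0$. Every summand with $A\ne A_0$ vanishes on such elements, so the relation collapses to $c_{A_0}\langle\theta_{\fA_0},i_{N*}z\rangle=0$ for all $N$ with $[N]=A_0$ and all $z\in H_2(\I_N;\Z)$, where $\fA_0$ is the modulo~$2$ reduction of~$A_0$. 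Thus the whole statement reduces to showing that each $\btheta_A$ is non-trivial.

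To exhibit a class on which $\btheta_{A_0}$ does not vanish I would invoke Proposition~\ref{propos_theta_main}(a). Choose a three-component multicurve $N$ with $[N]=A_0$; since the classes $a_1,a_2,a_3$ span a Lagrangian subgroup of $H$ and $g=3$, the surface $S\setminus N$ is a six-punctured sphere, and hence it contains two non-isotopic separating simple closed curves $\delta_1$, $\delta_2$ that are disjoint from $N$ and from each other, arranged as in Fig.~\ref{fig_2separate}. The abelian cycle $\CA(T_{\delta_1},T_{\delta_2})$ lies in $H_2(\I_N;\Z)$, and Proposition~\ref{propos_theta_main}(a) yields $\langle\theta_{\fA_0},i_{N*}\CA(T_{\delta_1},T_{\delta_2})\rangle=1$. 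Feeding $z=\CA(T_{\delta_1},T_{\delta_2})$ into the collapsed relation above forces $c_{A_0}=0$. Since $A_0\in F$ was arbitrary, $F=\emptyset$, which is precisely the asserted linear independence over~$\Z/2$.

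I do not expect a serious obstacle here: essentially all of the content sits inside Proposition~\ref{propos_theta_main}(a), whose part~(a) has already been proved, while the remaining ingredient --- that the homomorphisms $\btheta_A$ for distinct $A$ are non-zero only on mutually distinct summands of~\eqref{eq_CL_2} --- is immediate from the definition of $\btheta_A$. The only minor point worth spelling out is the existence of the configuration $(N,\delta_1,\delta_2)$, but this is transparent once $S\setminus N$ is identified with a six-punctured sphere (it is exactly the picture in Fig.~\ref{fig_2separate}).
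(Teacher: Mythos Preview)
Your proposal is correct and follows essentially the same approach as the paper: both arguments observe that distinct $\btheta_A$ are supported on distinct summands of the decomposition~\eqref{eq_CL_2}, reduce linear independence to non-triviality of each $\btheta_A$, and then verify non-triviality by applying Proposition~\ref{propos_theta_main}(a) to the abelian cycle $\CA(T_{\delta_1},T_{\delta_2})$ for a multicurve $N$ with $[N]=A$.
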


\begin{proof}
Each homomorphism~$\btheta_A$ may take nonzero values on only one summand of  decomposition~\eqref{eq_CL_final} of~$E^1_{0,2}$, and these summands are pairwise different for different sets~$A$. Therefore, to settle the required linear independence it is sufficient to show that every homomorphism~$\btheta_A$ is non-trivial.

Consider an oriented multicurve $N\in \M_0$ such that $[N]=A$, and choose two non-isotopic separating simple closed curves $\delta_1$ and~$\delta_2$ that are disjoint from~$N$ and from each other. It follows from Proposition~\ref{propos_theta_main}(a) that 
$\btheta_A\bigl(\CA(T_{\delta_1},T_{\delta_2})_N\bigr)=1$, 
so the homomorphism~$\btheta_A$ is non-trivial. 
\end{proof}

\begin{propos}\label{propos_d1}
For each $A\in\CH_0'$, the restriction of the homomorphism~$\btheta_A$ to the image of the differential $d^1\colon E^1_{1,2}\to E^1_{0,2}$ is trivial. 
\end{propos}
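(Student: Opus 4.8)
The plan is as follows. The group $E^1_{1,2}$ decomposes as a direct sum of the groups $H_2(\I_M;\Z)$ where $M$ ranges over a set of representatives of $\I$-orbits in $\M_1$, so it suffices to show that $\btheta_A(d^1 y) = 0$ for every $y$ in a single summand $H_2(\I_M;\Z)$. Fix such an $M \in \M_1$. The one-cell $P_M$ has exactly two vertices, $P_{N_1}$ and $P_{N_2}$, where $N_1, N_2 \in \M_0$ are the two basic cycles with support contained in $M$ (equivalently, the two oriented submulticurves $N_i \subset M$ lying in $\M_0$). The differential $d^1$ is induced by $\partial'$, which on $P_M$ gives $\pm P_{N_1} \mp P_{N_2}$; concretely, $d^1(z_M) = (j_{1*}z)_{N_1} - (j_{2*}z)_{N_2}$ up to sign, where $j_i \colon \I_M \hookrightarrow \I_{N_i}$ is the inclusion (note $\I_M \subseteq \I_{N_i}$ since a mapping class fixing $M$ componentwise fixes each $N_i$). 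Applying $\btheta_A$, the component of $d^1(z_M)$ in the summand $E^1_{0,2}(A)$ is nonzero only if $[N_i] = A$ for at least one $i$, so there are three cases to handle.

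First, if $[N_1] \neq A$ and $[N_2] \neq A$, then by definition $\btheta_A$ vanishes on both terms and $\btheta_A(d^1 y) = 0$. Second, if $[N_1] = [N_2] = A$, then since all three-component oriented multicurves in $S$ with a given multiset of homology classes in $\CH_0'$ lie in a single $\I$-orbit (as recalled in Section~\ref{section_outline}), we may, using the $\I$-equivariance of the decomposition and Fact~\ref{fact_E1}, identify both summands $E^1_{0,2}(A)$ contributions with $H_2(\I_N;\Z) \to H_2(\I;\Z)$ for a fixed $N$; the two inclusions $\I_M \hookrightarrow \I_{N_1} \to \I$ and $\I_M \hookrightarrow \I_{N_2} \to \I$ are the same map $\I_M \hookrightarrow \I$ up to conjugation in $\I$ (indeed both are literally the inclusion, since $N_1, N_2 \subset M$). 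Hence the two terms $\langle \theta_\fA, i_{N_1*}j_{1*}z\rangle$ and $\langle \theta_\fA, i_{N_2*}j_{2*}z\rangle$ are equal — more precisely, after reconciling orientations they enter $d^1(z_M)$ with opposite signs but give the same value of $\btheta_A$, so their contributions cancel and $\btheta_A(d^1 y) = 0$. (One must check that the sign issue genuinely produces a cancellation rather than a doubling; this follows because $\btheta_A$ lands in $\Z/2$ and both contributions equal $\langle \theta_\fA, (i_M)_* z\rangle$ where $i_M \colon \I_M \hookrightarrow \I$, so the total is $2\langle\theta_\fA, (i_M)_*z\rangle = 0$ in $\Z/2$ regardless of signs.)

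The remaining and genuinely substantive case is when exactly one of $[N_1] = A$, $[N_2] = A$ holds, say $[N_1] = A$ and $[N_2] \neq A$. Then $\btheta_A(d^1 y) = \langle \theta_\fA, (i_M)_* z\rangle$, which is the value of the restriction $\theta_\fA|_{\I_M} \in H^2(\I_M;\Z/2)$ on the class $z \in H_2(\I_M;\Z)$. So it suffices to prove that $\theta_\fA|_{\I_M} = 0$ for every such $M$. Now $M$ is a multicurve containing the three-component submulticurve $N_1 = N$ with $[N] = A$, hence with components having mod-$2$ classes $\ba_1, \ba_2, \ba_3$, together with one additional non-separating component; and $M$ has no pair of homologous components (if it did, $M$ would not lie in $\M_1$, or the extra component would force $[N_2] = [N_1] = A$, contrary to assumption — this needs a short verification using Proposition~\ref{propos_H} to enumerate the possible $[M]$). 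Thus $M$ is precisely a four-component multicurve of the type covered by Proposition~\ref{propos_theta_main}(b), which asserts exactly that $\theta_\fA|_{\I_M} = 0$. Therefore $\btheta_A(d^1 y) = \langle 0, z\rangle = 0$, completing the proof.

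The main obstacle is the last case: it hinges entirely on Proposition~\ref{propos_theta_main}(b), whose proof is deferred to Section~\ref{section_4component} and requires a detailed analysis of the structure of the stabilizer $\I_M$ (via Proposition~\ref{propos_stab_generate} and the Birman--Craggs formulae~\eqref{eq_BC1}, \eqref{eq_BC2}) to show that the quadratic expression $\sum_{i<j}\rho_i\rho_j$ in the four Birman--Craggs homomorphisms restricts to zero on $\I_M$. A secondary technical point is the bookkeeping in the case $[N_1] = [N_2] = A$: one should confirm via Proposition~\ref{propos_H} that this situation really can occur (e.g. for $M$ with $[M] = \{a_1, a_2, a_3, a_1 + a_2 + a_3\}$ and suitable orientations, or a four-component $M$ both of whose $\M_0$-faces have class-multiset $A$) and handle it uniformly with the mod-$2$ doubling argument above.
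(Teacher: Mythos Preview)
Your proof is correct and follows essentially the same approach as the paper's: the paper organizes the cases as (i) $[M]\not\supset A$, (ii) $[M]\supset A$ with no bounding pair in~$M$, and (iii) $[M]\supset A$ with a bounding pair, which match your cases of zero, one, and two endpoints satisfying $[N_i]=A$, respectively, and in the substantive case both arguments reduce to Proposition~\ref{propos_theta_main}(b). One small correction to your closing remarks: the example $[M]=\{a_1,a_2,a_3,a_1+a_2+a_3\}$ does \emph{not} give $[N_1]=[N_2]=A$, since these four classes are distinct; the case $[N_1]=[N_2]=A$ occurs precisely when $M$ contains a bounding pair, i.e.\ when $[M]=[a_i,a_1,a_2,a_3]$ for some~$i$.
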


\begin{proof}
We need to prove that $\btheta_A(d^1z_M)=0$ for any $M\in\M_1$ and any $z\in H_2(\I_M;\Z)$. Let $P_{N_1}$ and~$P_{N_2}$ be the endpoints of~$P_M$, then $N_1$ and~$N_2$ are the two oriented multicurves that are contained in~$M$ and belong to~$\M_0$. We have $\partial P_M=P_{N_2}-P_{N_1}$, provided that $P_M$ is oriented from~$P_{N_1}$ to~$P_{N_2}$. Since the differential~$d^1$ is induced by the differential~$\partial$ of the chain complex~$C_*(\B;\Z)$, we obtain that $d^1z_M=(i_{2*}z)_{N_2}-(i_{1*}z)_{N_1}$, where $i_k\colon\I_M\to\I_{N_k}$ are the inclusions, $k=1,2$. If the multiset~$[M]$ does not contain~$A$, then neither $[N_1]$ nor $[N_2]$ coincides with~$A$; hence $\btheta_A\bigl((i_{k*}z)_{N_k}\bigr)=0$ for $k=1,2$ and therefore $\btheta_A(d^1z_M)=0$. Suppose that $[M]\supset A$. If $M$ does not contain a pair of homologous components, then $\btheta_A(d^1z_M)=0$, since $\langle \theta_{\fA},i_{M*}z\rangle=\langle i_M^*\theta_{\fA},z\rangle=0$  by Proposition~\ref{propos_theta_main}(b). Finally, if $M$ contains a pair of homologous components, then the multicurves~$N_1$ and~$N_2$ are obtained from~$M$ by removing one of the two components in this pair. Hence $[N_1]=[N_2]=A$. Therefore $\btheta_A\bigl((i_{k*}z)_{N_k}\bigr)=\langle \theta_{\fA},i_{M*}z\rangle$ for $k=1,2$. Thus, $\btheta_A(d^1z_M)=0$. 
\end{proof}

By Proposition~\ref{propos_d1} every homomorphism~$\btheta_A$ induces a well-defined homomorphism $E^2_{0,2}\to\Z/2$, which we conveniently denote by~$\btheta_A$, too. Our  aim for the rest of the present paper is to prove the following proposition.

\begin{propos}\label{propos_d2}
For each $A\in\CH_0'$, the restriction of the homomorphism~$\btheta_A$ to the image of the differential $d^2\colon E^2_{2,1}\to E^2_{0,2}$ is trivial. 
\end{propos}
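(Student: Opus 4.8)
The strategy is exactly the one outlined in Section~\ref{section_outline}: to reduce the vanishing of $\btheta_A$ on $\operatorname{im} d^2$ to the three properties (P1), (P2), (P3), which in their precise forms will be Proposition~\ref{propos_Delta} (for (P1)), Proposition~\ref{propos_1} (for (P2)), and Proposition~\ref{propos_2} (for (P3)). Concretely, let $u \in E^2_{0,2}$ lie in the image of $d^2$. Then $u = d^2 y$ for some class in $E^2_{2,1}$, which we may lift to an actual cycle $y \in E^1_{2,1}$ with $d^1 y = 0$ representing it. By Proposition~\ref{propos_1} (property (P2)) we get $\sigma_D(y) = 0$ and $\nu_D(y) \equiv 0 \pmod 4$ for all $D \in \CH_2$; these are precisely the hypotheses of Proposition~\ref{propos_2} (property (P3)), which furnishes elements $Y \in B_{2,1}$, $X \in B_{1,2}$ with $\partial'' Y = 0$, $Y$ representing $y$, $\partial' Y + \partial'' X \in \Delta$, $\partial' X \in \Gamma$, $\Theta_A(\partial' X) = 0$ for all $A \in \CH_0'$, and $\sum_{C \supset A} \Phi_{C,A}(\partial' Y + \partial'' X) = \kappa_A(d^1 y)$. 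Since $d^1 y = 0$, we have $\partial' Y = -\partial'' X + (\partial' Y + \partial'' X) \in \partial''(B_{1,2})$, because $\partial' Y + \partial'' X \in \Delta$ and, being itself of the form $\partial'($something$) + \partial''($something$)$ minus $\partial'' X$, it represents $d^1 y = 0$ in $E^1_{1,1}$ and hence lies in $\partial''(B_{1,2})$; thus $\partial' Y + \partial'' X \in \Delta \cap \partial''(B_{1,2})$.

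\textbf{Applying (P1) and concluding.} Now invoke Proposition~\ref{propos_Delta} (property (P1)) with $U = \partial' Y + \partial'' X \in \Delta \cap \partial''(B_{1,2})$: this produces an element $X_1 \in B_{1,2}$ with $\partial'' X_1 = U = \partial' Y + \partial'' X$, $\partial' X_1 \in \Gamma$, and
\begin{equation*}
\Theta_A(\partial' X_1) = \sum_{C \in \CH_1,\, C \supset A} \Phi_{C,A}(\partial' Y + \partial'' X) = \kappa_A(d^1 y) = 0
\end{equation*}
for all $A \in \CH_0'$, where the last equality uses $d^1 y = 0$. Since $\partial''(X - X_1) = \partial'' X - \partial'' X_1 = \partial'' X - (\partial' Y + \partial'' X) = -\partial' Y$ and $\partial' Y$ lies in $\partial''(B_{1,2})$ with $\partial'' Y = 0$, Fact~\ref{fact_differential} tells us that the element $\partial'(X - X_1) \in B_{0,2}$ represents the class $d^2 y \in E^2_{0,2}$; moreover $\partial'(X - X_1) = \partial' X - \partial' X_1 \in \Gamma$ since both $\partial' X$ and $\partial' X_1$ lie in $\Gamma$. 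Because an element of $\Gamma$ representing a class in $E^1_{0,2}$ evaluates under $\btheta_A$ via $\Theta_A$, we obtain
\begin{equation*}
\btheta_A(d^2 y) = \Theta_A\bigl(\partial'(X - X_1)\bigr) = \Theta_A(\partial' X) - \Theta_A(\partial' X_1) = 0 - 0 = 0
\end{equation*}
for every $A \in \CH_0'$, which is exactly the assertion of the proposition.

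\textbf{Where the difficulty lies.} The bookkeeping above is purely formal once the three propositions are in hand; the entire weight of the argument is therefore displaced onto Propositions~\ref{propos_Delta}, \ref{propos_1}, and~\ref{propos_2}. The hardest of these — and the genuine mathematical core of the whole paper — is Proposition~\ref{propos_1} (property (P2)): namely, that a cycle $y \in E^1_{2,1}$ with $d^1 y = 0$ automatically satisfies $\sigma_D(y) = 0$ and $\nu_D(y) \equiv 0 \pmod 4$ for all $D$. As emphasized in Section~\ref{section_outline}, there appears to be no purely local obstruction forcing these conditions, so the proof must be global: one shows that if $\sigma_{D_0}(y) \neq 0$ for some $D_0$ then $\sigma_D(y) \neq 0$ for some $D$ with strictly larger Hatcher--Margalit weight $w(D)$, contradicting finiteness of the support of $y$; an analogous propagation argument using the auxiliary function $F$ built from a suitable linear functional $f \colon H \to \R$ of rank~$5$ handles the divisibility-by-$4$ statement for $\nu_D$. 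These propagation arguments, carried out in Sections~\ref{section_1} and~\ref{section_2}, require the detailed structural analysis of the stabilizers $\I_M$ of $4$- and $5$-component multicurves from Section~\ref{section_4component} and the explicit homomorphisms constructed in Sections~\ref{section_nu}, \ref{section_several_hom}, and~\ref{section_Delta}; Propositions~\ref{propos_Delta} and~\ref{propos_2}, while technically involved, are comparatively direct once those ingredients are available. The present proposition itself is then the short formal wrap-up displayed above.
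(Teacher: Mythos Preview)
Your proof is correct and follows essentially the same route as the paper's own proof in Section~\ref{section_scheme}: lift to a $d^1$-cycle $y$, invoke Propositions~\ref{propos_1} and~\ref{propos_2} to produce $Y$ and $X$, then apply Proposition~\ref{propos_Delta} to $U=\partial'Y+\partial''X$ and conclude via Fact~\ref{fact_differential} and Proposition~\ref{propos_Theta_induce}. Two cosmetic remarks: (i) the conditions $\sigma_D(y)=0$ and $\nu_D(y)\equiv 0\pmod 4$ are asserted only for $D\in\CH_2'$, not all $D\in\CH_2$ (though this is harmless since the homomorphisms are declared trivial elsewhere); (ii) your justification that $\partial'Y+\partial''X\in\partial''(B_{1,2})$ is a bit roundabout --- it suffices to note that $\partial'Y$ is a $\partial''$-cycle representing $d^1y=0$, hence $\partial'Y\in\partial''(B_{1,2})$, and then $\partial'Y+\partial''X$ is a sum of two elements of $\partial''(B_{1,2})$.
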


Theorem~\ref{theorem_main_section} follows immediately from Propositions~\ref{propos_li}, \ref{propos_d1}, and~\ref{propos_d2}.

\section{Cells of~$\B$ and their orientations}\label{section_cells}

Note that a multicurve in a closed genus~$3$ surface may contain at most one bounding pair.

It follows easily from~\eqref{eq_dimension} that every multicurve~$M\in\M_p$ has at most~$p+3$ components and decomposes~$S$ into exactly $p+1$ parts.
We denote by $\M_p'$ the subset of~$\M_p$ consisting of all~$M$ that satisfy the following two conditions:
\begin{itemize}
\item $M$ consists of exactly $p+3$ components,
\item $M$ does not contain a bounding pair,
\end{itemize}
and we denote by~$\CH_p'$ the set of all sets~$[M]$ for $M\in\M_p'$. Note that for $p=0$ this definition agrees with the definition in Section~\ref{section_theta}, since a multicurve in~$\M_0$ never contains a bounding pair. 

If we forget about orientation, and consider multicurves up to the action of the whole group~$\Mod(S)$, then there is only one type of multicurves in~$\M_0'$ and only one type of multicurves in~$\M_2'$, see Figs.~\ref{fig_types}(a) and~(d), respectively.  However, there are two different types of multicurves in~$\M_1'$. For a multicurve~$S$ of \textit{type}~1, $S\setminus M$ is the disjoint union of two $4$-punctured spheres, see Fig.~\ref{fig_types}(b), and for a multicurve~$S$ of \textit{type}~2, $S\setminus M$ is the disjoint union of a $3$-punctured sphere and a $5$-punctured spheres, see Fig.~\ref{fig_types}(c). We denote by~$\M_1^{(1)}$ and~$\M_1^{(2)}$ the subsets of~$\M_1'$ consisting of all type~1 and type~2 multicurves, respectively. Similarly, we denote by~$\CH_1^{(1)}$ and~$\CH_1^{(2)}$ the corresponding subsets of~$\CH_1'$.

\begin{figure}
\begin{tikzpicture}[scale=.25]
\small
\definecolor{myblue}{rgb}{0, 0, 0.7}
\definecolor{mygreen}{rgb}{0, 0.4, 0}
\tikzset{e path/.append style={line width=.2mm}}
\tikzset{my dash/.style={dash pattern=on 2pt off 1.5pt}}

\begin{scope}[shift={(-23.1,0)}]
% Invisible lines

\draw[color=myblue,my dash,  thick] (0,-3.03) .. controls (-0.3,-2) .. (0,-.97); % e_0
%\draw[red,my dash] (0,3.03) .. controls (-0.3,2) .. (0,.97); % e_0'
\draw[color=myblue,my dash,  thick] (-7.03,0) .. controls (-6,0.3) .. (-4.97,0); %e_1
\draw[color=myblue,my dash,  thick] (7.03,0) .. controls (6,0.3) .. (4.97,0); %e_2
%\draw[color=myblue,my dash,  thick] (-3.03,0) .. controls (-2,0.3) .. (-.97,0); % e_0+e_1
%\draw[color=myblue,my dash,  thick] (3.03,0) .. controls (2,0.3) .. (.97,0); % e_0+e_2
%\draw[color=myblue,  thick,my dash] (-3.293,-.707) .. controls (-1.5,-2.2) and (1.5,-2.2) .. (3.293,-.707);
%\draw[color=red,my dash] (-3.293,.707) .. controls (-1.5,2.8) and (1.5,2.8) .. (3.293,.707);

% Border lines

\draw [ thick] (0,0) circle (1);
\draw [ thick] (4,0) circle (1);
\draw [ thick] (-4,0) circle (1);
\draw [ thick]  (-4,3)--(4,3);
\draw [ thick]  (-4,-3)--(4,-3);
\draw [ thick] (-4,3) arc (90:270:3); 
\draw [ thick] (4,3) arc (90:-90:3); 

% Visible lines 

\draw[color=myblue,  thick] (0,-3.03) .. controls (0.3,-2) .. (0,-.97) ;  %e_0
%\draw[color=red] (0,3.03) .. controls (0.3,2) .. (0,.97);  %e_0'

\draw[color=myblue,  thick] (-4.97,0) .. controls (-6,-0.3) .. (-7.03,0);%e_1
%\draw[color=red,->-=.55] (4.97,0) .. controls (6,-0.4) .. (7.03,0) node [pos=.3,below=-1pt] {$\alpha_2$}; %e_2
%\draw[color=red,->-=.55] (-7.03,0) .. controls (-6,-0.3) .. (-4.97,0)node [pos=0.4,below] {$\alpha_1$};%e_1 -
\draw[color=myblue,  thick] (7.03,0) .. controls (6,-0.3) .. (4.97,0);%e_2 -

%\draw[color=myblue,  thick] (-3.03,0) .. controls (-2,-0.3) .. (-.97,0);%e_0+e_1
%\draw[color=myblue,  thick] (3.03,0) .. controls (2,-0.3) .. (.97,0);%e_0+e_2

%\draw[color=red] (-2,-3.03) .. controls (-1.7,-1.5) and (-1.7,1.5) .. (-2,3.03) node [pos=.25,right=-2pt] {$u$};
%\draw[color=red,my dash] (-2,-3.03) .. controls (-2.3,-1.5) and (-2.3,1.5) .. (-2,3.03);

%\draw[color=red] (-3.293,-.707) .. controls (-1,-2.8) and (1,-2.8) .. (3.293,-.707) node [pos=.8,below=1pt] {$\delta_2$};
%\draw[color=red] (-3.293,.707) .. controls (-1,2.2) and (1,2.2) .. (3.293,.707);

\node[] at (0,-5) {\textit{a}};
\end{scope}

\begin{scope}[shift={(-7.7,0)}]
% Invisible lines

%\draw[color=myblue,my dash,  thick] (0,-3.03) .. controls (-0.3,-2) .. (0,-.97); % e_0
%\draw[red,my dash] (0,3.03) .. controls (-0.3,2) .. (0,.97); % e_0'
\draw[color=myblue,my dash,  thick] (-7.03,0) .. controls (-6,0.3) .. (-4.97,0); %e_1
\draw[color=myblue,my dash,  thick] (7.03,0) .. controls (6,0.3) .. (4.97,0); %e_2
\draw[color=myblue,my dash,  thick] (-3.03,0) .. controls (-2,0.3) .. (-.97,0); % e_0+e_1
\draw[color=myblue,my dash,  thick] (3.03,0) .. controls (2,0.3) .. (.97,0); % e_0+e_2
%\draw[color=myblue,  thick,my dash] (-3.293,-.707) .. controls (-1.5,-2.2) and (1.5,-2.2) .. (3.293,-.707);
%\draw[color=red,my dash] (-3.293,.707) .. controls (-1.5,2.8) and (1.5,2.8) .. (3.293,.707);

% Border lines

\draw [ thick] (0,0) circle (1);
\draw [ thick] (4,0) circle (1);
\draw [ thick] (-4,0) circle (1);
\draw [ thick]  (-4,3)--(4,3);
\draw [ thick]  (-4,-3)--(4,-3);
\draw [ thick] (-4,3) arc (90:270:3); 
\draw [ thick] (4,3) arc (90:-90:3); 

% Visible lines 

%\draw[color=myblue,  thick] (0,-3.03) .. controls (0.3,-2) .. (0,-.97) ;  %e_0
%\draw[color=red] (0,3.03) .. controls (0.3,2) .. (0,.97);  %e_0'

\draw[color=myblue,  thick] (-4.97,0) .. controls (-6,-0.3) .. (-7.03,0);%e_1
%\draw[color=red,->-=.55] (4.97,0) .. controls (6,-0.4) .. (7.03,0) node [pos=.3,below=-1pt] {$\alpha_2$}; %e_2
%\draw[color=red,->-=.55] (-7.03,0) .. controls (-6,-0.3) .. (-4.97,0)node [pos=0.4,below] {$\alpha_1$};%e_1 -
\draw[color=myblue,  thick] (7.03,0) .. controls (6,-0.3) .. (4.97,0);%e_2 -

\draw[color=myblue,  thick] (-3.03,0) .. controls (-2,-0.3) .. (-.97,0);%e_0+e_1
\draw[color=myblue,  thick] (3.03,0) .. controls (2,-0.3) .. (.97,0);%e_0+e_2

%\draw[color=red] (-2,-3.03) .. controls (-1.7,-1.5) and (-1.7,1.5) .. (-2,3.03) node [pos=.25,right=-2pt] {$u$};
%\draw[color=red,my dash] (-2,-3.03) .. controls (-2.3,-1.5) and (-2.3,1.5) .. (-2,3.03);

%\draw[color=red] (-3.293,-.707) .. controls (-1,-2.8) and (1,-2.8) .. (3.293,-.707) node [pos=.8,below=1pt] {$\delta_2$};
%\draw[color=red] (-3.293,.707) .. controls (-1,2.2) and (1,2.2) .. (3.293,.707);

\node[] at (0,-5) {\textit{b}};
\end{scope}

\begin{scope}[shift={(7.7,0)}]
% Invisible lines

\draw[color=myblue,my dash,  thick] (0,-3.03) .. controls (-0.3,-2) .. (0,-.97); % e_0
%\draw[red,my dash] (0,3.03) .. controls (-0.3,2) .. (0,.97); % e_0'
\draw[color=myblue,my dash,  thick] (-7.03,0) .. controls (-6,0.3) .. (-4.97,0); %e_1
\draw[color=myblue,my dash,  thick] (7.03,0) .. controls (6,0.3) .. (4.97,0); %e_2
\draw[color=myblue,my dash,  thick] (-3.03,0) .. controls (-2,0.3) .. (-.97,0); % e_0+e_1
%\draw[color=myblue,my dash,  thick] (3.03,0) .. controls (2,0.3) .. (.97,0); % e_0+e_2
%\draw[color=myblue,  thick,my dash] (-3.293,-.707) .. controls (-1.5,-2.2) and (1.5,-2.2) .. (3.293,-.707);
%\draw[color=red,my dash] (-3.293,.707) .. controls (-1.5,2.8) and (1.5,2.8) .. (3.293,.707);

% Border lines

\draw [ thick] (0,0) circle (1);
\draw [ thick] (4,0) circle (1);
\draw [ thick] (-4,0) circle (1);
\draw [ thick]  (-4,3)--(4,3);
\draw [ thick]  (-4,-3)--(4,-3);
\draw [ thick] (-4,3) arc (90:270:3); 
\draw [ thick] (4,3) arc (90:-90:3); 

% Visible lines 

\draw[color=myblue,  thick] (0,-3.03) .. controls (0.3,-2) .. (0,-.97) ;  %e_0
%\draw[color=red] (0,3.03) .. controls (0.3,2) .. (0,.97);  %e_0'

\draw[color=myblue,  thick] (-4.97,0) .. controls (-6,-0.3) .. (-7.03,0);%e_1
%\draw[color=red,->-=.55] (4.97,0) .. controls (6,-0.4) .. (7.03,0) node [pos=.3,below=-1pt] {$\alpha_2$}; %e_2
%\draw[color=red,->-=.55] (-7.03,0) .. controls (-6,-0.3) .. (-4.97,0)node [pos=0.4,below] {$\alpha_1$};%e_1 -
\draw[color=myblue,  thick] (7.03,0) .. controls (6,-0.3) .. (4.97,0) node [pos=.5, below=-2pt] {$\gamma$};%e_2 -

\draw[color=myblue,  thick] (-3.03,0) .. controls (-2,-0.3) .. (-.97,0);%e_0+e_1
%\draw[color=myblue,  thick] (3.03,0) .. controls (2,-0.3) .. (.97,0);%e_0+e_2

%\draw[color=red] (-2,-3.03) .. controls (-1.7,-1.5) and (-1.7,1.5) .. (-2,3.03) node [pos=.25,right=-2pt] {$u$};
%\draw[color=red,my dash] (-2,-3.03) .. controls (-2.3,-1.5) and (-2.3,1.5) .. (-2,3.03);

%\draw[color=red] (-3.293,-.707) .. controls (-1,-2.8) and (1,-2.8) .. (3.293,-.707) node [pos=.8,below=1pt] {$\delta_2$};
%\draw[color=red] (-3.293,.707) .. controls (-1,2.2) and (1,2.2) .. (3.293,.707);

\node[] at (0,-5) {\textit{c}};
\end{scope}

\begin{scope}[shift={(23.1,0)}]
% Invisible lines

\draw[color=myblue,my dash,  thick] (0,-3.03) .. controls (-0.3,-2) .. (0,-.97); % e_0
%\draw[red,my dash] (0,3.03) .. controls (-0.3,2) .. (0,.97); % e_0'
\draw[color=myblue,my dash,  thick] (-7.03,0) .. controls (-6,0.3) .. (-4.97,0); %e_1
\draw[color=myblue,my dash,  thick] (7.03,0) .. controls (6,0.3) .. (4.97,0); %e_2
\draw[color=myblue,my dash,  thick] (-3.03,0) .. controls (-2,0.3) .. (-.97,0); % e_0+e_1
\draw[color=myblue,my dash,  thick] (3.03,0) .. controls (2,0.3) .. (.97,0); % e_0+e_2
%\draw[color=myblue,  thick,my dash] (-3.293,-.707) .. controls (-1.5,-2.2) and (1.5,-2.2) .. (3.293,-.707);
%\draw[color=red,my dash] (-3.293,.707) .. controls (-1.5,2.8) and (1.5,2.8) .. (3.293,.707);

% Border lines

\draw [ thick] (0,0) circle (1);
\draw [ thick] (4,0) circle (1);
\draw [ thick] (-4,0) circle (1);
\draw [ thick]  (-4,3)--(4,3);
\draw [ thick]  (-4,-3)--(4,-3);
\draw [ thick] (-4,3) arc (90:270:3); 
\draw [ thick] (4,3) arc (90:-90:3); 

% Visible lines 

\draw[color=myblue,  thick] (0,-3.03) .. controls (0.3,-2) .. (0,-.97) node [pos=.6, right=-2pt] {$\delta$};  %e_0
%\draw[color=red] (0,3.03) .. controls (0.3,2) .. (0,.97);  %e_0'

\draw[color=myblue,  thick] (-4.97,0) .. controls (-6,-0.3) .. (-7.03,0);%e_1
%\draw[color=red,->-=.55] (4.97,0) .. controls (6,-0.4) .. (7.03,0) node [pos=.3,below=-1pt] {$\alpha_2$}; %e_2
%\draw[color=red,->-=.55] (-7.03,0) .. controls (-6,-0.3) .. (-4.97,0)node [pos=0.4,below] {$\alpha_1$};%e_1 -
\draw[color=myblue,  thick] (7.03,0) .. controls (6,-0.3) .. (4.97,0);%e_2 -

\draw[color=myblue,  thick] (-3.03,0) .. controls (-2,-0.3) .. (-.97,0);%e_0+e_1
\draw[color=myblue,  thick] (3.03,0) .. controls (2,-0.3) .. (.97,0);%e_0+e_2

%\draw[color=red] (-2,-3.03) .. controls (-1.7,-1.5) and (-1.7,1.5) .. (-2,3.03) node [pos=.25,right=-2pt] {$u$};
%\draw[color=red,my dash] (-2,-3.03) .. controls (-2.3,-1.5) and (-2.3,1.5) .. (-2,3.03);

%\draw[color=red] (-3.293,-.707) .. controls (-1,-2.8) and (1,-2.8) .. (3.293,-.707) node [pos=.8,below=1pt] {$\delta_2$};
%\draw[color=red] (-3.293,.707) .. controls (-1,2.2) and (1,2.2) .. (3.293,.707);

\node[] at (0,-5) {\textit{d}};
\end{scope}

\end{tikzpicture}
\caption{Multicurves in the sets (a) $\M_0'$, (b) $\M_1^{(1)}$, (c) $\M_1^{(2)}$, and~(d) $\M_2'$}\label{fig_types}
\end{figure}

A component of a multicurve~$M\in\M_1^{(2)}$ is called \textit{special} if it is not adjacent to the connected component of~$S\setminus M$ that is homeomorphic to a sphere with three punctures, like~$\gamma$ in~Fig.~\ref{fig_types}(c). Three other components of~$M$ are called \textit{non-special}. A component of a multicurve~$M\in\M_2'$ is called \textit{principal} if it is adjacent to both connected components of~$S\setminus M$ that are homeomorphic to a sphere with three punctures, like~$\delta$ in~Fig.~\ref{fig_types}(d). Four other components of~$M$ are called \textit{non-principal}. We will use the same terminology for elements of sets in~$\CH_1^{(2)}$ and~$\CH_2'$.

We will need the following easy fact.

\begin{propos}\label{propos_M2no'}
Any oriented multicurve in $\M_2\setminus \M_2'$ contains a bounding pair.
\end{propos}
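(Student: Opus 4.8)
The plan is to reduce the statement to a short count with Euler characteristics and boundary circles. Let $M\in\M_2\setminus\M_2'$. By definition $\M_2'$ consists of the multicurves in $\M_2$ having exactly $5$ components and no bounding pair, and every multicurve in $\M_2$ has at most $5$ components; hence the hypothesis $M\notin\M_2'$ means that either $M$ already contains a bounding pair, in which case we are done, or $|M|\le 4$. So it suffices to treat the case $|M|\le 4$ and to produce a bounding pair inside $M$. The first step is the elementary remark that two distinct components $\delta_1,\delta_2$ of a multicurve of the type we consider (no separating components, no one-sided relations among the classes of the components) that happen to be homologous automatically form a bounding pair: they are disjoint, pairwise non-isotopic and non-separating, the union $\delta_1\cup\delta_2$ is null-homologous modulo~$2$ and hence separates $S$, while cutting $S$ first along the non-separating curve $\delta_1$ and then along $\delta_2$ shows that $S\setminus(\delta_1\cup\delta_2)$ has at most two components---so exactly two. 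Thus it is enough to show that an $M\in\M_2$ with $|M|\le 4$ has two homologous components.

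For the next step I would use formula~\eqref{eq_dimension}, by which $M$ cuts $S=S_3$ into exactly three pieces $R_1,R_2,R_3$, together with the identity $b(R_1)+b(R_2)+b(R_3)=2|M|\le 8$, where $b(R_i)$ is the number of boundary circles of $R_i$. Each $b(R_i)\ge 1$, and $b(R_i)=1$ is impossible, since the unique component of $M$ adjacent to such an $R_i$ would separate $S$, against our standing assumption. Hence all $b(R_i)\ge 2$, so $2|M|\ge 6$ and $|M|\ge 3$; and the three numbers cannot all be $\ge 3$, as their sum is at most $8$. Therefore some piece, say $R_3$, has $b(R_3)=2$.

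Its two boundary circles cannot be the two sides of a single component $\delta$ of $M$, for then regluing them would turn $\overline{R_3}$ into a closed surface, i.e. a connected component of $S$ that is strictly smaller than $S$ (using $|M|\ge 3>1$), contradicting connectedness. So the two boundary circles of $R_3$ are sides of two distinct components $\delta_1$ and $\delta_2$ of $M$. As $\overline{R_3}$ is a surface with boundary $\delta_1\cup\delta_2$, we get $[\delta_1]=\pm[\delta_2]$ in $H$; the sign $-$ is ruled out, because $[\delta_1]+[\delta_2]=0$ would be a one-sided relation, forbidden by the defining condition of $\M$. Hence $[\delta_1]=[\delta_2]$ after adjusting orientations, and by the preliminary remark $\{\delta_1,\delta_2\}$ is a bounding pair contained in $M$, which is what we wanted.

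I do not expect a genuine obstacle here: this is really an exercise in surgery on surfaces. The only places calling for a little care are the orientation bookkeeping in the step $[\delta_1]=\pm[\delta_2]$ (one first obtains $[\delta_1]+[\delta_2]=0$ for the boundary orientations induced by $R_3$ and then compares with the orientations that $M$ carries), and the short verification that a piece with a single boundary circle forces a separating component of $M$.
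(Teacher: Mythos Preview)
Your proof is correct. The paper's own proof is a one-line assertion that it is ``easy to check'' that a genus~$3$ surface cannot be cut into three pieces by fewer than five curves without producing a separating component or a bounding pair; you have supplied the actual argument behind that check. Your route---counting boundary circles of the pieces, forcing some piece to have exactly two boundary circles, and then reading off a homology relation between the two curves bounding it---is the natural way to make the paper's sentence precise, and it has the mild bonus of not being specific to genus~$3$ (the same count works whenever $|M|<p+3$ and $|S\setminus M|=p+1$). The only cosmetic remark is that your phrase ``Euler characteristics'' in the opening line is a slight misnomer: you never actually use~$\chi$, only the boundary-circle count $\sum_i b(R_i)=2|M|$.
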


\begin{proof}
It is easy to check that it is impossible to decompose a closed genus~$3$ surface into~$3$ parts by a multicurve that contains no separating components and no bounding pairs and has less than $5$ components.
\end{proof}

Note that similar assertions for multicurves in $\M_0\setminus\M_0'$ and~$\M_1\setminus\M_1'$ are false.

\begin{propos}\label{propos_orbits}
\textnormal{(a)} Suppose that $C$ belongs to either $\CH_0'$ or~$\CH_1^{(1)}$. Then all oriented multicurves~$M$ with $[M]=C$ lie in the same $\I$-orbit.

\textnormal{(b)} Suppose that $C$ belongs to either~$\CH_1^{(2)}$ or~$\CH_2'$. Then all oriented multicurves~$M$ with $[M]=C$ lie in the same $\hI$-orbit. This $\hI$-orbit consists of exactly two $\I$-orbits.
\end{propos}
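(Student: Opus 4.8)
The plan is to reduce the statement to the change of coordinates principle together with a computation of which automorphisms of $H$ are induced by homeomorphisms fixing a given multicurve.

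\emph{Step 1 (topological rigidity).} For each of the four families the homeomorphism type of the pair $(S,M)$ is determined by the family: the complement $S\setminus M$, together with the way its pieces are glued along $M$, is as in Figure~\ref{fig_types} — a six-holed sphere for $\CH_0'$, two four-holed spheres for $\CH_1^{(1)}$, a three-holed and a five-holed sphere for $\CH_1^{(2)}$, and two three-holed spheres and a four-holed sphere for $\CH_2'$ (the shape of the $\M_2'$ configuration being forced by Proposition~\ref{propos_M2no'}). Since all components of $S\setminus M$ have genus zero, $|M|-\rank M=\dim P_M$ forces $\rank M=3$, so the classes of the components of $M$ span a Lagrangian subgroup $L\subset H$. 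By the change of coordinates principle (see~\cite{FaMa12}) any two oriented multicurves $M_1,M_2$ with $[M_1]=[M_2]=C$ are carried one onto the other by some $\phi\in\Mod(S)$.

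\emph{Step 2 (from $\Mod(S)$ to $\I$; proof of (a)).} For $C\in\CH_0'$ or $C\in\CH_1^{(1)}$ I would first improve $\phi$ so that it carries $\overrightarrow{M_1}$ onto $\overrightarrow{M_2}$, matching each component of $M_1$ to the component of $M_2$ carrying the same homology class and respecting orientations; this is possible because every component of $S\setminus M$ is a sphere with at least four holes — whose mapping class group realizes every permutation of the boundary circles through orientation-preserving homeomorphisms — and because the gluing pattern carries no chirality. For such $\phi$ one has $\phi_*|_L=\mathrm{id}$, so $\phi_*$ lies in the unipotent subgroup $U\subset\Sp(6,\Z)$ of automorphisms trivial on $L$ and on $H/L$, which is canonically the group of symmetric integral bilinear forms on the rank-three lattice $L$. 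Next I would kill $\phi_*$ by precomposing with $\psi\in\Stab_{\Mod(S)}(\overrightarrow{M_2})$ satisfying $\psi_*=\phi_*^{-1}$: such $\psi$ exists because Dehn twists about the components of $M_2$ and about simple closed curves in $S\setminus M_2$ surrounding two or three punctures induce precisely the transvections along the classes $c_i$ and along suitable sums $c_i\pm c_j$, and for these configurations those transvections generate all of $U$. Then $\psi\phi\in\I$ and $\psi\phi(\overrightarrow{M_1})=\overrightarrow{M_2}$, which proves part (a).

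\emph{Step 3 (proof of (b)).} For $C\in\CH_1^{(2)}$ or $C\in\CH_2'$ the complement of $M$ has, respectively, one or two pair-of-pants components, and Step 2 acquires an obstruction: the pair $(S,\overrightarrow M)$ is no longer determined up to $\Mod(S)$ by $C$ alone, there remaining a $\Z/2$-valued invariant (one can write it down explicitly; it reflects the presence of the pants component(s), whose distinctly labelled boundary curves cannot all be permuted by orientation-preserving homeomorphisms compatible with the rest of $S$). Multicurves $M_1,M_2$ with $[M_1]=[M_2]=C$ and equal value of this invariant are $\I$-equivalent by the argument of Step 2; those with unequal value are not, the invariant being $\Mod(S)$- and hence $\I$-invariant. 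Both values occur — this is checked directly from the list in Proposition~\ref{propos_H} — so $\{M:[M]=C\}$ is the union of exactly two $\I$-orbits. Finally an element of $\hI\setminus\I$, assembled from a hyperelliptic-type involution adapted to $M$ and the orientation-reversing convention used to define the $\hI$-action, flips this $\Z/2$ invariant and therefore interchanges the two $\I$-orbits, so that they form a single $\hI$-orbit.

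\emph{Main obstacle.} The conceptual skeleton is short, but the real work lies in Steps 2 and 3: verifying, over the finitely many topological types (cf. Proposition~\ref{propos_H}), that the transvections carried by simple closed curves disjoint from $M$ generate the entire group $U$, and, for the pants case, isolating the correct $\Z/2$ invariant, showing it obstructs $\I$-equivalence, and exhibiting the element of $\hI\setminus\I$ that undoes it. I expect this case-by-case bookkeeping, rather than any single difficult idea, to be the crux of the proof.
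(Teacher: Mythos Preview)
Your proposal is correct and follows essentially the same route as the paper's proof: first produce a mapping class in the Lagrangian stabilizer $\CL$ carrying $M_1$ to $M_2$ with matching oriented components, then correct its action on $H$ by a product of Dehn twists about curves disjoint from~$M$ (the paper checks, exactly as you anticipate, that six such twists suffice to generate the rank-six unipotent group~$\mathbb{L}\cong U$), and for the $\CH_1^{(2)}$ and $\CH_2'$ cases isolate a $\Z/2$ obstruction that $\hI\setminus\I$ flips. The paper's description of that obstruction is concrete and worth recording: pick a non-special (resp.\ non-principal) element $c\in C$, and declare $M$ ``right'' or ``left'' according to which side of the oriented component in class~$c$ the adjacent three-punctured sphere lies; the artificial orientation reversal in the definition of the $\hI$-action is precisely what makes any $h\in\hI\setminus\I$ swap right and left.
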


\begin{proof}
First, suppose that $C\in\CH_1^{(2)}\cup\CH_2'$. Let $c$ be an element of~$C$ that is not special in case $C\in\CH_1^{(2)}$ and is not principal in case $C\in\CH_2'$. Suppose that $M$ is an oriented multicurve with~$[M]=C$, and $\gamma$ is the component of~$M$ with~$[\gamma]=c$. Consider the components of~$S\setminus M$ on both sides of~$\gamma$. Then exactly one of them is a three-punctured sphere, and the other is either five- or four-punctured sphere. We can say whether $M$ is \textit{right} or \textit{left} with respect to~$c$ depending of on which side of~$\gamma$ the component homeomorphic to a three-punctured sphere lies. Obviously, an element of the Torelli group cannot take a right multicurve with~$[M]=C$ to a left one, and vice versa. On the other hand, any element $h\in\hI\setminus\I$ transforms right multicurves to left ones and left multicurves to right ones, since we artificially reverse the orientations of components while acting by~$h$. 

So to prove the proposition we need to show that two multicurves~$M_1$ and~$M_2$ with $[M_1]=[M_2]=C\in\CH_p'$, $p=0,1,2$, lie in the same $\I$-orbit, provided that in cases $C\in\CH_1^{(2)}$ and $C\in\CH_2'$ we additionally know that they are either both right or both left with respect to some~$c$. 

Let $L\subset H$ be the Lagrangian subgroup generated by elements of~$C$ and $\CL\subset\Mod(S)$ be the corresponding \textit{Lagrangian mapping class group} consisting of all mapping classes that act trivially on~$L$. It is easy to prove that under the condition stated above~$M_1$ can be taken to~$M_2$ by a mapping class~$f\in\CL$. Indeed, one can just map every components of~$M_1$ onto the corresponding component of~$M_2$ (preserving the orientation), and then extend this map to every connected component of~$S\setminus M_1$. Now, the image~$\mathbb{L}$ of~$\CL$ under the natural surjective homomorphism $\Mod(S_3)\to\Sp(6,\Z)$ is a free abelian group of rank~$6$. If we write transformations in~$\Sp(6,\Z)$ in a symplectic basis $a_1,a_2,a_3,b_1,b_2,b_3$ such that $a_1,a_2,a_3$ is a basis of~$L$, then $\mathbb{L}$ will consist of all matrices of the form
$
\begin{pmatrix}
I&P\\
0&I
\end{pmatrix},
$
where $I$ is the unit $3\times 3$ matrix and
$P$
is a symmetric $3\times 3$ integral matrix. 

Now, let us make two easy observations. First, suppose that $a_1,a_2,a_3$ is a symplectic basis of~$L$ and $\gamma_1,\ldots,\gamma_6$ are simple closed curves in the homology classes $a_1$, $a_2$, $a_3$, $a_1+a_2$, $a_1+a_3$, and~$a_1+a_2+a_3$, respectively; then the images of the twists~$T_{\gamma_1},\ldots,T_{\gamma_6}$ form a basis of~$\mathbb{L}$. This can be checked immediately by writing down the corresponding matrices~$P$. Second, there exist $6$ curves $\gamma_1,\ldots,\gamma_6$ that are disjoint from~$M_1$ (where components of~$M_1$ are also allowed) so that their homology classes are exactly the $6$ classes listed above for some basis $a_1,a_2,a_3$ of~$L$. This can be checked by cases for all four types of multicurves in~$\M_p'$, $p=0,1,2$. Now, we obtain that every mapping class $hT_{\gamma_1}^{k_1}\cdots T_{\gamma_6}^{k_6}$, where $k_1,\ldots,k_6\in\Z$, takes $M_1$ to~$M_2$, and exactly one of those mapping classes belongs to~$\I$.
\end{proof}

For each $C\in \CH_0'\cup\CH_1^{(1)}$, we denote by~$\orb_C$ the $\I$-orbit consisting of oriented multicurves~$M$ with~$[M]=C$. For each $C\in \CH_1^{(2)}\cup\CH_2'$, we denote by~$\orb_C^+$ and~$\orb_C^-$ the two $\I$-orbits consisting of oriented multicurves~$M$ with~$[M]=C$. The choice of which of the two orbits is~$\orb^+_C$ is not canonical. We suppose that some choice is made.

Now, suppose that $C\in\CH_1^{(2)}$, $D\in\CH_2'$ and $C\subset D$. Then we put $\varepsilon_{D,C}=1$ if every multicurve in~$\orb_{D}^+$ contains a multicurve in~$\orb_C^+$ (and then every multicurve in~$\orb_{D}^-$ contains a multicurve in~$\orb_C^-$),  and $\varepsilon_{D,C}=-1$ if every multicurve in~$\orb_{D}^+$ contains a multicurve in~$\orb_C^-$ (and then every multicurve in~$\orb_{D}^-$ contains a multicurve in~$\orb_C^+$).

\begin{remark}\label{remark_infinite_orbits}
It is not hard to show that whenever an oriented multicurve~$M_0$ contains a bounding pair of components, there are infinitely many $\I$-orbits of oriented multicurves~$M$ with $[M]=[M_0]$. We are not going to prove this assertion, since we never use it in the sequel.
\end{remark}

To work with the chain complex~$C_*(\B;\Z)$, we need to endow positive-dimensional cells of~$\B$ with orientations. We do not know any canonical way to do this, and we have no need in it. However, we need to introduce an agreement on orientations, which will guarantee that the orientations of any two cells~$M$ with the same~$[M]$ agree each other properly. This agreement will be different depending on whether $M$ contain a bounding pair or not. Consider those two cases separately.

1. Suppose that $C\in\M_p$, $p>0$, and no element has multiplicity~$2$ in~$C$. Then, for any two oriented multicurves~$M_1$ and~$M_2$ with $[M_1]=[M_2]=C$, there is a unique bijection between components of~$M_1$ and components of~$M_2$ that takes every component to a component in the same homology class. This bijection induces an isomorphism $P_{M_1}\to P_{M_2}$. Our agreement is that the chosen orientation of~$P_{M_1}$ must go to the chosen  orientation of~$P_{M_2}$ under this isomorphism. 

2. Suppose that $C\in\M_p$, $p>0$, and an element~$c$ has multiplicity~$2$ in~$C$. In fact, we would like to leave out a very special case~$C=[x,x]$. Oriented multicurves~$M$ with $[M]=[x,x]$  will never occur in this paper, so we do not have to take care of their orientations. Assume that $C\ne [x,x]$. Let $E$ be the set consisting of all elements of~$C$ that are different from~$c$. Then $E$ is non-empty. For each multicurve $M$ with $[M]=C$, the bounding pair~$\gamma\cup\gamma'$ contained in~$M$ separates~$S$ into two connected components, so yields a partition of~$E$ into two subsets. It is easy to see that this partition is independent of the choice of~$M$. Indeed, two elements $e_1,e_2\in E$ lie in the same part if and only if one of the three homology classes $e_1+e_2$, $e_1-e_2$, and~$e_2-e_1$ is equal to~$c$. So this partition is intrinsically determined by the initial multiset~$C$. (Here it is important that the genus of~$S$ is $3$.) Let us choose which part is the first and which is the second, and denote them by~$E_1$ and~$E_2$, respectively. Then, for an oriented multicurve~$M$ with $[M]$, we can decide which of the two homologous components of it is the \textit{first} one and which is the \textit{second} one by claiming that $E_1$ lies on the right-hand side of the first component and on the left-hand side of the second component.  Now, for any two oriented multicurves~$M_1$ and~$M_2$ with $[M_1]=[M_2]=C$, there is a unique bijection between sets of their components that takes every component of~$M_1$ to a component of~$M_2$ in the same homology class, and besides, takes the first and the second components of the bounding pair in~$M_1$ to the first and the second components of the bounding pair in~$M_2$, respectively. If we swap the parts~$E_1$ and~$E_2$, then the first and the second components will interchange simultaneously for all~$M$ with $[M]=C$; hence, the constructed bijection will not change. This bijection induces an isomorphism $P_{M_1}\to P_{M_2}$. Our agreement is that the chosen orientation of~$P_{M_1}$ must go to the chosen  orientation of~$P_{M_2}$ under this isomorphism. 

In both cases, the agreement made means that, once we have chosen an orientation of a cell~$P_M$ (with $[M]\ne [x,x]$), we immediately orient accordingly all other cells~$P_{M'}$ with $[M']=[M]$. Throughout the rest of the present paper, we assume that orientations of cells of~$\B$ are chosen arbitrarily according to this agreement. Sometimes, we will conveniently reverse the orientations of some cells~$P_M$. Nevertheless, this always means that we simultaneously reverse the orientations of all cells~$P_{M'}$ with $[M']=[M]$.

Also, hereafter we assume that the pair of homologous components of each multicurve $M$ (that contains such a pair) is ordered so that the orderings for different~$M$ with the same~$[M]$ are in coordination with each other, as above.

If $P_M$ is a codimension~$1$ face of~$P_K$, we denote by~$[P_K\colon P_M]$ the incidence coefficient of the cells~$P_K$ and~$P_M$. Since $\B$ is a regular CW complex, we always have $[P_K\colon P_M]=\pm 1$.

If $C\in\CH_p$ and $D\in\CH_{p+1}$ are sets without multiple elements such that $C\subset D$, then the incidence coefficient~$[P_K\colon P_M]$ is the same for every pair of oriented multicurves $M\subset K$ satisfying $[M]=C$ and $[K]=D$. We denote this incidence coefficient by~$[D\colon C]$.

Finally, suppose that $K\in\M_2$ is a $5$-component oriented multicurve containing a bounding pair of curves, and~$M^{\pm}$ are the two multicurves obtained from~$K$ by removing one of the two components of the bounding pair. Put $D=[K]$ and~$C=[M^{\pm}]$. Then $M^{\pm}$ belong to~$\M_1^{(2)}$ and lie in different $\I$-orbits, so we may swap them to obtain $M^+\in\orb_C^+$ and $M^-\in\orb_C^-$. The cell $P_K$ is isomorphic to the cylinder $P_{M^+}\times [0,1]$ with bases~$P_{M^+}$ and~$P_{M^-}$. Hence $[P_K\colon P_{M^+}]=-[P_K\colon P_{M^-}]$. On the other hand, the incidence coefficient $[P_K\colon P_{M^+}]$ will not change if we replace~$K$ with another multicurve~$K'$ such that $[K']=D$. We denote this incidence coefficient $[P_K\colon P_{M^+}]$ by~$[D\colon C]$.

\section{Homomorphisms~$\nu_{\gamma}$, $\nu_{\gamma,\CW}$, and~$\mu_{\gamma,\gamma'}$}
\label{section_nu}

Consider a closed oriented surface~$S_2$ of genus~$2$ and the corresponding Torelli group~$\I_2$. Recall a well-known fact that there exists a unique homomorphism $d_0\colon\I_2\to \Z$ satisfying $d_0(T_{\delta})=1$ for all separating simple closed curves~$\delta$ in~$S_2$. Generally, Morita~\cite{Mor91} constructed a homomorphism $d_0\colon \K_g\to\Z$ satisfying $d_0(T_{\delta})=g'(g-g')$ whenever $\delta$ separates~$S_g$ into two parts of genera~$g'$ and~$g-g'$, and proved that up to proportionality $d_0$ is the only  $\Mod_g$-invariant homomorphism~$\K_g\to\Z$. Here $\K_g\subset \I_g$ is the subgroup generated by Dehn twists about separating curves. ($\K_g$ is also called the \textit{Johnson kernel}, since it is the kernel of the \textit{Johnson homomorphism} $\tau\colon\I_g\to\Lambda^3H/H$, cf.~\cite{Joh85a}.) If $g=2$, then $\I_2=\K_2$, hence, Morita's construction gives the required homomorphism $d_0\colon\I_2\to \Z$.

Now, suppose that $\gamma$ is a non-separating simple closed curve in a closed oriented genus~$3$ surface~$S=S_3$. Then $S\setminus\gamma$ is a genus two surface with two punctures. Compactifying~$S\setminus\gamma$ by adding two points, we obtain a closed surface~$S_2$ of genus~$2$. Let~$\lambda_{\gamma}$ be  the composition of the natural homomorphisms
$$
\I_{\gamma}\hookrightarrow\PMod(S\setminus\gamma)\to\Mod(S_2).
$$
It is easy to see that the image of~$\lambda_{\gamma}$ is contained in the Torelli group $\I_2=\I(S_2)$. The homomorphism $\nu_{\gamma}\colon\I_{\gamma}\to\Z$ is, by definition, the composition
$$
\I_{\gamma}\xrightarrow{\lambda_{\gamma}}\I_2\xrightarrow{d_0}\Z.
$$

Next proposition follows immediately from the definition of~$\nu_{\gamma}$. 

\begin{propos}\label{propos_phi_values}
\textnormal{(a)} Suppose that $\delta$ is a separating simple closed curve disjoint from~$\gamma$. Then $\nu_{\gamma}(T_{\delta})=1$ whenever $\gamma$ lies in the genus~$2$ component of~$S\setminus\delta$ and~$\nu_{\gamma}(T_{\delta})=0$ whenever $\gamma$ lies in the genus~$1$ component of~$S\setminus\delta$.

\textnormal{(b)} If $\{\gamma,\gamma'\}$ is a bounding pair, then $\nu_{\gamma}(T_{\gamma,\gamma'})=-1$.

\textnormal{(c)} If $\{\delta,\delta'\}$ is a bounding pair such that $\delta$ and~$\delta'$ are disjoint from~$\gamma$ and neither~$\delta$ nor~$\delta'$ is homotopic to~$\gamma$, then $\nu_{\gamma}(T_{\delta,\delta'})=0$.
\end{propos}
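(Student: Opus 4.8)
The plan is to unwind the definition $\nu_\gamma=d_0\circ\lambda_\gamma$ and to identify, for each of the three mapping classes in the statement, what $\lambda_\gamma$ does to it; the whole argument is elementary cut-and-paste topology resting on two facts: that $d_0\colon\I_2\to\Z$ is additive with $d_0(T_\delta)=1$ for every separating curve $\delta\subset S_2$, and that under the surjection $\PMod(S\setminus\gamma)\to\Mod(S_2)$ filling in the two punctures a twist $T_c$ maps to $T_{\bar c}$, which is trivial exactly when $\bar c$ bounds a disk or a once-punctured disk in $S_2$. I would first record the preliminary remark that, although $T_\gamma\notin\I_\gamma$, its image in $\PMod(S\setminus\gamma)$ is trivial: a boundary curve of an annular neighbourhood of $\gamma$ bounds a once-punctured disk after $\gamma$ is removed, so the corresponding twist dies there. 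Consequently, if $\{\gamma,\gamma'\}$ is a bounding pair, the image of $T_{\gamma,\gamma'}=T_\gamma T_{\gamma'}^{-1}$ in $\PMod(S\setminus\gamma)$ is just $T_{\gamma'}^{-1}$, and if $\{\delta,\delta'\}$ is a bounding pair disjoint from and not isotopic to $\gamma$, the image of $T_{\delta,\delta'}$ is $T_\delta T_{\delta'}^{-1}$ with $\delta,\delta'$ honest essential curves on $S\setminus\gamma$.

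For part~(a), $\delta$ divides $S$ into a one-holed genus-$1$ surface and a one-holed genus-$2$ surface, and a short check (using $[\gamma]\neq 0$) shows $\gamma$ is non-separating in whichever of the two pieces contains it. A bookkeeping of Euler characteristics then shows that cutting the containing piece along $\gamma$ and re-filling the two new punctures turns the genus-$2$ piece into a one-holed genus-$1$ surface with boundary $\delta$, and turns the genus-$1$ piece into a disk with boundary $\delta$. In the first case $\bar\delta$ is a separating curve of $S_2$ with genus-$1$ pieces on both sides, so $\nu_\gamma(T_\delta)=d_0(T_{\bar\delta})=1$; in the second case $\bar\delta$ is null-homotopic, $T_{\bar\delta}=1$, and $\nu_\gamma(T_\delta)=0$, as claimed.

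For part~(b), non-isotopy of $\gamma$ and $\gamma'$ rules out an annular complementary piece, so Euler characteristic forces $\gamma\cup\gamma'$ to split $S$ into two genus-$1$ surfaces each with two boundary circles; cutting $S$ along $\gamma$ only and filling the two punctures converts each of these into a one-holed genus-$1$ surface with boundary $\gamma'$, so $\bar{\gamma'}$ is separating in $S_2$ with genus-$1$ sides. Combined with the preliminary remark, $\lambda_\gamma(T_{\gamma,\gamma'})=T_{\bar{\gamma'}}^{-1}$ and $\nu_\gamma(T_{\gamma,\gamma'})=-d_0(T_{\bar{\gamma'}})=-1$. For part~(c), the same count shows $\delta\cup\delta'$ cuts $S$ into two genus-$1$ two-holed pieces, and a small case analysis (using $[\gamma]\neq 0$ and that $\gamma$ is isotopic to neither $\delta$ nor $\delta'$) shows $\gamma$ lies in one of them, non-separating there; cutting that piece along $\gamma$ and filling the two punctures yields an annulus with boundary $\delta\cup\delta'$, so $\bar\delta$ and $\bar{\delta'}$ become isotopic in $S_2$, whence $\lambda_\gamma(T_{\delta,\delta'})=T_{\bar\delta}T_{\bar{\delta'}}^{-1}=1$ and $\nu_\gamma(T_{\delta,\delta'})=0$.

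The only place where a little thought is required --- the ``main obstacle'', modest as it is --- is the preliminary remark that $T_\gamma$ maps to the identity in $\PMod(S\setminus\gamma)$ (equivalently that a twist about a loop encircling a single puncture is trivial there), since this is exactly what lets one restrict the identity $T_{\gamma,\gamma'}=T_\gamma T_{\gamma'}^{-1}$ to $S\setminus\gamma$ despite $T_\gamma$ and $T_{\gamma'}$ not individually lying in $\I_\gamma$. Once that is in hand, every remaining assertion reduces to naming the isotopy class in $S_2$ of a curve obtained by a cut along $\gamma$ followed by a refilling, which the Euler-characteristic counts settle; this is in keeping with the author's claim that the proposition follows immediately from the definition.
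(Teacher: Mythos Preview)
Your proof is correct and is precisely the unpacking of the definition that the paper has in mind; the paper's own ``proof'' is just the single sentence ``Next proposition follows immediately from the definition of~$\nu_\gamma$.'' Your preliminary remark that $T_\gamma$ dies in $\PMod(S\setminus\gamma)$ is exactly the statement that $G(\gamma)=\langle T_\gamma\rangle$ is the kernel in the Birman--Lubotzky--McCarthy sequence~\eqref{eq_BLM}, and the Euler-characteristic bookkeeping you carry out is the straightforward verification the paper leaves implicit.
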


\begin{propos}\label{propos_triv_restr1}
Suppose that $\{\gamma,\gamma'\}$ is a bounding pair. Then, for all $h\in\I_{\gamma\cup\gamma'}$,
$$
\nu_{\gamma}(h)\equiv\nu_{\gamma'}(h)\pmod 2.
$$
\end{propos}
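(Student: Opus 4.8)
The plan is to reduce everything to a finite generating set of $\I_{\gamma\cup\gamma'}$ and to check the congruence generator by generator. Since a mapping class fixing the multicurve $\gamma\cup\gamma'$ fixes each of its two components, $\I_{\gamma\cup\gamma'}\subseteq\I_\gamma\cap\I_{\gamma'}$, so both $\nu_\gamma$ and $\nu_{\gamma'}$ are defined on $\I_{\gamma\cup\gamma'}$ and $\nu_\gamma-\nu_{\gamma'}$ is a homomorphism $\I_{\gamma\cup\gamma'}\to\Z$; composing with reduction modulo~$2$ yields a homomorphism $\bar\nu\colon\I_{\gamma\cup\gamma'}\to\Z/2$, and it is enough to show that $\bar\nu$ vanishes on a set of generators. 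By Proposition~\ref{propos_stab_generate}, applied to the multicurve $M=\gamma\cup\gamma'$ (which has no separating components), such a set is given by the twists $T_\varepsilon$ about separating simple closed curves $\varepsilon$ disjoint from $\gamma\cup\gamma'$ together with the twists $T_{\delta,\delta'}$ about bounding pairs $\{\delta,\delta'\}$ disjoint from $\gamma\cup\gamma'$.

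First I would dispose of the separating generators. If $\varepsilon$ is a separating simple closed curve disjoint from $\gamma\cup\gamma'$, then, since $\varepsilon$ is essential, $S\setminus\varepsilon$ is the disjoint union of a one-holed torus $\Sigma_1$ and a one-holed genus-$2$ surface $\Sigma_2$, and each of $\gamma,\gamma'$ lies in one of these pieces. A one-holed torus contains no two disjoint non-isotopic essential simple closed curves that are both non-separating in $S$: cutting it along a non-separating curve yields a pair of pants, every essential curve of which is boundary-parallel and hence isotopic in $S$ either to $\varepsilon$ (separating) or to the cut curve. So $\gamma$ and $\gamma'$ are not both contained in $\Sigma_1$; nor can they lie in different pieces, because $\varepsilon$ being separating forces $H$ to split as the direct sum of the images of $H_1(\Sigma_1;\Z)$ and $H_1(\Sigma_2;\Z)$, subgroups meeting only in~$0$, while $[\gamma]=\pm[\gamma']\neq0$. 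Hence both curves lie in $\Sigma_2$, and Proposition~\ref{propos_phi_values}(a) gives $\nu_\gamma(T_\varepsilon)=\nu_{\gamma'}(T_\varepsilon)=1$, so $\bar\nu(T_\varepsilon)=0$.

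Next I would handle the bounding-pair generators, and here the key auxiliary fact is a piece of genus-$3$ rigidity: $S$ contains no three pairwise disjoint, pairwise non-isotopic simple closed curves $c_1,c_2,c_3$ such that each pair $\{c_i,c_j\}$ is a bounding pair. I would prove this by cutting $S$ along $c_1$ to a connected genus-$2$ surface, in which $c_2$ and $c_3$ become separating curves, and then comparing Euler characteristics region by region, using repeatedly that two disjoint homologous non-isotopic simple closed curves cannot cobound an annulus; the genera one is forced to attribute to the regions cannot add up. Granting this, let $\{\delta,\delta'\}$ be a bounding pair disjoint from $\gamma\cup\gamma'$. If some component of $\{\delta,\delta'\}$ is isotopic to some component of $\{\gamma,\gamma'\}$, then $[\delta]=\pm[\delta']=\pm[\gamma]=\pm[\gamma']$ and all four curves are pairwise disjoint, so applying the rigidity fact to a suitable triple forces $\{\delta,\delta'\}$ and $\{\gamma,\gamma'\}$ to coincide as pairs of isotopy classes; then $T_{\delta,\delta'}=T_{\gamma,\gamma'}^{\pm1}$ and Proposition~\ref{propos_phi_values}(b) gives $\nu_\gamma(T_{\delta,\delta'}),\nu_{\gamma'}(T_{\delta,\delta'})\in\{1,-1\}$, both odd. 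Otherwise no component of $\{\delta,\delta'\}$ is homotopic to $\gamma$ or to $\gamma'$, and Proposition~\ref{propos_phi_values}(c) gives $\nu_\gamma(T_{\delta,\delta'})=\nu_{\gamma'}(T_{\delta,\delta'})=0$. In either case $\bar\nu(T_{\delta,\delta'})=0$, which finishes the proof.

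The main obstacle is precisely the bounding-pair case. A priori one might worry about a generator $T_{\delta,\delta'}$ in which exactly one of $\delta,\delta'$ is parallel to exactly one of $\gamma,\gamma'$: Proposition~\ref{propos_phi_values}(b) would then contribute $\pm1$ to one of $\nu_\gamma(T_{\delta,\delta'})$, $\nu_{\gamma'}(T_{\delta,\delta'})$ while part~(c) contributes $0$ to the other, violating the desired congruence. The genus-$3$ rigidity statement above is exactly what rules this configuration out, and establishing it carefully (a short cut-and-count argument) is the one genuinely nontrivial point; the separating-generator case is routine once one notices that $\gamma$ and $\gamma'$ are pinned to the genus-$2$ side of every disjoint separating curve.
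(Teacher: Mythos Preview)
Your proof is correct and follows essentially the same route as the paper's: reduce to the generators of $\I_{\gamma\cup\gamma'}$ supplied by Proposition~\ref{propos_stab_generate} and check the congruence on each using Proposition~\ref{propos_phi_values}. The paper's two-line argument is terser only because it silently invokes the fact, recorded at the beginning of Section~\ref{section_cells}, that a multicurve in $S_3$ contains at most one bounding pair; this is precisely your ``genus-$3$ rigidity'' statement, and it is what rules out the dangerous case of a bounding-pair generator with exactly one component parallel to one of $\gamma,\gamma'$. Your inline proof of that fact, together with the explicit verification that $\gamma$ and $\gamma'$ both lie on the genus-$2$ side of any disjoint separating curve, simply makes explicit what the paper leaves to the reader.
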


\begin{proof}
By Proposition~\ref{propos_stab_generate}, it is enough to check the required equality for $h=T_{\gamma,\gamma'}$ and $h=T_{\delta}$, where $\delta$ is a  separating simple closed curve disjoint from~$\gamma$ and~$\gamma'$. In both cases the equality follows from Proposition~\ref{propos_phi_values}.
\end{proof}

Proposition~\ref{propos_triv_restr1} allows us to assign to a bounding pair~$\{\gamma,\gamma'\}$ the homomorphism $\mu_{\gamma,\gamma'}\colon \I_{\gamma\cup\gamma'}\to\Z$ defined by
\begin{equation}\label{eq_mu_defin}
\mu_{\gamma,\gamma'}(h)=\frac{\nu_{\gamma}(h)+\nu_{\gamma'}(h)}2\,.
\end{equation}
Obviously $\mu_{\gamma',\gamma}=\mu_{\gamma,\gamma'}$. Next proposition follows immediately from Proposition~\ref{propos_phi_values}.

\begin{propos}\label{propos_psi_values}
Suppose that $\{\gamma,\gamma'\}$ is a bounding pair. Then $\mu_{\gamma,\gamma'}(T_{\gamma,\gamma'})=0$. Besides, $\mu_{\gamma,\gamma'}(T_{\delta})=1$ whenever $\delta$ is a separating simple closed curve disjoint from~$\gamma$ and~$\gamma'$.
\end{propos}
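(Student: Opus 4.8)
The plan is to deduce both equalities directly from Proposition~\ref{propos_phi_values} together with the definition~\eqref{eq_mu_defin} of $\mu_{\gamma,\gamma'}$, as the sentence preceding the statement indicates; there is no serious obstacle, only one small topological observation to make along the way. Throughout, I use that $\mu_{\gamma,\gamma'}$ is already known to be a well-defined homomorphism $\I_{\gamma\cup\gamma'}\to\Z$ by Proposition~\ref{propos_triv_restr1}.

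\emph{Step 1 (the bounding pair twist).} Since $T_{\gamma,\gamma'}=T_\gamma T_{\gamma'}^{-1}$ fixes both $\gamma$ and $\gamma'$, it lies in $\I_{\gamma\cup\gamma'}$, so $\mu_{\gamma,\gamma'}(T_{\gamma,\gamma'})$ is defined. I would apply Proposition~\ref{propos_phi_values}(b) twice: once to the bounding pair $\{\gamma,\gamma'\}$, giving $\nu_\gamma(T_{\gamma,\gamma'})=-1$, and once to the same unordered pair written as $\{\gamma',\gamma\}$, giving $\nu_{\gamma'}(T_{\gamma',\gamma})=-1$; since $T_{\gamma',\gamma}=T_{\gamma'}T_{\gamma}^{-1}=T_{\gamma,\gamma'}^{-1}$ and $\nu_{\gamma'}$ is a homomorphism, this yields $\nu_{\gamma'}(T_{\gamma,\gamma'})=1$. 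Hence $\mu_{\gamma,\gamma'}(T_{\gamma,\gamma'})=\frac{1}{2}(-1+1)=0$.

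\emph{Step 2 (a separating twist).} Let $\delta$ be a separating simple closed curve disjoint from $\gamma$ and $\gamma'$. It splits $S=S_3$ into a piece $\Sigma_1$ of genus $1$ and a piece $\Sigma_2$ of genus $2$ (a nontrivial separating curve in a genus~$3$ surface cannot have a genus~$0$ side). Each of $\gamma$, $\gamma'$ lies in one of these two pieces. They cannot lie in different pieces: $\delta$ is nullhomologous, so $H=H_1(\Sigma_1;\Z)\oplus H_1(\Sigma_2;\Z)$, and being nonseparating, $\gamma$ and $\gamma'$ have nonzero classes in the summand corresponding to the piece that contains them, hence could not be homologous in $H$ were they in different pieces. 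They also cannot both lie in $\Sigma_1$, since a one-holed torus contains no bounding pair: any two disjoint essential simple closed curves in it are either isotopic, or one of them is boundary-parallel and hence separating in $S$ — in either case not a bounding pair. Therefore $\gamma,\gamma'\subset\Sigma_2$, the genus~$2$ component of $S\setminus\delta$, and Proposition~\ref{propos_phi_values}(a) gives $\nu_\gamma(T_\delta)=\nu_{\gamma'}(T_\delta)=1$, whence $\mu_{\gamma,\gamma'}(T_\delta)=\frac{1}{2}(1+1)=1$.

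The only mildly nontrivial ingredient — which is what I would call the ``main obstacle'', although it is quite elementary — is the observation in Step 2 that a separating curve disjoint from a bounding pair necessarily has both curves of the pair on its genus~$2$ side; everything else is an immediate substitution into~\eqref{eq_mu_defin}, and no further verification is needed.
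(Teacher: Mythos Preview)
Your proof is correct and is exactly the argument the paper has in mind when it says the proposition ``follows immediately from Proposition~\ref{propos_phi_values}'': you simply plug the values from Proposition~\ref{propos_phi_values}(a),(b) into the definition~\eqref{eq_mu_defin}. The one thing you make explicit that the paper leaves implicit is the topological observation in Step~2 that both curves of the bounding pair must lie on the genus~$2$ side of~$\delta$; this is indeed needed and your justification is fine.
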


Consider orthogonal splittings $H_1(S_2;\Z)=U\oplus V$ into two rank~$2$ subgroups. We regard any such splitting as a unordered pair of subgroups, that is, do not distinguish between splittings~$U\oplus V$ and~$V\oplus U$.
By a theorem of Mess~\cite{Mes92} 
\begin{itemize}
\item the Torelli group~$\I_2$ is an infinitely generated free group,
\item for free generators of~$\I_2$ one can take the Dehn twists~$T_{\delta_i}$ about certain separating curves~$\delta_i$ giving all pairwise different orthogonal splittings of~$H_1(S_2;\Z)$.  
\end{itemize}
Hence, for each orthogonal splitting~$\CW$  of~$H_1(S_2;\Z)$, there is a unique homomorphism $d_{\CW}\colon \I_2\to \Z$ such that $d_{\CW}(T_{\delta})=1$ whenever $\delta$ is a separating curve providing the splitting~$\CW$ in homology and $d_{\CW}(T_{\delta})=0$ whenever $\delta$ provides any other splitting. Obviously, Morita's homomorphism~$d_0$ is the sum of all~$d_{\CW}$ in the sense that, for all~$h\in\I_2$,
$$
d_0(h)=\sum d_{\CW}(h).
$$
(The sum in the right-hand side is finite for each~$h$.)

Using homomorphisms~$d_{\CW}$ instead of~$d_0$, we may refine our construction of homomorphisms~$\nu_{\gamma}$ as follows.
Suppose that $\gamma$ is a non-separating curve in~$S=S_3$, and $\CW$ is an orthogonal splitting of the rank~$4$ abelian group $H_{\gamma}=\langle [\gamma]\rangle^{\bot}/\langle [\gamma]\rangle$. As above, consider the genus~$2$ surface~$S_2$ obtained by  compactifying~$S\setminus\gamma$. Then the group~$H_1(S_2;\Z)$ is canonically isomorphic to~$H_{\gamma}$, so $\CW$ can be regarded as a splitting of it. The homomorphism $\nu_{\gamma,\CW}\colon\I_{\gamma}\to\Z$ is, by definition, the composition
$$
\I_{\gamma}\xrightarrow{\lambda_{\gamma}}\I_2\xrightarrow{d_{\CW}}\Z.
$$

Next proposition is an analogue of Proposition~\ref{propos_phi_values} in this situation. 

\begin{propos}\label{propos_phiW_values}
\textnormal{(a)} Suppose that $\delta$ is a separating simple closed curve that is disjoint from~$\gamma$. Then $\nu_{\gamma,\CW}(T_{\delta})=1$ if~$\delta$ yields the splitting~$\CW$ for~$H_{\gamma}$, and $\nu_{\gamma,\CW}(T_{\delta})=0$ otherwise.

\textnormal{(b)} Suppose that $\{\gamma,\gamma'\}$ is a bounding pair. Then  $\nu_{\gamma,\CW}(T_{\gamma,\gamma'})=-1$ if this pair yields the splitting~$\CW$ for~$H_{\gamma}$, and $\nu_{\gamma,\CW}(T_{\gamma,\gamma'})=0$ otherwise.

\textnormal{(c)} Suppose that  $\{\delta,\delta'\}$ is a bounding pair  such that $\delta$ and~$\delta'$ are disjoint from~$\gamma$ and neither~$\delta$ nor~$\delta'$ is homotopic to~$\gamma$, then $\nu_{\gamma,\CW}(T_{\delta,\delta'})=0$.
\end{propos}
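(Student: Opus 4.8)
The plan is to compute directly, for each of the three types of generators appearing in the statement, the image under $\lambda_{\gamma}\colon\I_{\gamma}\to\I_2$, and then to read off the value of $\nu_{\gamma,\CW}=d_{\CW}\circ\lambda_{\gamma}$ from the defining property of~$d_{\CW}$ together with the canonical identification $H_1(S_2;\Z)\cong H_{\gamma}$. The one general fact used repeatedly is that if~$\epsilon$ and~$\epsilon'$ are simple closed curves disjoint from~$\gamma$, then naturality of Dehn twists under cutting~$S$ along~$\gamma$ and capping the two resulting punctures gives $\lambda_{\gamma}(T_{\epsilon})=T_{\bar\epsilon}$ and $\lambda_{\gamma}(T_{\epsilon}T_{\epsilon'}^{-1})=T_{\bar\epsilon}T_{\bar\epsilon'}^{-1}$, where $\bar\epsilon$ denotes the image of~$\epsilon$ in~$S_2$.

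\emph{Part~(a).} Here $\lambda_{\gamma}(T_{\delta})=T_{\bar\delta}$. The curve~$\delta$ bounds each of the two components of~$S\setminus\delta$, and since $\gamma$ is non-separating it lies in the interior of one of them and is non-separating there; hence cutting that component along~$\gamma$ keeps it connected, and $\bar\delta$ separates~$S_2$ into two subsurfaces. A genus count shows that the component of~$S_2\setminus\bar\delta$ coming from the $\gamma$-side of~$\delta$ is a disk precisely when $\gamma$ lies in the genus~$1$ component of~$S\setminus\delta$; in that case $T_{\bar\delta}=1$, so $\nu_{\gamma,\CW}(T_{\delta})=0$ for all~$\CW$, and this is exactly the case in which $\delta$ yields no splitting of~$H_{\gamma}$. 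Otherwise $\bar\delta$ is an essential separating curve of~$S_2$ splitting it into two genus~$1$ pieces, and it therefore determines an orthogonal splitting~$\CW_{\delta}$ of $H_1(S_2;\Z)\cong H_{\gamma}$, which by definition is the splitting yielded by~$\delta$; the defining property of~$d_{\CW}$ gives $\nu_{\gamma,\CW}(T_{\delta})=1$ if $\CW=\CW_{\delta}$ and $0$ otherwise.

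\emph{Part~(b).} Replacing~$T_{\gamma}$ by the isotopic twist~$T_{\gamma^{(1)}}$ about a curve~$\gamma^{(1)}$ parallel to~$\gamma$ on one side, which can be represented by a homeomorphism that is the identity near~$\gamma$, one sees that $T_{\gamma,\gamma'}=T_{\gamma^{(1)}}T_{\gamma'}^{-1}$ with both $\gamma^{(1)}$ and~$\gamma'$ disjoint from~$\gamma$. After cutting along~$\gamma$ and capping, $\gamma^{(1)}$ becomes a curve bounding a disk, so $\lambda_{\gamma}(T_{\gamma,\gamma'})=T_{\bar\gamma'}^{-1}$. The bounding pair $\{\gamma,\gamma'\}$ divides~$S$ into two surfaces of genus~$1$ with two boundary components each, and capping one of the two copies of~$\gamma$ in each piece shows that $\bar\gamma'$ is a separating curve of~$S_2$ cutting it into two genus~$1$ pieces; the resulting orthogonal splitting~$\CW_0$ of $H_1(S_2;\Z)\cong H_{\gamma}$ is by definition the one yielded by the bounding pair~$\{\gamma,\gamma'\}$. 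Hence $\nu_{\gamma,\CW}(T_{\gamma,\gamma'})=d_{\CW}(T_{\bar\gamma'}^{-1})=-1$ if $\CW=\CW_0$ and $0$ otherwise.

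\emph{Part~(c).} Now $\lambda_{\gamma}(T_{\delta,\delta'})=T_{\bar\delta}T_{\bar\delta'}^{-1}$. The bounding pair $\{\delta,\delta'\}$ cuts~$S$ into two surfaces of genus~$1$ with two boundary components each; since $\gamma$ is disjoint from~$\delta\cup\delta'$ and isotopic to neither curve, it lies in the interior of one of these pieces~$R$, and being non-separating in~$S$ it is non-separating in~$R$, so cutting~$R$ along~$\gamma$ yields a four-holed sphere. Capping the two copies of~$\gamma$ turns this into an annulus cobounded by~$\bar\delta$ and~$\bar\delta'$, so $\bar\delta$ is isotopic to~$\bar\delta'$ in~$S_2$ and $\lambda_{\gamma}(T_{\delta,\delta'})=1$; therefore $\nu_{\gamma,\CW}(T_{\delta,\delta'})=0$ for every~$\CW$. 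The main obstacle in all three parts is bookkeeping with the cut-and-cap operation — in particular, checking in part~(b) that the $\gamma$-twist factor becomes trivial after capping, and carrying out the genus counts that identify~$\bar\delta$, respectively~$\bar\gamma'$, as a separating curve of~$S_2$ realizing the prescribed splitting — after which each assertion is immediate from the definition of~$d_{\CW}$.
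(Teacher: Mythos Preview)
Your proof is correct and follows the same approach the paper has in mind: the paper states this proposition without proof, simply noting that it is the analogue of Proposition~\ref{propos_phi_values}, which in turn ``follows immediately from the definition of~$\nu_{\gamma}$.'' What you have written is precisely that immediate-from-definition argument spelled out in full, tracking what each curve becomes under cut-and-cap and then applying the defining property of~$d_{\CW}$; the one point that deserves the care you gave it is the verification in part~(c) that~$\gamma$ is non-separating in the piece~$R$ (which uses the hypothesis that~$\gamma$ is not isotopic to~$\delta$ or~$\delta'$).
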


\section{Stabilizers of four- and five-component multicurves without separating components and bounding pairs}\label{section_4component}

In this section we study the stabilizers~$\I_M$ for $4$-component multicurves in~$\M_1^{(1)}$ and~$\M_1^{(2)}$ and $5$-components multicurves in~$\M_2'$. Orientations of components of multicurves will not play any role, so we temporarily forget about them. We will conveniently study type~1 four-component multicurves first, then five-component multicurves, and only then type~2 four-component multicurves. In particularly, in Subsections~\ref{subsection_type1} and~\ref{subsection_type2} we prove Proposition~\ref{propos_theta_main}(b) for the two types of four-component multicurves, see Propositions~\ref{propos_theta_main1} and~\ref{propos_theta_main2}.

\subsection{Type~1 four-component multicurves}\label{subsection_type1}

Suppose that $M=\alpha_0\cup\alpha_1\cup\alpha_2\cup\alpha_3$ is a type~1 four-component multicurve in~$S$. Then $M$ divides $S$ into two connected components~$\Sigma_1$ and~$\Sigma_2$ each of which is homeomorphic to a four-punctured sphere, see Fig.~\ref{fig_psi}. We denote by $\ba_1$, $\ba_2$, and~$\ba_3$ the modulo~$2$ homology classes of~$\alpha_1$, $\alpha_2$, and~$\alpha_3$, respectively, and put $\fA=\{\ba_1,\ba_2,\ba_3\}$; then the modulo~$2$ homology class of~$\alpha_0$ is $\ba_1+\ba_2+\ba_3$. By~\eqref{eq_theta_equal} the class~$\theta_{\fA}$ is independent on the numeration of the components of~$M$. 

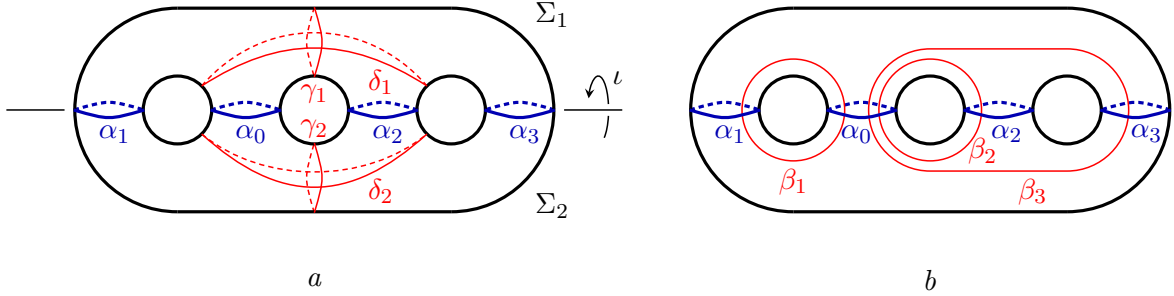
\begin{figure}
\begin{tikzpicture}[scale=.45]
\small
\definecolor{myblue}{rgb}{0, 0, 0.7}
\definecolor{mygreen}{rgb}{0, 0.4, 0}
\tikzset{every path/.append style={line width=.2mm}}
\tikzset{my dash/.style={dash pattern=on 2pt off 1.5pt}}

\begin{scope}[shift={(-9,0)}]
% Invisible lines

\draw[red,my dash] (0,-3.03) .. controls (-0.3,-2) .. (0,-.97); % e_0
\draw[red,my dash] (0,3.03) .. controls (-0.3,2) .. (0,.97); % e_0'
\draw[color=myblue,my dash, very thick] (-7.03,0) .. controls (-6,0.3) .. (-4.97,0); %e_1
\draw[color=myblue,my dash, very thick] (7.03,0) .. controls (6,0.3) .. (4.97,0); %e_2
\draw[color=myblue,my dash, very thick] (-3.03,0) .. controls (-2,0.3) .. (-.97,0); % e_0+e_1
\draw[color=myblue,my dash, very thick] (3.03,0) .. controls (2,0.3) .. (.97,0); % e_0+e_2
\draw[color=red,my dash] (-3.293,-.707) .. controls (-1.5,-2.2) and (1.5,-2.2) .. (3.293,-.707);
\draw[color=red,my dash] (-3.293,.707) .. controls (-1.5,2.8) and (1.5,2.8) .. (3.293,.707);

% Border lines

\draw [very thick] (0,0) circle (1);
\draw [very thick] (4,0) circle (1);
\draw [very thick] (-4,0) circle (1);
\draw [very thick]  (-4,3)--(4,3);
\draw [very thick]  (-4,-3)--(4,-3);
\draw [very thick] (-4,3) arc (90:270:3); 
\draw [very thick] (4,3) arc (90:-90:3) node [pos=.25, above right] {$\Sigma_1$} node [pos=.75, below right] {$\Sigma_2$}; 
\draw [] (-9,0) -- (-7.3,0);
\draw [] (9,0) -- (7.3,0);

\draw (8.3,0) + (-55: .3 and 1) arc (-55: -10: .3 and 1);
\draw [-stealth] (8.3,0) + (10: .3 and 1) arc (10: 160: .3 and 1) node [pos=.4, right] {$\iota$};

% Visible lines 

\draw[color=red] (0,-3.03) .. controls (0.3,-2) .. (0,-.97) node [above=-2pt] {$\gamma_2$};  %e_0
\draw[color=red] (0,3.03) .. controls (0.3,2) .. (0,.97) node [below=-1pt] {$\gamma_1$};  %e_0'

\draw[color=myblue, very thick] (-4.97,0) .. controls (-6,-0.3) .. (-7.03,0) node [pos=0.4,below=-1pt] {$\alpha_1$};%e_1
%\draw[color=red,->-=.55] (4.97,0) .. controls (6,-0.4) .. (7.03,0) node [pos=.3,below=-1pt] {$\alpha_2$}; %e_2
%\draw[color=red,->-=.55] (-7.03,0) .. controls (-6,-0.3) .. (-4.97,0)node [pos=0.4,below] {$\alpha_1$};%e_1 -
\draw[color=myblue, very thick] (7.03,0) .. controls (6,-0.3) .. (4.97,0) node [pos=.4,below=-1pt] {$\alpha_3$};%e_2 -

\draw[color=myblue, very thick] (-3.03,0) .. controls (-2,-0.3) .. (-.97,0) node [pos=.6,below=-1pt] {$\alpha_0$};%e_0+e_1
\draw[color=myblue, very thick] (3.03,0) .. controls (2,-0.3) .. (.97,0) node [pos=.4,below=-1pt] {$\alpha_2$};%e_0+e_2

%\draw[color=red] (-2,-3.03) .. controls (-1.7,-1.5) and (-1.7,1.5) .. (-2,3.03) node [pos=.25,right=-2pt] {$u$};
%\draw[color=red,my dash] (-2,-3.03) .. controls (-2.3,-1.5) and (-2.3,1.5) .. (-2,3.03);

\draw[color=red] (-3.293,-.707) .. controls (-1,-2.8) and (1,-2.8) .. (3.293,-.707) node [pos=.8,below=0pt] {$\delta_2$};
\draw[color=red] (-3.293,.707) .. controls (-1,2.2) and (1,2.2) .. (3.293,.707)node [pos=.8,below=-1pt] {$\delta_1$};

\node[] at (0,-5) {\textit{a}};
\end{scope}

\begin{scope}[shift={(9,0)}]
% Invisible lines

\draw[color=myblue,my dash, very thick] (-7.03,0) .. controls (-6,0.3) .. (-4.97,0); %e_1
\draw[color=myblue,my dash, very thick] (7.03,0) .. controls (6,0.3) .. (4.97,0); %e_2
\draw[color=myblue,my dash, very thick] (-3.03,0) .. controls (-2,0.3) .. (-.97,0); % e_0+e_1
\draw[color=myblue,my dash, very thick] (3.03,0) .. controls (2,0.3) .. (.97,0); % e_0+e_2

% Border lines

\draw [very thick] (0,0) circle (1);
\draw [very thick] (4,0) circle (1);
\draw [very thick] (-4,0) circle (1);
\draw [very thick]  (-4,3)--(4,3);
\draw [very thick]  (-4,-3)--(4,-3);
\draw [very thick] (-4,3) arc (90:270:3); 
\draw [very thick] (4,3) arc (90:-90:3); 

% Visible lines 

\draw [color=red] (-2.5,0) arc (0:360:1.5)node [pos=.75, below=-1pt] {$\beta_1$} ;

\draw [color=red] (1.5,0) arc (0:360:1.5) node [pos=.85, right=-1pt] {$\beta_2$} ;
\draw [color=red] (0,1.8) arc (90:270:1.8) -- (4,-1.8) node [pos=.75,below=-1pt] {$\beta_3$} arc (-90:90:1.8) -- (0,1.8) ;

\draw[color=myblue, very thick] (-4.97,0) .. controls (-6,-0.3) .. (-7.03,0) node [pos=0.4,below=-1pt] {$\alpha_1$};%e_1
%\draw[color=red,->-=.55] (4.97,0) .. controls (6,-0.4) .. (7.03,0) node [pos=.3,below=-1pt] {$\alpha_2$}; %e_2
%\draw[color=red,->-=.55] (-7.03,0) .. controls (-6,-0.3) .. (-4.97,0)node [pos=0.4,below] {$\alpha_1$};%e_1 -
\draw[color=myblue, very thick] (7.03,0) .. controls (6,-0.3) .. (4.97,0) node [pos=.3,below=-1pt] {$\alpha_3$};%e_2 -

\draw[color=myblue, very thick] (-3.03,0) .. controls (-2,-0.3) .. (-.97,0) node [pos=.4,below=-1pt] {$\alpha_0$};%e_0+e_1
\draw[color=myblue, very thick] (3.03,0) .. controls (2,-0.3) .. (.97,0) node [pos=.35,below=-1pt] {$\alpha_2$};%e_0+e_2

%\draw[color=red] (-2,-3.03) .. controls (-1.7,-1.5) and (-1.7,1.5) .. (-2,3.03) node [pos=.25,right=-2pt] {$u$};
%\draw[color=red,my dash] (-2,-3.03) .. controls (-2.3,-1.5) and (-2.3,1.5) .. (-2,3.03);
\node[] at (0,-5) {\textit{b}};

\end{scope}

\end{tikzpicture}
\caption{Type 1 multicurve}\label{fig_psi}
\end{figure}

We denote by~$\theta_M$ the restriction of~$\theta_{\fA}$ to the stabilizer~$\I_M$. In this section we first prove that $\theta_M=0$, and then construct a homomorphism $\psi_M\colon \CC_M\to\Z/2$ originating in vanishing of~$\theta_M$. (Recall that we denote by~$\CC$ the kernel of the Birman--Craggs--Johnson homomorphism~$\sigma$, see Section~\ref{subsection_BC}.)

For each $s=1,2$, choose simple closed curves $\gamma_s$ and~$\delta_s$ in~$\Sigma_s$ so that the modulo~$2$ homology classes of~$\gamma_s$ and~$\delta_s$ are $\ba_2+\ba_3$ and~$\ba_1+\ba_3$, respectively, and the geometric intersection number of~$\gamma_s$ and~$\delta_s$ is~$2$, see Fig.~\ref{fig_psi}(a). (The reason why the components of~$M$ in Fig.~\ref{fig_psi} are arranged in a rather strange order is to achieve that $\gamma_s$ and~$\delta_s$ look nice.) Let $\bb_1$, $\bb_2$, and~$\bb_3$ be the modulo~$2$ homology classes of the curves~$\beta_1$, $\beta_2$, and~$\beta_3$ in Fig.~\ref{fig_psi}(b), respectively. Then   $\ba_1,\ba_2,\ba_3,\bb_1,\bb_2,\bb_3$ is a symplectic basis of~$H_{\Z/2}$. Number the quadratic forms~$\omega_i$ and the homomorphisms~$\rho_i=\rho_{\omega_i}$ so that equations~\eqref{eq_omega_numeration} hold.

Either of the surfaces $\Sigma_s$ is homeomorphic to  a sphere with four punctures. Hence $\PMod(\Sigma_s)$ is the free group~$\F_2$ with two generators $u_s=T_{\gamma_s}$ and $v_s=T_{\delta_s}$. Therefore $\PMod(S\setminus M)=\F_2\times\F_2$, where generators of the first factor are $u_1$ and~$v_1$ and generators of the second factor are $u_2$ and~$v_2$. Since $M$ does not contain a bounding pair, the natural homomorphism 
$\eta \colon \I_M\to\F_2\times\F_2$ in the Birman--Lubotzky--McCarthy exact sequence~\eqref{eq_BLM} is injective.

{\sloppy
\begin{propos}\label{propos_StabIM12}
The following three subgroups of~$\F_2\times\F_2$ coincide with each other:
\begin{itemize}
\item $\eta(\I_M)$,
\item $\ker f$, where   $f\colon \F_2\times\F_2\to\Z\times\Z$  is the homomorphism sending the generators~$u_1$ and~$u_2$ to~$(1,0)$ and the generators~$v_1$ and~$v_2$ to~$(0,1)$,
\item the normal closure of the two elements~$z_1=u_1u_2^{-1}$ and~$z_2=v_1v_2^{-1}$,
\item the subgroup generated by the three elements $z_1=u_1u_2^{-1}$, $z_2=v_1v_2^{-1}$, and $z_3=[u_1,v_1]$.
\end{itemize}
\end{propos}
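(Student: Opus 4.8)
The statement splits into a homological identification, $\eta(\I_M)=\ker f$, and two purely group-theoretic identities, $\ker f=\langle\langle z_1,z_2\rangle\rangle=\langle z_1,z_2,z_3\rangle$ inside $\F_2\times\F_2$; the plan is to treat them in this order.

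For the \emph{homological part}, fix integral homology classes $a_1,a_2,a_3$ of $\alpha_1,\alpha_2,\alpha_3$ and extend them to a symplectic basis $a_1,a_2,a_3,b_1,b_2,b_3$ of $H$; then $[\alpha_0]=\pm(a_1+a_2+a_3)$ and $L=\langle a_1,a_2,a_3\rangle$ is a Lagrangian. Every mapping class in $\Stab_{\Mod(S)}(\overrightarrow{M})$ fixes each $a_i$ and hence fixes $L$ pointwise, so, exactly as in the second ``easy observation'' in the proof of Proposition~\ref{propos_orbits}, its image in $\Sp(6,\Z)$ is a matrix $\begin{pmatrix}I&P\\0&I\end{pmatrix}$ with $P$ a symmetric $3\times3$ integral matrix; denote the group of all such matrices by $\mathbb{L}$. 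The subgroup $G(M)$ then maps onto the sublattice $\Lambda\subseteq\mathbb{L}$ spanned by $E_{11},E_{22},E_{33}$ (the images of $T_{\alpha_1},T_{\alpha_2},T_{\alpha_3}$) and by the all-ones matrix $J$ (the image of $T_{\alpha_0}$). Since $G(M)\subseteq\ker\eta$ and, by exactness of the lower row of~\eqref{eq_BLM}, any two lifts to $\Stab_{\Mod(S)}(\overrightarrow{M})$ of an element of $\PMod(S\setminus M)$ differ by an element of $G(M)$, the homomorphism $\Stab_{\Mod(S)}(\overrightarrow{M})\to\mathbb{L}$ descends to a homomorphism $\bar f\colon\F_2\times\F_2=\PMod(S\setminus M)\to\mathbb{L}/\Lambda$. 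I would then check $\eta(\I_M)=\ker\bar f$: the inclusion ``$\subseteq$'' is obvious, and if $\bar f(h)=0$, lift $h$ to $g\in\Stab_{\Mod(S)}(\overrightarrow{M})$, note that its image in $\mathbb{L}$ coincides with that of some $g'\in G(M)$, so $g'^{-1}g\in\I_M$ and $\eta(g'^{-1}g)=h$. Finally I would identify $\bar f$ with $f$ up to an automorphism of the target: $u_s=T_{\gamma_s}$ lifts to the Dehn twist $T_{\gamma_s}\in\Mod(S)$, which acts on $H$ by the transvection along $[\gamma_s]=\pm(a_2+a_3)$, and $v_s=T_{\delta_s}$ lifts to the transvection along $[\delta_s]=\pm(a_1+a_3)$; since a transvection does not depend on the sign of its vector, $\bar f(u_1)=\bar f(u_2)$ and $\bar f(v_1)=\bar f(v_2)$. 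Identifying $\mathbb{L}/\Lambda$ with $\Z^3/\langle(1,1,1)\rangle$ by recording the off-diagonal triple $(P_{12},P_{13},P_{23})$ modulo $(1,1,1)$, one computes that $\bar f(u_s)$ and $\bar f(v_s)$ are, up to sign, the classes of $(0,0,1)$ and $(0,1,0)$, and since $(0,0,1),(0,1,0),(1,1,1)$ is a basis of $\Z^3$ these two classes form a basis of $\mathbb{L}/\Lambda\cong\Z^2$. Hence $\bar f$ equals $f$ followed by an isomorphism $\Z^2\xrightarrow{\sim}\mathbb{L}/\Lambda$, so $\ker\bar f=\ker f$ and $\eta(\I_M)=\ker f$.

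For the \emph{group-theoretic identities}, adjoining to the standard presentation $\langle u_1,v_1,u_2,v_2\mid[u_1,u_2],[u_1,v_2],[v_1,u_2],[v_1,v_2]\rangle$ of $\F_2\times\F_2$ the two relators $z_1=u_1u_2^{-1}$ and $z_2=v_1v_2^{-1}$ and carrying out the Tietze transformations that eliminate $u_2$ and $v_2$ collapses it to $\langle u_1,v_1\mid[u_1,v_1]\rangle\cong\Z^2$, and the resulting quotient map is exactly $f$; therefore $\langle\langle z_1,z_2\rangle\rangle=\ker f$. For the last equality, set $N=\langle z_1,z_2,z_3\rangle$; clearly $N\subseteq\ker f$, and it suffices to show that $N$ is normal in $\F_2\times\F_2$, since then $\ker f=\langle\langle z_1,z_2\rangle\rangle\subseteq N\subseteq\ker f$. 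Normality is checked on the four generators, using $u_1=z_1u_2$, $v_1=z_2v_2$, the fact that the two free factors commute, and the identity $u_1v_1u_1^{-1}v_1^{-1}=(u_1v_1)\,z_3\,(u_1v_1)^{-1}=z_1z_2z_3z_2^{-1}z_1^{-1}$: one obtains $u_1z_1u_1^{-1}=z_1$, $u_1z_3u_1^{-1}=z_1z_3z_1^{-1}$, $u_1z_2u_1^{-1}=(z_1z_2z_3z_2^{-1}z_1^{-1})z_2$, and symmetrically $v_1z_2v_1^{-1}=z_2$, $v_1z_3v_1^{-1}=z_2z_3z_2^{-1}$, $v_1z_1v_1^{-1}=(z_1z_2z_3^{-1}z_2^{-1}z_1^{-1})z_1$, all of which lie in $N$; conjugation by $u_2=z_1^{-1}u_1$ and $v_2=z_2^{-1}v_1$ then reduces to these cases composed with conjugation by $z_1^{\pm1},z_2^{\pm1}\in N$. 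Thus $N=\ker f=\langle\langle z_1,z_2\rangle\rangle=\eta(\I_M)$, which is the desired conclusion.

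The hardest point is the bookkeeping in the homological part: one must pin down the homology classes of $\gamma_s$ and $\delta_s$ and the image of $G(M)$ in $\mathbb{L}$ precisely enough to be sure that $\bar f(u_s)$ and $\bar f(v_s)$ really do form a basis of $\mathbb{L}/\Lambda$ — equivalently, that $\bar f$ and $f$ have the same kernel. Once this is settled, the rest is a routine diagram chase together with the short list of commutator identities above.
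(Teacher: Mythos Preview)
Your argument is correct. The homological identification $\eta(\I_M)=\ker f$ is essentially the paper's computation in different packaging: the paper picks the two linear functionals $p_{23}-p_{12}$ and $p_{13}-p_{12}$ on~$\mathbb{L}$, observes that they kill $G(M)$ and equal $f_1\circ\eta$ and $f_2\circ\eta$, and deduces $\eta(\I_M)\subseteq\ker f$; the opposite inclusion is obtained from $z_1,z_2,z_3\in\eta(\I_M)$ together with $\ker f=\langle z_1,z_2,z_3\rangle$. Your passage to the quotient $\mathbb{L}/\Lambda$ and identification $\bar f=f$ up to an isomorphism of the target amounts to choosing the same two functionals at once, and has the mild advantage that it gives both inclusions $\eta(\I_M)=\ker\bar f$ directly, without first needing to place $z_1,z_2,z_3$ inside~$\I_M$.

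The genuine difference is in the proof that $K=\langle z_1,z_2,z_3\rangle$ equals $\ker f$. The paper argues element-by-element: write $w\in\ker f$ as $z_1^kz_2^lw_1w_2$ with $w_i$ in the commutator subgroup of the $i$th factor, then express each $[u_s^m,v_s]$ as a word in $z_1,z_2,z_3$ (via $z_4=[u_2,v_2]=z_1z_2z_3^{-1}z_1^{-1}z_2^{-1}$). Your route is cleaner: since $K\subseteq\ker f$ is obvious and $\langle\langle z_1,z_2\rangle\rangle=\ker f$, it suffices to show $K$ is normal, which you do by a short list of conjugation identities. This avoids the explicit ``every commutator lies in $K$'' step; conversely, the paper's decomposition has the side benefit of producing the element $z_4$ and the relation $z_4=z_1z_2z_3^{-1}z_1^{-1}z_2^{-1}$, which is reused later (Proposition~\ref{propos_psi2}). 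Either approach is fine here.
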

}

\begin{proof}
The proof of the fact that $\ker f$ coincides with the normal closure of~$z_1$ and~$z_2$ is straightforward. 

Let $K$ be the subgroup of~$\F_2\times\F_2$ generated by~$z_1$, $z_2$, and~$z_3$. Obviously, $K\subseteq \ker f$. Let us prove the opposite inclusion. We put 
$z_4=[u_2,v_2]$. Then $z_4\in K$, since  
\begin{equation}\label{eq_z4}
z_4=z_1z_2z_3^{-1}z_1^{-1}z_2^{-1}.
\end{equation}

Consider an element $w\in \ker f$ and write it as a word~$W$ in letters~$u_1^{\pm 1}$, $v_1^{\pm 1}$, $u_2^{\pm 1}$, and~$v_2^{\pm 1}$. Let $k$ be the algebraic number of occurrences of~$u_1$ in~$W$, i.\,e. the number of occurrences of~$u_1$ minus the number of occurrences of~$u_1^{-1}$. Similarly, let $l$ be the algebraic number of occurrences of~$v_1$ in~$W$. Since $f(w)=0$, we see that the algebraic numbers of occurrences of~$u_2$ and~$v_2$ in~$W$ are equal to~$-k$ and~$-l$, respectively. Obviously, the numbers~$k$ and~$l$ are independent of the choice of a word~$W$ representing~$w$. It is easy to see that $w$ can be written in the form
$$
w=z_1^kz_2^lw_1w_2,
$$
where $w_1$ lies in the commutator subgroup of the first factor~$\F_2$ and $w_2$ lies in the commutator subgroup of the second factor~$\F_2$. It is a standard fact that these commutator subgroups are generated by the commutators~$[u_1^m,v_1]$ and~$[u_2^m,v_2]$, $m\in\Z$, respectively. So to show that $\ker f\subseteq K$ it is sufficient to prove that all commutators~$[u_1^m,v_1]$ and~$[u_2^m,v_2]$ lie in~$K$. We have
$$
[u_1^m,v_1]=u_1^{-1}[u_1^{m-1},v_1]u_1[u_1,v_1]=z_1^{-1}[u_1^{m-1},v_1]z_1z_3.
$$
Therefore, $[u_1^m,v_1]=z_1^{-m}(z_1z_3)^m$ and, similarly,  $[u_2^m,v_2]=z_2^{-m}(z_2z_4)^m$. Thus, $\ker f=K$.

Finally, let us prove that $\eta(\I_M)=K$.

The elements $z_1=T_{\gamma_1,\gamma_2}$ and $z_2=T_{\delta_1,\delta_2}$ are twists about bounding pairs disjoint from~$M$, and the element $z_3=[T_{\gamma_1},T_{\delta_1}]$ is the commutator of a simply intersecting pair. Hence $z_1,z_2,z_3\in\eta(\I_M)$ and therefore $K\subseteq\eta(\I_M)$.

Let us prove the opposite inclusion. Let $a_1,a_2,a_3,b_1,b_2,b_3$ be the integral homology classes of the curves $\alpha_1,\alpha_2,\alpha_3,\beta_1,\beta_2,\beta_3$, respectively, with orientations chosen so that $[\gamma_1]=[\gamma_2]=a_2+a_3$, $[\delta_1]=[\delta_2]=a_1+a_3$ and $a_i\cdot b_i=1$, $i=1,2,3$. 

The Birman--Lubotzky--McCarthy short exact sequence~\eqref{eq_BLM} reads as 
$$
1\to G(M)\to \Stab_{\Mod(S)}\bigl(\overrightarrow{M}\bigr)\xrightarrow{\eta}\F_2\times\F_2\to 1.
$$
The action of an element~$h\in\Stab_{\Mod(S)}\bigl(\overrightarrow{M}\bigr)$ on~$H$ is written in the  basis $a_1,a_2,a_3,b_1,b_2,b_3$ by a matrix 
$$
\begin{pmatrix}
I&-P\\
0&\phantom{-}I
\end{pmatrix},
$$
where $I$ is the unit $3\times 3$ matrix and $P=P(h)=(p_{ij})_{1\le i,j\le 3}$
is a symmetric $3\times 3$ integer matrix. Therefore, we obtain well-defined homomorphisms 
$$p_{ij}\colon  \Stab_{\Mod(S)}\bigl(\overrightarrow{M}\bigr)\to\Z, \qquad 1\le i\le j\le 3.$$
It is easy to see that the matrices $P$ for the twists $T_{\alpha_0}$, $T_{\alpha_1}$, $T_{\alpha_2}$, $T_{\alpha_3}$ are 
\begin{align*}
&
\begin{pmatrix}
1&1&1\\
1&1&1\\
1&1&1
\end{pmatrix},&
&
\begin{pmatrix}
1&0&0\\
0&0&0\\
0&0&0
\end{pmatrix},&
&
\begin{pmatrix}
0&0&0\\
0&1&0\\
0&0&0
\end{pmatrix},&
&
\begin{pmatrix}
0&0&0\\
0&0&0\\
0&0&1
\end{pmatrix},
\end{align*}
respectively. It follows that the homomorphisms $p_{23}-p_{12}$ and~$p_{13}-p_{12}$ vanish on~$G(M)$ and hence determine well-defined homomorphisms $\F_2\times\F_2\to\Z$. Further, it is easy to see that 
\begin{align*}
P(T_{\gamma_1})=P(T_{\gamma_2})&=
\begin{pmatrix}
0&0&0\\
0&1&1\\
0&1&1
\end{pmatrix},&
P(T_{\delta_1})=P(T_{\delta_2})&=
\begin{pmatrix}
1&0&1\\
0&0&0\\
1&0&1
\end{pmatrix}.
\end{align*}
Therefore, $f_1\circ\eta=p_{23}-p_{12}$ and $f_2\circ\eta=p_{13}-p_{12}$, where $f_1$ and~$f_2$ are the coordinates of the homomorphism~$f$. Since all homomorphisms $p_{ij}$ obviously vanish on $\I_M$, we obtain that $f_1$ and~$f_2$ vanish on~$\eta(\I_M)$. Thus, $\eta(\I_M)\subseteq K$.
\end{proof}

In the sequel, we conveniently identify~$\I_M$ with its image in~$\F_2\times\F_2$ under~$\eta$.

Before proceeding with studying restrictions of the Birman--Craggs homomorphism to~$\I_M$, let us mention the following easy consequence of Propositions~\ref{propos_StabIM12} and~\ref{propos_phi_values}.

\begin{propos}\label{propos_triv_restr2}
The restrictions of the homomorphisms~$\nu_{\alpha_i}$ to the group~$\I_M$ are trivial, $i=0,1,2,3$.
\end{propos}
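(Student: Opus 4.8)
The plan is to reduce the assertion, via Proposition~\ref{propos_StabIM12}, to a check on three group elements and then dispatch each of them using Proposition~\ref{propos_phi_values}. Fix $i\in\{0,1,2,3\}$. Since $\nu_{\alpha_i}\colon\I_{\alpha_i}\to\Z$ is a homomorphism into an abelian group and, by Proposition~\ref{propos_StabIM12}, the group $\I_M$ (identified via $\eta$ with its image in $\F_2\times\F_2$) is generated by $z_1=T_{\gamma_1,\gamma_2}$, $z_2=T_{\delta_1,\delta_2}$, and $z_3=[T_{\gamma_1},T_{\delta_1}]$, it suffices to show that $\nu_{\alpha_i}$ annihilates $z_1$, $z_2$, and $z_3$.

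For $z_1$ and $z_2$ I would simply invoke Proposition~\ref{propos_phi_values}(c). Both $\{\gamma_1,\gamma_2\}$ and $\{\delta_1,\delta_2\}$ are bounding pairs all of whose curves are disjoint from $M$, hence from $\alpha_i$, and their integral homology classes are $\pm([\alpha_2]+[\alpha_3])$ and $\pm([\alpha_1]+[\alpha_3])$; these differ from $\pm[\alpha_j]$ for every $j$ because $[\alpha_1],[\alpha_2],[\alpha_3]$ are linearly independent and $[\alpha_0]=\pm([\alpha_1]+[\alpha_2]+[\alpha_3])$. Thus none of the four curves is homotopic to any $\alpha_j$, and Proposition~\ref{propos_phi_values}(c) gives $\nu_{\alpha_i}(z_1)=\nu_{\alpha_i}(z_2)=0$.

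The only genuine issue is $z_3$, because it is the commutator of a simply intersecting pair rather than a twist, so Proposition~\ref{propos_phi_values} does not apply to it as stated; I expect this to be the one (minor) obstacle. I would remove it by rewriting $z_3$ as a product of two bounding-pair twists. Since $\gamma_2\subset\Sigma_2$ is disjoint from $\gamma_1\cup\delta_1\subset\Sigma_1$ and homologous to $\gamma_1$, the twist $T_{\gamma_2}$ commutes with $[T_{\gamma_1},T_{\delta_1}]$, and the same manipulation as in the proof of the last lemma of Subsection~\ref{subsection_stabilizers} yields $z_3=T_{\gamma_1,\gamma_2}^{-1}\,T_{T_{\delta_1}^{-1}(\gamma_1),\,\gamma_2}$. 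Because $[\gamma_1]\cdot[\delta_1]=0$ (both classes lie in the Lagrangian subgroup spanned by $[\alpha_1],[\alpha_2],[\alpha_3]$), the curve $T_{\delta_1}^{-1}(\gamma_1)$ is again homologous to $\gamma_1$, so $\{\gamma_1,\gamma_2\}$ and $\{T_{\delta_1}^{-1}(\gamma_1),\gamma_2\}$ are bounding pairs disjoint from $M$ whose curves carry homology class $\pm([\alpha_2]+[\alpha_3])$ and hence are not homotopic to any $\alpha_j$. A final application of Proposition~\ref{propos_phi_values}(c) to each factor then gives $\nu_{\alpha_i}(z_3)=0$, which completes the argument. (If one prefers to bypass the rewriting, an alternative for this last step is to note that the capping homomorphism $\PMod(S\setminus\alpha_i)\to\Mod(S_2)$ sends $z_3$ to $[T_{\overline\gamma_1},T_{\overline\delta_1}]$, where $\overline\gamma_1,\overline\delta_1$ are the images of $\gamma_1,\delta_1$ in $S_2$; these are isotopic to two disjoint components of the image of $M\setminus\alpha_i$, so $\lambda_{\alpha_i}(z_3)=1$ and therefore $\nu_{\alpha_i}(z_3)=d_0(1)=0$.)
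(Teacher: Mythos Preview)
Your proof is correct and follows essentially the same approach as the paper. The paper's own argument is a one-line compression of yours: rather than decomposing $z_3$ as a product of two bounding-pair twists, it simply observes that $z_1$, $z_2$, and $z_1z_3=T_{\gamma_1,\gamma_2}^{v_1}=T_{T_{\delta_1}^{-1}(\gamma_1),\gamma_2}$ also generate $\I_M$ and are all bounding-pair twists to which Proposition~\ref{propos_phi_values}(c) applies---which is exactly your rewriting $z_3=z_1^{-1}(z_1z_3)$, viewed from the other side.
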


\begin{proof}
By Proposition~\ref{propos_phi_values} every homomorphism~$\nu_{\alpha_i}$ takes zero value on each of the bounding pair twists~$z_1$, $z_2$, and~$z_1z_3=T_{\gamma_1,\gamma_2}^{v_1}$, which generate~$\I_M$.
\end{proof}

\begin{propos}
We have, 
\begin{gather}
\rho_0(z_1)=\rho_1(z_1)=0,\qquad \rho_2(z_1)=\rho_3(z_1)=1,\label{eq_rho_z1}\\
\rho_0(z_2)=\rho_2(z_2)=0,\qquad \rho_1(z_2)=\rho_3(z_2)=1,\label{eq_rho_z2}\\
\rho_0(z_3)=\rho_1(z_3)=\rho_2(z_3)=\rho_3(z_3)=1.\label{eq_rho_z3}
\end{gather}
\end{propos}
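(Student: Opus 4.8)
The elements involved are the bounding-pair twists $z_1=T_{\gamma_1,\gamma_2}$ and $z_2=T_{\delta_1,\delta_2}$, both disjoint from $M$, and the commutator $z_3=[T_{\gamma_1},T_{\delta_1}]$ of a simply intersecting pair. The plan is to evaluate all three by reducing to Johnson's formula \eqref{eq_BC2} for the value of $\rho_\omega$ on a bounding-pair twist. Every bounding pair that occurs will have mod~$2$ homology class of the shape $\ba_k+\ba_l$ with $k\ne l$, and then the prefactor $\omega_i(\bc)+1$ in \eqref{eq_BC2} equals $1$ for every $i$: indeed $\omega_i(\ba_k)=\omega_i(\ba_l)=1$ and $\ba_k\cdot\ba_l=0$, so \eqref{eq_Sp} gives $\omega_i(\ba_k+\ba_l)=0$.

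For $z_1$ the bounding pair $\{\gamma_1,\gamma_2\}$ has mod~$2$ class $\ba_2+\ba_3$, and one of the two pieces into which $\gamma_1\cup\gamma_2$ cuts $S$ is the genus-$1$ surface $S'$ with two boundary circles obtained by gluing the pants of $\Sigma_1$ cut off by $\gamma_1$ and carrying $\alpha_2,\alpha_3$ to the corresponding pants of $\Sigma_2$ along $\alpha_2$ and $\alpha_3$. Inside $S'$ one picks a one-holed torus with symplectic basis $\ba_2,\mathbf{h}$, where $\mathbf{h}$ is the class of a curve meeting each of $\alpha_2,\alpha_3$ once; an inspection of Figure~\ref{fig_psi} shows that $\mathbf{h}\in\langle\ba_2,\ba_3,\bb_2,\bb_3\rangle$ with $\bb_2$- and $\bb_3$-coefficients equal to $1$, and since $(\bb_2+\bb_3)\cdot\ba_2=(\bb_2+\bb_3)\cdot\ba_3=1$ the undetermined $\ba$-coefficients do not affect $\omega_i(\mathbf{h})=\omega_i(\bb_2)+\omega_i(\bb_3)$. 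Then \eqref{eq_BC2} gives $\rho_i(z_1)=\omega_i(\ba_2)\bigl(\omega_i(\bb_2)+\omega_i(\bb_3)\bigr)$, which by \eqref{eq_omega_numeration} is \eqref{eq_rho_z1}. The computation for $z_2$ is identical after exchanging the indices $1$ and $2$ (the relevant genus-$1$ piece now carries $\alpha_1,\alpha_3$), giving $\rho_i(z_2)=\omega_i(\ba_1)\bigl(\omega_i(\bb_1)+\omega_i(\bb_3)\bigr)$, which is \eqref{eq_rho_z2}.

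For $z_3$ I would first rewrite the commutator as a product of bounding-pair twists exactly as in the proof of the last lemma of Subsection~\ref{subsection_stabilizers}, taking $\varepsilon=\gamma_2$ (which is disjoint from both $\gamma_1$ and $\delta_1$ and homologous to $\gamma_1$): since $T_{c,c'}=T_cT_{c'}^{-1}$ and $\gamma_2$ commutes with $T_{\gamma_1}$ and $T_{\delta_1}$, this yields
\[
z_3=[T_{\gamma_1},T_{\delta_1}]=T_{\gamma_1,\gamma_2}^{-1}\,T_{T_{\delta_1}^{-1}(\gamma_1),\gamma_2}=z_1^{-1}\,T_{T_{\delta_1}^{-1}(\gamma_1),\gamma_2}.
\]
Both factors are twists about bounding pairs disjoint from $M$ of mod~$2$ class $\ba_2+\ba_3$. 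Since $\delta_1$ is disjoint from $\gamma_2$ and from $\alpha_2$, the second bounding pair cobounds the $T_{\delta_1}^{-1}$-image of the genus-$1$ piece used for $z_1$, so its handle class is $\mathbf{h}+\bigl(\mathbf{h}\cdot[\delta_1]\bigr)[\delta_1]=\mathbf{h}+\ba_1+\ba_3$ (using $[\delta_1]\equiv\ba_1+\ba_3$ and $\mathbf{h}\cdot(\ba_1+\ba_3)=1$) while $\ba_2$ is unchanged. Hence, by \eqref{eq_BC2} and \eqref{eq_Sp},
\[
\rho_i\bigl(T_{T_{\delta_1}^{-1}(\gamma_1),\gamma_2}\bigr)=\omega_i(\ba_2)\bigl(\omega_i(\mathbf{h})+\omega_i(\ba_1+\ba_3)+\mathbf{h}\cdot(\ba_1+\ba_3)\bigr)=\omega_i(\mathbf{h})+1=\rho_i(z_1)+1,
\]
and therefore $\rho_i(z_3)=\rho_i(z_1)+\bigl(\rho_i(z_1)+1\bigr)=1$ for every $i$, which is \eqref{eq_rho_z3}.

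The only genuinely delicate point is the homological bookkeeping in the second paragraph: verifying from Figure~\ref{fig_psi} which side of each bounding pair carries the genus-$1$ piece, and that the handle class $\mathbf{h}$ has no $\bb_1$-component (equivalently, is orthogonal to $\ba_1$, which holds because the piece in question is disjoint from $\alpha_1$) and no $\ba_1$-component (i.e. is orthogonal to $\bb_1$, which holds because $\beta_1$ can be isotoped off that piece). I expect this to be routine, and the outcome is in any case forced: the values must be insensitive to the undetermined $\ba_2,\ba_3$-coefficients of $\mathbf{h}$, and they must be compatible with the $\mathfrak{S}_3$-symmetry of the configuration permuting $\alpha_1,\alpha_2,\alpha_3$ (hence $\bb_1,\bb_2,\bb_3$ and $\rho_1,\rho_2,\rho_3$) and fixing $\rho_0$, which is visibly the case for \eqref{eq_rho_z1}--\eqref{eq_rho_z3}.
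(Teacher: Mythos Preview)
Your proof is correct and follows essentially the same route as the paper's: both reduce to Johnson's formula~\eqref{eq_BC2}/\eqref{eq_BCJ_BP} for bounding-pair twists, and both handle $z_3$ via the identity $z_1z_3=T_{T_{\delta_1}^{-1}(\gamma_1),\gamma_2}$, reading off the new handle basis as the $T_{\delta_1}^{-1}$-image of the old one.

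The one cosmetic difference is the choice of genus-$1$ piece. For $z_1$ the paper takes the side of $\gamma_1\cup\gamma_2$ containing $\alpha_0,\alpha_1$, whose one-holed torus has the ready-made symplectic basis $\ba_1,\bb_1$ from Figure~\ref{fig_psi}(b); this gives the clean polynomial $\sigma(z_1)=(\overline{\ba}_2+\overline{\ba}_3+1)\overline{\ba}_1\overline{\bb}_1$, and similarly $\sigma(z_2)=(\overline{\ba}_1+\overline{\ba}_3+1)\overline{\ba}_2\overline{\bb}_2$ and $\sigma(z_3)=\overline{\ba}_1\overline{\ba}_2\overline{\ba}_3$. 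You instead take the opposite side (containing $\alpha_2,\alpha_3$), which forces you to identify the handle class $\mathbf{h}$ as $\bb_2+\bb_3$ modulo $\langle\ba_2,\ba_3\rangle$ and then argue that the undetermined $\ba_2,\ba_3$-coefficients drop out. That argument is fine, and your final paragraph correctly isolates the one point that needs checking (no $\ba_1$- or $\bb_1$-component in $\mathbf{h}$); the paper's choice simply avoids this bookkeeping. Working with the polynomial $\sigma$ rather than the individual $\rho_i$ also has the side benefit that the formula $\sigma(z_3)=\overline{\ba}_1\overline{\ba}_2\overline{\ba}_3$ is reused later (e.g.\ in the proof of Proposition~\ref{propos_psi_nu}).
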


\begin{proof}
Applying formula~\eqref{eq_BCJ_BP} to the bounding pair twists~$T_{\gamma_1,\gamma_2}$, $T_{\delta_1,\delta_2}$, and~$T_{\gamma_1',\gamma_2}$, where $\gamma_1'=T_{\delta_1}^{-1}(\gamma_1)$, we get:
\begin{align*}
\sigma(z_1)&=\sigma(T_{\gamma_1,\gamma_2})=\left(
\overline{\ba}_2+\overline{\ba}_3+1
\right)\overline{\ba}_1\overline{\bb}_1,\\
\sigma(z_2)&=\sigma(T_{\delta_1,\delta_2})=\left(
\overline{\ba}_1+\overline{\ba}_3+1
\right)\overline{\ba}_2\overline{\bb}_2,\\
\sigma(z_1z_3)&=\sigma(T_{\gamma_1',\gamma_2})=\left(
\overline{\ba}_2+\overline{\ba}_3+1
\right)\overline{\ba}_1\left(\overline{\bb}_1+\overline{\ba}_3\right).
\end{align*}
Therefore,
\begin{equation*}
\sigma(z_3)=\overline{\ba}_1\overline{\ba}_2\overline{\ba}_3.
\end{equation*}
Combining this with~\eqref{eq_omega_numeration}, we obtain the required formulae~\eqref{eq_rho_z1}--\eqref{eq_rho_z3}.
\end{proof}

For $i=0,1,2,3$, denote by~$\rho_{i,M}$ the restriction of the homomorphism~$\rho_i$ to~$\I_M$.

\begin{cor}\label{cor_StabC}
The homomorphisms $\rho_{i,M}\colon\I_M\to\Z/2$ satisfy $\rho_{0,M}+\rho_{1,M}+\rho_{2,M}+\rho_{3,M}=0$ and satisfy no other linear relations over~$\Z/2$. The subgroup $\CC_M\subset\I_M$ has index~$8$ and coincides with the intersection of kernels of the homomorphisms~$\rho_{0,M}$, $\rho_{1,M}$, $\rho_{2,M}$, and~$\rho_{3,M}$. Besides, $\CC_M$ coincides with the kernel of the natural homomorphism $\I_M\to H_1(\I_M;\Z/2)$.
\end{cor}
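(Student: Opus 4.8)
The plan is to reduce all four assertions to the computations already recorded in~\eqref{eq_rho_z1}--\eqref{eq_rho_z3} together with the description of~$\I_M$ in Proposition~\ref{propos_StabIM12}. By that proposition, $\I_M$ (identified with~$\eta(\I_M)$) is generated by the three elements $z_1$, $z_2$, and~$z_3$. Hence a homomorphism $\I_M\to\Z/2$ is determined by its values on $z_1,z_2,z_3$, so the map sending $\varphi\mapsto\bigl(\varphi(z_1),\varphi(z_2),\varphi(z_3)\bigr)$ embeds $\Hom(\I_M,\Z/2)$ into~$(\Z/2)^3$; in particular $\dim_{\Z/2}\Hom(\I_M,\Z/2)\le 3$.

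Next I would read off from~\eqref{eq_rho_z1}--\eqref{eq_rho_z3} the images of $\rho_{0,M},\rho_{1,M},\rho_{2,M},\rho_{3,M}$ under this embedding: they are $(0,0,1)$, $(0,1,1)$, $(1,0,1)$, and $(1,1,1)$, respectively. The first three are linearly independent over~$\Z/2$ and the fourth is their sum, so $\rho_{0,M},\rho_{1,M},\rho_{2,M},\rho_{3,M}$ span a $3$-dimensional subspace of~$\Hom(\I_M,\Z/2)$. Therefore the only linear relation they satisfy over~$\Z/2$, apart from the trivial one, is $\rho_{0,M}+\rho_{1,M}+\rho_{2,M}+\rho_{3,M}=0$: indeed the space of relations is the kernel of the evaluation map $(\Z/2)^4\to\Hom(\I_M,\Z/2)$, $(\epsilon_i)\mapsto\sum_i\epsilon_i\rho_{i,M}$, whose image has dimension~$3$, so this kernel has dimension~$1$ and it contains $(1,1,1,1)$. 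Combining with the upper bound of the previous paragraph, $\Hom(\I_M,\Z/2)$ is exactly $3$-dimensional and is spanned by the $\rho_{i,M}$; equivalently $H_1(\I_M;\Z/2)\cong(\Z/2)^3$. Since the kernel~$N$ of $\I_M\to H_1(\I_M;\Z/2)$ is the common kernel of all homomorphisms $\I_M\to\Z/2$, and the $\rho_{i,M}$ span this space of homomorphisms, we get $N=\bigcap_{i=0}^{3}\ker\rho_{i,M}$, a subgroup of index~$8$ in~$\I_M$ (being the kernel of a map onto $(\Z/2)^3$).

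Finally I would identify $\CC_M$ with~$N$. Because $\sigma$ takes values in the $\Z/2$-vector space~$\BB_3'$, the restriction $\sigma|_{\I_M}$ factors through $H_1(\I_M;\Z/2)$, so $N\subseteq\ker(\sigma|_{\I_M})=\CC_M$. Conversely, each~$\omega_i$ has zero Arf invariant, hence $\rho_i$ vanishes on $\CC=\ker\sigma$, so $\CC_M\subseteq\bigcap_{i=0}^{3}\ker\rho_{i,M}=N$. Therefore $\CC_M=N=\bigcap_{i=0}^{3}\ker\rho_{i,M}$, which has index~$8$ and is the kernel of $\I_M\to H_1(\I_M;\Z/2)$, as claimed. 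There is no real difficulty beyond this bookkeeping; the one point that genuinely requires both halves of the argument is pinning down that $\dim\Hom(\I_M,\Z/2)=3$ --- the lower bound coming from the four explicit Birman--Craggs homomorphisms and the upper bound from the three-element generating set of~$\I_M$ provided by Proposition~\ref{propos_StabIM12}.
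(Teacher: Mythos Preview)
Your proof is correct and follows essentially the same approach as the paper: both use the three-generator description of~$\I_M$ from Proposition~\ref{propos_StabIM12} together with the values~\eqref{eq_rho_z1}--\eqref{eq_rho_z3} to see that the~$\rho_{i,M}$ span a $3$-dimensional space with the single relation $\rho_{0,M}+\rho_{1,M}+\rho_{2,M}+\rho_{3,M}=0$, and then sandwich $\CC_M$ between $\bigcap_i\ker\rho_{i,M}$ and $\ker\bigl[\I_M\to H_1(\I_M;\Z/2)\bigr]$, both of index~$8$. The only cosmetic difference is that you phrase the bound $\dim\Hom(\I_M,\Z/2)\le 3$ via the evaluation embedding into~$(\Z/2)^3$, whereas the paper states it as $\dim H_1(\I_M;\Z/2)\le 3$.
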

\begin{proof}
By Proposition~\ref{propos_StabIM12} the group~$\I_M$ is generated by $z_1$, $z_2$, and~$z_3$. So it follows from~\eqref{eq_rho_z1}--\eqref{eq_rho_z3} that the homomorphisms~$\rho_{0,M}$, $\rho_{1,M}$, and~$\rho_{2,M}$ are linearly independent and $\rho_{0,M}+\rho_{1,M}+\rho_{2,M}+\rho_{3,M}=0$. Hence, the intersection of kernels of~$\rho_{0,M}$, $\rho_{1,M}$, $\rho_{2,M}$, and~$\rho_{3,M}$ has index~$8$. On the other hand, we have
\begin{equation}\label{eq_two_incl}
\bigcap_{i=0}^3\ker\rho_{i,M}\supseteq\CC_M\supseteq\ker\left[
\I_M\to H_1(\I_M;\Z/2)
\right].
\end{equation}
However, since the group~$\I_M$ is generated by $3$ elements, we have $\dim_{\Z/2} H_1(\I_M;\Z/2)\le 3$. Therefore, the index of the kernel of the natural homomorphism $\I_M\to H_1(\I_M;\Z/2)$ does not exceed~$8$. Thus, this index is equal to~$8$ and both inclusions in~\eqref{eq_two_incl} are equalities.
\end{proof}

By Proposition~\ref{propos_StabIM12}, we have a short exact sequence
\begin{equation*}
1\to \I_M\xrightarrow{\eta}\F_2\times\F_2\xrightarrow{f}\Z\times\Z\to 1.
\end{equation*}
For $i=1,2$, let $\xi_i\colon \I_M\to\Z$ be the composition of homomorphisms
$$
\I_M\xrightarrow{\eta}\F_2\times\F_2\xrightarrow{\pr_1}\F_2\xrightarrow{\zeta_i}\Z,
$$
where $\pr_1$ is the projection onto the first factor, $\zeta_1$ sends the generators~$u_1$ and~$v_1$ to~$1$ and~$0$, respectively, and $\zeta_2$ sends~$u_1$ and~$v_1$ to~$0$ and~$1$, respectively. Obviously, 
\begin{equation}\label{eq_xi_z}
\begin{aligned}
\xi_1(z_1)&=1,&
\xi_1(z_2)&=0,&
\xi_1(z_3)&=0,\\
\xi_2(z_1)&=0,&
\xi_2(z_2)&=1,&
\xi_2(z_3)&=0.
\end{aligned}
\end{equation}

Before proceeding with studying homomorphisms~$\rho_{i,M}$, let us make the following easy observation, which will be used in Section~\ref{subsection_b}.

\begin{propos}\label{propos_xi_eps}
Suppose that $\{\varepsilon_1,\varepsilon_2\}$ is a bounding pair that is disjoint from~$M$, and besides, $\varepsilon_1\subset\Sigma_1$ and $\varepsilon_2\subset\Sigma_2$. Then 
\begin{itemize}
\item $\xi_1(T_{\varepsilon_1,\varepsilon_2})=1$ and $\xi_2(T_{\varepsilon_1,\varepsilon_2})=0$, provided that $\varepsilon_1\cup\varepsilon_2$ separates $\alpha_0\cup\alpha_1$ from~$\alpha_2\cup\alpha_3$,
\item $\xi_1(T_{\varepsilon_1,\varepsilon_2})=0$ and $\xi_2(T_{\varepsilon_1,\varepsilon_2})=1$, provided that $\varepsilon_1\cup\varepsilon_2$ separates $\alpha_0\cup\alpha_2$ from~$\alpha_1\cup\alpha_3$,
\item $\xi_1(T_{\varepsilon_1,\varepsilon_2})=\xi_2(T_{\varepsilon_1,\varepsilon_2})=-1$, provided that $\varepsilon_1\cup\varepsilon_2$ separates $\alpha_0\cup\alpha_3$ from~$\alpha_1\cup\alpha_2$.
\end{itemize}
\end{propos}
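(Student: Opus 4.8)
The plan is to reduce everything to a computation in the abelianization of $\PMod(\Sigma_1)\cong\F_2$. First, since $\varepsilon_1\subset\Sigma_1$ and $\varepsilon_2\subset\Sigma_2$ are disjoint from~$M$, the twists $T_{\varepsilon_1}$ and $T_{\varepsilon_2}$ are supported in $\Sigma_1$ and in~$\Sigma_2$, respectively. Hence, under the identification $\PMod(S\setminus M)=\PMod(\Sigma_1)\times\PMod(\Sigma_2)$ (in which the first factor is generated by $u_1$ and~$v_1$), we have $\eta(T_{\varepsilon_1,\varepsilon_2})=\eta(T_{\varepsilon_1})\,\eta(T_{\varepsilon_2})^{-1}=(T_{\varepsilon_1},T_{\varepsilon_2}^{-1})$. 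Applying $\pr_1$ and then $\zeta_i$, we get $\xi_i(T_{\varepsilon_1,\varepsilon_2})=\zeta_i(T_{\varepsilon_1})$ for $i=1,2$. So I only need to compute the image of $T_{\varepsilon_1}$ under the abelianization $(\zeta_1,\zeta_2)\colon\PMod(\Sigma_1)\to\Z^2$.

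The key observation is that, for an essential simple closed curve $c\subset\Sigma_1$, the pair $(\zeta_1(T_c),\zeta_2(T_c))\in\Z^2$ depends only on the partition of the four punctures $\alpha_0,\alpha_1,\alpha_2,\alpha_3$ into two pairs that is induced by~$c$. Indeed, by the change of coordinates principle (see~\cite{FaMa12}), $\PMod(\Sigma_1)$ acts transitively on isotopy classes of essential simple closed curves inducing a prescribed partition; if $c'=h(c)$ with $h\in\PMod(\Sigma_1)$, then $T_{c'}=hT_ch^{-1}$ is conjugate to~$T_c$, hence has the same image in $H_1(\PMod(\Sigma_1);\Z)=\Z^2$. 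Now the curve $\gamma_1$, having modulo~$2$ homology class $\ba_2+\ba_3$, induces the partition $\{\alpha_0,\alpha_1\}\sqcup\{\alpha_2,\alpha_3\}$, and $(\zeta_1,\zeta_2)(T_{\gamma_1})=(\zeta_1,\zeta_2)(u_1)=(1,0)$; similarly $\delta_1$ induces the partition $\{\alpha_0,\alpha_2\}\sqcup\{\alpha_1,\alpha_3\}$ and $(\zeta_1,\zeta_2)(T_{\delta_1})=(\zeta_1,\zeta_2)(v_1)=(0,1)$.

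It remains to handle the third partition $\{\alpha_0,\alpha_3\}\sqcup\{\alpha_1,\alpha_2\}$, which is the one induced by $\varepsilon_1$ in the third case of the proposition. For this I would apply the lantern relation~\eqref{eq_lantern} realized inside~$\Sigma_1$, with $\alpha_0,\dots,\alpha_3$ in the role of the boundary curves and with the three interior curves $\gamma_1',\gamma_2',\gamma_3'$ inducing the three partitions $\{\alpha_0,\alpha_1\}|\{\alpha_2,\alpha_3\}$, $\{\alpha_0,\alpha_2\}|\{\alpha_1,\alpha_3\}$, $\{\alpha_0,\alpha_3\}|\{\alpha_1,\alpha_2\}$, respectively. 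Since each $\alpha_i$ is parallel to a puncture of~$\Sigma_1$, the twist $T_{\alpha_i}$ is trivial in $\PMod(\Sigma_1)$, so the lantern relation reduces to $T_{\gamma_1'}T_{\gamma_2'}T_{\gamma_3'}=1$. Passing to $H_1(\PMod(\Sigma_1);\Z)$ and using the previous paragraph, we obtain $(\zeta_1,\zeta_2)(T_{\gamma_3'})=-(1,0)-(0,1)=(-1,-1)$, hence $(\zeta_1,\zeta_2)(T_c)=(-1,-1)$ for every curve~$c$ of the third type. Combining these three computations with $\xi_i(T_{\varepsilon_1,\varepsilon_2})=\zeta_i(T_{\varepsilon_1})$ gives exactly the three asserted cases.

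The step requiring the most care is the transitivity of the $\PMod(\Sigma_1)$-action on essential curves inducing a fixed two-by-two partition of the punctures (equivalently, the claim that the class of a Dehn twist in $H_1(\PMod(\Sigma_1);\Z)$ is determined by that partition): here one must apply the change of coordinates principle to a punctured sphere, checking that the homeomorphism straightening one curve onto another can be chosen to fix every puncture, which is possible because the pure mapping class group of a thrice-punctured sphere is trivial. Everything else is routine.
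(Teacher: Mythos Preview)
Your proof is correct and uses the same two ingredients as the paper---conjugacy (change of coordinates) for the first two partitions and the lantern relation for the third---but is organized a bit differently. The paper keeps track of both factors of $\F_2\times\F_2$: it picks an explicit pair $\varepsilon_1,\varepsilon_2$ (Fig.~\ref{fig_lantern_type1}), applies the lantern relation once in each of $\Sigma_1$ and $\Sigma_2$ to obtain the exact identity $T_{\varepsilon_1,\varepsilon_2}=z_2^{-1}z_1^{-1}$, and then reads off $\xi_1,\xi_2$ from~\eqref{eq_xi_z}. You instead project to the first factor at the outset, reducing everything to computing $(\zeta_1,\zeta_2)(T_{\varepsilon_1})$ in $H_1(\PMod(\Sigma_1);\Z)$, and use a single lantern relation in the punctured model (where the boundary twists die). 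Your route is marginally cleaner in that it needs only one lantern relation and never names a specific $\varepsilon_i$; the paper's route has the advantage of producing an explicit word for $T_{\varepsilon_1,\varepsilon_2}$ in terms of the chosen generators. One small remark on your last paragraph: the fact you really use to fix the punctures is that $\Mod(S_{0,3})$ realizes every permutation (equivalently, the half-twist in a twice-punctured disk swaps the two punctures), not merely that $\PMod(S_{0,3})$ is trivial.
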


\begin{proof}
The first two assertions follow from~\eqref{eq_xi_z}, since every twist~$T_{\varepsilon_1,\varepsilon_2}$ such that $\varepsilon_1\cup\varepsilon_2$ separates $\alpha_0\cup\alpha_1$ from~$\alpha_2\cup\alpha_3$ (respectively, $\alpha_0\cup\alpha_2$ from~$\alpha_1\cup\alpha_3$) is conjugate in~$\F_2\times\F_2$ to~$z_1$ (respectively, to~$z_2$). Similarly, it is sufficient to prove the third assertion for one particular twist~$T_{\varepsilon_1,\varepsilon_2}$ such that $\varepsilon_1\cup\varepsilon_2$ separates $\alpha_0\cup\alpha_1$ from~$\alpha_2\cup\alpha_3$, since all such twists are conjugate two each other. Let us consider the particular curves~$\varepsilon_1$ and~$\varepsilon_2$ shown in Fig.~\ref{fig_lantern_type1}. 

\begin{figure}
\begin{tikzpicture}[scale=.7]
\small
\definecolor{myblue}{rgb}{0, 0, 0.7}
\definecolor{mygreen}{rgb}{0, 0.4, 0}
\tikzset{every path/.append style={line width=.2mm}}
\tikzset{my dash/.style={dash pattern=on 2pt off 1.5pt}}

\begin{scope}
% Invisible lines

\draw[red,my dash] (0,-3.03) .. controls (-0.3,-2) .. (0,-.97); % e_0
\draw[red,my dash] (0,3.03) .. controls (-0.3,2) .. (0,.97); 

\begin{scope}
 {
  \tikzset{every path/.style={}}
  \clip (-4,0) rectangle (.7,3);
  }
  \draw[color=red,my dash] (-3.293,.707) .. controls (-1.5,2.8) and (1.5,2.8) .. (3.293,.707);
\end{scope}

\begin{scope}
 {
  \tikzset{every path/.style={}}
  \clip (1.2,3) rectangle (4,0);
  }
  \draw[color=red,my dash] (-3.293,.707) .. controls (-1.5,2.8) and (1.5,2.8) .. (3.293,.707);
\end{scope}

\begin{scope}
 {
  \tikzset{every path/.style={}}
  \clip (-4,0) rectangle (.7,-3);
  }
  \draw[color=red,my dash] (-3.293,-.707) .. controls (-1.5,-2.2) and (1.5,-2.2) .. (3.293,-.707);
\end{scope}

\begin{scope}
 {
  \tikzset{every path/.style={}}
  \clip (1.2,-3) rectangle (4,0);
  }
  \draw[color=red,my dash] (-3.293,-.707) .. controls (-1.5,-2.2) and (1.5,-2.2) .. (3.293,-.707);

\end{scope}

\draw[color=myblue,my dash, very thick] (-7.03,0) .. controls (-6,0.3) .. (-4.97,0); %e_1
\draw[color=myblue,my dash, very thick] (7.03,0) .. controls (6,0.3) .. (4.97,0); %e_2
\draw[color=myblue,my dash, very thick] (-3.03,0) .. controls (-2,0.3) .. (-.97,0); % e_0+e_1
\draw[color=myblue,my dash, very thick] (3.03,0) .. controls (2,0.3) .. (.97,0); % e_0+e_2

\draw[color=mygreen, thick, my dash] (3.293,.707) .. controls (1.7,1.5) and (.7,1.5) .. (0,.98);

\draw[color=mygreen, thick, my dash] (-3.293,.707) .. controls (-2.7,2) and (-.7,2.8) .. (0,3.02);

\draw[color=mygreen, thick, my dash] (3.293,-.707) .. controls (1.7,-1.5) and (.7,-1.5) .. (0,-.98);

\draw[color=mygreen, thick, my dash] (-3.293,-.707) .. controls (-2.7,-2) and (-.7,-2.8) .. (0,-3.02);

% Border lines

\draw [very thick] (0,0) circle (1);
\draw [very thick] (4,0) circle (1);
\draw [very thick] (-4,0) circle (1);
\draw [very thick]  (-4,3)--(4,3);
\draw [very thick]  (-4,-3)--(4,-3);
\draw [very thick] (-4,3) arc (90:270:3); 
\draw [very thick] (4,3) arc (90:-90:3); 
%\draw [] (-9,0) -- (-7.3,0);
%\draw [] (9,0) -- (7.3,0);

%\draw (8.3,0) + (-55: .3 and 1) arc (-55: -10: .3 and 1);
%\draw [-stealth] (8.3,0) + (10: .3 and 1) arc (10: 160: .3 and 1) node [pos=.4, right] {$\iota$};

% Visible lines 

\draw[color=red] (0,-3.03) .. controls (0.3,-2) .. (0,-.97) node [above=-2pt] {$\gamma_2$};  %e_0
\draw[color=red] (0,3.03) .. controls (0.3,2) .. (0,.97) node [below=-1pt] {$\gamma_1$};  %e_0'

\draw[color=myblue, very thick] (-4.97,0) .. controls (-6,-0.3) .. (-7.03,0) node [pos=0.4,below=-1pt] {$\alpha_1$};%e_1
%\draw[color=red,->-=.55] (4.97,0) .. controls (6,-0.4) .. (7.03,0) node [pos=.3,below=-1pt] {$\alpha_2$}; %e_2
%\draw[color=red,->-=.55] (-7.03,0) .. controls (-6,-0.3) .. (-4.97,0)node [pos=0.4,below] {$\alpha_1$};%e_1 -
\draw[color=myblue, very thick] (7.03,0) .. controls (6,-0.3) .. (4.97,0) node [pos=.4,below=-1pt] {$\alpha_3$};%e_2 -

\draw[color=myblue, very thick] (-3.03,0) .. controls (-2,-0.3) .. (-.97,0) node [pos=.6,below=-1pt] {$\alpha_0$};%e_0+e_1
\draw[color=myblue, very thick] (3.03,0) .. controls (2,-0.3) .. (.97,0) node [pos=.4,below=-1pt] {$\alpha_2$};%e_0+e_2

%\draw[color=red] (-2,-3.03) .. controls (-1.7,-1.5) and (-1.7,1.5) .. (-2,3.03) node [pos=.25,right=-2pt] {$u$};
%\draw[color=red,my dash] (-2,-3.03) .. controls (-2.3,-1.5) and (-2.3,1.5) .. (-2,3.03);

\draw[color=red] (-3.293,-.707) .. controls (-1,-2.8) and (1,-2.8) .. (3.293,-.707) node [pos=.65,above=-1pt] {$\delta_2$};
\draw[color=red] (-3.293,.707) .. controls (-1,2.2) and (1,2.2) .. (3.293,.707)node [pos=.65,above = -1pt] {$\delta_1$};

\draw[color=mygreen, thick] (-3.293,.707) .. controls (-1.7,1.5) and (-.7,1.5) .. (0,.98);

\draw[color=mygreen, thick] (3.293,.707) .. controls (2.7,2) and (.7,2.8) .. (0,3.02) node [pos=.35, above right = -2pt] {$\varepsilon_1$};

\draw[color=mygreen, thick] (-3.293,-.707) .. controls (-1.7,-1.5) and (-.7,-1.5) .. (0,-.98);

\draw[color=mygreen, thick] (3.293,-.707) .. controls (2.7,-2) and (.7,-2.8) .. (0,-3.02) node [pos=.35, below right = -2pt] {$\varepsilon_2$};

\end{scope}

\end{tikzpicture}
\caption{Bounding pair $\{\varepsilon_1,\varepsilon_2\}$}\label{fig_lantern_type1}

\end{figure}
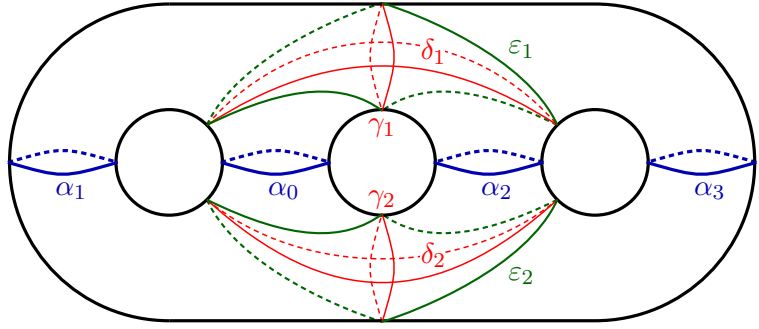

Note that the $7$-tuples $\{\gamma_s,\delta_s,\varepsilon_s,\alpha_0,\alpha_1,\alpha_2,\alpha_3\}$, $i=1,2$, are lantern configurations (with opposite orientations), cf.~Fig.~\ref{fig_lantern}. The corresponding lantern relations read as
$$
T_{\gamma_1}T_{\delta_1}T_{\varepsilon_1}=T_{\alpha_0}T_{\alpha_1}T_{\alpha_2}T_{\alpha_3}=T_{\delta_2}T_{\gamma_2}T_{\varepsilon_2}.
$$ 
Therefore $T_{\varepsilon_1,\varepsilon_2}=z_2^{-1}z_1^{-1}$, hence the required equalities.
\end{proof}

Combining~\eqref{eq_xi_z} and~\eqref{eq_rho_z1}--\eqref{eq_rho_z3}, we obtain the following proposition.

\begin{propos}\label{propos_liftrho}
For all $h\in\I_M$, 
\begin{align*}
\rho_0(h)+\rho_1(h)&=\xi_2(h) \mod 2,\\
\rho_0(h)+\rho_2(h)&=\xi_1(h) \mod 2.
\end{align*}
\end{propos}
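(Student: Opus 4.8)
The plan is to observe that both asserted identities are equalities between homomorphisms $\I_M\to\Z/2$, so it suffices to check them on a generating set of~$\I_M$. By Proposition~\ref{propos_StabIM12}, the group~$\I_M$ (identified with its image in $\F_2\times\F_2$) is generated by the three elements $z_1$, $z_2$, and~$z_3$, so the whole proposition reduces to a finite verification on these generators.

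First I would note that each $\rho_i=\rho_{\omega_i}\colon\I\to\Z/2$ is a homomorphism by construction, hence so are $\rho_0+\rho_1$ and $\rho_0+\rho_2$; likewise the composition of $\xi_j\colon\I_M\to\Z$ with the reduction $\Z\to\Z/2$ is a homomorphism, $j=1,2$. Therefore the difference of the two sides of each identity is a homomorphism $\I_M\to\Z/2$, and it is enough to show that it vanishes on $z_1$, $z_2$, and~$z_3$. Then I would simply substitute the already computed values: from \eqref{eq_rho_z1}--\eqref{eq_rho_z3} one reads off $(\rho_0+\rho_1)(z_1)=0$, $(\rho_0+\rho_1)(z_2)=1$, $(\rho_0+\rho_1)(z_3)=0$ and $(\rho_0+\rho_2)(z_1)=1$, $(\rho_0+\rho_2)(z_2)=0$, $(\rho_0+\rho_2)(z_3)=0$ (all taken in $\Z/2$); from \eqref{eq_xi_z} one has $\xi_2(z_1)=0$, $\xi_2(z_2)=1$, $\xi_2(z_3)=0$ and $\xi_1(z_1)=1$, $\xi_1(z_2)=0$, $\xi_1(z_3)=0$. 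Comparing term by term gives $\rho_0+\rho_1\equiv\xi_2$ and $\rho_0+\rho_2\equiv\xi_1$ on $z_1,z_2,z_3$, hence on all of~$\I_M$.

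There is essentially no obstacle here: this is a routine bookkeeping argument once Propositions~\ref{propos_StabIM12} and the formulae \eqref{eq_rho_z1}--\eqref{eq_rho_z3} and \eqref{eq_xi_z} are in hand. The only point that deserves a word of care is the remark that all five maps $\rho_0,\rho_1,\rho_2,\xi_1\bmod 2,\xi_2\bmod 2$ are genuine group homomorphisms to~$\Z/2$, which is precisely what lets agreement on a generating set propagate to the whole group, and that the generating set $\{z_1,z_2,z_3\}$ used is exactly the one on which these values have already been recorded.
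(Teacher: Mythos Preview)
Your proof is correct and is exactly the argument the paper uses: the paper's proof consists of the single sentence ``Combining~\eqref{eq_xi_z} and~\eqref{eq_rho_z1}--\eqref{eq_rho_z3}, we obtain the following proposition,'' which is precisely your verification on the generating set $\{z_1,z_2,z_3\}$.
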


\begin{cor}\label{cor_rho_sq}
$\rho_{0,M}^2=\rho_{1,M}^2=\rho_{2,M}^2=\rho_{3,M}^2$ in $H^2(\I_M;\Z/2)$.
\end{cor}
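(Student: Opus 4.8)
The plan is to reduce the statement to the single observation that, with $\Z/2$ coefficients, the cup square of a one-dimensional cohomology class that admits an integral lift vanishes. Since the mod~$2$ cup product is commutative, for any $\kappa,\lambda\in H^1(\I_M;\Z/2)$ one has $(\kappa+\lambda)^2=\kappa^2+\lambda^2$. Hence it suffices to prove that $(\rho_{0,M}+\rho_{1,M})^2=0$ and $(\rho_{0,M}+\rho_{2,M})^2=0$ in $H^2(\I_M;\Z/2)$: these two equalities give $\rho_{0,M}^2=\rho_{1,M}^2=\rho_{2,M}^2$, and then the relation $\rho_{0,M}+\rho_{1,M}+\rho_{2,M}+\rho_{3,M}=0$ from Corollary~\ref{cor_StabC} yields $\rho_{3,M}=\rho_{0,M}+\rho_{1,M}+\rho_{2,M}$, whence $\rho_{3,M}^2=\rho_{0,M}^2+\rho_{1,M}^2+\rho_{2,M}^2=\rho_{0,M}^2$ over~$\Z/2$.

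To prove the vanishing of the two squares I would invoke Proposition~\ref{propos_liftrho}: it says precisely that $\rho_{0,M}+\rho_{1,M}$ is the mod~$2$ reduction of the homomorphism $\xi_2\colon\I_M\to\Z$ and $\rho_{0,M}+\rho_{2,M}$ is the mod~$2$ reduction of $\xi_1\colon\I_M\to\Z$. Thus each of these classes equals $\xi_i^*(c)$, where $c$ is the nonzero element of $H^1(\Z;\Z/2)$, and therefore its cup square equals $\xi_i^*(c^2)$. Since $\Z$ has cohomological dimension~$1$ we have $c^2\in H^2(\Z;\Z/2)=0$, so both squares vanish. Equivalently, for a one-dimensional class the cup square coincides with $\mathrm{Sq}^1$, i.e.\ with the Bockstein homomorphism associated with $0\to\Z/2\to\Z/4\to\Z/2\to 0$, and this Bockstein annihilates any class admitting a $\Z$-valued (hence $\Z/4$-valued) lift.

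I do not expect a genuine obstacle here: all the substantive work has already been done in Propositions~\ref{propos_StabIM12} and~\ref{propos_liftrho} and Corollary~\ref{cor_StabC}, which identify $\I_M$ inside $\F_2\times\F_2$ and express $\rho_{0,M}+\rho_{1,M}$ and $\rho_{0,M}+\rho_{2,M}$ as reductions of homomorphisms to~$\Z$. The only point to be slightly careful about is to phrase the ``square of an integrally-liftable class is zero'' step correctly; if one prefers to avoid quoting $H^2(\Z;\Z/2)=0$ or Steenrod-square formalism, one can instead note directly that a homomorphism $\I_M\to\Z$ factors through a map to a space of cohomological dimension~$1$, which already forces the relevant degree-$2$ class to vanish.
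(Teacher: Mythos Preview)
Your proof is correct and follows essentially the same approach as the paper: use Proposition~\ref{propos_liftrho} to see that $\rho_{0,M}+\rho_{1,M}$ and $\rho_{0,M}+\rho_{2,M}$ admit integral lifts, conclude that their squares vanish (via the Bockstein identity $z^2=\beta z$ or, equivalently, the pullback from $H^2(\Z;\Z/2)=0$), and then invoke Corollary~\ref{cor_StabC} for $\rho_{3,M}$. The paper phrases the vanishing step via the Bockstein, while you lead with the pullback argument, but the content is identical.
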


\begin{proof}
Recall that, for any $1$-dimensional modulo~$2$ cohomology class~$z$ of any space or group, we have $z^2=\beta z$, where $\beta$ is the Bockstein homomorphism associated with the short exact sequence of coefficient groups $0\to\Z/2\to\Z/4\to\Z/2\to 0$. Hence, $z^2=0$ whenever $z$ lifts to a cohomology class with coefficients in~$\Z/4$, so in particular whenever it lifts to an integral cohomology class. Therefore, by Proposition~\ref{propos_liftrho} we have $(\rho_{0,M}+\rho_{i,M})^2=0$ for $i=1,2$. Thus, $\rho_{0,M}^2=\rho_{1,M}^2=\rho_{2,M}^2$. By Corollary~\ref{cor_StabC} we finally get $\rho_{3,M}^2=\rho_{0,M}^2$.
\end{proof}

Now, we are ready to prove Proposition~\ref{propos_theta_main}(b) for type~1 multicurves. 

\begin{propos}\label{propos_theta_main1} 
$\theta_M=0$.
\end{propos}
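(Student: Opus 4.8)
The idea is to show that, once the relations already established for the restricted Birman--Craggs homomorphisms are used, the class $\theta_M=\sum_{0\le i<j\le 3}\rho_{i,M}\rho_{j,M}\in H^2(\I_M;\Z/2)$ collapses to a cup product of two classes that are pulled back from a free group, where $H^2$ vanishes for a trivial reason. Throughout we work over~$\Z/2$, so cup products of degree~$1$ classes commute. Write $\overline{\xi}_1,\overline{\xi}_2\in H^1(\I_M;\Z/2)$ for the reductions modulo~$2$ of the homomorphisms $\xi_1,\xi_2\colon\I_M\to\Z$. By Proposition~\ref{propos_liftrho} together with Corollary~\ref{cor_StabC} we have, in $H^1(\I_M;\Z/2)$,
\begin{align*}
\rho_{1,M}&=\rho_{0,M}+\overline{\xi}_2, & \rho_{2,M}&=\rho_{0,M}+\overline{\xi}_1, & \rho_{3,M}&=\rho_{0,M}+\overline{\xi}_1+\overline{\xi}_2.
\end{align*}

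Next I would simply substitute these expressions into $\theta_M$. Writing $\rho_{i,M}=\rho_{0,M}+s_i$ with $s_0=0$, $s_1=\overline{\xi}_2$, $s_2=\overline{\xi}_1$, $s_3=\overline{\xi}_1+\overline{\xi}_2$, one obtains
\begin{align*}
\theta_M&=\sum_{0\le i<j\le 3}(\rho_{0,M}+s_i)(\rho_{0,M}+s_j)\\
&=\binom{4}{2}\,\rho_{0,M}^2+\rho_{0,M}\sum_{0\le i<j\le 3}(s_i+s_j)+\sum_{0\le i<j\le 3}s_is_j.
\end{align*}
Here the first summand vanishes because $\binom{4}{2}$ is even; the coefficient of $\rho_{0,M}$ in the second summand vanishes because it equals $3(s_0+s_1+s_2+s_3)=0$; and, using that $\overline{\xi}_1^2=\overline{\xi}_2^2=0$ (each $\overline{\xi}_i$ lifts to an integral, hence a $\Z/4$, class, so its Bockstein, and therefore its square, is zero), the last summand reduces to $s_1s_2+s_1s_3+s_2s_3=3\,\overline{\xi}_1\overline{\xi}_2=\overline{\xi}_1\overline{\xi}_2$. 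Thus $\theta_M=\overline{\xi}_1\smile\overline{\xi}_2$ in $H^2(\I_M;\Z/2)$.

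Finally, I would observe that, by their very definitions, both $\xi_1$ and $\xi_2$ factor through the homomorphism $q=\pr_1\circ\eta\colon\I_M\to\F_2$ (followed by $\zeta_1$, respectively $\zeta_2$), so $\overline{\xi}_i=q^*\overline{\zeta}_i$ for the mod~$2$ reductions $\overline{\zeta}_i\in H^1(\F_2;\Z/2)$. Hence $\overline{\xi}_1\smile\overline{\xi}_2=q^*(\overline{\zeta}_1\smile\overline{\zeta}_2)$, and $\overline{\zeta}_1\smile\overline{\zeta}_2$ lies in $H^2(\F_2;\Z/2)$, which is trivial since a free group has cohomological dimension~$1$. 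Therefore $\theta_M=0$.

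The whole argument is a short formal manipulation once Corollary~\ref{cor_StabC} (which controls the linear relations among the $\rho_{i,M}$) and Proposition~\ref{propos_liftrho} (which identifies the relevant differences with reductions of integral homomorphisms) are in hand; the only point that requires an idea, rather than bookkeeping, is the recognition that after the substitution all the potentially troublesome self-intersection terms $\rho_{i,M}^2$ cancel, leaving a class that is pulled back from the free group~$\F_2$, where degree-$2$ cohomology vanishes identically. Corollary~\ref{cor_rho_sq} is a by-product of the same computation and could be used to shorten the bookkeeping, but is not strictly needed.
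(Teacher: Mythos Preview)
Your proof is correct and follows essentially the same route as the paper: both arguments use the linear relation $\rho_{0,M}+\rho_{1,M}+\rho_{2,M}+\rho_{3,M}=0$ together with the Bockstein vanishing for the integral classes~$\xi_i$ to reduce $\theta_M$ to $\overline{\xi}_1\smile\overline{\xi}_2$, and then kill this class by pulling it back from $H^2(\F_2;\Z/2)=0$. The only difference is organizational---the paper packages the Bockstein step as Corollary~\ref{cor_rho_sq} and then factors $(\rho_{0,M}+\rho_{1,M})(\rho_{0,M}+\rho_{2,M})$ directly, while you substitute $\rho_{i,M}=\rho_{0,M}+s_i$ and expand---but the content is the same.
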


\begin{proof}
Using Corollaries~\ref{cor_StabC} and~\ref{cor_rho_sq} and Proposition~\ref{propos_liftrho}, we obtain that
\begin{align*}
\theta_M=\sum_{i<j}\rho_{i,M}\rho_{j,M}&=\rho_{0,M}\rho_{1,M} + \rho_{0,M}\rho_{2,M} + \rho_{1,M}\rho_{2,M} + \rho_{3,M}^2\\
{}&=\rho_{0,M}\rho_{1,M} + \rho_{0,M}\rho_{2,M} + \rho_{1,M}\rho_{2,M} + \rho_{0,M}^2\phantom{\sum_{i<j}}\\
{}&=(\rho_{0,M}+\rho_{1,M}) (\rho_{0,M}+\rho_{2,M})= \xi_1\xi_2\mod 2. 
\end{align*}
But
$$
\xi_1\xi_2=(\pr_1\circ\eta)^*(\zeta_1\zeta_2)=0,
$$
since $H^2(\F_2;\Z)=0$. Thus, $\theta_M=0$.
\end{proof}

\begin{propos}\label{propos_psi1}
There exists a unique homomorphism $\psi_M\colon \CC_M\to\Z/2$ such that, for all $h_1,h_2\in\I_M$,
\begin{equation}
\label{eq_psi1}\psi_M\bigl([h_1,h_2]\bigr)=\sum_{i\ne j}\rho_i(h_1)\rho_j(h_2),
\end{equation}
and, for all $h\in\I_M$,
\begin{equation}
\label{eq_psi2}\psi_M(h^2)=\sum_{i<j}\rho_i(h)\rho_j(h).
\end{equation}
Besides, $\psi_M$ is invariant under the action of~$\Stab_{\Mod(S)}(M)$ on~$\CC_M$ by conjugations.
\end{propos}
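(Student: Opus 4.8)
The plan is to apply the fundamental exact sequence machinery of Subsection~\ref{subsection_fundamental}, in the form of Proposition~\ref{propos_d2_LHS}, to the group $G=\I_M$ together with the collection of Birman--Craggs homomorphisms $\rho_{0,M},\ldots,\rho_{3,M}\colon\I_M\to\Z/2$. First I would recall that by Corollary~\ref{cor_StabC} the subgroup $\CC_M\subset\I_M$ coincides with the kernel $N$ of the natural surjection $\I_M\to Q:=H_1(\I_M;\Z/2)$, so the five-term exact sequence~\eqref{eq_fund2} applies verbatim with $N=\CC_M$. Next I would take the index set of Proposition~\ref{propos_d2_LHS} to be the ordered pairs $(i,j)$ with $0\le i<j\le 3$, with $\kappa_{(i,j)}=\rho_{i,M}$ and $\lambda_{(i,j)}=\rho_{j,M}$; then $\sum_{i<j}\kappa_{(i,j)}\lambda_{(i,j)}=\theta_M$, which vanishes in $H^2(\I_M;\Z/2)$ by Proposition~\ref{propos_theta_main1}. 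Hence the hypotheses of Proposition~\ref{propos_d2_LHS} are met, and it produces a unique $Q$-invariant homomorphism $\psi_M\colon\CC_M\to\Z/2$ whose transgression is the class $\varphi=\sum_{i<j}r_i r_j\in H^2(Q;\Z/2)$, where $r_i$ is the homomorphism of $Q$ induced by $\rho_{i,M}$. Formulae~\eqref{eq_d2_gen1} and~\eqref{eq_d2_gen2} of that proposition are then precisely~\eqref{eq_psi1} and~\eqref{eq_psi2}: indeed $\sum_{s}(\kappa_s(h_1)\lambda_s(h_2)+\kappa_s(h_2)\lambda_s(h_1))=\sum_{i<j}(\rho_i(h_1)\rho_j(h_2)+\rho_i(h_2)\rho_j(h_1))=\sum_{i\ne j}\rho_i(h_1)\rho_j(h_2)$, and $\sum_s\kappa_s(h)\lambda_s(h)=\sum_{i<j}\rho_i(h)\rho_j(h)$.

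For uniqueness I would argue that $\CC_M$ is generated as a group by commutators $[h_1,h_2]$ and squares $h^2$ with $h_1,h_2,h\in\I_M$; this is the standard description of the kernel of $G\to H_1(G;\Z/2)$ recalled in Subsection~\ref{subsection_fundamental}. Therefore any homomorphism $\CC_M\to\Z/2$ satisfying~\eqref{eq_psi1} and~\eqref{eq_psi2} is determined on a generating set, hence unique; and since the right-hand sides of~\eqref{eq_psi1} and~\eqref{eq_psi2} are exactly the values dictated by Proposition~\ref{propos_d2_LHS}, the homomorphism produced there both exists and satisfies the stated formulae.

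Finally, for the $\Stab_{\Mod(S)}(M)$-invariance I would upgrade the $Q$-invariance already provided by exact sequence~\eqref{eq_fund2} (where $Q$ acts through the conjugation action of $\I_M$ on $\CC_M$, which is the action of the inner automorphisms). For an arbitrary $g\in\Stab_{\Mod(S)}(M)$, conjugation by $g$ is an automorphism of $\I_M$ carrying $\CC_M$ to itself; I would check that it fixes each $\rho_{i,M}$ up to permutation of the index set $\{0,1,2,3\}$. This follows because $g$ permutes the four components of $M$, hence permutes their $\Z/2$ homology classes among $\ba_1,\ba_2,\ba_3,\ba_1+\ba_2+\ba_3$, so it permutes the four $\Sp$-quadratic functions $\omega_0,\ldots,\omega_3$ taking the value $1$ on $\fA$ (equivalently on all four classes, using~\eqref{eq_theta_equal}), and by $\Sp$-equivariance of the Birman--Craggs--Johnson homomorphism the pullbacks $g^*\rho_{i,M}$ are again the $\rho_{j,M}$ up to this permutation. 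Since the defining formulae~\eqref{eq_psi1}, \eqref{eq_psi2} are symmetric under permutations of $\{0,1,2,3\}$, the pulled-back homomorphism $\psi_M\circ(\text{conj by }g)$ satisfies the same two identities as $\psi_M$, whence by uniqueness $\psi_M\circ(\text{conj by }g)=\psi_M$. The main obstacle I anticipate is the very last point: making precise that $g$ induces a permutation of exactly the four relevant $\omega_i$ (rather than some larger reshuffling of $\Sp$-quadratic functions) and that $\Sp$-equivariance of $\sigma$ then forces $g^*\rho_{i,M}=\rho_{\pi(i),M}$ on the nose on $\I_M$ — this requires combining the description of $\omega_i$ in~\eqref{eq_omega_numeration}, the identity~\eqref{eq_theta_equal}, and the naturality of $\sigma$ with respect to the $\Mod(S)$-action carefully, but it is purely formal once set up.
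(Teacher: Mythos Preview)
Your proof is correct and follows essentially the same route as the paper's: use Corollary~\ref{cor_StabC} to identify $\CC_M$ with the kernel of $\I_M\to H_1(\I_M;\Z/2)$, invoke Proposition~\ref{propos_theta_main1} for the vanishing of $\theta_M$, and then apply Proposition~\ref{propos_d2_LHS} to obtain existence, with uniqueness coming from the fact that commutators and squares generate $\CC_M$. The paper's proof is terser on the invariance step, asserting only that it ``follows immediately from the uniqueness''; your explicit verification that conjugation by $g\in\Stab_{\Mod(S)}(M)$ permutes the four $\rho_{i,M}$ (via the permutation of the set $\{\ba_1,\ba_2,\ba_3,\ba_1+\ba_2+\ba_3\}$ and $\Sp$-equivariance of~$\sigma$) is exactly the content hidden behind that phrase.
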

 
\begin{proof}
 By Corollary~\ref{cor_StabC} the subgroup~$\CC_M$ coincides with the kernel of the natural homomorphism $j\colon \I_M\to H_1(\I_M;\Z/2)$; hence, $\CC_M$ is generated by commutators~$[h_1,h_2]$, $h_1,h_2\in\I_M$, and squares~$h^2$ of elements $h\in\I_M$. Therefore, a homomorphism~$\psi_M$ satisfying~\eqref{eq_psi1} and~\eqref{eq_psi2} is unique if exists. Moreover, the invariance of~$\psi_M$ under the action of~$\Stab_{\Mod(S)}(M)$ follows immediately from the uniqueness of it. By Proposition~\ref{propos_d2_LHS}, the existence of~$\psi_M$ satisfying~\eqref{eq_psi1} and~\eqref{eq_psi2} follows from vanishing of~$\theta_M$.
\end{proof}

Let $\iota$ be the rotation of the surface shown in Fig.~\ref{fig_psi} by angle~$\pi$ around the horizontal axis. Then $\iota\in\hI_M\setminus\I_M$ and $\iota^2=1$. 
Hence, $\I_M$ is an index two subgroup of~$\hI_M$. By Proposition~\ref{propos_eBC},  $[h_1,h_2]\in\CC_M$ for all $h_1,h_2\in\hI_M$ and $h^2\in\CC_M$ for all $h\in\hI_M$. 
In the sequel, we will need the following improvement of Proposition~\ref{propos_psi1}.

\begin{propos}\label{propos_psi2}
For all $h_1,h_2\in\hI_M$,
\begin{equation}
\label{eq_psi1h}\psi_M\bigl([h_1,h_2]\bigr)=\sum_{i\ne j}\hrho_i(h_1)\hrho_j(h_2),
\end{equation}
and, for all $h\in\hI_M$,
\begin{equation}
\label{eq_psi2h}\psi_M(h^2)=\sum_{i<j}\hrho_i(h)\hrho_j(h).
\end{equation}
\end{propos}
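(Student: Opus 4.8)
The plan is to deduce Proposition~\ref{propos_psi2} from the homomorphism $\psi_M$ already constructed in Proposition~\ref{propos_psi1} by a finite computation. Since $\iota\in\hI_M$ and $\hsigma$ is a homomorphism into the elementary abelian $2$-group $\BB'$, every commutator $[h_1,h_2]$ and every square $h^2$ with $h_1,h_2,h\in\hI_M$ lies in $\ker\hsigma=\CC$ (Proposition~\ref{propos_eBC}), hence in $\CC_M$, so the left-hand sides of~\eqref{eq_psi1h} and~\eqref{eq_psi2h} make sense. I would introduce the ``error'' functions
$$\Delta(h_1,h_2)=\psi_M\bigl([h_1,h_2]\bigr)+\sum_{i\ne j}\hrho_i(h_1)\hrho_j(h_2),\qquad \Delta'(h)=\psi_M(h^2)+\sum_{i<j}\hrho_i(h)\hrho_j(h),$$
for $h,h_1,h_2\in\hI_M$, so that the claim is exactly $\Delta\equiv 0$ and $\Delta'\equiv 0$. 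Using the commutator identities $[h_1,h_2h_3]=[h_1,h_3]\,[h_1,h_2]^{h_3}$, $[h_1h_2,h_3]=[h_1,h_3]^{h_2}[h_2,h_3]$, $(h_1h_2)^2=h_1^2h_2^2[h_2,h_1]^{h_2}$, the fact that $\psi_M$ is a homomorphism invariant under $\Stab_{\Mod(S)}(M)$-conjugation (Proposition~\ref{propos_psi1}), and that the $\hrho_i$ are invariant under conjugation by elements of $\hI$ (because $\hI$ acts trivially on $H_{\Z/2}$, hence on the affine space $\Omega$ of quadratic forms), one checks the formal identities: $\Delta$ is bi-additive and symmetric over $\Z/2$, $\Delta(h,h)=0$ identically, and $\Delta'(h_1h_2)=\Delta'(h_1)+\Delta'(h_2)+\Delta(h_1,h_2)$.

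Granting these, the verification collapses to a short list. By Proposition~\ref{propos_psi1} both $\Delta$ and $\Delta'$ vanish on $\I_M$, and $\Delta(\iota,\iota)=0$ automatically (it equals $\sum_{i\ne j}\hrho_i(\iota)\hrho_j(\iota)=\bigl(\sum_i\hrho_i(\iota)\bigr)^2-\sum_i\hrho_i(\iota)=0$ over $\Z/2$). Since $\hI_M=\I_M\rtimes\langle\iota\rangle$ with $\I_M=\langle z_1,z_2,z_3\rangle$ (Proposition~\ref{propos_StabIM12}), bi-additivity and symmetry of $\Delta$ reduce $\Delta\equiv 0$ to the three equalities $\Delta(z_k,\iota)=0$ for $k=1,2,3$, and then additivity of $\Delta'$ reduces $\Delta'\equiv 0$ to $\Delta'(\iota)=0$. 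Evaluating these requires three inputs. First, the conjugates $z_k^{\iota}$: because $\iota$ interchanges $\Sigma_1$ and $\Sigma_2$, it interchanges $u_1\leftrightarrow u_2$ and $v_1\leftrightarrow v_2$, whence $z_1^{\iota}=z_1^{-1}$, $z_2^{\iota}=z_2^{-1}$, and $z_3^{\iota}=z_4$. Second, the values $\psi_M\bigl([z_k,\iota]\bigr)=\psi_M(z_k^{-1}z_k^{\iota})$: for $k=1,2$ this is $\psi_M(z_k^{-2})$, computed from~\eqref{eq_psi2} and the values in~\eqref{eq_rho_z1}--\eqref{eq_rho_z3}; for $k=3$ one first rewrites $z_3^{-1}z_4$, using~\eqref{eq_z4}, as a product of a square and commutators of elements of $\I_M$ (it lies in $\CC_M$, so this is possible), and then applies~\eqref{eq_psi1} and~\eqref{eq_psi2}. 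Third, the values $\hrho_i(\iota)=\hsigma(\iota)(\omega_i)$: here one identifies $\iota$ with a hyperelliptic involution in a symplectic basis adapted to $M$ and to the curves $\beta_1,\beta_2,\beta_3$ of Fig.~\ref{fig_psi}, and transports formula~\eqref{eq_eBC3} along this identification via the $\Mod(S)$-equivariance of $\hsigma$. Substituting the resulting four values of $\hrho_i(\iota)$ into the four expressions $\Delta(z_1,\iota),\Delta(z_2,\iota),\Delta(z_3,\iota),\Delta'(\iota)$, all of them vanish, which proves the proposition.

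The main obstacle is the third input: correctly computing $\hsigma(\iota)$ in coordinates adapted to the configuration of Fig.~\ref{fig_psi}, rather than in the ``standard'' picture of Fig.~\ref{fig_eBC}, and then evaluating it at the four quadratic forms $\omega_0,\dots,\omega_3$ attached to $\fA$; an indexing or orientation slip here feeds into all four final checks simultaneously. A secondary technical point is the rewriting of $z_3^{-1}z_4$ in the second input, where one uses $z_3z_4=(z_1z_2)^{z_3^{-1}}z_1^{-1}z_2^{-1}\in\CC_M$ and expresses this element through~\eqref{eq_psi1}; once that expression is in hand, everything that remains is routine $\Z/2$-bookkeeping. (Equivalently, the whole computation amounts to showing that the restriction $\sum_{i<j}\hrho_{i,M}\hrho_{j,M}$ vanishes in $H^2(\hI_M;\Z/2)$ and invoking Proposition~\ref{propos_d2_LHS} with $G=\hI_M$, together with the uniqueness of $\psi_M$; I would, however, carry it out directly via $\Delta$ and $\Delta'$, since the semidirect-product structure makes the cohomological route no shorter.)
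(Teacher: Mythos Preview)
Your proposal is correct and follows essentially the same route as the paper's proof: reduce to generators via commutator and square identities together with the $\Stab_{\Mod(S)}(M)$-invariance of~$\psi_M$, compute the conjugation action $z_1^{\iota}=z_1^{-1}$, $z_2^{\iota}=z_2^{-1}$, $z_3^{\iota}=z_4$, compute $\hrho_i(\iota)$ from~\eqref{eq_eBC3}, and then check the finitely many remaining cases using~\eqref{eq_psi1}, \eqref{eq_psi2}, and~\eqref{eq_z4}. Your packaging via the error functions $\Delta$ and $\Delta'$ is a clean way to organise exactly the same computation; the paper instead derives~\eqref{eq_psi2h} from~\eqref{eq_psi1h} by writing $h^2=(h\iota)^2[\iota,h]$, which is equivalent to your cocycle identity for~$\Delta'$.
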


\begin{proof}
For the elements $z_1$, $z_2$, and $z_3$ in Proposition~\ref{propos_StabIM12} and the  element $z_4$ in the proof of those proposition, we have
$$
\iota z_1\iota=z_1^{-1},\qquad
\iota z_2\iota=z_2^{-1},\qquad
\iota z_3\iota=z_4.
$$
Using~\eqref{eq_eBC3}, we obtain that
$$
\hsigma(\iota)=\overline{\ba}_1\bigl(\overline{\ba}_2+\overline{\ba}_3+1\bigr)\overline{\bb}_1\overline{\bb}_2
$$
and hence
\begin{equation}\label{eq_hrho_iota}
\hrho_0(\iota)=\hrho_1(\iota)=\hrho_2(\iota)=0,\qquad \hrho_3(\iota)=1.
\end{equation}
Since the group~$\I_M$ is generated by~$z_1$, $z_2$, and~$z_3$, we see that the group~$\hI_M$ is generated by~$z_1$, $z_2$, $z_3$, and~$\iota$.

Suppose that $h_1=h'h''$, where $h',h''\in\hI_M$; then
$$
[h_1,h_2]=[h',h_2]^{h''}[h'',h_2].
$$ 
Since the homomorphism~$\psi_M$ is $\hI_M$-invariant,  equality~\eqref{eq_psi1h} for $[h_1,h_2]$ would follow once the same equalities were proved for $[h',h_2]$ and~$[h'',h_2]$. Besides, $[h_1,h_2]^{-1}=[h_1^{-1},h_2]^{h_1}$ and hence equality~\eqref{eq_psi1h} for $[h_1,h_2]$ is equivalent to the same equality for~$[h_1^{-1},h_2]$. Thus, it is sufficient to prove the required equality~\eqref{eq_psi1h} for some set of elements~$h_1$ that generate the group~$\hI_M$, for instance, for $h_1\in\{z_1,z_2,z_3,\iota\}$. Repeating similar arguments for~$h_2$, we can further reduce the proof of~\eqref{eq_psi1h} to the case when both $h_1$ and~$h_2$ are in the set~$\{z_1,z_2,z_3,\iota\}$. If $h_1=h_2$, then equality~\eqref{eq_psi1h} is obvious. If both $h_1$ and~$h_2$ are in~$\{z_1,z_2,z_3\}$, then  equality~\eqref{eq_psi1h} holds by Proposition~\ref{propos_psi1}. Besides, equality~\eqref{eq_psi1h} for $[h_1,h_2]$ is equivalent to the same equality for $[h_2,h_1]$. So we need only to prove equality~\eqref{eq_psi1h} for $h_1=\iota$ and $h_2=z_s$, $s=1,2,3$. Combining~\eqref{eq_rho_z1}--\eqref{eq_rho_z3} and~\eqref{eq_hrho_iota}, we see that, for each commutator~$[\iota,z_s]$, $s=1,2,3$, the right-hand side of~\eqref{eq_psi1h} is equal to~$1$. So we need to prove that $\psi_M\bigl([\iota,z_s]\bigr)=1$.

If $s$ is either~$1$ or~$2$, then $[\iota,z_s]=z_s^2$. Combining~\eqref{eq_psi2}, \eqref{eq_rho_z1}, and~\eqref{eq_rho_z2}, we obtain that
$$
\psi_M\bigl([\iota,z_s]\bigr)=\sum_{i<j}\rho_i(z_s)\rho_j(z_s)=1.
$$

If $s=3$, then, using~\eqref{eq_z4}, we obtain that
$$
[\iota,z_3]=z_4^{-1}z_3=z_2z_1z_3z_2^{-1}z_1^{-1}z_3=\bigl[z_1^{-1}z_2^{-1},z_3^{-1}\bigr]z_3^2\bigl[z_2^{-1},z_1^{-1}\bigr]^{z_3}.
$$
Combining~\eqref{eq_psi1}, \eqref{eq_psi2} and~\eqref{eq_rho_z1}--\eqref{eq_rho_z3}, we compute that
$$
\psi_M\bigl([\iota,z_3]\bigr)=
\psi_M\bigl(\bigl[z_1^{-1}z_2^{-1},z_3^{-1}\bigr]\bigr)
+\psi_M\bigl(z_3^2\bigr)
+\psi_M\bigl(\bigl[z_2^{-1},z_1^{-1}\bigr]\bigr)=1,
$$
which completes the proof of~\eqref{eq_psi1h}.

Now, let us prove~\eqref{eq_psi2h}. If $h\in\I_M$, then this is exactly equality~\eqref{eq_psi2}. Suppose that $h\in\hI_M\setminus\I_M$. Then $h\iota\in\I_M$ and 
$$
h^2=(h\iota)^2[\iota,h].
$$
Using~\eqref{eq_psi2} and~\eqref{eq_psi1h}, we obtain that
\begin{multline*}
\psi_M\left(h^2\right)=\psi_M\left((h\iota)^2\right)+\psi_M\bigl( [\iota,h]\bigr)=\sum_{i<j}\hrho_i(h\iota)\hrho_j(h\iota)+\sum_{i\ne j}\hrho_i(\iota)\hrho_j(h)\\{}=\sum_{i<j}\hrho_i(h)\hrho_j(h)+\sum_{i<j}\hrho_i(\iota)\hrho_j(\iota)=\sum_{i<j}\hrho_i(h)\hrho_j(h),
\end{multline*}
which is exactly the required equality~\eqref{eq_psi2h}.
\end{proof}

\subsection{Five-component multicurves} 
In this section we study the stabilizer~$\I_K$ for a five-component multicurve~$K$ that belongs to~$\M_2'$. In order to use notation in the previous subsection, we conveniently arrange~$K$ as the multicurve $\alpha_0\cup\alpha_1\cup\alpha_2\cup\alpha_3\cup\gamma_2$ in Fig.~\ref{fig_psi}.  As in the previous section, we denote by~$M$ the four-component multicurve $\alpha_0\cup\alpha_1\cup\alpha_2\cup\alpha_3$.

\begin{propos}\label{propos_Stab_5comp}
The stabilizer~$\I_K$ is the free group with infinitely many generators
$$
w_k=T_{\delta_1}^{k}T_{\gamma_1,\gamma_2}T_{\delta_1}^{-k},\qquad k\in\Z.
$$
\end{propos}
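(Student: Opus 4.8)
The plan is to realize $\I_K$ inside the group $\F_2\times\F_2$ that already describes the stabilizer of the underlying four-component multicurve $M=\alpha_0\cup\alpha_1\cup\alpha_2\cup\alpha_3$, and then read off free generators by an elementary exponent-sum (Reidemeister--Schreier) computation. First I would record the basic structure: the fifth component $\gamma_2$ lies in $\Sigma_2$ and cuts this four-punctured sphere into two pairs of pants (one carrying $\alpha_0,\alpha_1$, the other $\alpha_2,\alpha_3$). Since the five components of $K$ have pairwise distinct homology classes and $\I$ acts trivially on $H$, every $h\in\I_K$ fixes each component of $K$; in particular $\I_K\subseteq\I_M$, and an element $h\in\I_M$ lies in $\I_K$ if and only if $h(\gamma_2)=\gamma_2$. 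Because $K\in\M_2'$ contains no bounding pair, the term $BP(K)$ in the Birman--Lubotzky--McCarthy sequence~\eqref{eq_BLM} is trivial, so $\I_K$ embeds into a free group and is therefore free; it remains only to pin down generators.

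Next I would invoke Proposition~\ref{propos_StabIM12} to identify $\I_M$ with $\ker f\subseteq\F_2\times\F_2$, the two factors being $\PMod(\Sigma_1)=\langle u_1,v_1\rangle$ and $\PMod(\Sigma_2)=\langle u_2,v_2\rangle$ with $u_s=T_{\gamma_s}$, $v_s=T_{\delta_s}$, and $f$ sending $u_1,u_2$ to $(1,0)$ and $v_1,v_2$ to $(0,1)$. Under the restriction map $\eta$, the condition $h(\gamma_2)=\gamma_2$ translates (using incompressibility of $\Sigma_2$) into $\pr_2\eta(h)\in\Stab_{\PMod(\Sigma_2)}(\gamma_2)$. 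Since $\gamma_2$ separates the four-punctured sphere $\Sigma_2$ into two pairs of pants whose punctures cannot be permuted (pure mapping classes) and the pure mapping class group of a pair of pants is trivial, this stabilizer is the infinite cyclic group $\langle T_{\gamma_2}\rangle=\langle u_2\rangle$. Hence $\eta(\I_K)=\{(a,b)\in\ker f:\ b\in\langle u_2\rangle\}$; writing $b=u_2^{-m}$, membership in $\ker f$ forces $m=\#_{u_1}(a)$ and $\#_{v_1}(a)=0$, where $\#_w$ denotes exponent sum. Thus $\eta(\I_K)=\{(a,u_2^{-\#_{u_1}(a)}):a\in\langle u_1,v_1\rangle,\ \#_{v_1}(a)=0\}$, and the first-coordinate projection carries it isomorphically onto $N:=\ker(\langle u_1,v_1\rangle\to\Z)$, the kernel of the homomorphism killing $u_1$ and sending $v_1\mapsto1$ (the second coordinate being a function of the first).

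Finally I would apply Reidemeister--Schreier to the index-infinite subgroup $N$ of $\F_2$: it is free with free basis $\{v_1^k u_1 v_1^{-k}:k\in\Z\}$. Since $T_{\delta_1}=v_1$ acts trivially on $\Sigma_2$ and $T_{\gamma_1,\gamma_2}=z_1=u_1u_2^{-1}$ (Proposition~\ref{propos_StabIM12} and its proof), we get $\eta(w_k)=v_1^k(u_1u_2^{-1})v_1^{-k}=(v_1^ku_1v_1^{-k},u_2^{-1})$, which corresponds under the above isomorphism exactly to the free generator $v_1^ku_1v_1^{-k}$ of $N$; in particular each $w_k$ indeed lies in $\I_K$, being a conjugate of the bounding-pair twist $T_{\gamma_1,\gamma_2}\in\I_K$ by $T_{\delta_1}^k$, which fixes $K$ and normalizes $\I$. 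Therefore $\{w_k:k\in\Z\}$ freely generates $\eta(\I_K)$, hence freely generates $\I_K$. The one step demanding genuine care is the identification $\Stab_{\PMod(\Sigma_2)}(\gamma_2)=\langle u_2\rangle$ — equivalently, determining precisely which subgroup of $\F_2\times\F_2$ is cut out by the requirement that $h$ fix $\gamma_2$; after that the argument is routine exponent-sum bookkeeping inside $\F_2\times\F_2$. (Alternatively, one can bypass the stabilizer computation and work with $\eta_K\colon\I_K\hookrightarrow\PMod(S\setminus K)=\PMod(\Sigma_1)$ directly, determining its image from the symmetric matrices $P(T_{\gamma_1}),P(T_{\delta_1}),P(T_{\gamma_2})$ already computed in the proof of Proposition~\ref{propos_StabIM12}; this again yields $\eta_K(\I_K)=N$ with $\eta_K(w_k)=v_1^ku_1v_1^{-k}$.)
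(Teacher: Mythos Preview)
Your proof is correct. The paper's own argument is precisely the alternative you sketch at the end: it applies the Birman--Lubotzky--McCarthy sequence directly to~$K$, observes that $\PMod(S\setminus K)\cong\PMod(\Sigma_1)=\F_2=\langle T_{\gamma_1},T_{\delta_1}\rangle$ (the two three-punctured spheres contribute nothing), and then argues exactly as in Proposition~\ref{propos_StabIM12} that $\eta(\I_K)$ is the kernel of the exponent-sum map $T_{\gamma_1}\mapsto 0$, $T_{\delta_1}\mapsto 1$, whence the free basis $\{v_1^k u_1 v_1^{-k}\}$ pulls back to~$\{w_k\}$. Your primary route---embedding $\I_K\subseteq\I_M\subseteq\F_2\times\F_2$ and cutting out the condition $h(\gamma_2)=\gamma_2$ as $\pr_2\eta(h)\in\langle u_2\rangle$---is a legitimate detour that reuses the description of~$\I_M$ already on the shelf; it costs you the extra verification that $\Stab_{\PMod(\Sigma_2)}(\gamma_2)=\langle u_2\rangle$ and the incompressibility argument translating ``$h(\gamma_2)=\gamma_2$ in~$S$'' to ``in~$\Sigma_2$'', but after that the bookkeeping is identical. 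Both approaches land on the same Reidemeister--Schreier computation in~$\F_2$.
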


\begin{proof}
Since one connected components of~$S\setminus K$ is a 4-punctured sphere, and two others are 3-punctured spheres, we see that $\PMod(S\setminus K)\cong\F_2$ is the free group with two generators~$T_{\gamma_1}$ and~$T_{\delta_1}$. Similarly to Proposition~\ref{propos_StabIM12}, we derive that the image of the homomorphism~$\eta\colon \I_K\to \F_2$ in the corresponding Birman--Lubotzky--McCarthy exact sequence  is the kernel  of the homomorphism $\F_2\to\Z$ that sends~$T_{\gamma_1}$ and~$T_{\delta_1}$ to~$0$ and~$1$, respectively. The proposition follows easily.
\end{proof}
 
\begin{propos}
The group~$\CC_K$ coincides with the normal closure in~$\PMod(S\setminus K)$ of the three elements~$w_0^2$, $w_2w_0^{-1}$, and~$[w_0,w_1]$.
\end{propos}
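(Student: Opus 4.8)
The proof will be carried out entirely inside $\PMod(S\setminus K)$. By Proposition~\ref{propos_Stab_5comp} this group is the free group $\F_2=\langle a,t\rangle$, where $a$ and $t$ are the images of $T_{\gamma_1}$ and $T_{\delta_1}$; since $\gamma_2$ is a component of $K$, the twist $T_{\gamma_2}$ maps to the identity, so $\eta(T_{\gamma_1,\gamma_2})=a$ and $\eta(w_k)=t^kat^{-k}$. As $K\in\M_2'$ it contains no bounding pair, so $\eta$ embeds $\I_K$ onto the kernel of the homomorphism $\F_2\to\Z$ sending $a\mapsto 0$, $t\mapsto 1$; we identify $\I_K$ with this subgroup, which is \emph{normal} in $\F_2$ and free on the $w_k=t^kat^{-k}$, and we set $\CC_K=\ker(\sigma|_{\I_K})$. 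Let $N\subseteq\F_2$ be the normal closure of $w_0^2$, $w_2w_0^{-1}$ and $[w_0,w_1]$. These three elements lie in $\I_K$ and $\I_K\triangleleft\F_2$, so $N\subseteq\I_K$. The proof then reduces to the chain $N\subseteq\CC_K\subseteq\I_K$ together with the equalities $[\I_K:N]=[\I_K:\CC_K]=4$, which forces $N=\CC_K$.

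First I would establish $N\subseteq\CC_K$. Since $\sigma$ is a homomorphism into the $2$-torsion group $\BB_3'$, it annihilates $[w_0,w_1]$ and $w_0^2$ (as $\sigma(w_0^2)=2\sigma(w_0)=0$); and it annihilates $w_2w_0^{-1}$ because $w_2=t^2w_0t^{-2}$ and the square of the transvection $\overline{T_{\delta_1}}$ reduces to the identity modulo $2$, hence acts trivially on $\BB_3'$, so that $\sigma(w_2)=\sigma(w_0)$ by the $\Sp(6,\Z)$-equivariance of $\sigma$.

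Next I would bound $[\I_K:N]$ from above. Working in $G=\F_2/N$ and conjugating each of the three defining relators by $t^j$ — legitimate precisely because $N\triangleleft\F_2$ — one reads off that in $G$ one has $\overline w_{j+2}=\overline w_j$, $\overline w_j^2=1$ and $[\overline w_j,\overline w_{j+1}]=1$ for every $j\in\Z$. Hence every $\overline w_k$ equals $\overline w_0$ or $\overline w_1$, and $\overline w_0,\overline w_1$ are commuting involutions, so the image of $\I_K=\langle w_k\mid k\in\Z\rangle$ in $G$ is $\langle\overline w_0,\overline w_1\rangle$, of order at most $4$; that is, $[\I_K:N]\le 4$.

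Finally I would compute $[\I_K:\CC_K]$, which is the only step with genuine content. Since $\I_K$ is free on the $w_k$, the homomorphism $\sigma|_{\I_K}$ factors through $H_1(\I_K;\Z/2)=\bigoplus_k(\Z/2)\overline w_k$, and (again because $\overline{T_{\delta_1}}^2$ acts trivially on $\BB_3'$) the value $\sigma(w_k)$ depends only on the parity of $k$. Reusing the computations of Subsection~\ref{subsection_type1} — namely $\sigma(w_0)=\sigma(z_1)=(\overline\ba_2+\overline\ba_3+1)\overline\ba_1\overline\bb_1$, $\sigma(z_3)=\overline\ba_1\overline\ba_2\overline\ba_3$, together with $w_{-1}=z_1z_3$, whence $\sigma(w_1)=\sigma(w_{-1})=\sigma(w_0)+\overline\ba_1\overline\ba_2\overline\ba_3$ — one checks that $\sigma(w_0)$ and $\sigma(w_1)$ are linearly independent in $\BB_3'$, so the image of $\sigma|_{\I_K}$ is isomorphic to $(\Z/2)^2$ and $[\I_K:\CC_K]=4$. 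Combining the three steps, $\I_K/\CC_K$ is a quotient of $\I_K/N$, the former has order $4$ and the latter order at most $4$, so the natural map $\I_K/N\to\I_K/\CC_K$ is an isomorphism and $N=\CC_K$. The points to watch are that $N$ is normal in $\PMod(S\setminus K)=\F_2$ by construction, which legitimizes the conjugation argument of the third step, and that $\I_K$ is normal in $\F_2$ as a kernel, which is what yields $N\subseteq\I_K$.
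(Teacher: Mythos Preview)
Your argument is correct and takes a genuinely different route from the paper's. The paper computes $\sigma(w_k)=(\overline\ba_2+\overline\ba_3+1)\overline\ba_1\overline\bb_1+k\,\overline\ba_1\overline\ba_2\overline\ba_3$ directly from formula~\eqref{eq_BCJ_BP}, reads off that $[\I_K:\CC_K]=4$ with coset representatives $\{1,w_0,w_1,w_0w_1\}$, and then applies the Reidemeister rewriting process to produce an explicit (infinite) generating set for $\CC_K$, each of whose members it expresses through $w_0^2$, $w_1^2$, $[w_0,w_1]$, $w_m w_{m-2}^{-1}$ and their $\PMod(S\setminus K)$-conjugates. You instead run a clean index argument: $N\subseteq\CC_K$, $[\I_K:N]\le 4$, $[\I_K:\CC_K]=4$, done. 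This avoids the rewriting entirely and is more conceptual; the paper's approach, on the other hand, gives explicit normal generators as a by-product, which is closer in spirit to how the result is used in Proposition~\ref{propos_psi_nu}.

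One point needs an extra sentence. In your first step you verify that $\sigma$ kills the three specific elements $w_0^2$, $w_2w_0^{-1}$, $[w_0,w_1]$, but $N$ is their normal closure in $\F_2$, so to conclude $N\subseteq\CC_K$ you must know that $\CC_K$ is itself normal in $\F_2$. This is true but should be said: the $\Sp(6,\Z)$-equivariance of $\sigma$ (which you already invoke) makes $\CC=\ker\sigma$ invariant under $\Mod(S)$-conjugation, and since $\eta\colon\Stab_{\Mod(S)}(\overrightarrow{K})\to\F_2$ is surjective, conjugation by any $g\in\F_2$ lifts to conjugation by an element of $\Stab_{\Mod(S)}(\overrightarrow{K})$, which preserves both $\CC$ and $\I_K$, hence $\CC_K$. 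Equivalently, your parity observation already shows $\sigma$ kills every $t^j$-conjugate of the three generators, and $a$-conjugation is inner in $\I_K$; either way the inclusion $N\subseteq\CC_K$ follows. Your closing ``points to watch'' paragraph mentions $N\trianglelefteq\F_2$ and $\I_K\trianglelefteq\F_2$ but omits precisely this third normality, which is the one the first step actually needs.
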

 
\begin{proof}
Using~\eqref{eq_BCJ_BP}, we compute that 
\begin{equation}\label{eq_sigma_w_value}
\sigma(w_k)=\bigl(\overline{\ba}_2+\overline{\ba}_3+1\bigr)\overline{\ba}_1\overline{\bb}_1+k\,\overline{\ba}_1\overline{\ba}_2\overline{\ba}_3.
\end{equation}
Since the elements $\bigl(\overline{\ba}_2+\overline{\ba}_3+1\bigr)\overline{\ba}_1\overline{\bb}_1$ and~$\overline{\ba}_1\overline{\ba}_2\overline{\ba}_3$ are linearly independent in~$\BB_3'$, we obtain that $\CC_K$ has index~$4$ in~$\I_K$. Besides, $\{1, w_0, w_1, w_0w_1\}$ is the set of representatives for the cosets of~$\I_K$ by~$\CC_K$. Using Reidemeister rewriting process (cf.~\cite[Theorem~2.7]{MKS}), we obtain that the group~$\CC_K$ is generated by the following elements:
\begin{align*}
&w_{2k}w_0^{-1}, &
&w_0w_{2k}, &
&w_1w_{2k}w_1^{-1}w_0^{-1}, &
&w_0w_1w_{2k}w_1^{-1}, \\
&w_{2k+1}w_1^{-1},& 
&w_0w_{2k+1}w_1^{-1}w_0^{-1},&
&w_1w_{2k+1},&
&w_0w_1w_{2k+1}w_0^{-1},
\end{align*}
where $k$ runs over~$\Z$. Each of these elements can be easily expressed through the elements~$w_0^2$, $w_1^2$, $[w_0,w_1]$, $w_{m}w_{m-2}^{-1}$, where $m\in\Z$, and their conjugates. Since in the group~$\PMod(S\setminus K)$ the element~$w_1^2$ is conjugate to~$w_0^2$ and every element~$w_{m}w_{m-2}^{-1}$ is conjugate to~$w_2w_0^{-1}$, the proposition follows.
\end{proof}
 
Now, we are able to prove the following result, which relates the homomorphism~$\psi_M$ to the homomorphisms~$\nu_{\gamma}$ introduced in Section~\ref{section_nu}. 

\begin{propos}\label{propos_psi_nu}
Suppose that $h\in\CC_K$. Then the number $\nu_{\gamma_2}(h)$ is even, and
\begin{equation}\label{eq_psi_nu}
\psi_M(h)=\frac{\nu_{\gamma_2}(h)}{2}\mod 2.
\end{equation}
\end{propos}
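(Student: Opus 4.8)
The plan is to recognize that both sides of~\eqref{eq_psi_nu} are homomorphisms $\CC_K\to\Z/2$ which are invariant under a suitable group of inner automorphisms, and then to check their equality on a set of normal generators of $\CC_K$.

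First I would prove the divisibility assertion, which is also what makes the right-hand side of~\eqref{eq_psi_nu} meaningful. Since $\I_K$ is free on the generators $w_k$ (Proposition~\ref{propos_Stab_5comp}) and $\nu_{\gamma_2}$ is a homomorphism, it is enough to evaluate $\nu_{\gamma_2}$ on one generator: the twist $T_{\delta_1}$ about the separating curve $\delta_1$ disjoint from $\gamma_2$ lies in $\I_{\gamma_2}$, so conjugation invariance gives $\nu_{\gamma_2}(w_k)=\nu_{\gamma_2}(T_{\gamma_1,\gamma_2})=\pm1$ by Proposition~\ref{propos_phi_values}(b); the sign will be irrelevant modulo~$2$. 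Hence for $h\in\I_K$ the integer $\nu_{\gamma_2}(h)$ is, up to this sign, the total exponent of $h$ as a word in the $w_k$, and its reduction mod~$2$ equals the coefficient $\ell(h)\in\Z/2$ of $b_1:=(\overline{\ba}_2+\overline{\ba}_3+1)\overline{\ba}_1\overline{\bb}_1$ in the expansion $\sigma(h)=\ell(h)\,b_1+m(h)\,b_2$ coming from~\eqref{eq_sigma_w_value}, where $b_2:=\overline{\ba}_1\overline{\ba}_2\overline{\ba}_3$. As $b_1$ and $b_2$ are linearly independent in $\BB_3'$, the condition $h\in\CC_K=\I_K\cap\ker\sigma$ forces $\ell(h)=0$, i.e.\ $\nu_{\gamma_2}(h)$ is even; so $\overline\nu\colon\CC_K\to\Z/2$, $\overline\nu(h)=\tfrac12\nu_{\gamma_2}(h)\bmod2$, is a well-defined homomorphism.

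Next I would reduce~\eqref{eq_psi_nu} to a finite computation. One has $\CC_K\subseteq\CC_M$ (both being the intersection with $\CC$ of the corresponding stabilizers, and $\I_K\subseteq\I_M$), so $\psi_M$ restricts to $\CC_K$. By the previous proposition, which describes $\CC_K$ as a normal closure, $\CC_K$ is the normal closure in $\PMod(S\setminus K)=\langle T_{\gamma_1},T_{\delta_1}\rangle$ of the three elements $w_0^2$, $w_2w_0^{-1}$, $[w_0,w_1]$; since these lie in $\I_K$ (the kernel of $\PMod(S\setminus K)\to\Z$), every $\PMod(S\setminus K)$-conjugate of them can be written as $T_{\delta_1}^k(qsq^{-1})T_{\delta_1}^{-k}$ with $q\in\I_K$. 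Thus $\CC_K$ is generated by the $T_{\delta_1}$- and $\I_K$-conjugates of $w_0^2$, $w_2w_0^{-1}$, $[w_0,w_1]$. Now both $\psi_M$ and $\overline\nu$ are invariant under these conjugations: $\psi_M$ is invariant under $\Stab_{\Mod(S)}(M)$ by Proposition~\ref{propos_psi1}, and both $\I_K\subseteq\I_M$ and $T_{\delta_1}$ (as $\delta_1$ is disjoint from $M$) lie in $\Stab_{\Mod(S)}(M)$; while $\overline\nu$ is invariant under $\I_K\subseteq\I_{\gamma_2}$ because $\nu_{\gamma_2}$ is a homomorphism to an abelian group, and invariant under $T_{\delta_1}$ because the class of $T_{\delta_1}$ in $\Mod(S_2)$ normalizes $\I_2$ and Morita's homomorphism $d_0$ is $\Mod(S_2)$-invariant. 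Hence it suffices to verify $\psi_M(s)=\overline\nu(s)$ for $s\in\{w_0^2,\,w_2w_0^{-1},\,[w_0,w_1]\}$.

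Finally, these three checks are short once one observes $w_0=T_{\gamma_1,\gamma_2}=z_1$ in the notation of Proposition~\ref{propos_StabIM12} and $w_k=T_{\delta_1}^kz_1T_{\delta_1}^{-k}$. For $w_0^2$: by~\eqref{eq_psi2} and~\eqref{eq_rho_z1}, $\psi_M(w_0^2)=\sum_{i<j}\rho_i(z_1)\rho_j(z_1)=\rho_2(z_1)\rho_3(z_1)=1$, and $\overline\nu(w_0^2)=\tfrac12\nu_{\gamma_2}(w_0^2)=\tfrac12(\pm2)\equiv1$. For $w_2w_0^{-1}=[T_{\delta_1}^{-2},z_1^{-1}]$: by~\eqref{eq_psi1}, $\psi_M(w_2w_0^{-1})=\sum_{i\neq j}\rho_i(T_{\delta_1}^2)\rho_j(z_1)=0$ since $\rho_i(T_{\delta_1}^2)=0$, and $\overline\nu(w_2w_0^{-1})=0$ because $\nu_{\gamma_2}(w_2)=\nu_{\gamma_2}(w_0)$. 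For $[w_0,w_1]=[z_1,T_{\delta_1}z_1T_{\delta_1}^{-1}]$: by~\eqref{eq_psi1} and the conjugation invariance of the $\rho_j$, $\psi_M([w_0,w_1])=\sum_{i\neq j}\rho_i(z_1)\rho_j(z_1)=2\rho_2(z_1)\rho_3(z_1)\equiv0$, and $\overline\nu([w_0,w_1])=0$ since $\nu_{\gamma_2}$ kills commutators. These agree, which finishes the argument. The main obstacle is not a single computation but the bookkeeping of the middle step: one must be sure that $\CC_K$ is generated by conjugates lying over $\langle T_{\delta_1}\rangle$ together with $\I_K$, and that both homomorphisms are invariant under precisely those conjugations — in particular the $T_{\delta_1}$-invariance of $\overline\nu$ really uses Morita's uniqueness statement for $d_0$.
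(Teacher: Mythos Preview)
Your overall strategy---reduce to the normal generators $w_0^2$, $w_2w_0^{-1}$, $[w_0,w_1]$ via $\Stab_{\Mod(S)}(M)$-invariance of~$\psi_M$ and $\Mod(S_2)$-invariance of~$d_0$---is exactly the paper's, and the case $w_0^2$ is handled correctly. The gap is in the other two computations, and it comes from the same oversight: the conjugating element $T_{\delta_1}$ is \emph{not} in the Torelli group.

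For $w_2w_0^{-1}=[T_{\delta_1}^{-2},z_1^{-1}]$ you invoke~\eqref{eq_psi1}, but that formula is only stated for $h_1,h_2\in\I_M$. Since $\delta_1$ is non-separating, $T_{\delta_1}^{2}$ acts on~$H$ by $c\mapsto c+2(c\cdot[\delta_1])[\delta_1]$, which is nontrivial; hence $T_{\delta_1}^{2}\notin\I$ and the symbol $\rho_i(T_{\delta_1}^{2})$ is undefined. The paper fixes this by trading $T_{\delta_1}$ for the bounding-pair twist $z_2=T_{\delta_1,\delta_2}\in\I_M$: using that $T_{\gamma_2}$ commutes with $T_{\delta_1}$ and $T_{\delta_2}$ commutes with $T_{\gamma_1}$, one rewrites
\[
w_2w_0^{-1}=T_{\delta_1}^{2}T_{\gamma_1}T_{\delta_1}^{-2}T_{\gamma_1}^{-1}
=z_2^{2}\cdot\bigl(T_{\gamma_1}z_2^{-2}T_{\gamma_1}^{-1}\bigr),
\]
a product of two elements of~$\CC_M$. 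Now \eqref{eq_psi2} gives $\psi_M(z_2^{2})=1$, and the $\Stab_{\Mod(S)}(M)$-invariance of~$\psi_M$ (note $T_{\gamma_1}\in\Stab_{\Mod(S)}(M)$) gives the same value for the conjugate, so $\psi_M(w_2w_0^{-1})=1+1=0$.

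For $[w_0,w_1]$ your application of~\eqref{eq_psi1} is legitimate (both $w_0,w_1\in\I_M$), but the next step, $\rho_j(w_1)=\rho_j(z_1)$ ``by conjugation invariance of the $\rho_j$'', is not: the $\rho_j$ are invariant only under conjugation by elements of~$\I$, and $T_{\delta_1}\notin\I$. Indeed, \eqref{eq_sigma_w_value} gives $\bigl(\rho_0(w_1),\ldots,\rho_3(w_1)\bigr)=(1,1,0,0)$, different from $\bigl(\rho_i(w_0)\bigr)=(0,0,1,1)$. A direct evaluation of $\sum_{i\ne j}\rho_i(w_0)\rho_j(w_1)$ with these correct values still yields~$0$, so this case is easily repaired; but the $w_2w_0^{-1}$ case genuinely needs the rewriting above.
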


\begin{proof}
Since both homomorphisms~$\psi_M$ and~$\nu_{\gamma_2}$ are $\PMod(S\setminus K)$--invariant, we suffice to check equality~\eqref{eq_psi_nu} for the three elements~$w_0^2$, $w_2w_0^{-1}$, and~$[w_0,w_1]$. 

By Proposition~\ref{propos_phi_values} we have $\nu_{\gamma_2}(w_k)=1$ for all~$k\in\Z$. Hence
$$
\nu_{\gamma_2}\bigl(w_0^2\bigr)=2,\qquad \nu_{\gamma_2}\bigl(w_{2}w_0^{-1}\bigr)=\nu_{\gamma_2}\bigl([w_0,w_1]\bigr)=0.
$$

Combining~\eqref{eq_psi1},~\eqref{eq_psi2}, and~\eqref{eq_sigma_w_value}, we obtain that 
\begin{align}\label{eq_psi_wk}
\psi_M(w_0^2)&=1,\\
\psi_M\bigl([w_0,w_1]\bigr)&=0.\nonumber
\end{align}
To compute the value~$\psi_M(w_{2}w_0^{-1})$, we proceed as follows:
$$
w_{2}w_0^{-1}=T_{\delta_1}^{2}T_{\gamma_1,\gamma_2}T_{\delta_1}^{-2}T_{\gamma_1,\gamma_2}^{-1}=T_{\delta_1}^{2}T_{\gamma_1}T_{\delta_1}^{-2}T_{\gamma_1}^{-1}=T_{\delta_1,\delta_2}^{2}T_{\gamma_1}T_{\delta_1,\delta_2}^{-2}T_{\gamma_1}^{-1}.
$$
Similarly to~\eqref{eq_psi_wk}, we have
$$
\psi_M\bigl(T_{\delta_1,\delta_2}^2\bigr)=\psi_M\bigl(T_{\gamma_1}T_{\delta_1,\delta_2}^2T_{\gamma_1}^{-1}\bigr)=1.
$$
Hence $\psi_M(w_2w_0^{-1})=0$. Therefore, the required equality~\eqref{eq_psi_nu} holds for the elements~$w_0^2$, $w_2w_0^{-1}$, and~$[w_0,w_1]$; thus, for all elements of~$\CC_K$.
\end{proof}

\begin{propos}\label{propos_nu_W_w}
There are orthogonal splittings~$\CW_k$, $k\in\Z$, of the group $\langle [\gamma_2]\rangle^{\bot}/\langle [\gamma_2]\rangle$ such that 
\begin{equation}\label{eq_nu_W_w}
\nu_{\gamma_2,\CW_k}(w_m)=\left\{
\begin{aligned}
&1,&k&=m,\\
&0,&k&\ne m.
\end{aligned}
\right.
\end{equation}
Moreover, $\CW_k$ are exactly all orthogonal splittings~$U\oplus V$ of~$\langle [\gamma_2]\rangle^{\bot}/\langle [\gamma_2]\rangle$ satisfying $[\alpha_i]\in U\cup V$ for $i=0,1,2,3$. 
\end{propos}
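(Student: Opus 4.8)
The plan is to realize each generator $w_k$ as a twist about a bounding pair and then apply Proposition~\ref{propos_phiW_values}(b). Since $\delta_1$ lies in $\Sigma_1$ and $\gamma_2$ lies in $\Sigma_2$, the Dehn twists $T_{\delta_1}$ and $T_{\gamma_2}$ commute, so, putting $\gamma_1^{(k)}:=T_{\delta_1}^{k}(\gamma_1)$, we get
$$
w_k=T_{\delta_1}^{k}T_{\gamma_1}T_{\gamma_2}^{-1}T_{\delta_1}^{-k}=T_{\gamma_1^{(k)}}T_{\gamma_2}^{-1}=T_{\gamma_1^{(k)},\gamma_2}.
$$
Each $\gamma_1^{(k)}$ is a simple closed curve in $\Sigma_1$, disjoint from $\gamma_2$, homologous to $\gamma_1$, and it separates $\Sigma_1$ into a pair of pants containing $\alpha_0,\alpha_1$ and one containing $\alpha_2,\alpha_3$; hence $\{\gamma_1^{(k)},\gamma_2\}$ is a bounding pair dividing $S$ into two genus~$1$ pieces. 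Let $\CW_k$ be the orthogonal splitting of $H_{\gamma_2}=\langle[\gamma_2]\rangle^{\bot}/\langle[\gamma_2]\rangle$ yielded by this bounding pair. Writing $w_m=\bigl(T_{\gamma_2,\gamma_1^{(m)}}\bigr)^{-1}$ and applying Proposition~\ref{propos_phiW_values}(b) gives $\nu_{\gamma_2,\CW}(w_m)=1$ if $\{\gamma_1^{(m)},\gamma_2\}$ yields $\CW$ and $\nu_{\gamma_2,\CW}(w_m)=0$ otherwise; in particular $\nu_{\gamma_2,\CW_k}(w_m)=1$ if $\CW_m=\CW_k$ and $=0$ otherwise. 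Thus~\eqref{eq_nu_W_w} reduces to the pairwise distinctness of the $\CW_k$, and the ``moreover'' assertion is the identification of $\{\CW_k\}_{k\in\Z}$ with the set of all orthogonal splittings of $H_{\gamma_2}$ compatible with the $[\alpha_i]$.

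For the computation I would use the symplectic basis $a_1,a_2,a_3,b_1,b_2,b_3$ of $H$ from the proof of Proposition~\ref{propos_StabIM12}, so that $[\gamma_1]=[\gamma_2]=a_2+a_3$, $[\delta_1]=a_1+a_3$, $[\alpha_i]=a_i$ for $i=1,2,3$, $[\alpha_0]=-(a_1+a_2+a_3)$. Then $\langle[\gamma_2]\rangle^{\bot}=\langle a_1,a_2,a_3,b_1,b_2-b_3\rangle$ (note that $b_3$ itself is \emph{not} orthogonal to $a_2+a_3$, only $b_2-b_3$ is), the classes $\bar a_1,\bar a_2,\bar b_1,\overline{b_2-b_3}$ form a symplectic basis of $H_{\gamma_2}$, and $\bar a_3=-\bar a_2$; in particular the images of $[\alpha_0],\dots,[\alpha_3]$ in $H_{\gamma_2}$ are $\pm\bar a_1$ and $\pm\bar a_2$. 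I would then classify the orthogonal splittings $U\oplus V$ of $H_{\gamma_2}$ with all $[\alpha_i]\in U\cup V$: the condition means $\bar a_1,\bar a_2\in U\cup V$, and since $\langle\bar a_1,\bar a_2\rangle$ is a rank~$2$ isotropic subgroup it cannot be contained in either nondegenerate rank~$2$ summand, so $\bar a_1$ and $\bar a_2$ lie in different summands. A short computation, in which the ``dual'' generators are forced by the orthogonality $U\bot V$, shows that these splittings are precisely
$$
\CW^{(t)}=\bigl\langle\,\bar a_1,\ \bar b_1+t\bar a_2\,\bigr\rangle\oplus\bigl\langle\,\bar a_2,\ \overline{b_2-b_3}+t\bar a_1\,\bigr\rangle,\qquad t\in\Z,
$$
and that these are pairwise distinct.

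It then remains to check that $\CW_k=\CW^{(t_0+\varepsilon k)}$ for some $t_0\in\Z$ and a fixed sign $\varepsilon\in\{-1,1\}$; this yields the distinctness of the $\CW_k$ (hence~\eqref{eq_nu_W_w}) and the equality $\{\CW_k\}_{k\in\Z}=\{\CW^{(t)}\}_{t\in\Z}$ (hence the ``moreover'' part). Because $\gamma_1$ separates $\Sigma_1$ with $\alpha_0,\alpha_1$ on one side and $\alpha_2,\alpha_3$ on the other, the splitting $\CW_0$ satisfies the $[\alpha_i]$-condition, so $\CW_0=\CW^{(t_0)}$ for a unique $t_0$. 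The twist $T_{\delta_1}$ fixes $\gamma_2$, hence descends to $\Mod(S_2)$ and acts on $H_{\gamma_2}$ as the transvection along $\overline{[\delta_1]}=\bar a_1-\bar a_2$, so $\CW_k=T_{\delta_1}^{k}(\CW_0)$; a one-line computation with this transvection gives $T_{\delta_1}(\CW^{(t)})=\CW^{(t+\varepsilon)}$, where $\varepsilon=\pm1$ is the sign occurring in the transvection formula. Hence $\CW_k=\CW^{(t_0+\varepsilon k)}$, and the $\CW_k$ run bijectively through $\{\CW^{(t)}\}_{t\in\Z}$, completing the proof.

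I do not expect a serious obstacle: the content of the statement is the geometric fact that conjugating the bounding pair $\{\gamma_1,\gamma_2\}$ by the powers of $T_{\delta_1}$ sweeps out, bijectively, all bounding pairs --- equivalently all orthogonal splittings of $H_{\gamma_2}$ --- that are compatible with the four-component multicurve $M$. The only slightly delicate points are the bookkeeping for the symplectic basis of $H_{\gamma_2}$, the sign change needed when passing from $T_{\gamma_1^{(m)},\gamma_2}$ to $T_{\gamma_2,\gamma_1^{(m)}}$ in order to quote Proposition~\ref{propos_phiW_values}(b), and checking that $\CW_0$ indeed lands in the family $\{\CW^{(t)}\}$.
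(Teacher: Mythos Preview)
Your proof is correct and follows essentially the same approach as the paper's: both identify $w_m$ with the bounding-pair twist $T_{T_{\delta_1}^m(\gamma_1),\gamma_2}$, define $\CW_k$ as the splitting induced by this pair, invoke Proposition~\ref{propos_phiW_values}(b), and then verify that these splittings are pairwise distinct and exhaust all splittings compatible with the~$[\alpha_i]$. The only difference is cosmetic: the paper chooses a symplectic basis with $e_1=\pm[\gamma_2]$ so that the quotient has the clean basis $e_2,e_3,f_2,f_3$ and the family of splittings is written down immediately, whereas you reuse the basis from Proposition~\ref{propos_StabIM12} (where $[\gamma_2]=a_2+a_3$ is not a basis vector), making the description of $H_{\gamma_2}$ slightly less tidy but arriving at the same parametrization; you also spell out the transvection action of~$T_{\delta_1}$ where the paper simply asserts the identification.
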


\begin{remark}
Here with some abuse of notation we denote the image of~$[\alpha_i]$ after taking quotient by~$[\gamma_2]$ again by~$[\alpha_i]$.
The homology classes $[\gamma_2]$, $[\alpha_0],\ldots,[\alpha_3]$ are defined up to sign. After taking quotient by~$\langle [\gamma_2]\rangle$, we have $[\alpha_0]=\pm [\alpha_1]$ and~$[\alpha_2]=\pm[\alpha_3]$. So in fact the condition $[\alpha_i]\in U\cup V$ for $i=0,1,2,3$ means that $[\alpha_0]$ belongs to one of the summands and $[\alpha_2]$ belongs to the other.
\end{remark}

\begin{proof}[Proof of Proposition~\ref{propos_nu_W_w}]
Choose a symplectic basis $e_1,e_2,e_3,f_1,f_2,f_3$ of~$H$ so that 
$$e_1=\pm[\gamma_2],\quad e_2=\pm[\alpha_0],\quad e_3=\pm[\alpha_2],\quad e_2+e_3=\pm [\delta_1],$$ 
and there exist curves in homology classes~$f_2$ and~$f_3$ disjoint from $\gamma_1\cup\gamma_2$. Denote the images of the classes $e_2,e_3,f_2,f_3$ after taking quotient by~$\langle e_1\rangle$ again by the same letters. Denote by~$\CW_k$ the splitting
$$
\langle e_1\rangle^{\bot}/\langle e_1\rangle=\langle e_2,f_2-ke_3\rangle\oplus\langle e_3,f_3-ke_2\rangle.
$$
It is easy to see that $\CW_k$ are exactly all orthogonal splittings for which $e_2$ and~$e_3$ lie in the summands. Moreover, every $\CW_k$ coincides with the splitting provided by the bounding pair~$\bigl(T_{\delta_1}^k(\gamma_1),\gamma_2\bigr)$, hence formula~\eqref{eq_nu_W_w}.
\end{proof}

\begin{cor}\label{cor_injection_H1}
The inclusion of groups $\I_K\subset \I_{\gamma_2}$ induces an injective homomorphism $H_1(\I_K;\Z)\to H_1(\I_{\gamma_2};\Z)$.
\end{cor}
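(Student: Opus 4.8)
The plan is to detect the free generators of $\I_K$ inside $H_1(\I_{\gamma_2};\Z)$ by means of the homomorphisms $\nu_{\gamma_2,\CW_k}$ produced in Proposition~\ref{propos_nu_W_w}, using them as a kind of dual basis. First I would record that, by Proposition~\ref{propos_Stab_5comp}, the group $\I_K$ is free with basis $\{w_k : k\in\Z\}$, so that $H_1(\I_K;\Z)$ is the free abelian group on the classes $[w_k]$; every element of it is a finite sum $\xi=\sum_m c_m[w_m]$ with $c_m\in\Z$.

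Next, for each $k\in\Z$, the homomorphism $\nu_{\gamma_2,\CW_k}\colon\I_{\gamma_2}\to\Z$ (defined in Section~\ref{section_nu}, with the orthogonal splitting $\CW_k$ of $\langle[\gamma_2]\rangle^{\bot}/\langle[\gamma_2]\rangle$ from Proposition~\ref{propos_nu_W_w}) factors through $H_1(\I_{\gamma_2};\Z)$, since its target is abelian; let $\overline{\nu}_k\colon H_1(\I_{\gamma_2};\Z)\to\Z$ be the induced functional. Writing $j\colon\I_K\hookrightarrow\I_{\gamma_2}$ for the inclusion and $j_*\colon H_1(\I_K;\Z)\to H_1(\I_{\gamma_2};\Z)$ for the induced map, the composition $\overline{\nu}_k\circ j_*$ is exactly the map induced on $H_1$ by the restriction $\nu_{\gamma_2,\CW_k}|_{\I_K}$. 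By formula~\eqref{eq_nu_W_w} this restriction sends $w_m$ to $\delta_{km}$, so $\overline{\nu}_k\circ j_*$ sends $[w_m]$ to $\delta_{km}$ and hence $\overline{\nu}_k(j_*\xi)=c_k$ for $\xi$ as above.

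It then follows at once that $j_*$ is injective: if $j_*\xi=0$, then $c_k=\overline{\nu}_k(j_*\xi)=0$ for every $k\in\Z$, whence $\xi=0$. I do not expect a genuine obstacle here — the corollary is an immediate consequence of Propositions~\ref{propos_Stab_5comp} and~\ref{propos_nu_W_w}, the only points to verify being the (automatic) factorization of the group homomorphisms $\nu_{\gamma_2,\CW_k}$ through homology and the naturality of the induced maps with respect to the inclusion $\I_K\subset\I_{\gamma_2}$.
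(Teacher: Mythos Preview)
Your proposal is correct and follows essentially the same approach as the paper: use that $\I_K$ is free on the $w_k$ (Proposition~\ref{propos_Stab_5comp}) so that $H_1(\I_K;\Z)$ is free abelian on the $[w_k]$, and then detect each coefficient via the homomorphisms $\nu_{\gamma_2,\CW_k}$ of Proposition~\ref{propos_nu_W_w}, which are defined on the larger group $\I_{\gamma_2}$ and satisfy $\nu_{\gamma_2,\CW_k}(w_m)=\delta_{km}$. The paper's proof is just a terser version of what you wrote.
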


\begin{proof}
Since $\I_K$ is the free group with generators~$w_k$, $k\in\Z$, we see that $H_1(\I_K;\Z)$ is the free abelian group with generators~$[w_k]$. The required assertion follows immediately from Proposition~\ref{propos_nu_W_w}, since homomorphisms~$\nu_{\gamma_2,\CW_k}$ are defined on~$\I_{\gamma_2}$.
\end{proof}

\subsection{Type~2 four-component multicurves}\label{subsection_type2}

Suppose that $M$ is a type~$2$ four-component multicurve, see Fig.~\ref{fig_types}(c). Suppose that $\ba_1,\ba_2,\ba_3\in H_{\Z/2}$ are elements such that the modulo~$2$ homology classes of components of~$M$ are $\ba_1$, $\ba_2$, $\ba_1+\ba_2$, and~$\ba_3$. Put $\fA=\{\ba_1,\ba_2,\ba_3\}$, extend the set~$\fA$ to a symplectic basis $\ba_1,\ba_2,\ba_3,\bb_1,\bb_2,\bb_3$  of~$H_{\Z/2}$, and number the quadratic forms~$\omega_i$ and the homomorphisms~$\rho_i=\rho_{\omega_i}$ so that equations~\eqref{eq_omega_numeration} hold. Denote the components of~$M$ by $\alpha_0,\ldots,\alpha_3$ so that the homology class of~$\alpha_i$ is~$\ba_i$ for $i=1,2,3$, and the homology class of~$\alpha_0$ is $\ba_1+\ba_2$. 

\begin{propos}\label{propos_generate_type2}
The group~$\I_M$ is generated by twists~$T_{\alpha',\alpha}$, where $\alpha$ is one of the three curves~$\alpha_0$, $\alpha_1$, and~$\alpha_2$ and~$\alpha'$ is homologous to~$\alpha$ and disjoint from~$M$.
\end{propos}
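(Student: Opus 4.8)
The plan is to exploit the structure of a type~2 multicurve exactly as was done for the type~1 case in Subsection~\ref{subsection_type1}, but now the situation is even more favorable because $M$ contains a bounding pair. Indeed, $\alpha_0$ and either $\alpha_1$ or $\alpha_2$ (depending on how we normalize) together with $\alpha_0$ actually form a bounding pair inside $M$: the components $\alpha_0$ and, say, $\alpha_1\cup\alpha_2$ bound, but more relevantly the two components $\alpha_0$ and $\alpha_1$ (in classes $\ba_1+\ba_2$ and $\ba_1$) are not a bounding pair; rather, the genuine feature to use is that $S\setminus M$ consists of a $3$-punctured sphere and a $5$-punctured sphere, so $\PMod(S\setminus M)$ is a free group on two generators coming from the $5$-punctured sphere component only. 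First I would set up the Birman--Lubotzky--McCarthy exact sequence~\eqref{eq_BLM} for $M$ and identify $\PMod(S\setminus M)$. The $3$-punctured sphere contributes nothing, so $\PMod(S\setminus M)\cong\F_2$, with generators that can be taken to be Dehn twists $T_{\mu}$, $T_{\nu}$ about two simple closed curves in the $5$-punctured sphere piece that form a simply intersecting pair (geometric intersection $2$, algebraic intersection $0$).

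Next I would determine $\eta(\I_M)\subseteq\PMod(S\setminus M)=\F_2$ where $\eta$ is the boundary map in~\eqref{eq_BLM}. Since $M$ does contain a bounding pair (the pair dividing the $5$-punctured sphere appropriately), the kernel $BP(M)$ of $\eta$ restricted to $\I_M$ is the rank-one group generated by the bounding-pair twist $T_{\alpha_0,\beta}$ for a suitable $\beta\subset M$, or more simply $BP(M)$ is generated by a single bounding pair twist; in any case it is generated by twists of the form $T_{\alpha',\alpha}$ with $\alpha$ a component of $M$. For the quotient, I claim $\eta(\I_M)$ is the kernel of the homomorphism $\F_2\to\Z$ recording the total exponent of one of the two free generators (this is the analog of Proposition~\ref{propos_StabIM12} and Proposition~\ref{propos_Stab_5comp}), hence a free group; and I would write down an explicit free generating set $w_k$, $k\in\Z$, each of which is a conjugate $T_{\delta}^k\,T_{\gamma,\gamma'}\,T_{\delta}^{-k}$ of a bounding-pair twist by a power of a twist $T_{\delta}$ about a curve $\delta$ homologous to a component of $M$. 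The point is that every $w_k$ is itself a twist about a bounding pair $\{T_\delta^k(\gamma),\gamma'\}$ disjoint from $M$, with $\gamma'$ isotopic to a component of $M$; and $\delta$ is homologous to a component of $M$, so $T_\delta$ stabilizes $M$ and conjugating by it is legitimate inside $\Stab_{\Mod(S)}(M)$.

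Assembling these: $\I_M$ is generated by $BP(M)$ together with lifts of the $w_k$, and each of these generators is of the required form $T_{\alpha',\alpha}$ with $\alpha\in\{\alpha_0,\alpha_1,\alpha_2\}$ and $\alpha'$ homologous to $\alpha$ and disjoint from $M$. More precisely, by Proposition~\ref{propos_stab_generate} we already know $\I_M$ is generated by twists about separating curves disjoint from $M$ and twists about bounding pairs disjoint from $M$; since $M$ has a bounding pair and the genus is $3$, there is \emph{no} separating simple closed curve disjoint from $M$ at all (the two complementary pieces are planar), so only bounding-pair twists occur. It then remains to check that every bounding pair $\{\varepsilon_1,\varepsilon_2\}$ disjoint from $M$ has $[\varepsilon_1]=[\varepsilon_2]$ equal to $\pm[\alpha_i]$ for some $i\in\{0,1,2\}$: this is a direct case analysis of which homology classes of non-separating curves can live in a $5$-punctured sphere and pair up into a bounding pair across it, using that the classes $\ba_3$ and combinations involving $\ba_3$ are excluded because $\alpha_3$ sits across the $3$-punctured sphere. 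So the homology class of such a bounding pair is forced into the set $\{[\alpha_0],[\alpha_1],[\alpha_2]\}$, and then $T_{\varepsilon_1,\varepsilon_2}$ can be rewritten (composing with a twist about a further bounding pair if necessary, as in the lantern-relation trick of Subsection~\ref{subsection_type1}, e.g.\ Proposition~\ref{propos_xi_eps}) as a product of twists $T_{\alpha',\alpha}$ with the claimed properties.

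The main obstacle I expect is the bookkeeping in identifying $\eta(\I_M)$ precisely and producing the explicit free generators $w_k$: one must verify that conjugation by $T_\delta$ genuinely lands inside $\Stab_{\Mod(S)}(\overrightarrow{M})$ and that the resulting elements indeed lie in the Torelli group (not merely the Lagrangian mapping class group), which requires tracking the action on $H$ carefully — exactly the kind of computation carried out in Subsection~\ref{subsection_type1} with the matrices $P(h)$. The other delicate point is the exhaustive case check that no bounding pair disjoint from $M$ has a homology class outside $\{[\alpha_0],[\alpha_1],[\alpha_2]\}$; this is a finite combinatorial verification but must be done for all arrangements of the $5$-punctured sphere, and it is the step most prone to a missed case. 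Everything else (rewriting a given twist as a product of the stated generators, invoking Proposition~\ref{propos_stab_generate} and the absence of separating curves) is routine.
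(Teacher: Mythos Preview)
Your proposal contains two genuine errors that break the argument.

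First, you assert that $M$ contains a bounding pair. It does not: by definition (Section~\ref{section_cells}), a type~2 multicurve lies in~$\M_1^{(2)}\subset\M_1'$, and~$\M_1'$ is precisely the set of multicurves in~$\M_1$ with four components and \emph{no} bounding pair. Consequently $BP(M)$ is trivial and the map~$\eta\colon\I_M\to\PMod(S\setminus M)$ is injective, so none of your discussion of~$BP(M)$ is relevant.

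Second, and more seriously, you claim that because the complementary pieces are planar there is no separating simple closed curve disjoint from~$M$. This is false. The $5$-punctured sphere has five punctures: one from each of $\alpha_0,\alpha_1,\alpha_2$ and two from~$\alpha_3$ (since $\alpha_3$ is the special component, adjacent to this piece on both sides). A simple closed curve~$\delta$ in this piece separating the two $\alpha_3$-punctures from the three others is separating in~$S$: it cuts off a one-holed torus containing~$\alpha_3$. Such curves~$\delta$ exist in abundance, and the corresponding twists~$T_\delta$ genuinely appear among the generators furnished by Proposition~\ref{propos_stab_generate}. Your case analysis of bounding pairs is therefore insufficient; the separating-curve twists must be dealt with.

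The paper's proof is much shorter than your outline and goes as follows. Starting from Proposition~\ref{propos_stab_generate}, one has bounding-pair twists (which are already of the required form~$T_{\alpha',\alpha_i}$, $i\in\{0,1,2\}$, by the homology constraint you correctly identify) together with the twists~$T_\delta$ about separating curves as above. For each such~$\delta$, the curves $\alpha_0,\alpha_1,\alpha_2,\delta$ bound a four-holed sphere, and choosing a lantern configuration $\alpha_0',\alpha_1',\alpha_2'$ inside it yields
\[
T_\delta=T_{\alpha_0',\alpha_0}T_{\alpha_1',\alpha_1}T_{\alpha_2',\alpha_2},
\]
with each $\alpha_i'$ homologous to~$\alpha_i$ and disjoint from~$M$. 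This single lantern computation disposes of the separating twists; no Birman--Lubotzky--McCarthy analysis or explicit free generating set for~$\eta(\I_M)$ is needed.
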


\begin{proof}
It follows from Proposition~\ref{propos_stab_generate} that~$\I_M$ is generated by twists~$T_{\alpha',\alpha}$ described in the formulation of the proposition and twists~$T_{\delta}$ about separating curves disjoint from~$M$. For any of the latter twists~$T_{\delta}$, the curves~$\alpha_0$, $\alpha_1$, $\alpha_2$, and~$\delta$ bound a subsurface of~$S$ that is homeomorphic to a sphere with four boundary components. In this subsurface we can find simple closed curves~$\alpha_0'$, $\alpha_1'$, and~$\alpha_2'$ that together with~$\alpha_0$, $\alpha_1$, $\alpha_2$, and~$\delta$ form a lantern configuration,  see Fig.~\ref{fig_lantern_type2}. Then $T_{\alpha_0'}T_{\alpha_1'}T_{\alpha_2'}=T_{\alpha_0}T_{\alpha_1}T_{\alpha_2}T_{\delta}$. Since $\delta$ is separating, every~$\alpha_i'$ is homologous to~$\alpha_i$. Thus,
$T_{\delta}=T_{\alpha_0',\alpha_0}T_{\alpha_1',\alpha_1}T_{\alpha_2',\alpha_2}$
and the proposition follows.
\end{proof}

\begin{figure}
\tikzset{->-/.style={decoration={
     markings,
     mark=at position #1 with {\arrow{>}}},postaction={decorate}}}

  %  \draw[->-=.5] (0,0) to [bend left] (2,4);
    %\draw[->-=.8] (0,0) to [bend right] (2,4);

\definecolor{mygreen}{rgb}{0, 0.4, 0}
\definecolor{myblue}{rgb}{0, 0, 0.7}
\definecolor{mypink}{rgb}{1, 0.3, 0.8}
\definecolor{myyellow}{rgb}{1,1, 0}

\colorlet{cola}{violet}
\colorlet{cold}{red}
\colorlet{colg}{myblue}
\colorlet{colkf}{mypink}
\colorlet{colks}{violet}
\colorlet{coll}{orange}
\colorlet{colx}{mygreen}
\colorlet{coly}{orange}
\colorlet{colm}{mygreen}
\colorlet{colz}{brown}

\begin{tikzpicture}
\small
\tikzset{every path/.append style={line width=.3mm}}

\filldraw [fill=black!4, very thick] (0,0) circle (1.9); 
%\fill [color=] (0,0) circle (2);
\filldraw [fill=white, very thick] (0,1) circle (.3);
\filldraw [fill=white, very thick] (0.866,-0.5) circle (.3);
\filldraw [fill=white, very thick] (-0.866,-0.5) circle (.3);

\draw [thick] (0.1,-.5) ellipse (1.4 and .7);
\draw [thick, rotate=120] (0.1,-.5) ellipse (1.4 and .7);
\draw [thick, rotate=240] (0.1,-.5) ellipse (1.4 and .7);
\path (0.03,1) node {$\alpha_2$};
\path (-0.836,-0.5) node {$\alpha_1$};
\path (0.896,-0.5) node {$\alpha_0$};
\path (0.13,-1.45) node {$\alpha_2'$};
\path (1.27,0.7) node {$\alpha_1'$};
\path (-1.3,0.7) node {$\alpha_0'$};
\path (1.56,-1.56) node {$\delta$};

\end{tikzpicture}
\caption{Simple closed curves~$\alpha_0'$, $\alpha_1'$, and~$\alpha_2'$}\label{fig_lantern_type2}
\end{figure}

Note that, unlike in the case of type~1 multicurves, the cohomology class $\theta_{\fA}$ depends on which two of the three homology classes of components of~$M$ bounding a three-punctured sphere are taken for~$\ba_1$ and~$\ba_2$.
We denote by~$\rho_{0,M},\ldots,\rho_{3,M}$ the restrictions of $\rho_0,\ldots,\rho_3$ to~$\I_M$, respectively, and by~$\theta_{\fA,M}$ the restriction of~$\theta_{\fA}$ to~$\I_M$.

\begin{propos}\label{propos_rho_coincide}
We have $\rho_{0,M}=\rho_{3,M}$ and~$\rho_{1,M}=\rho_{2,M}$.
\end{propos}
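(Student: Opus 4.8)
The plan is to compute the Birman--Craggs--Johnson homomorphism $\sigma$ on a set of generators of $\I_M$ and show that the resulting values, when evaluated against the quadratic functions $\omega_i$, give $\rho_{0,M}=\rho_{3,M}$ and $\rho_{1,M}=\rho_{2,M}$. By Proposition~\ref{propos_generate_type2}, the group $\I_M$ is generated by bounding pair twists $T_{\alpha',\alpha}$, where $\alpha\in\{\alpha_0,\alpha_1,\alpha_2\}$ and $\alpha'$ is homologous to $\alpha$ and disjoint from $M$. So it suffices to check the claimed equalities on each such generator.

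First I would apply formula~\eqref{eq_BCJ_BP} to a twist $T_{\alpha',\alpha}$. Here the common modulo~$2$ class $\bc$ of $\alpha$ and $\alpha'$ is one of $\ba_1$, $\ba_2$, or $\ba_1+\ba_2$, and $\sigma(T_{\alpha',\alpha})=(\overline{\bc}+1)q$ where $q=\sum_i\overline{\be}_i\overline{\bf}_i$ is built from a symplectic basis of $H_1(\Sigma;\Z/2)$ for a one-puncture subsurface $\Sigma$ on one side of $\alpha$. The key observation is that $\alpha_0\cup\alpha_1\cup\alpha_2$ bounds a three-punctured sphere, which forces $\bc$ to lie in the span $\langle\ba_1,\ba_2\rangle$, and moreover $\ba_3$ lies in the subsurface $\Sigma$ together with all the other homology data. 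Then I evaluate: $\rho_{i,M}(T_{\alpha',\alpha})=\omega_i(\bc+1)\cdot\big(\text{term}\big)$ — more precisely $\rho_{i,M}=\langle\sigma(\cdot),\omega_i\rangle$. The point is that $\omega_0$ and $\omega_3$ agree on all of $\langle\ba_1,\ba_2,\ba_3\rangle$, and by the numeration~\eqref{eq_omega_numeration} they also agree on the relevant $\overline{\bb}_j$-data coming from $\Sigma$ (which only involves the classes $\ba_3,\bb_3$ and the symplectic complement, where $\omega_0$ and $\omega_3$ differ only in the direction of $\bb_3$, but $\omega_0(\bb_3)=\omega_3(\bb_3)=0$). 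Similarly $\omega_1$ and $\omega_2$ agree on everything that shows up. Hence $\rho_{0,M}=\rho_{3,M}$ and $\rho_{1,M}=\rho_{2,M}$ as homomorphisms $\I_M\to\Z/2$.

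The cleanest way to organize the computation is probably to fix a convenient model of $M$ as in Fig.~\ref{fig_types}(c), choose an explicit symplectic basis adapted to it, and write out $\sigma(T_{\alpha',\alpha})$ for each of the three homology types of generator; then the equalities $\omega_0|_{(*)}=\omega_3|_{(*)}$ and $\omega_1|_{(*)}=\omega_2|_{(*)}$ on the Boolean monomials that appear are immediate from~\eqref{eq_omega_numeration}. Alternatively, one can argue more structurally: the value $\sigma(T_{\alpha',\alpha})$ always lies in the subalgebra of $\BB_3'$ generated by $\overline{\ba}_1,\overline{\ba}_2,\overline{\ba}_3,\overline{\bb}_3$ (since $\alpha$ and the separating subsurface it bounds only involve those classes once we pass to the three-punctured-sphere side), and on that subalgebra the functionals dual to $\omega_0$ and $\omega_3$ coincide, as do those dual to $\omega_1$ and $\omega_2$.

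The main obstacle is bookkeeping rather than conceptual: one has to be careful that for \emph{every} bounding pair twist in the generating set — not just for specially chosen representatives — the class $\bc$ and the subsurface $\Sigma$ entering~\eqref{eq_BCJ_BP} stay within the controlled subspace, so that the genus-one summand $q$ never picks up a $\overline{\bb}_1$ or $\overline{\bb}_2$ on which $\omega_0$ and $\omega_3$ (or $\omega_1$ and $\omega_2$) disagree. This follows because any $\alpha'$ homologous to $\alpha$ and disjoint from $M$ still cobounds, with $\alpha$, a subsurface avoiding the three-punctured-sphere component of $S\setminus M$, so the relevant genus-one piece can always be taken inside the part of $S$ carrying only $\alpha_3$ and its dual. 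Once this is verified, the equalities drop out by direct substitution.
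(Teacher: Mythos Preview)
Your approach is correct and will go through, but it is genuinely different from the paper's proof, and it is worth seeing the contrast.

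You argue by computing $\sigma$ on the generating set supplied by Proposition~\ref{propos_generate_type2}. For $\alpha\in\{\alpha_1,\alpha_2\}$ the factor $(\overline{\bc}+1)$ evaluates to~$0$ at every $\omega_i$ (since $\omega_i(\ba_1)=\omega_i(\ba_2)=1$), so all four $\rho_i$ vanish on those generators and there is nothing to check. For $\alpha=\alpha_0$ you observe that the genus-one side of the bounding pair can be taken away from the three-punctured sphere bounded by $\alpha_0\cup\alpha_1\cup\alpha_2$, so its mod~$2$ homology has a basis of the form $\ba_3,\ \bb_3+m_1\ba_1+m_2\ba_2$; then $\rho_i(T_{\alpha',\alpha_0})=\omega_i(\bb_3)+m_1+m_2$, and the equalities $\omega_0(\bb_3)=\omega_3(\bb_3)=0$, $\omega_1(\bb_3)=\omega_2(\bb_3)=1$ finish the argument. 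Your ``structural'' phrasing via the Boolean subalgebra generated by $\overline{\ba}_1,\overline{\ba}_2,\overline{\ba}_3,\overline{\bb}_3$ is a nice way to package this, though you should be explicit that for $\alpha\in\{\alpha_1,\alpha_2\}$ the vanishing comes from the prefactor rather than from the subalgebra claim.

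The paper instead gives a one-line symmetry argument: the Dehn twist $T_{\alpha_0}$ commutes with every element of~$\I_M$ (since $\alpha_0$ is a component of~$M$) and its action on~$H_{\Z/2}$ fixes $\ba_1,\ba_2,\ba_3,\bb_3$ while sending $\bb_1\mapsto\bb_1+\ba_1+\ba_2$ and $\bb_2\mapsto\bb_2+\ba_1+\ba_2$. A direct check shows that conjugation by~$T_{\alpha_0}$ swaps $\omega_0\leftrightarrow\omega_3$ and $\omega_1\leftrightarrow\omega_2$, hence swaps the corresponding $\rho_i$. Since $T_{\alpha_0}$ centralizes $\I_M$, it acts trivially on $H^1(\I_M;\Z/2)$, and the equalities $\rho_{0,M}=\rho_{3,M}$, $\rho_{1,M}=\rho_{2,M}$ follow. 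This avoids Proposition~\ref{propos_generate_type2} and any case analysis entirely. Your computation and the paper's symmetry argument are really two sides of the same coin: the fact that $\omega_0$ and $\omega_3$ agree on $\bb_3$ (which you use) is exactly why $T_{\alpha_0}$, which only moves $\bb_1,\bb_2$, interchanges them.
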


\begin{proof}
 The Dehn twist $T_{\alpha_0}$  acts on the group~$H_{\Z/2}$ by stabilizing the basis elements~$\ba_1$, $\ba_2$, $\ba_3$, and~$\bb_3$, and sending $\bb_1$ and~$\bb_2$ to $\bb_1+\ba_1+\ba_2$ and $\bb_2+\ba_1+\ba_2$, respectively. It follows easily that the conjugation by~$T_{\alpha_0}$ permutes the quadratic forms~$\omega_i$ by $\omega_0\leftrightarrow \omega_3$ and~$\omega_1\leftrightarrow \omega_2$ and hence the Birman--Craggs homomorphisms~$\rho_i$ by~$\rho_0\leftrightarrow \rho_3$ and~$\rho_1\leftrightarrow \rho_2$. On the other hand, $T_{\alpha_0}$ commutes with all elements of $\I_M$ and therefore acts trivially on~$H^1(\I_M;\Z/2)$. Thus, $\rho_{0,M}=\rho_{3,M}$ and~$\rho_{1,M}=\rho_{2,M}$.
\end{proof}

\begin{cor}\label{cor_rho_rho_zero}
Suppose that $h_1,h_2\in\I_M$; then 
$$
\sum_{i\ne j}\rho_i(h_1)\rho_j(h_2)=0.
$$
\end{cor}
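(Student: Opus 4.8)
The plan is to obtain this identity as a purely formal consequence of Proposition~\ref{propos_rho_coincide}, working over $\Z/2$ throughout. First I would record the convention, forced by the way the sum $\sum_{i\ne j}$ is used in Propositions~\ref{propos_d2_LHS} and~\ref{propos_psi1}, that the indices range over \emph{ordered} pairs of distinct elements of $\{0,1,2,3\}$. Then, for any $h_1,h_2\in\I_M$, I would rewrite
\[
\sum_{i\ne j}\rho_i(h_1)\rho_j(h_2)=\Bigl(\sum_i\rho_i(h_1)\Bigr)\Bigl(\sum_j\rho_j(h_2)\Bigr)+\sum_i\rho_i(h_1)\rho_i(h_2),
\]
the sign being irrelevant modulo~$2$.

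Now I would invoke Proposition~\ref{propos_rho_coincide}, which gives $\rho_{0,M}=\rho_{3,M}$ and $\rho_{1,M}=\rho_{2,M}$ on $\I_M$. Consequently $\sum_i\rho_i(h)=2\,\rho_{0,M}(h)+2\,\rho_{1,M}(h)=0$ for every $h\in\I_M$, so the first summand on the right-hand side vanishes; and likewise $\sum_i\rho_i(h_1)\rho_i(h_2)=2\,\rho_{0,M}(h_1)\rho_{0,M}(h_2)+2\,\rho_{1,M}(h_1)\rho_{1,M}(h_2)=0$. Hence the whole sum is zero, which is the assertion.

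There is no genuine obstacle here: the corollary is immediate once Proposition~\ref{propos_rho_coincide} is available, and the only point requiring minimal care is the ordered-pair convention just mentioned, so that each unordered pair $\{i,j\}$ contributes both $\rho_i(h_1)\rho_j(h_2)$ and $\rho_i(h_2)\rho_j(h_1)$. If one prefers to avoid even the rearrangement identity, an equivalent route is to split the twelve ordered terms into the pairs $\{(0,3),(3,0)\}$ and $\{(1,2),(2,1)\}$, each of which cancels because of the coincidences $\rho_{0,M}=\rho_{3,M}$ and $\rho_{1,M}=\rho_{2,M}$, and then to observe that the remaining eight terms collect into $4\,\rho_{0,M}(h_1)\rho_{1,M}(h_2)+4\,\rho_{1,M}(h_1)\rho_{0,M}(h_2)$, which again vanishes modulo~$2$.
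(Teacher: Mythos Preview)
Your proof is correct and matches the paper's approach: the paper states this as a corollary of Proposition~\ref{propos_rho_coincide} with no separate proof, treating it as immediate, and your computation is exactly the intended derivation.
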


Now, we prove the following analogue of Proposition~\ref{propos_liftrho}.

\begin{propos}\label{propos_liftrho2}
For all $h\in\I_M$, 
\begin{equation}\label{eq_rho_nu}
\rho_0(h)+\rho_1(h)=\nu_{\alpha_0}(h) \mod 2.
\end{equation}
\end{propos}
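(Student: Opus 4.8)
Both sides of~\eqref{eq_rho_nu} are homomorphisms $\I_M\to\Z/2$ (the left-hand side is $\rho_{0,M}+\rho_{1,M}$, the right-hand side is the reduction of~$\nu_{\alpha_0}$), so the plan is simply to verify the identity on a generating set of~$\I_M$. By Proposition~\ref{propos_generate_type2} it suffices to take the generators $T_{\alpha',\alpha}$ with $\alpha\in\{\alpha_0,\alpha_1,\alpha_2\}$ and $\alpha'$ homologous to~$\alpha$ and disjoint from~$M$; we may assume $\alpha'$ is not isotopic to~$\alpha$, as otherwise the twist is trivial. Throughout I will use that $\omega_i(\ba_j)=1$ for all $i\in\{0,1,2,3\}$ and $j\in\{1,2,3\}$ (see the numbering~\eqref{eq_omega_numeration}), that $\omega_0$ and~$\omega_1$ agree on the Lagrangian $L=\langle\ba_1,\ba_2,\ba_3\rangle$ (their difference is a linear function vanishing on each~$\ba_j$), and that $\omega_0(\bb_3)=0$ while $\omega_1(\bb_3)=1$.

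Suppose first $\alpha=\alpha_1$ (the case $\alpha=\alpha_2$ being identical). Then $\{\alpha_1,\alpha'\}$ is a bounding pair, both of whose curves are disjoint from~$\alpha_0$ and, having mod~$2$ homology class $\ba_1\neq\ba_1+\ba_2=[\alpha_0]$, not isotopic to~$\alpha_0$; hence $\nu_{\alpha_0}(T_{\alpha',\alpha_1})=0$ by Proposition~\ref{propos_phi_values}(c). On the other hand, formula~\eqref{eq_BC2} applied to $\{\alpha_1,\alpha'\}$, whose mod~$2$ class is $\bc=\ba_1$, shows $\rho_i(T_{\alpha',\alpha_1})=(\omega_i(\ba_1)+1)(\cdots)=0$ for every~$i$, since $\omega_i(\ba_1)=1$. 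So both sides of~\eqref{eq_rho_nu} vanish on~$T_{\alpha',\alpha_1}$.

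Now let $\alpha=\alpha_0$. Then $\{\alpha_0,\alpha'\}$ is a bounding pair containing~$\alpha_0$, so $\nu_{\alpha_0}(T_{\alpha',\alpha_0})=\pm1\equiv1\pmod 2$ by Proposition~\ref{propos_phi_values}(b), and it remains to show $\rho_0(T_{\alpha',\alpha_0})+\rho_1(T_{\alpha',\alpha_0})=1$. Since $\omega_i(\ba_1+\ba_2)=\omega_i(\ba_1)+\omega_i(\ba_2)+\ba_1\cdot\ba_2=0$ for $i=0,1$, formula~\eqref{eq_BC2} gives $\rho_i(T_{\alpha',\alpha_0})=\Arf\bigl(\omega_i|_{H_1(\Sigma;\Z/2)}\bigr)$, where $\Sigma$ is a once-punctured subsurface of either connected component of $S\setminus(\alpha_0\cup\alpha')$. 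The geometric input is that, up to isotopy, $\alpha'$ is the curve lying in the five-punctured-sphere component of $S\setminus M$ that separates the two components $\alpha_1,\alpha_2$ from the remaining boundary; a short surface-topology analysis then shows that $S\setminus(\alpha_0\cup\alpha')$ has a component~$S''$ that is a genus~$1$ surface with two boundary circles, contains~$\alpha_3$ (with $\alpha_3$ non-separating in~$S''$), and is disjoint from the other component, which contains $\alpha_1$ and~$\alpha_2$. Choosing $\Sigma\subset S''$ a once-punctured genus~$1$ subsurface containing~$\alpha_3$, we get a symplectic basis $\ba_3,\mathbf{q}$ of $H_1(\Sigma;\Z/2)$; since $\alpha_1,\alpha_2$ lie in the other component we have $\mathbf{q}\cdot\ba_1=\mathbf{q}\cdot\ba_2=0$ and $\mathbf{q}\cdot\ba_3=1$, so $\mathbf{q}=\bb_3+v$ with $v\in L$. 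Hence $\rho_i(T_{\alpha',\alpha_0})=\omega_i(\ba_3)\,\omega_i(\mathbf{q})=\omega_i(\mathbf{q})$, and since $\omega_0$ and~$\omega_1$ agree on~$L$ while $\omega_0(\bb_3)+\omega_1(\bb_3)=1$, we obtain $\rho_0(T_{\alpha',\alpha_0})+\rho_1(T_{\alpha',\alpha_0})=\omega_0(\mathbf{q})+\omega_1(\mathbf{q})=\omega_0(\bb_3)+\omega_1(\bb_3)=1$, as required. This completes the verification on all generators, hence the proposition.

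The applications of~\eqref{eq_BC2} and the arithmetic with~\eqref{eq_omega_numeration} are routine. The step I expect to cost some work is the geometric one in the last paragraph: identifying the isotopy class of the curve~$\alpha'$ within the complement of~$M$, and checking that cutting along $\alpha_0\cup\alpha'$ peels off a genus-$1$ piece containing~$\alpha_3$ in which $\alpha_3$ remains non-separating. This amounts to going through the possible ways a simple closed curve homologous to~$\alpha_0$ and disjoint from~$M$ can sit inside the three- and five-punctured spheres that make up $S\setminus M$.
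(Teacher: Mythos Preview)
Your argument is correct and follows essentially the paper's approach: reduce to the generators of Proposition~\ref{propos_generate_type2}, dispose of $\alpha\in\{\alpha_1,\alpha_2\}$ via $\omega_i(\ba_j)=1$, and for $\alpha=\alpha_0$ apply~\eqref{eq_BC2} on the genus-$1$ side of $S\setminus(\alpha_0\cup\alpha')$ containing~$\alpha_3$. The paper packages this last step by introducing a separating curve~$\delta$ bounding a genus-$1$ subsurface with mod~$2$ symplectic basis $\ba_3,\,\bb_3+m_1\ba_1+m_2\ba_2$; that subsurface is your~$\Sigma$ and $\bb_3+m_1\ba_1+m_2\ba_2$ is your~$\mathbf{q}$ (after absorbing any $\ba_3$-component into the first basis vector).

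One wording issue: the claim that ``up to isotopy, $\alpha'$ is \emph{the} curve \ldots'' is false---there are infinitely many isotopy classes of curves in the five-punctured sphere realising the partition $\{\alpha_1,\alpha_2\}$ versus $\{\alpha_0,\alpha_3^+,\alpha_3^-\}$. What the homology constraint $[\alpha']=[\alpha_0]$ determines is precisely this partition, and hence the homeomorphism type of $S\setminus(\alpha_0\cup\alpha')$; that is all you actually use. The class~$\mathbf{q}$ genuinely varies with~$\alpha'$, but, as you note, $\omega_0-\omega_1$ vanishes on~$L$, so $\rho_0(T_{\alpha',\alpha_0})+\rho_1(T_{\alpha',\alpha_0})$ is independent of which $\mathbf{q}=\bb_3+v$ arises. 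So the slip is purely in the phrasing, not the mathematics.
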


\begin{proof}
By Proposition~\ref{propos_generate_type2} it is sufficient to prove the required equality for  $h=T_{\alpha',\alpha_i}$, where $i\in\{0,1,2\}$ and~$\alpha'$ is a curve homologous to~$\alpha_i$ and disjoint from~$M$.

First, suppose that $i$ is either~$1$ or~$2$. Since $\omega_0(\ba_i)=\omega_1(\ba_i)=1$, formula~\eqref{eq_BC1} easily implies that $\rho_0(T_{\alpha',\alpha_i})=\rho_1(T_{\alpha',\alpha_i})=0$. The required equality follows, since $\nu_{\alpha_0}(T_{\alpha',\alpha_i})=0$ by Proposition~\ref{propos_phi_values}.

Second, suppose that  $i=0$. Consider a separating simple closed curve~$\delta$ that separates $\alpha_0\cup\alpha'$ from~$\alpha_3$. The modulo~$2$ homology group of the genus~1 subsurface of~$S$ bounded by~$\delta$ has basis $\ba_3$, $\bb_3+m_1\ba_1+m_2\ba_2$ for certain $m_1,m_2\in\Z/2$.  Using  formula~\eqref{eq_BC1}, we easily compute that 
\begin{align*}
\rho_0\bigl(T_{\alpha',\alpha_0}\bigr) &=\bigl(\omega_0(\ba_1+\ba_2)+1\bigr)\omega_0(\ba_3)\omega_0(\bb_3+m_1\ba_1+m_2\ba_2)=m_1+m_2,\\
\rho_1\bigl(T_{\alpha',\alpha_0}\bigr) &=\bigl(\omega_1(\ba_1+\ba_2)+1\bigr)\omega_1(\ba_3)\omega_1(\bb_3+m_1\ba_1+m_2\ba_2)=m_1+m_2+1. 
\end{align*}
On the other hand, by Proposition~\ref{propos_phi_values} we have $\nu_{\alpha_0}(T_{\alpha',\alpha_0})=1$, which implies~\eqref{eq_rho_nu}.
\end{proof}

Now, we are ready to prove  Proposition~\ref{propos_theta_main}(b) for type~2 multicurves.

\begin{propos}\label{propos_theta_main2}
$\theta_{\fA,M}=0$.
\end{propos}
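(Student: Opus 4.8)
The plan is to mimic the proof of Proposition~\ref{propos_theta_main1}, using the relations among the restrictions $\rho_{i,M}$ established in Propositions~\ref{propos_rho_coincide} and~\ref{propos_liftrho2}. By Proposition~\ref{propos_rho_coincide} we have $\rho_{0,M}=\rho_{3,M}$ and $\rho_{1,M}=\rho_{2,M}$, so the defining sum for $\theta_{\fA}$ collapses:
\begin{align*}
\theta_{\fA,M}=\sum_{0\le i<j\le 3}\rho_{i,M}\rho_{j,M}
&=\rho_{0,M}\rho_{1,M}+\rho_{0,M}\rho_{2,M}+\rho_{0,M}\rho_{3,M}
+\rho_{1,M}\rho_{2,M}+\rho_{1,M}\rho_{3,M}+\rho_{2,M}\rho_{3,M}\\
&=4\,\rho_{0,M}\rho_{1,M}+\rho_{0,M}^2+\rho_{1,M}^2
=\rho_{0,M}^2+\rho_{1,M}^2
=(\rho_{0,M}+\rho_{1,M})^2,
\end{align*}
where the middle terms cancel in characteristic~$2$ and $\rho_{0,M}\rho_{3,M}=\rho_{0,M}^2$, $\rho_{1,M}\rho_{2,M}=\rho_{1,M}^2$.

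It then remains to show that $(\rho_{0,M}+\rho_{1,M})^2=0$ in $H^2(\I_M;\Z/2)$. Here I would invoke the standard fact that for a one-dimensional class $z\in H^1(G;\Z/2)$ one has $z^2=\beta z$, where $\beta$ is the Bockstein associated with $0\to\Z/2\to\Z/4\to\Z/2\to 0$; hence $z^2=0$ as soon as $z$ lifts to an integral (or $\Z/4$) cohomology class. By Proposition~\ref{propos_liftrho2}, $\rho_{0,M}+\rho_{1,M}$ equals the modulo~$2$ reduction of $\nu_{\alpha_0}$ restricted to~$\I_M$, i.e.\ it is the image of the integral class $[\nu_{\alpha_0}]\in H^1(\I_M;\Z)$ under reduction mod~$2$. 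Therefore $(\rho_{0,M}+\rho_{1,M})^2=0$, and consequently $\theta_{\fA,M}=0$.

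I expect the only real content to be the bookkeeping of which products cancel, which is already handled by Propositions~\ref{propos_rho_coincide} and~\ref{propos_liftrho2}; there is essentially no obstacle once those are in hand. One small point to be careful about is that the argument relies on $\rho_{0,M}+\rho_{1,M}$ being the mod~$2$ reduction of a genuinely integral homomorphism $\I_M\to\Z$, rather than just a $\Z/4$-valued cochain — but this is exactly the statement of Proposition~\ref{propos_liftrho2}, since $\nu_{\alpha_0}$ is $\Z$-valued by construction (Section~\ref{section_nu}). Hence the proof is complete.

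\begin{proof}
By Proposition~\ref{propos_rho_coincide}, $\rho_{0,M}=\rho_{3,M}$ and $\rho_{1,M}=\rho_{2,M}$ in $H^1(\I_M;\Z/2)$. Hence
$$
\theta_{\fA,M}=\sum_{i<j}\rho_{i,M}\rho_{j,M}=\rho_{0,M}^2+\rho_{1,M}^2=(\rho_{0,M}+\rho_{1,M})^2,
$$
the cross terms cancelling modulo~$2$. By Proposition~\ref{propos_liftrho2}, the class $\rho_{0,M}+\rho_{1,M}\in H^1(\I_M;\Z/2)$ is the reduction modulo~$2$ of the integral cohomology class defined by the homomorphism $\nu_{\alpha_0}\colon\I_M\to\Z$, and therefore lifts to $H^1(\I_M;\Z/4)$. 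Since the square of a one-dimensional class equals its Bockstein for $0\to\Z/2\to\Z/4\to\Z/2\to 0$, we get $(\rho_{0,M}+\rho_{1,M})^2=0$. Thus $\theta_{\fA,M}=0$.
\end{proof}
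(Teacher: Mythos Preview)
Your proof is correct and follows essentially the same approach as the paper: both use Proposition~\ref{propos_rho_coincide} to collapse $\theta_{\fA,M}$ to $\rho_{0,M}^2+\rho_{1,M}^2$, then invoke Proposition~\ref{propos_liftrho2} together with the identity $z^2=\beta z$ to conclude vanishing. The only cosmetic difference is that you write the result as $(\rho_{0,M}+\rho_{1,M})^2$ while the paper writes it as $\beta(\rho_{0,M}+\rho_{1,M})$, which amounts to the same thing.
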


\begin{proof}
By Proposition~\ref{propos_rho_coincide}, we have
$$
\theta_{\fA,M}=\sum_{i<j}\rho_{i,M}\rho_{j,M}=\rho_{0,M}^2+\rho_{1,M}^2=\beta(\rho_{0,M}+\rho_{1,M}),
$$
where $\beta$ is the Bockstein homomorphism. On the other hand, by Proposition~\ref{propos_liftrho2} the cohomology class~$\rho_{0,M}+\rho_{1,M}$ is integral, that is, lies in the image of the natural homomorphism $H^1(\I_M;\Z)\to H^1(\I_M;\Z/2)$. Therefore $\beta(\rho_{0,M}+\rho_{1,M})=0$.
\end{proof}

Now, we would like to study five-component multicurves $K\in\M_2'$ that contain~$M$. For each such multicurve~$K$, its principal component is one of the three curves~$\alpha_0$, $\alpha_1$, and~$\alpha_2$. We will say that $K$ is \textit{associated} with~$\alpha_i$ if the principal component of~$K$ is~$\alpha_i$. It follows from Corollary~\ref{cor_injection_H1} that the homomorphism $H_1(\I_K;\Z)\to H_1(\I_M;\Z)$ induced by the inclusion $\I_K\subset \I_M$ is injective. We will identify $H_1(\I_K;\Z)$ with a subgroup of~$H_1(\I_M;\Z)$ via this homomorphism. We would like to study subgroups $H_1(\I_K;\Z)\subset H_1(\I_M;\Z)$ for various $K$ containing~$M$. In the following proposition we always mean that $K$, $K'$, $K_j$ are five-component multicurves in~$\M_2'$ that contain~$M$.

\begin{propos}\label{propos_KH_decompose}
\textnormal{(a)} If $K$ and~$K'$ are associated with the same component~$\alpha_i$, then  the subgroups~$H_1(\I_K;\Z)$ and~$H_1(\I_{K'};\Z)$ of~$H_1(\I_M;\Z)$ coincide with each other.

\textnormal{(b)} If $K_0$, $K_1$, and~$K_2$ are  associated with $\alpha_0$, $\alpha_1$, and~$\alpha_2$, respectively, then
\begin{equation}\label{eq_KH_decompose}
H_1(\I_M;\Z)=
H_1(\I_{K_0};\Z)\oplus
H_1(\I_{K_1};\Z)\oplus
H_1(\I_{K_2};\Z).
\end{equation}
\end{propos}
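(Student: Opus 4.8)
The plan is to reduce both parts to one structural statement: that the homomorphisms $\nu_{\alpha_i,\CW}$ of Section~\ref{section_nu}, restricted to $\I_M$, form a complete coordinate system on $H_1(\I_M;\Z)$. Here $i$ ranges over $\{0,1,2\}$, i.e.\ over the three non-special components $\alpha_0,\alpha_1,\alpha_2$ of $M$, and $\CW$ ranges over the set $\mathcal{S}_i$ of orthogonal splittings of $\langle[\alpha_i]\rangle^{\bot}/\langle[\alpha_i]\rangle$ realised by some bounding pair $\{\zeta,\alpha_i\}$ with $\zeta$ disjoint from $M$. One first checks, by inspection of the images of the remaining components of $M$ in $\langle[\alpha_i]\rangle^{\bot}/\langle[\alpha_i]\rangle$, that $\mathcal{S}_i$ depends only on $M$ and $i$, and that for every $5$-component $K\in\M_2'$ containing $M$ and associated with $\alpha_i$ it coincides with the set $\{\CW_k\}$ produced by Proposition~\ref{propos_nu_W_w} for $K$ (the principal component of such a $K$ being exactly $\alpha_i$). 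Note also that any such $K$ arises by adjoining $\alpha_i$ to the type~$1$ four-component multicurve $K\setminus\alpha_i$ (whose complement is a disjoint union of two four-punctured spheres), so the whole analysis of Subsection~\ref{subsection_type1} is available for the subgroups $\I_K\subseteq\I_M$.

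Next I would show that the $\nu_{\alpha_i,\CW}$, $i\in\{0,1,2\}$, $\CW\in\mathcal{S}_i$, vanish on $[\I_M,\I_M]$ and that the induced map
\[
\bar\Phi\colon H_1(\I_M;\Z)\longrightarrow\bigoplus_{i=0}^{2}\ \bigoplus_{\CW\in\mathcal{S}_i}\Z
\]
is an isomorphism. Surjectivity is easy: for each $i$ and each $\CW\in\mathcal{S}_i$ one picks a bounding pair $\{\zeta,\alpha_i\}$ disjoint from $M$ realising $\CW$, and by Propositions~\ref{propos_phi_values} and~\ref{propos_phiW_values} the class $[T_{\zeta,\alpha_i}]$ goes to $\pm$ the corresponding basis vector and to $0$ in every other coordinate. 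For injectivity, note that by Proposition~\ref{propos_generate_type2} the group $H_1(\I_M;\Z)$ is generated by the classes $[T_{\alpha',\alpha_i}]$, each of which, by Proposition~\ref{propos_phiW_values}, again maps under $\bar\Phi$ to $\pm$ a single basis vector, indexed by $i$ and by the splitting $\CW(\alpha')$ that $\{\alpha',\alpha_i\}$ induces. Hence injectivity of $\bar\Phi$ is equivalent to the \emph{key lemma}: the class $[T_{\alpha',\alpha_i}]\in H_1(\I_M;\Z)$ depends only on $(i,\CW(\alpha'))$. This is in essence a computation of $H_1(\I_M;\Z)$, and I would attack it through the cutting homomorphisms $\lambda_{\alpha_i}\colon\I_M\subseteq\I_{\alpha_i}\to\I_2$: $\lambda_{\alpha_i}$ sends $T_{\alpha',\alpha_i}$ to the Dehn twist about the separating curve of $S_2$ determined by $\CW(\alpha')$, whose homology class in $H_1(\I_2;\Z)$ depends (by Mess's theorem and the defining property of the $d_{\CW}$) only on that splitting; it then remains to control $\ker\bigl(\lambda_{\alpha_i}{}_*\bigr)$ on the subgroup generated by the $[T_{\alpha',\alpha_i}]$ — using e.g.\ that $S_2$ carries no bounding pairs, so every bounding-pair twist of $\I_M$ not involving $\alpha_i$ is killed by $\lambda_{\alpha_i}$ — possibly together with the explicit picture of $\I_K\subseteq\I_M$ coming from Subsection~\ref{subsection_type1}.

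Once $\bar\Phi$ is known to be an isomorphism, parts (a) and (b) are pure linear algebra. For a $5$-component $K\in\M_2'$ containing $M$ and associated with $\alpha_i$, the stabiliser $\I_K$ is free on twists about bounding pairs involving the principal component $\alpha_i$ (Proposition~\ref{propos_Stab_5comp}); by Proposition~\ref{propos_phiW_values}(c) each such generator lies in $\ker\nu_{\alpha_j,\CW}$ for all $j\neq i$, so $\bar\Phi$ carries $H_1(\I_K;\Z)$ into the block $T_i:=\bigoplus_{\CW\in\mathcal{S}_i}\Z$, and by Proposition~\ref{propos_nu_W_w} it carries it onto a set of generators of $T_i$. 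Since $\bar\Phi$ is an isomorphism, $H_1(\I_K;\Z)=\bar\Phi^{-1}(T_i)$, which is manifestly independent of $K$; this is part~(a). And then
\[
\bigoplus_{i=0}^{2}H_1(\I_{K_i};\Z)=\bigoplus_{i=0}^{2}\bar\Phi^{-1}(T_i)=\bar\Phi^{-1}\Bigl(\bigoplus_{i=0}^{2}T_i\Bigr)=H_1(\I_M;\Z),
\]
which is part~(b). The main obstacle is the injectivity of $\bar\Phi$, i.e.\ the key lemma above: everything else only shuffles the homomorphisms already constructed, whereas this step forces one to genuinely pin down the structure of $H_1(\I_M;\Z)$.
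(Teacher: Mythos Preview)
Your setup is sound and you have identified the right separating tools --- the homomorphisms $\nu_{\alpha_i,\CW}$ are exactly what the paper uses --- but you have routed the argument through a statement that is strictly harder than the proposition and that you do not actually prove. Given Proposition~\ref{propos_nu_W_w} and Corollary~\ref{cor_injection_H1}, your key lemma (equivalently, injectivity of~$\bar\Phi$) is \emph{equivalent} to the proposition, so you have only reformulated the goal. Your sketch for it just pushes the difficulty into controlling $\ker(\lambda_{\alpha_i})_*$ on the span of the $[T_{\alpha',\alpha_i}]$, and nothing you cite rules out a nonzero difference $[T_{\alpha'_1,\alpha_i}]-[T_{\alpha'_2,\alpha_i}]$ in that kernel when the two bounding pairs induce the same splitting.

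The paper bypasses this entirely. For the spanning part of~(b) it uses a conjugation argument instead of a homology computation: given $T_{\alpha',\alpha_i}$, choose a curve~$\gamma'$ homologous to the extra component~$\gamma$ of~$K_i$ and disjoint from $M\cup\alpha'$; by Proposition~\ref{propos_orbits} the five-component multicurves $M\cup\gamma$ and $M\cup\gamma'$ lie in the same $\I$-orbit, so some $h\in\I_M$ takes $\gamma'$ to~$\gamma$ and hence conjugates $T_{\alpha',\alpha_i}$ into~$\I_{K_i}$. Thus every generator $[T_{\alpha',\alpha_i}]$ already lies in~$H_1(\I_{K_i};\Z)$, and spanning is immediate. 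Directness then follows exactly as you outline (vanishing of $\nu_{\alpha_j,\CW}$ on $\I_{K_i}$ for $j\ne i$, plus Proposition~\ref{propos_nu_W_w} to detect nonzero elements of~$H_1(\I_{K_i};\Z)$), and part~(a) drops out of the characterisation of $H_1(\I_{K_i};\Z)$ as $\bigcap_{j\ne i,\CW}\ker\nu_{\alpha_j,\CW}$, which is manifestly independent of~$K_i$. Your isomorphism~$\bar\Phi$ is then a \emph{consequence} of the proposition, not a prerequisite; the conjugation trick via Proposition~\ref{propos_orbits} is the idea you are missing.
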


\begin{proof}
We start with proving~(b). Let  $a_1,a_2,a_3,b_1,b_2,b_3$ be a symplectic basis of~$H$ such that $a_i=[\alpha_i]$, $i=1,2,3$. 

First, let us prove that the three subgroups~$H_1(\I_{K_i};\Z)$ generate the whole group $H_1(\I_M;\Z)$. By Proposition~\ref{propos_generate_type2}, the group~$H_1(\I_M;\Z)$ is generated by classes~$[T_{\alpha',\alpha_i}]$, where $i=0$, $1$, or~$2$ and $\alpha'$ is homologous to~$\alpha_i$ and disjoint from~$M$. So we suffice to show that every twist~$T_{\alpha',\alpha_i}$ is conjugate in~$\I_M$ to an element of~$\I_{K_i}$. We may assume that~$i=1$. Let $\gamma$ be a component of~$K_1$ that is not a component of~$M$. Then $[\gamma]$ is one of the classes~$a_1+a_3$, $a_1-a_3$, and~$a_3-a_1$. It is easy to see that there is a simple closed curve~$\gamma'$ such that $[\gamma']=[\gamma]$ and~$\gamma'$ is disjoint from~$M\cup\alpha'$. Now, it follows from Proposition~\ref{propos_orbits} that the multicurves $K=M\cup\gamma$ and $M\cup\gamma'$ lie in the same~$\I$-orbit. Hence there exists an element $h\in\I_M$ such that $h(\gamma')=\gamma$. Put $\alpha''=h(\alpha')$. Then the twist~$T_{\alpha'',\alpha_1}$ belongs to~$\I_{K_1}$ and is conjugate to~$T_{\alpha',\alpha_1}$ in~$\I_M$.

Second, we prove that the sum of the subgroups~$H_1(\I_{K_i};\Z)$, $i=0,1,2$, is direct, that is, each of these subgroups does not intersect the sum of the other two. Consider all homomorphisms $\nu_{\alpha_i,\CW}$, where $i=0,1,2$. It follows from Proposition~\ref{propos_nu_W_w} that, for any element $h\in H_1(\I_{K_i};\Z)$, at least one of the homomorphisms~$\nu_{\alpha_i,\CW}$ takes a nonzero value on~$h$. On the other hand, all homomorphisms~$\nu_{\alpha_j,\CW}$ with $j\ne i$ vanish on $H_1(\I_{K_i};\Z)$. Indeed, $\I_{K_i}$ is generated by twists about bounding pairs of curves in homology class~$[\alpha_i]$ and every~$\nu_{\alpha_j,\CW}$ vanishes on all such twists. Thus, the sum~\eqref{eq_KH_decompose} is direct. Besides, we see that the summand $H_1(\I_{K_i};\Z)$ is exactly the subgroup of~$H_1(\I_M;\Z)$ obtained as the intersection of kernels of all  homomorphisms~$\nu_{\alpha_j,\CW}$ with $j\ne i$. Since this characterization is independent of~$K_i$, we see that the summand $H_1(\I_{K_i};\Z)$ will not change if we replace~$K_i$ with another five-component multicurve associated to the same~$\alpha_i$. So (a) also follows.
\end{proof}

For every $i=0,1,2$, we denote by~$\pr_{\alpha_i}$ the projection onto the $i$th summand in decomposition~\eqref{eq_KH_decompose}. By  Proposition~\ref{propos_KH_decompose}(a), these projectors are independent of the choice of~$K_0$, $K_1$, and~$K_2$. Let $\sigma_{M,\alpha_i}\colon \I_M\to\BB_3'$ be the composition of homomorphisms
$$
\I_M\to H_1(\I_M;\Z)\xrightarrow{\pr_{\alpha_i}} H_1(\I_{K_i};\Z)\xrightarrow{\sigma_*}\BB_3',
$$
where the latter arrow is induced by the restriction of the Birman--Craggs--Johnson homomorphism~$\sigma$ to~$\I_{K_i}$. Then, for all $h\in\I_M$,
$$
\sigma(h)=\sigma_{M,\alpha_0}(h)+\sigma_{M,\alpha_1}(h)+\sigma_{M,\alpha_2}(h).
$$

The homomorphisms $\sigma_{M,\alpha_i}$ will be used in the next section to construct  homomorphisms $\sigma_{C,c}\colon E^1_{1,1}\to\BB_3'$.

\section{Several useful homomorphisms of the groups~$E^1_{1,1}$ and~$E^1_{2,1}$}\label{section_several_hom}

In this section, we use the Birman--Craggs--Johnson homomorphism~$\sigma$, the homomorphisms~$\sigma_{M,\alpha_i}$ constructed in the previous section, and the homomorphisms~$\nu_{\gamma}$ and~$\mu_{\gamma,\gamma'}$ constructed in Section~\ref{section_nu} to define several useful homomorphisms of the groups~$E^1_{1,1}$ and~$E^1_{2,1}$ to~$\BB_3'$ and $\Z$.

\subsection{Homomorphisms~$\sigma_C$ and~$\sigma_{C,c}$} Suppose that $C$ is a set belonging to~$\CH_p'$, where $p$ is either~$1$ or~$2$. Define a homomorphism $\sigma_C\colon E^1_{p,1}\to\BB_3'$ by 
$$
\sigma_C\bigl([h]_M\bigr)=\left\{
\begin{aligned}
&\sigma(h),&[M]&=C,\\
&0,&[M]&\ne C.
\end{aligned}
\right.
$$  
It is easy to see that the homomorphism~$\sigma_C$ is well defined.

\begin{propos}\label{propos_sigma_d1}
Suppose that $C\in\CH'_1$ and $y\in E^1_{2,1}$; then
\begin{equation}\label{eq_sigma_d1}
\sigma_C(d^1y)=\sum_{D\in\CH_2',\,D\supset C}\sigma_D(y).
\end{equation}
\end{propos}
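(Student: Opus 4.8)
The plan is to unwind the definitions of the two sides of~\eqref{eq_sigma_d1} and reduce everything to the compatibility of the splittings~\eqref{eq_KH_decompose} with the boundary map $\partial$ of~$C_*(\B;\Z)$. First I would fix $C\in\CH_1'$ and choose a representative multicurve $M\in\M_1'$ with $[M]=C$, together with representatives of the $\I$-orbits of all $D\in\CH_2'$ with $D\supset C$; note that, since $D$ has exactly $p+3=5$ components and $C$ has $4$, each such $D$ is obtained from $C$ by adding one more homology class, and geometrically each five-component multicurve $K\supset M$ (with $K\in\M_2'$) gives such a $D=[K]$. The left-hand side $\sigma_C(d^1y)$ picks out, from the chain $d^1 y\in E^1_{1,1}$, the component lying in the summand $H_1(\I_M;\Z)$ (up to conjugation and orientation sign, as in Fact~\ref{fact_E1}) and applies $\sigma$. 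Since $d^1$ is induced by $\partial$, and $P_M$ is a codimension-one face of $P_K$ precisely when $M\subset K$, this component is a signed sum over the five-component $K\supset M$ of the images in $H_1(\I_M;\Z)$ of the relevant classes in $H_1(\I_K;\Z)$ under the inclusion $\I_K\subset\I_M$.

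The next step is bookkeeping with orientations and incidence coefficients. Because $C$ is a set without multiple elements, the incidence coefficient $[D\colon C]=[P_K\colon P_M]=\pm1$ depends only on the pair $(D,C)$, by the agreement on orientations in Section~\ref{section_cells}; moreover the definition of $\sigma_D(y)$ already incorporates the orientation of $P_K$, so the signs cancel consistently and it suffices to check the identity summand-by-summand over the $D$'s, i.e. to show that for $y$ supported (up to orbit) on a fixed five-component $K$, the $H_1(\I_M;\Z)$-component of $d^1y$ is exactly $i_*$ of the $H_1(\I_K;\Z)$-part, and hence $\sigma_C(d^1 y)=\sigma(i_*(\cdot))=\sigma_D(y)$ with $D=[K]$. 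Here I would use Corollary~\ref{cor_injection_H1} to regard $H_1(\I_K;\Z)$ as a subgroup of $H_1(\I_M;\Z)$, so that ``the component of $d^1y$ in $H_1(\I_M;\Z)$'' literally equals the image of the $H_1(\I_K;\Z)$-summand.

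The crux is then the following: given $y\in E^1_{2,1}$, write $y=\sum_D y_D$ with $y_D\in E^1_{2,1}(D)$, and for each $D\supset C$ expand $y_D$ over the (one or two, by Proposition~\ref{propos_orbits}(b)) $\I$-orbits of five-component multicurves $K$ with $[K]=D$. Applying $d^1$ and projecting to the $M$-summand, one gets $\sum_{D\supset C}\sum_{K} [P_K\colon P_M]\, i_*(y_K)$ where $i\colon\I_K\hookrightarrow\I_M$; this is by construction $\sum_{D\supset C}$ of the element whose $\sigma$-image is $\sigma_D(y)$. The one genuine point to verify is that no other terms contribute, i.e. that a summand $y_D$ with $D\not\supset C$ produces nothing in the $M$-slot of $d^1y$: this holds because $P_N$ is a face of $P_K$ only when $N\subset K$, so a five-component $K$ whose face $P_M$ appears must satisfy $[K]\supset[M]=C$. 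I expect the main obstacle to be purely organizational: carefully matching up the orientation conventions of Section~\ref{section_cells} (including the identification of representatives via the conjugation isomorphisms of Fact~\ref{fact_E1}) with the identification $H_1(\I_K;\Z)\subset H_1(\I_M;\Z)$ of Corollary~\ref{cor_injection_H1}, so that all the $\pm1$'s in~\eqref{eq_sigma_d1} come out right rather than off by signs. Once that is pinned down, \eqref{eq_sigma_d1} follows by linearity in $y$.
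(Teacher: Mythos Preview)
Your outline has a genuine gap: you only consider contributions from $D\in\CH_2'$ with $D\supset C$, and then rule out $D\not\supset C$. But $E^1_{2,1}$ also has summands $E^1_{2,1}(D)$ for $D\in\CH_2\setminus\CH_2'$ with $D\supset C$, and these can be nonempty. Concretely, by Proposition~\ref{propos_M2no'} any $K\in\M_2\setminus\M_2'$ contains a bounding pair $\{\gamma^+,\gamma^-\}$, and if $[K]\supset C$ then (since $C$ is a genuine $4$-element set) $C$ must equal the set of distinct homology classes of components of $K$; hence $K$ contains \emph{two} submulticurves $M^+,M^-\in\M_1'$ with $[M^\pm]=C$, obtained by deleting $\gamma^-$ or $\gamma^+$ respectively. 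Such $D=[K]$ does not appear on the right-hand side of~\eqref{eq_sigma_d1}, so you must show its contribution to the left-hand side vanishes. This is the step the paper singles out, and your proposal does not address it.

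Once you see the gap, the fix is short: since $\BB_3'$ is a $\Z/2$-vector space, all incidence signs disappear, and the two faces $M^+,M^-$ each contribute $\sigma(h)$, which sum to $0$. This also means your extended worry about matching orientation conventions is unnecessary here; there is no sign bookkeeping to do. Likewise, the appeal to Corollary~\ref{cor_injection_H1} is irrelevant: you never need $H_1(\I_K;\Z)\hookrightarrow H_1(\I_M;\Z)$, only that $\sigma$ is defined on all of $\I$ and hence restricts compatibly to $\I_K\subset\I_M$. The paper's proof simply checks the identity on generators $y=[h]_K$ and splits into the three cases $[K]\not\supset C$, $K\in\M_2'$ with $[K]\supset C$ (one face, both sides equal $\sigma(h)$), and $K\notin\M_2'$ with $[K]\supset C$ (two faces, both sides vanish).
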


\begin{proof}
It is sufficient to prove~\eqref{eq_sigma_d1} for generators of the group~$E^1_{2,1}$, i.\,e. for elements $y=[h]_K$, where $K\in\M_2$ and~$h\in\I_K$. We have
\begin{equation}\label{eq_dh}
d^1[h]_K=\sum_{M\in\M_1,\,M\subset K}[P_K\colon P_M]\,[h]_M.
\end{equation}

If $[K]\not\supset C$, then both sides of~\eqref{eq_sigma_d1} obviously vanish.

Suppose that $K\notin\M_2'$ and $[K]\supset C$.  Then the right-hand side of~\eqref{eq_sigma_d1} vanishes. On the other hand, by Proposition~\ref{propos_M2no'}, $K$ contains a bounding pair. Hence, $K$ contains exactly two multicurves~$M$ with $[M]=C$. Thus, the left-hand side of~\eqref{eq_sigma_d1} also vanishes.

Now, suppose that $K\in\M_2'$ and $[K]\supset C$. Then the right-hand side of~\eqref{eq_sigma_d1} is equal to~$\sigma(h)$. On the other hand, $K$  contains exactly one multicurve~$M$ with $[M]=C$. Hence the left-hand side of~\eqref{eq_sigma_d1} is also equal to~$\sigma(h)$.
\end{proof}

In case~$C\in\M_1^{(2)}$, we may use homomorphisms~$\sigma_{M,\gamma}$ constructed in the end of Section~\ref{subsection_type2} to refine the construction of~$\sigma_C$ as follows. Suppose that $c$ is a non-special element of~$C$.
Define a homomorphism $\sigma_{C,c}\colon E^1_{1,1}\to\BB_3'$ by 
$$
\sigma_{C,c}\bigl([h]_M\bigr)=\left\{
\begin{aligned}
&\sigma_{M,\gamma}(h),&[M]&=C,\\
&0,&[M]&\ne C,
\end{aligned}
\right.
$$  
where in the former case $\gamma$ is the component of~$M$ with $[\gamma]=c$.

\begin{propos}\label{propos_sigma2_d1}
Suppose that $C\in\CH_1^{(2)}$, $c$ is a non-special element of~$C$, and $y\in E^1_{2,1}$; then
\begin{equation}\label{eq_sigma2_d1}
\sigma_{C,c}(d^1y)=\sum\sigma_D(y),
\end{equation}
where the sum is taken over all sets~$D\in\CH_2'$ such that $D$ contains~$C$ and the principal element of~$D$ is~$c$.
\end{propos}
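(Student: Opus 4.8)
The plan is to follow the proof of Proposition~\ref{propos_sigma_d1}, tracking in addition how the differential~$d^1$ interacts with the direct sum decomposition $H_1(\I_M;\Z)=\bigoplus_i H_1(\I_{K_i};\Z)$ supplied by Proposition~\ref{propos_KH_decompose}. Since $E^1_{2,1}$ is generated by elements $y=[h]_K$ with $K\in\M_2$ and $h\in\I_K$, it suffices to check~\eqref{eq_sigma2_d1} for such~$y$. Expanding the cellular boundary as in~\eqref{eq_dh} and applying~$\sigma_{C,c}$, only the faces $M\subset K$ with $[M]=C$ survive, each contributing $[P_K\colon P_M]\,\sigma_{M,\gamma}(h)$, where $\gamma$ is the unique component of~$M$ of homology class~$c$; note that $\gamma$ is non-special in~$M$ since $c$ is non-special in~$C$, so these terms make sense. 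As all incidence coefficients are $\pm1$, hence $1$ modulo~$2$, they may be ignored because $\sigma_{C,c}$ takes values in the $\Z/2$-vector space~$\BB_3'$.

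I would then run the three-case analysis of Proposition~\ref{propos_sigma_d1}. If $C\not\subseteq[K]$, then both sides of~\eqref{eq_sigma2_d1} vanish. Suppose $K\in\M_2'$ and $C\subseteq[K]$; since $K$ contains no bounding pair, $[K]$ is a $5$-element set, so there is a \emph{unique} face $M\subset K$ with $[M]=C$, namely $M=K\setminus\epsilon$, where $\epsilon$ is the component of~$K$ whose class is the single element of $[K]\setminus C$. Because $[M]=C$ is a type~$2$ multiset, $\epsilon$ is a non-principal component of~$K$, and the principal component~$\delta$ of~$K$ is a non-special component of~$M$; thus $K$ itself is one of the five-component multicurves containing~$M$ that is associated with~$\delta$. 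By Proposition~\ref{propos_KH_decompose}(a) the image of $H_1(\I_K;\Z)$ in $H_1(\I_M;\Z)$ is exactly the $\delta$-summand. Hence $\sigma_{M,\gamma}(h)=\sigma\bigl(\pr_{\delta}[h]\bigr)$ equals $\sigma(h)$ when the component of~$M$ of class~$c$ is~$\delta$, i.e.\ when $c$ is the principal element of~$[K]$, and equals $0$ otherwise; this is precisely the right-hand side of~\eqref{eq_sigma2_d1} evaluated at $y=[h]_K$.

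It remains to treat $K\in\M_2\setminus\M_2'$ with $C\subseteq[K]$. By Proposition~\ref{propos_M2no'}, $K$ contains a bounding pair~$\{\beta_1,\beta_2\}$; since $C$ is a set and $[K]$ differs from~$C$ only by doubling the class~$b$ of the bounding pair (so $b\in C$ and $\mathrm{mult}_{[K]}(b)=2$), the only one-dimensional faces of~$P_K$ with $[M]=C$ are the two multicurves $M^{\pm}\in\M_1^{(2)}$ obtained by deleting one component of~$\{\beta_1,\beta_2\}$, and $[P_K\colon P_{M^+}]=-[P_K\colon P_{M^-}]$. The right-hand side of~\eqref{eq_sigma2_d1} vanishes because $[K]\notin\CH_2'$. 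So the claim reduces to the identity $\sigma_{M^+,\gamma^+}(h)=\sigma_{M^-,\gamma^-}(h)$ for $h\in\I_K$, where $\gamma^{\pm}$ is the component of~$M^{\pm}$ of class~$c$. To get this I would again use the intrinsic description of the summands from Proposition~\ref{propos_KH_decompose}: the $\alpha_i$-summand of $H_1(\I_M;\Z)$ is the common kernel of the homomorphisms~$\nu_{\alpha_j,\CW}$ over all $j\ne i$ and all orthogonal splittings~$\CW$, and the projection onto it is pinned down by the values of the~$\nu_{\alpha_i,\CW}$ (Proposition~\ref{propos_nu_W_w}). Each $\nu_{\alpha,\CW}$ is defined already on~$\I_\alpha$, so its restriction to the subgroup~$\I_K$ is the same whether $\alpha$ is viewed as a component of~$M^+$ or of~$M^-$; when $\gamma^+,\gamma^-$ are the two homologous curves $\beta_2,\beta_1$ themselves one additionally invokes Proposition~\ref{propos_triv_restr1}. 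Consequently $\sigma_{M^{\pm},\gamma^{\pm}}(h)$ both compute the image under~$\sigma$ of the same, intrinsically defined ``$c$-component'' of~$h$, and the two terms cancel.

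I expect the bounding-pair case to be the main obstacle: one must verify carefully that the equality $\sigma_{M^+,\gamma^+}(h)=\sigma_{M^-,\gamma^-}(h)$ genuinely follows from the intrinsic characterization of the decomposition, especially in the sub-case $c=b$, where $\gamma^+$ and $\gamma^-$ are genuinely distinct (though homologous) curves and the summands being compared sit inside two different groups $H_1(\I_{M^+};\Z)$ and $H_1(\I_{M^-};\Z)$, so that the comparison must be routed through the stabilizers~$\I_{\beta_1},\I_{\beta_2}$ of single curves rather than through a common ambient group. Everything else is a direct transcription of the proof of Proposition~\ref{propos_sigma_d1}, with Proposition~\ref{propos_KH_decompose} taking the place of the trivial identifications used there.
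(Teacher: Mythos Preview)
Your handling of the cases $[K]\not\supset C$ and $K\in\M_2'$ is correct; for the latter you spell out, via Proposition~\ref{propos_KH_decompose}, exactly why $\sigma_{M,\gamma}(h)$ equals $\sigma(h)$ when $c$ is principal in~$[K]$ and vanishes otherwise --- an argument the paper leaves implicit under ``as in the proof of Proposition~\ref{propos_sigma_d1}''.

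The bounding-pair case, however, contains a genuine gap. You correctly observe that each value $\nu_{\alpha,\CW}(h)$ is intrinsic to~$\alpha$ and~$\CW$, and that (via Proposition~\ref{propos_nu_W_w}) these values determine the coordinates of $\pr_{\gamma^{\pm}}[h]$ with respect to the free generators $w_k^{\pm}$ of the relevant summand $H_1(\I_{K^{\pm}};\Z)$, where $K^{\pm}\supset M^{\pm}$ is a five-component multicurve with $\gamma^{\pm}$ principal. But the elements $w_k^{+}$ and $w_k^{-}$ live in \emph{different} groups (no single multicurve in~$\M_2'$ contains both $M^{+}$ and $M^{-}$), and nothing you cite shows that $\sigma(w_k^{+})=\sigma(w_k^{-})$. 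Equal $\nu$-coordinates do not by themselves force equal $\sigma$-images. The appeal to Proposition~\ref{propos_triv_restr1} in the sub-case $c=b$ is even weaker: that result compares only the unrefined homomorphisms~$\nu_{\gamma^{\pm}}$, and only modulo~$2$, which is far from controlling $\sigma_{M^{\pm},\gamma^{\pm}}$.

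The paper's argument is completely different and much shorter. It reduces via Proposition~\ref{propos_stab_generate} to the generators $h=T_{\alpha^{+},\alpha^{-}}$ and $h=T_{\delta}$ of~$\I_K$, arranges $K$ and~$\delta$ symmetrically as in Fig.~\ref{fig_Kbp}, and uses the hyperelliptic involution~$\iota$, which swaps $(M^{+},\gamma^{+})$ with $(M^{-},\gamma^{-})$. Since the construction of~$\sigma_{M,\gamma}$ is canonical, conjugation by~$\iota$ carries $\sigma_{M^{+},\gamma^{+}}$ to $\sigma_{M^{-},\gamma^{-}}$; as $\iota$ fixes~$T_{\delta}$ and inverts~$T_{\alpha^{+},\alpha^{-}}$, and $\BB_3'$ is a $\Z/2$-vector space, the two homomorphisms agree on the generators, hence on all of~$\I_K$. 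This symmetry argument bypasses any comparison of the two direct-sum decompositions.
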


\begin{proof}
As in the proof of Proposition~\ref{propos_sigma_d1}, we assume that $y=[h]_K$ and use formula~\eqref{eq_dh}. The only case that needs to be taken care of is the case of~$K$ containing a bounding pair~$\{\alpha^+,\alpha^-\}$ and containing two sub-multicurves~$M^{\pm}$ with $[M^{\pm}]=C$. In this case the right-hand side of~\eqref{eq_sigma2_d1} vanishes. We may assume that $\alpha^+\subset M^+$ and $\alpha^-\subset M^-$. Let $\gamma^+$ and~$\gamma^-$ be the components of~$M^+$ and~$M^-$ in the homology class~$c$. Then either $\gamma^{\pm}=\alpha^{\pm}$ or $\gamma^+=\gamma^-$. It is sufficient to consider cases $h=T_{\alpha^+,\alpha^-}$ and~$h=T_{\delta}$, where $\delta$ is separating and disjoint from~$K$, since  by Proposition~\ref{propos_stab_generate} such elements~$h$ generate~$\I_K$.  Arrange the multicurve~$K$  and the curve~$\delta$ (in case $h=T_{\delta}$) as the multicurve and the curve shown in Fig.~\ref{fig_Kbp}, and consider the rotation~$\iota$ by~$\pi$ around the horizontal axis. Since the homomorphisms~$\sigma_{M^+,\gamma^+}$ and~$\sigma_{M^-,\gamma^-}$ were defined in a certain canonical way, they must be taken to each other by the involution~$\iota$. Hence $\sigma_{M^+,\gamma^+}(h)=\sigma_{M^-,\gamma^-}(h)$ for both $h=T_{\alpha^+,\alpha^-}$ and~$h=T_{\delta}$. Thus, $\sigma_{C,c}(d^1[h]_K)=0$.
\end{proof}

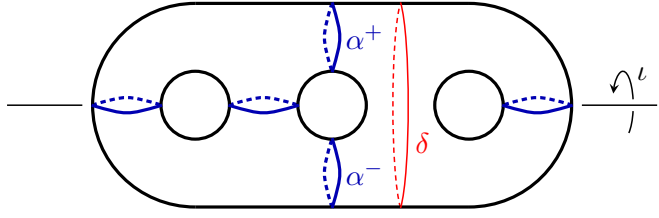
\begin{figure}
\begin{tikzpicture}[scale=.45]
\small
\definecolor{myblue}{rgb}{0, 0, 0.7}
\definecolor{mygreen}{rgb}{0, 0.4, 0}
\tikzset{every path/.append style={line width=.2mm}}
\tikzset{my dash/.style={dash pattern=on 2pt off 1.5pt}}

\begin{scope}
% Invisible lines

\draw[color=myblue,my dash, very thick] (0,-3.03) .. controls (-0.3,-2) .. (0,-.97); % e_0
\draw[color=myblue,my dash, very thick] (0,3.03) .. controls (-0.3,2) .. (0,.97); % e_0'
\draw[color=myblue,my dash, very thick] (-7.03,0) .. controls (-6,0.3) .. (-4.97,0); %e_1
\draw[color=myblue,my dash, very thick] (7.03,0) .. controls (6,0.3) .. (4.97,0); %e_2
\draw[color=myblue,my dash, very thick] (-3.03,0) .. controls (-2,0.3) .. (-.97,0); % e_0+e_1
%\draw[color=red,my dash ] (3.03,0) .. controls (2,0.3) .. (.97,0); % e_0+e_2
\draw[color=red, my dash] (2, -3.03) .. controls (1.7,-2) and (1.7,2) .. (2, 3.03);
%\draw[color=myblue, very thick,my dash] (-3.293,-.707) .. controls (-1.5,-2.2) and (1.5,-2.2) .. (3.293,-.707);
%\draw[color=red,my dash] (-3.293,.707) .. controls (-1.5,2.8) and (1.5,2.8) .. (3.293,.707);

% Border lines

\draw [very thick] (0,0) circle (1);
\draw [very thick] (4,0) circle (1);
\draw [very thick] (-4,0) circle (1);
\draw [very thick]  (-4,3)--(4,3);
\draw [very thick]  (-4,-3)--(4,-3);
\draw [very thick] (-4,3) arc (90:270:3); 
\draw [very thick] (4,3) arc (90:-90:3); 

\draw [] (-9.5,0) -- (-7.3,0);
\draw [] (9.5,0) -- (7.3,0);

\draw (8.5,0) + (-55: .3 and 1) arc (-55: -10: .3 and 1);
\draw [-stealth] (8.5,0) + (10: .3 and 1) arc (10: 160: .3 and 1) node [pos=.4, right] {$\iota$};

% Visible lines 

\draw[color=myblue, very thick] (0,-3.03) .. controls (0.3,-2) .. (0,-.97) node [pos=0.5, right=-2pt] {$\alpha^-$};  %e_0
\draw[color=myblue, very thick] (0,3.03) .. controls (0.3,2) .. (0,.97) node [pos=0.5, right=-2pt]{$\alpha^+$};  %e_0'

\draw[color=myblue, very thick] (-4.97,0) .. controls (-6,-0.3) .. (-7.03,0);%e_1
%\draw[color=red,->-=.55] (4.97,0) .. controls (6,-0.4) .. (7.03,0) node [pos=.3,below=-1pt] {$\alpha_2$}; %e_2
%\draw[color=red,->-=.55] (-7.03,0) .. controls (-6,-0.3) .. (-4.97,0)node [pos=0.4,below] {$\alpha_1$};%e_1 -
\draw[color=myblue, very thick] (7.03,0) .. controls (6,-0.3) .. (4.97,0);%e_2 -

\draw[color=myblue, very thick] (-3.03,0) .. controls (-2,-0.3) .. (-.97,0);%e_0+e_1
%\draw[color=red] (3.03,0) node [right=-2pt] {$\delta$} .. controls (2,-0.3) .. (.97,0) ;%e_0+e_2

\draw[color=red] (2, -3.03) .. controls (2.3,-2) and (2.3,2) .. (2, 3.03) node [pos=0.35, right=-1pt] {$\delta$};  %e_0

%\draw[color=red] (-2,-3.03) .. controls (-1.7,-1.5) and (-1.7,1.5) .. (-2,3.03) node [pos=.25,right=-2pt] {$u$};
%\draw[color=red,my dash] (-2,-3.03) .. controls (-2.3,-1.5) and (-2.3,1.5) .. (-2,3.03);

%\draw[color=red] (-3.293,-.707) .. controls (-1,-2.8) and (1,-2.8) .. (3.293,-.707) node [pos=.8,below=1pt] {$\delta_2$};
%\draw[color=red] (-3.293,.707) .. controls (-1,2.2) and (1,2.2) .. (3.293,.707)node [pos=.8,below] {$\delta$};
\end{scope}

\end{tikzpicture}
\caption{Multicurve~$S$, separating curve~$\delta$, and involution~$\iota$}\label{fig_Kbp}
\end{figure}

\subsection{Homomorphisms~$\nu_{C,c}$ and~$\nu_{C,c}^+$}
Suppose that 
\begin{itemize}
\item either $C\in\CH_1^{(2)}$ and $c$ is a non-special element of~$C$
\item or~$C\in\CH_2'$ and $c$ is the principal element of~$C$. 
\end{itemize}
Put $p=1$ in the former case and $p=2$ in the latter case. Recall that the set of oriented multicurves~$M$ with $[M]=C$ consists of two $\I$-orbits~$\orb_C^{\pm}$, and in Section~\ref{section_cells} we have agreed to fix (arbitrarily) which of the orbits is~$\orb_C^+$ and which is~$\orb_C^-$.

Define homomorphisms~$\nu_{C,c}$ and~$\nu_{C,c}^+$ of~$E^1_{p,1}$ to~$\Z$ by 
\begin{align*}
\nu_{C,c}\bigl([h]_M\bigr)&=\left\{
\begin{aligned}
&\nu_{\gamma}(h),&M&\in \orb_C^+,\\
-&\nu_{\gamma}(h),&M&\in \orb_C^-,\\
&0,&M&\notin \orb_C^+\cup\orb_C^-,
\end{aligned}
\right.
\\
\nu_{C,c}^+\bigl([h]_M\bigr)&=\left\{
\begin{aligned}
\phantom{-}&\nu_{\gamma}(h),&M&\in \orb_C^+,\\
&0,&M&\notin \orb_C^+,
\end{aligned}
\right.
\end{align*}
where $\gamma$ is the component of~$M$ in the homology class~$c$.
  
These two homomorphisms depend on the choice of orientations of the cells~$P_M$ and also on which of the two $\I$-orbits is taken for~$\orb_C^+$.  Note that, by our  agreement on orientations, we may reverse the orientations only for all cells~$P_M$, $M\in\orb_C^+\cup\orb_C^-$, simultaneously. If we do this, each of the homomorphisms~$\nu_{C,c}$ and~$\nu_{C,c}^+$ will change sign. Also the homomorphism~$\nu_{C,c}$ will change sign if we swap the orbits~$\orb_C^+$ and~$\orb_C^-$. (The homomorphism~$\nu_{C,c}^+$ behaves worse under this operation, namely, it changes to $\nu_{C,c}^+-\nu_{C,c}$.)
  
In case $C\in\CH_2'$ we will usually omit $c$ in notation and write simply~$\nu_C$ and~$\nu_C^+$, since the set~$C$ contains a unique principal element.  

For any $C\in\CH_1^{(2)}$ and any $A\in\CH_0'$ such that $C\supset A$, we put $\nu_{C,A}=\nu_{C,c}$ and $\nu_{C,A}^+=\nu_{C,c}^+$, where $c$ is a unique element of~$C\setminus A$. (Note that $c$ is always non-special.) 

\subsection{Homomorphisms $\mu_C$} Now, suppose that $C$ is a multiset in~$\CH_p$, where $p$ is either~$1$ or~$2$, such that there is a (unique) element~$c$ that has multiplicity~$2$ in~$C$. 
 
Define a homomorphism $\mu_C\colon E^1_{p,1}\to\Z$ by 
$$
\mu_C\bigl([h]_M\bigr)=\left\{
\begin{aligned}
&\mu_{\gamma,\gamma'}(h),&[M]&=C,\\
&0,&[M]&\ne C,
\end{aligned}
\right.
$$  
where $\{\gamma,\gamma'\}$ is the bounding pair in~$M$. If we reverse the orientation of~$P_M$ simultaneously for all~$M$ with~$[M]=C$, the homomorphism $\mu_C$ will change sign.

In the following proposition we use notation~$[D\colon C]$ and~$\varepsilon_{D,C}$ introduced in Section~\ref{section_cells}.

\begin{propos}\label{propos_nu_d1}
Suppose that $C\in\CH^{(2)}_1$, $c$ is a non-special element of~$C$, and $y\in E^1_{2,1}$; then
\begin{equation}\label{eq_nu_d1}
\nu_{C,c}(d^1y)=\sum\varepsilon_{D,C}[D\colon C]\nu_D(y)+2\sum [D\colon C]\mu_D(y),
\end{equation}
where the former sum is taken over all~$D\in\CH_2'$ such that $D\supset C$ and $c$ is the principal element of~$D$, and the latter sum is taken over all~$D\in\CH_2\setminus\CH_2'$ such that $D\supset C$.
\end{propos}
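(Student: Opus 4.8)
The plan is to follow the template of the proofs of Propositions~\ref{propos_sigma_d1} and~\ref{propos_sigma2_d1}. Since $d^1$ is $\Z$-linear and $E^1_{2,1}$ is generated by the elements $[h]_K$ with $K\in\M_2$ and $h\in\I_K$, it is enough to prove~\eqref{eq_nu_d1} for $y=[h]_K$, using formula~\eqref{eq_dh} for $d^1[h]_K$. For such $y$ both $\nu_D(y)$ and $\mu_D(y)$ vanish unless $D=[K]$, so the right-hand side of~\eqref{eq_nu_d1} vanishes unless $[K]\supset C$; likewise the left-hand side vanishes unless some $M\in\M_1$ with $M\subset K$ has $[M]=C$, which again forces $[K]\supset C$. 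So the case $[K]\not\supset C$ is trivial, and I would split the rest according to whether $K\in\M_2'$ or $K\in\M_2\setminus\M_2'$ (in the latter case $K$ contains a bounding pair, by Proposition~\ref{propos_M2no'}).

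Suppose $[K]\supset C$ and $K\in\M_2'$. Then $D:=[K]$ is a five-element set and $[K]\setminus C=\{d\}$, so $K$ has a unique sub-multicurve $M$ with $[M]=C$, obtained by deleting the component in class~$d$. Since $C\in\CH_1^{(2)}$ the multicurve $M$ is of type~$2$, which forces the deleted component to be non-principal and hence the principal element of~$D$ to be a non-special element of $C=[M]$. If $c$ is this principal element, then the component of $M$ in class~$c$ is the principal component of~$K$, and comparing the $\I$-orbits of $K$ and~$M$ (recorded by $\varepsilon_{D,C}$) with $[P_K\colon P_M]=[D\colon C]$ and the definitions of $\nu_{C,c}$ and~$\nu_D$ yields $\nu_{C,c}(d^1[h]_K)=\varepsilon_{D,C}[D\colon C]\,\nu_D([h]_K)$, which is the right-hand side of~\eqref{eq_nu_d1} (the $\mu$-sum being empty as $D\in\CH_2'$). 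If $c$ is not the principal element of~$D$, then the component $\gamma$ of $M$ in class~$c$ is a non-principal component of~$K$, and by Proposition~\ref{propos_Stab_5comp} the group $\I_K$ is freely generated by bounding-pair twists $w_k$ whose bounding pairs are parallel to the principal component; Proposition~\ref{propos_phi_values}(c) then gives $\nu_\gamma(w_k)=0$, so $\nu_{C,c}(d^1[h]_K)=0$ and both sides of~\eqref{eq_nu_d1} vanish.

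Suppose now $[K]\supset C$ and $K\in\M_2\setminus\M_2'$, with bounding pair $\{\alpha^+,\alpha^-\}$. Counting distinct homology classes shows $K$ has exactly five components, the two curves of the bounding pair being homologous, so $D:=[K]$ has the four distinct elements of~$C$, one of them --- the class~$e$ of the bounding pair --- of multiplicity~$2$. Then $K$ has exactly two codimension-one faces with class-multiset~$C$, namely $M^+=K\setminus\alpha^-$ and $M^-=K\setminus\alpha^+$, lying in $\orb_C^+$ and $\orb_C^-$, with $[P_K\colon P_{M^+}]=-[P_K\colon P_{M^-}]=[D\colon C]$ by the last convention of Section~\ref{section_cells}. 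A short computation then gives $\nu_{C,c}(d^1[h]_K)=[D\colon C]\bigl(\nu_{\gamma^+}(h)+\nu_{\gamma^-}(h)\bigr)$, where $\gamma^\pm$ is the component of $M^\pm$ in class~$c$. If $c=e$, then $\gamma^\pm=\alpha^\pm$ and, by~\eqref{eq_mu_defin}, $\nu_{\gamma^+}(h)+\nu_{\gamma^-}(h)=2\mu_{\alpha^+,\alpha^-}(h)=2\mu_D([h]_K)$, which is the right-hand side of~\eqref{eq_nu_d1} (the $\nu$-sum being empty as $D\notin\CH_2'$). If $c\ne e$, then $\gamma^+=\gamma^-=:\gamma$ is a common component of $M^+$ and $M^-$, and the whole statement reduces to the identity $\nu_\gamma(h)=\mu_{\alpha^+,\alpha^-}(h)$ for all $h\in\I_K$.

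This last identity is the main obstacle. I would prove it by reducing, via Proposition~\ref{propos_stab_generate}, to checking it on twists $T_\varepsilon$ about separating curves disjoint from~$K$ and twists $T_{\varepsilon_1,\varepsilon_2}$ about bounding pairs disjoint from~$K$, and then applying Propositions~\ref{propos_phi_values} and~\ref{propos_psi_values}. The real content is geometric: for the (finitely many types of) multicurves $K$ occurring here --- which are controlled by Proposition~\ref{propos_H} --- one must show that no simple closed curve homologous to~$\gamma$ is disjoint from~$K$ except~$\gamma$ itself, so that the only bounding-pair twist in $\I_K$ on which $\nu_\gamma$ can be nonzero is $T_{\alpha^+,\alpha^-}$, where both sides vanish by Propositions~\ref{propos_phi_values}(c) and~\ref{propos_psi_values}; and that every separating curve $\varepsilon$ disjoint from~$K$ has $\alpha^+$, $\alpha^-$ and~$\gamma$ all in its genus-$2$ side, so that $\nu_\gamma(T_\varepsilon)=\nu_{\alpha^+}(T_\varepsilon)=\nu_{\alpha^-}(T_\varepsilon)=1=\mu_{\alpha^+,\alpha^-}(T_\varepsilon)$. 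Combined with the $\Mod(S_2)$-invariance of Morita's homomorphism --- which makes $\nu_\gamma$ and $\mu_{\alpha^+,\alpha^-}$ invariant under conjugation by elements fixing the relevant curves --- this yields the identity, and with it Proposition~\ref{propos_nu_d1}. The same kind of geometric input, namely that a non-principal component of $K\in\M_2'$ admits no parallel curve disjoint from~$K$, is what makes the corresponding sub-case of the $\M_2'$ analysis go through.
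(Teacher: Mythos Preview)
Your argument follows the paper's own proof almost exactly: reduce to $y=[h]_K$, split into $K\in\M_2'$ versus $K\in\M_2\setminus\M_2'$, and in the latter case further into $c=e$ and $c\ne e$. The structure and the computations in all sub-cases are the same as in the paper. Two points of comparison are worth noting.

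For $K\in\M_2'$ with $c$ non-principal, the paper simply invokes Proposition~\ref{propos_triv_restr2}: the type~1 four-component multicurve $M'\subset K$ obtained by deleting the principal component contains~$\gamma$, so $\I_K\subset\I_{M'}$ and $\nu_\gamma|_{\I_K}=0$. Your route via Proposition~\ref{propos_Stab_5comp} and Proposition~\ref{propos_phi_values}(c) is equally valid---indeed it is essentially how Proposition~\ref{propos_triv_restr2} itself is proved---so there is no real difference here.

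For $K\in\M_2\setminus\M_2'$ with $c\ne e$, you work harder than necessary. The paper observes (implicitly) that $\{\alpha^+,\alpha^-\}$ is the \emph{only} bounding pair disjoint from~$K$: the two $3$-punctured-sphere components of~$S\setminus K$ contain no essential curves, and an easy rank argument shows that no essential curve in the $4$-punctured-sphere component is homologous to any component of~$K$. Hence Proposition~\ref{propos_stab_generate} already gives that $\I_K$ is generated by~$T_{\alpha^+,\alpha^-}$ together with separating twists~$T_\delta$, and the identity $\nu_\gamma=\mu_{\alpha^+,\alpha^-}$ is verified on these two kinds of generators directly from Propositions~\ref{propos_phi_values} and~\ref{propos_psi_values}. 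Your detour through Proposition~\ref{propos_H}, the non-existence of curves homologous to~$\gamma$, and the $\Mod(S_2)$-invariance of Morita's homomorphism is not needed; moreover, as written, your argument checks the $\nu_\gamma$-side on hypothetical extra bounding-pair twists but does not explicitly treat the $\mu_{\alpha^+,\alpha^-}$-side on them. This is harmless only because such twists do not exist, which is exactly the shortcut the paper takes.
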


\begin{proof}
As in the proof of Proposition~\ref{propos_sigma_d1}, assume that $y=[h]_K$ and use formula~\eqref{eq_dh}. Put $D_0=[K]$. 
If $D_0\not\supset C$, then both sides of~\eqref{eq_nu_d1} vanish. So we assume that $D_0\supset C$.

First, suppose that $D_0\in\CH_2'$. Let $M\subset K$ be the multicurve with $[M]=C$ and $\gamma$ be the component of~$M$ in the homology class~$c$. If $c$ is the principal element of~$D_0$, then both~$\nu_{C,c}([h]_M)$ and~$\nu_{D_0}([h]_K)$  are equal to~$\pm \nu_{\gamma}(h)$ with the signs depending on which of the two orbits~$\orb_D^{\pm}$ contains~$K$ and which of the two orbits~$\orb_C^{\pm}$ contains~$M$, respectively. Together with $[P_K\colon P_M]$ these two signs give exactly $\varepsilon_{D_0,C}[D_0\colon C]$. All other summands in the right-hand side obviously vanish, so~\eqref{eq_nu_d1} holds. 

If $c$ is a non-principal element of~$D_0$, then the right-hand side of~\eqref{eq_nu_d1} vanishes. But the left-hand side also vanishes by Proposition~\ref{propos_triv_restr2}, so~\eqref{eq_nu_d1} again holds. 

Second, suppose that $D_0\notin\CH_2'$; then $K$ contains a bounding pair~$\{\alpha^+,\alpha^-\}$. Let $M^+$ and~$M^-$ be the multicurves obtained from~$K$ by removing the components~$\alpha^-$ and~$\alpha^+$, respectively. We could swap the pair~$\{\alpha^+,\alpha^-\}$ so that to achieve that $M^+\in\orb_C^+$ and $M^-\in\orb_C^-$. Since all summands in the right-hand side of~\eqref{eq_nu_d1}, except for the one that corresponds to~$D_0$, vanish, the required formula~\eqref{eq_nu_d1} reads as
\begin{equation}\label{eq_gamma_alpha_pm}
\nu_{\gamma^+}(h)+\nu_{\gamma^-}(h)=2\mu_{\alpha^+,\alpha^-}(h),
\end{equation}
where $\gamma^+$ and~$\gamma^-$ are the components of~$M^+$ and~$M^-$ in the homology class~$c$, respectively. 

If $[\alpha^+]=[\alpha^-]=c$, then $\gamma^+=\alpha^+$ and~$\gamma^-=\alpha^-$, and~\eqref{eq_gamma_alpha_pm} is just formula~\eqref{eq_mu_defin}. 

Suppose that $[\alpha^+]=[\alpha^-]\ne c$. Then the curves~$\gamma^+$ and~$\gamma^-$ coincide to each other, so we denote this curve simply by~$\gamma$. We need to prove that $\nu_{\gamma}(h)=\mu_{\alpha^+,\alpha^-}(h)$. We suffice to consider two cases, $h=T_{\alpha^+,\alpha^-}$ and $h=T_{\delta}$ for a separating curve~$\delta$ disjoint from~$K$, since by Proposition~\ref{propos_stab_generate} such elements generate~$\I_K$. In both cases the required equality follows easily from Propositions~\ref{propos_phi_values} and~\ref{propos_psi_values}.
\end{proof}

\section{Subgroup $\Delta$}\label{section_Delta}

To prove Proposition~\ref{propos_d2} we will use Fact~\ref{fact_differential}, so we have to work directly with the bicomplex~$B_{*,*}=C_*(\B;\Z)\otimes\res_*$, where $\res_*$ is the bar resolution for~$\Z$ over~$\Z\I$. Because of that we would like to find homomorphisms $E^0_{0,2}=B_{0,2}\to\Z/2$ that vanish on the image of the differential $d^0=\partial''$ and induce the homomorphisms~$\btheta_A$ in page~$E^1$. 
 
Suppose that $\rho_0$, $\rho_1$, $\rho_2$, and~$\rho_3$ are the four Birman--Craggs homomorphisms corresponding to a set $A\in\CH_0'$, as in Section~\ref{section_theta}. Then we define a homomorphism 
$$
\Theta_{A}^{\rho_0,\rho_1,\rho_2,\rho_3}\colon B_{0,2}\to\Z/2
$$
by
$$
\Theta_{A}^{\rho_0,\rho_1,\rho_2,\rho_3}\bigl(P_M\otimes [h_1|h_2]\bigr)=\left\{
\begin{aligned}
&\sum_{i<j}\rho_i(h_1)\rho_j(h_2)&&\text{if }[M]=A,\\
&0&&\text{if }[M]\ne A.
\end{aligned}
\right.
$$

\begin{propos}\label{propos_Theta_induce}
The homomorphism~$\Theta_{A}^{\rho_0,\rho_1,\rho_2,\rho_3}$ vanishes on the image of~$\partial''$ and induces the homomorphism~$\btheta_A$ in page~$E^1$.
\end{propos}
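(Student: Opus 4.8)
The plan is to verify the two claimed properties of $\Theta_A^{\rho_0,\rho_1,\rho_2,\rho_3}$ separately: first that it kills $\partial''(B_{0,3})$, and then that the induced map on $E^1_{0,2}$ is $\btheta_A$. Both reduce to computations on generators of $B_{0,*}$, since $B_{0,q}=C_0(\B;\Z)\otimes_{\I}\res_q$ is generated by elements $P_M\otimes\xi$ with $M\in\M_0$.

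For the vanishing on $\mathrm{im}\,\partial''$, I would take a generator $P_M\otimes[h_1|h_2|h_3]$ of $B_{0,3}$. If $[M]\ne A$ then $\partial''$ of it is still supported on the cell $P_M$ (the differential in $\res_*$ does not change the $C_0$-factor), so $\Theta_A$ vanishes on it by definition. If $[M]=A$, recall the sign convention $\partial''(\sigma\otimes\xi)=(-1)^{\dim\sigma}\sigma\otimes\partial\xi$; since $\dim P_M=0$ this is just $P_M\otimes\partial[h_1|h_2|h_3]$, where $\partial[h_1|h_2|h_3]=[h_2|h_3]-[h_1h_2|h_3]+[h_1|h_2h_3]-[h_1|h_2]$. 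Applying $\Theta_A$ and writing $r_i$ for $\rho_i$, we must check that
\begin{equation*}
\sum_{i<j}\bigl(r_i(h_2)r_j(h_3)-r_i(h_1h_2)r_j(h_3)+r_i(h_1)r_j(h_2h_3)-r_i(h_1)r_j(h_2)\bigr)=0
\end{equation*}
in $\Z/2$. Expanding via $r_i(h_1h_2)=r_i(h_1)+r_i(h_2)$ (the $\rho_i$ are homomorphisms), the terms cancel in pairs; this is exactly the standard fact that $(q_1,q_2)\mapsto\sum_{i<j}r_i(q_1)r_j(q_2)$ is a group $2$-cocycle, equivalently that it represents the cup product $\sum_{i<j}\rho_i\rho_j$ in the bar complex. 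So $\Theta_A\circ\partial''=0$ and $\Theta_A$ descends to a homomorphism $E^1_{0,2}\to\Z/2$.

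For the identification with $\btheta_A$, fix $M\in\M_0$ with $[M]=A$ and a class $z\in H_2(\I_M;\Z)$; by the construction of the spectral sequence (Fact~\ref{fact_E1} and its Addendum), $z$ is represented in $E^1_{0,2}$ by an element $P_M\otimes\zeta\in B_{0,2}$ where $\zeta\in\res_2$ is a $2$-cycle in the bar resolution for $\I_M$ representing $z\in H_2(\I_M;\Z)$. Then $\Theta_A(P_M\otimes\zeta)$ is precisely the value of the cocycle $\sum_{i<j}\rho_{i,M}\rho_{j,M}=i_M^*\theta_{\fA}$ on the cycle $\zeta$, i.e. $\langle\theta_{\fA},i_{M*}z\rangle$, which is the definition of $\btheta_A(z_M)$. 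For $M'\in\M_0$ with $[M']\ne A$, both $\Theta_A$ and $\btheta_A$ vanish on the corresponding summand. Since the isomorphism $E^1_{0,2}\cong\bigoplus_{M}H_2(\I_M;\Z)$ is compatible with the representation of classes by elements $P_M\otimes\zeta$, the induced homomorphism agrees with $\btheta_A$ on every summand, hence everywhere.

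The only mild subtlety — and the part I would be most careful about — is bookkeeping with signs and with the choice of representatives: one must make sure the sign convention for $\partial''$ really does reduce to the plain bar differential on a $0$-cell (it does, because $(-1)^0=1$), and that the abstract isomorphism of Fact~\ref{fact_E1} sends the class of $P_M\otimes\zeta$ to exactly $z_M$ with no extra sign, which is the content of the Addendum together with our fixed orientation of $P_M$ (a $0$-cell carries no orientation ambiguity). With those points dispatched, the proof is a formal unwinding of definitions, so there is no real obstacle.
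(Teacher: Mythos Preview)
Your proof is correct and follows essentially the same approach as the paper's, just more explicitly: the paper simply invokes the standard fact that the bar cochain $c_{ij}([h_1|h_2])=\rho_i(h_1)\rho_j(h_2)$ is a cocycle representing the cup product $\rho_i\rho_j$ (citing Brown, Section~V.3), whereas you unpack both halves of that fact---the cocycle check and the identification of the induced map with evaluation of $\theta_{\fA}$---by hand. One small quibble: the Addendum you cite is stated only for $q=1$, so strictly speaking you are using its evident analogue for $q=2$ (that a class in $H_2(\I_M;\Z)$ is represented in $B_{0,2}$ by $P_M\otimes\zeta$ with $\zeta$ a bar $2$-cycle for~$\I_M$), which is the standard content of the isomorphism in Fact~\ref{fact_E1}.
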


\begin{proof}
The map~$\partial''\colon B_{0,3}\to B_{0,2}$ is induced by the differential of the bar resolution, so the proposition follows from the standard fact that the cochain $c_{ij}\in\Hom_{\I}(\res_2,\Z/2)$ given by $c_{ij}([h_1|h_2])=\rho_i(h_1)\rho_j(h_2)$ is a cocycle representing the cup-product $\rho_i\rho_j\in H^2(\I;\Z/2)$, cf.~\cite[Section~V.3]{Bro82}.
\end{proof}

Unlike~$\btheta_A$, the homomorphism~$\Theta_{A}^{\rho_0,\rho_1,\rho_2,\rho_3}$ depends on the chosen numeration of the four Birman--Craggs homomomorphisms~$\rho_0,\ldots,\rho_3$. We denote by~$\Gamma$ the subgroup  of~$B_{0,2}$ consisting of all elements~$X$ such that, for all $A\in\CH_0'$ and all permutations~$\rho_0',\rho_1',\rho_2',\rho_3'$ of the corresponding four Birman--Craggs homomorphisms~$\rho_0,\rho_1,\rho_2,\rho_3$, we have
$$
\Theta_{A}^{\rho_0',\rho_1',\rho_2',\rho_3'}(X)=\Theta_{A}^{\rho_0,\rho_1,\rho_2,\rho_3}(X).
$$ 
We denote by~$\Theta_A$ the restriction of any of the homomorphisms~$\Theta_{A}^{\rho_0,\rho_1,\rho_2,\rho_3}$ to~$\Gamma$.

A central role in our proof of Proposition~\ref{propos_d2} will be played by a special subgroup
$$
\Delta\subset B_{1,1}= C_1(\B;\Z)\otimes_{\I}\res_1.
$$
By definition, $\Delta$ is generated by all elements of the following three types:
\begin{enumerate}
\item $P_M\otimes [h]$, where $M\in \M_1^{(1)}$ and $h\in\CC_M$,
\item $P_{M_1}\otimes [h_1]-P_{M_2}\otimes [h_2]$, where $M_1,M_2\in \M_1^{(2)}$,  $[M_1]=[M_2]$, $M_1$ and~$M_2$ lie in different $\I$-orbits, $h_1\in\I_{M_1}$,  $h_2\in\I_{M_2}$, and $\sigma(h_1)=\sigma(h_2)$,
\item $P_M\otimes [h]$, where $M\in \M_1\setminus\left(\M_1^{(1)}\cup \M_1^{(2)}\right)$ and $h\in\I_M$.
\end{enumerate}
We shall refer to these generators as to type~1, type~2, and type~3 generators, respectively.

For each $C\in\CH_1^{(1)}$, we define a homomorphism $\Psi_C\colon\Delta\to\Z/2$ as follows:
\begin{itemize}
\item for each type~1 generator $P_{M}\otimes [h]$, we put
\begin{equation*}
\Psi_C(P_{M}\otimes [h])=
\left\{\begin{aligned}
&\psi_M(h)&&\text{if $[M]=C$},\\
&0&&\text{if $[M]\ne C$,}
\end{aligned}
\right.
\end{equation*}
where $\psi_M\colon\CC_M\to\Z/2$ is the homomorphism constructed in Proposition~\ref{propos_psi1},
\item for each type~2 or type~3 generator, we set the value of~$\Psi_C$ on it to be zero.
\end{itemize}

The uniqueness of the homomorphism~$\psi_M$ satisfying conditions of Proposition~\ref{propos_psi1} implies that $\psi_{g(M)}(ghg^{-1})=\psi_M(h)$ for any $g\in\I$. Hence, $\Psi_C$ is well defined.

As in Section~\ref{section_theta}, suppose that $A\in\CH_0'$, $\fA$ is the reduction of~$A$ modulo~$2$, $\rho_0,\ldots,\rho_3$ (respectively, $\hrho_0,\ldots,\hrho_3$) are the four corresponding Birman--Craggs  (respectively, extended Birman--Craggs) homomorphisms.

\begin{propos}\label{propos_Phi_CA_value}
Suppose that $C\in\CH_1^{(2)}$ and $C\supset A$. Then there exists a unique homomorphism~$\Phi_{C,A}\colon\Delta\to\Z/2$ such that
\begin{itemize}
\item for each type~2 generator, 
\begin{equation}\label{eq_Phi}
\Phi_{C,A}(P_{M_1}\otimes [h_1]-P_{M_2}\otimes [h_2])=
\left\{\begin{aligned}
&\sum_{i\ne j}\hrho_i(g)\rho_j(h_1)&&\text{if}\ \ [M_1]=[M_2]=C,\\
&{}\,\,0&&\text{if}\ \  [M_1]=[M_2]\ne C,
\end{aligned}
\right.
\end{equation}
where $g$ is an element of~$\hI$ such that $g(M_1)=M_2$,
 \item the value of~$\Phi_{C,A}$ on every type~1 or type~3 generator is zero.
\end{itemize}
\end{propos}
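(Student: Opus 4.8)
The statement to prove is Proposition~\ref{propos_Phi_CA_value}: the existence and uniqueness of a homomorphism $\Phi_{C,A}\colon \Delta\to\Z/2$ whose values on the three types of generators are prescribed, with the type~2 formula \eqref{eq_Phi} depending on an auxiliary choice of $g\in\hI$ with $g(M_1)=M_2$.

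The plan is as follows. First I would treat uniqueness, which is immediate: the elements listed (type~1, type~2, type~3 generators) span $\Delta$ by definition, and the proposition prescribes the value of $\Phi_{C,A}$ on each of them, so at most one such homomorphism can exist. All the content is in \emph{existence}, i.e.\ in showing that the prescribed assignment is consistent — it respects all the linear relations among the generators and, crucially, that the right-hand side of \eqref{eq_Phi} is independent of the choice of $g$.

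For existence I would proceed as follows. Fix a set of representatives of $\I$-orbits in $\M_1^{(2)}$; by Proposition~\ref{propos_orbits}(b), for each $C\in\CH_1^{(2)}$ there are exactly two such orbits $\orb_C^{+}$ and $\orb_C^{-}$, and $\I$ acts transitively on each. Define a candidate function on the free abelian group $C_1(\B;\Z)\otimes_{\I}\res_1$ restricted to the summands $B_{1,1}(C)$ with $[M]=C\in\CH_1^{(2)}$ by choosing, for each such $C\supset A$, one representative $M^{+}\in\orb_C^{+}$ and one $M^{-}\in\orb_C^{-}$ and one fixed $g_C\in\hI\setminus\I$ with $g_C(M^{+})=M^{-}$; then for $h\in\I_{M^{+}}$ set $\Phi_{C,A}(P_{M^{+}}\otimes[h])=0$ and $\Phi_{C,A}(P_{M^{-}}\otimes[h'])=\sum_{i\ne j}\hrho_i(g_C)\rho_j\bigl(g_C^{-1}h'g_C\bigr)$, extending $\I$-equivariantly using $\CA(ghg^{-1})_{g(M)}=\CA(h)_M$ and the fact that the summands of $E^1_{1,1}$, equivalently of $B_{1,1}$, attached to the two orbits are permuted by $\hI$. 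The key verifications are: (1) this is well-defined on $B_{1,1}(C)$ — uses $\hI$-equivariance of the extended Birman--Craggs homomorphisms $\hrho_i$ (they are $\Mod(S)$-equivariant by the results recalled in Subsection~\ref{subsection_BC}) and $\rho_i(ghg^{-1})=\rho_{g^{-1}\cdot\omega_i}(h)$, together with the observation that conjugation by elements of $\I$ permutes the $\omega_i$'s only within the set $\{\omega_0,\ldots,\omega_3\}$, so the symmetrized sum $\sum_{i\ne j}$ is preserved; (2) the resulting homomorphism actually vanishes on type~1 and type~3 generators and on those type~2 generators with $[M_1]=[M_2]\ne C$, which is automatic since those generators live in different summands $B_{1,1}(C')$, $C'\ne C$, or in $B_{1,1}(C)$ with $C\in\CH_1^{(1)}$; (3) on a general type~2 generator $P_{M_1}\otimes[h_1]-P_{M_2}\otimes[h_2]$ with $[M_1]=[M_2]=C$ and $\sigma(h_1)=\sigma(h_2)$, the value computed from this definition equals the right-hand side of \eqref{eq_Phi} for \emph{any} choice of $g$ with $g(M_1)=M_2$.

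The main obstacle is precisely point~(3): the independence of \eqref{eq_Phi} from the choice of $g$. If $g,g'$ both satisfy $g(M_1)=g'(M_1)=M_2$, then $g'=g\cdot t$ with $t\in\I_{M_1}$, and we must show $\sum_{i\ne j}\hrho_i(g)\rho_j(h_1)=\sum_{i\ne j}\hrho_i(gt)\rho_j(h_1)$. Since $\hrho_i(gt)=\hrho_i(g)+\rho_i(t)$ (because $\hrho_i$ restricted to $\I$ is $\rho_i$, and $\hrho_i$ is a homomorphism), the difference is $\sum_{i\ne j}\rho_i(t)\rho_j(h_1)$; I expect this to vanish exactly because of the constraint $\sigma(h_1)=\sigma(h_2)$ built into the definition of type~2 generators, combined with the structural facts about $\I_M$ for $M\in\M_1^{(2)}$ established in Subsection~\ref{subsection_type2} — namely that on $\I_M$ one has $\rho_{0,M}=\rho_{3,M}$ and $\rho_{1,M}=\rho_{2,M}$ (Proposition~\ref{propos_rho_coincide}), so that $\sum_{i\ne j}\rho_i(t)\rho_j(h_1)$ collapses, by Corollary~\ref{cor_rho_rho_zero}, to $0$. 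One also needs the compatibility $\sigma(h_1)=\sigma(h_2)=\sigma(g^{-1}h_1g)$ to tie the $\rho_j(h_1)$ in \eqref{eq_Phi} to $\rho_j(h_2)$, so that the formula is genuinely symmetric in $(M_1,h_1)\leftrightarrow(M_2,h_2)$ up to the expected sign; here I would again invoke the $\hI$-equivariance of $\sigma$ together with $\hsigma$. Once these collapses are checked, consistency with all remaining relations in $\Delta$ is formal, and $\Phi_{C,A}$ is defined.
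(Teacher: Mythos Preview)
Your verification that \eqref{eq_Phi} is independent of the choice of $g$ (via $g'=gt$, $t\in\I_{M_1}$, and Corollary~\ref{cor_rho_rho_zero}) and symmetric under $(M_1,h_1)\leftrightarrow(M_2,h_2)$ (via $\sigma(h_1)=\sigma(h_2)\Rightarrow\rho_j(h_1)=\rho_j(h_2)$) is correct, and these are exactly the first two steps of the paper's proof.

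The gap is in your framing: the attempt to extend $\Phi_{C,A}$ to the individual terms $P_M\otimes[h]$ by setting it to $0$ on all of $\orb_C^+$ and to $\sum_{i\ne j}\hrho_i(g_C)\rho_j(h)$ on $\orb_C^-$ (for a single fixed $g_C$) does \emph{not} restrict to \eqref{eq_Phi} on $\Delta$. Take $M_1=M^+$ but $M_2=f(M^-)$ for some $f\in\I$. Your extension gives $\sum_{i\ne j}\hrho_i(g_C)\rho_j(h_1)$, whereas \eqref{eq_Phi} with $g=fg_C$ gives $\sum_{i\ne j}\hrho_i(g_C)\rho_j(h_1)+\sum_{i\ne j}\rho_i(f)\rho_j(h_1)$. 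Writing $\alpha=\rho_0(h_1)=\rho_3(h_1)$ and $\beta=\rho_1(h_1)=\rho_2(h_1)$ (Proposition~\ref{propos_rho_coincide}), the discrepancy equals $\alpha(\rho_0(f)+\rho_3(f))+\beta(\rho_1(f)+\rho_2(f))$, which need not vanish since $f\in\I$ is arbitrary and Corollary~\ref{cor_rho_rho_zero} does not apply to it. The identification ``$[ghg^{-1}]_{g(M)}=[h]_M$'' you invoke holds in $E^1_{1,1}$, not in $B_{1,1}$; in $B_{1,1}$ the elements $P_{f(M^-)}\otimes[h_2]$ and $P_{M^-}\otimes[f^{-1}h_2f]$ are genuinely different, so ``extending $\I$-equivariantly'' is an additional choice, and the choice you make is the wrong one. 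What the paper does instead of any extension is to observe that all relations among the listed generators of $\Delta$ follow from the rectangle identity
\[
(P_{M_1}{\otimes}[h_1]-P_{M_2}{\otimes}[h_2])+(P_{M_1'}{\otimes}[h_1']-P_{M_2'}{\otimes}[h_2'])
=(P_{M_1}{\otimes}[h_1]-P_{M_2'}{\otimes}[h_2'])+(P_{M_1'}{\otimes}[h_1']-P_{M_2}{\otimes}[h_2])
\]
with all four $\sigma$-values equal, and then to check in one line that the four values prescribed by \eqref{eq_Phi} sum to zero. Your step~(3) asserts what this rectangle check actually proves, so you cannot bypass it.
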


\begin{proof}
First, let us show that the value of~$\Phi_{C,A}$ on a type~2 generator is independent of the choice of~$g$. This is obvious if $[M_1]\ne C$, so we assume that $[M_1]=C$. If we replace~$g$ with another element $g'\in\hI$ such that $g'(M_1)=M_2$, then the right-hand side of~\eqref{eq_Phi} will change by a summand 
$$
\sum_{i\ne j}\rho_i\bigl(g^{-1}g'\bigr)\rho_j(h_1),
$$ 
which equals zero by Corollary~\ref{cor_rho_rho_zero}, since both $g^{-1}g'$ and~$h_1$ lie in~$\I_{M_1}$. 

Second, if we swap the pairs~$(M_1,h_1)$ and $(M_2,h_2)$, then we should replace~$g$ by~$g^{-1}$ and~$h_1$ by~$h_2$. Since $\sigma(h_1)=\sigma(h_2)$,  the right-hand side of~\eqref{eq_Phi} will not change.

So the value of~$\Phi_{C,A}$ on every generator of~$\Delta$ is well defined. Now, it is not hard to see that all relations among the described set of generators of~$\Delta$ follow from relations of the form
\begin{multline*}
\bigl(P_{M_1}\otimes [h_1]-P_{M_2}\otimes [h_2]\bigr)+\bigl(P_{M_1'}\otimes [h_1']-P_{M_2'}\otimes [h_2']\bigr)\\
{}=\bigl(P_{M_1}\otimes [h_1]-P_{M_2'}\otimes [h_2']\bigr)+\bigl(P_{M_1'}\otimes [h_1']-P_{M_2}\otimes [h_2]\bigr),
\end{multline*}
where $M_1,M_1',M_2,M_2'\in\M_1^{(2)}$, $[M_1]=[M_1']=[M_2]=[M_2']$, $\I M_1=\I M_1'\ne \I M_2=\I M_2'$, $h_i\in\I_{M_i}$, $h_i'\in\I_{M_i'}$, and $\sigma(h_1)=\sigma(h_1')=\sigma(h_2)=\sigma(h_2')$. Therefore, to prove  the well-definedness of~$\Phi_{C,A}$ it is sufficient to show that the sum of the right-hand sides of~\eqref{eq_Phi} for the four elements $P_{M_1}\otimes [h_1]-P_{M_2}\otimes [h_2]$, $P_{M_1'}\otimes [h_1']-P_{M_2'}\otimes [h_2']$, $P_{M_1}\otimes [h_1]-P_{M_2'}\otimes [h_2']$, and $P_{M_1'}\otimes [h_1']-P_{M_2}\otimes [h_2]$ is zero. (Signs do not matter, since the values are in~$\Z/2$.) Let $g\in\hI$, $f_1,f_2\in\I$ be elements such that $g(M_1)=M_2$, $f_i(M_i)=M_i'$, $i=1,2$. Then the sum we are interested in is equal to
$$
\sum_{i\ne j}\bigl(\hrho_i(g)+\hrho_i(f_2gf_1^{-1})+\hrho_i(f_2g)+\hrho_i(gf_1^{-1})\bigr)\rho_j(h_1)=0.
$$ 
Thus, $\Phi_{C,A}$ is well defined.
\end{proof}

\begin{lem}\label{lem_Delta_1}
Suppose that $M\in\M_1^{(1)}\cup\M_1^{(2)}$ and $h\in[\I_M,\I_M]$. Then there exists an element $X\in B_{1,2}$ such that $\partial'' X=P_M\otimes[h]$, $\partial' X\in \Gamma$, and for all $A\in \CH_0'$,
\begin{equation}\label{eq_Theta_lem}
\Theta_A(\partial' X)=\left\{
\begin{aligned}
&\psi_M(h)&&\text{if $M\in\M_1^{(1)}$ and $A\subset [M]$,}\\
&0&&\text{otherwise.}
\end{aligned}
\right.
\end{equation}
\end{lem}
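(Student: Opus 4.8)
The plan is to reduce the statement to a computation with cocycles in the bar resolution. Write $h$ as a product of commutators $h = \prod_{s=1}^{r}[a_s,b_s]$ with $a_s,b_s\in\I_M$. First I would handle the case of a single commutator $h=[a,b]$, and then assemble the general case by an inductive telescoping argument, using the standard identity $P_M\otimes[h_1h_2] = P_M\otimes[h_1] + P_M\otimes[h_2] + \partial''(\text{something}) + P_M\otimes(\text{boundary terms in }C_*(\B))$; more precisely, for a product of commutators one writes $[h,k]$-type correction $2$-chains and keeps track of the $\partial'$-images that land in cells $P_N$ with $N\subset M$, $N\in\M_0$. The key point is that whenever $M\in\M_1^{(1)}$, the two endpoints $N_1,N_2$ of $P_M$ satisfy $[N_1]=[N_2]=[M]$ only when $M$ has a pair of homologous components, but a type~$1$ multicurve has none, so in fact $[N_1]\ne[N_2]$ and at most one of them can equal a given $A\in\CH_0'$; for $M\in\M_1^{(2)}$ the special component keeps both $[N_1]$ and $[N_2]$ from containing $A$ fully, i.e. $A\not\subset[N_i]$, which is why the right-hand side of \eqref{eq_Theta_lem} vanishes in the type~$2$ case.

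\textbf{Construction of $X$ for a single commutator.} For $h=[a,b]$ with $a,b\in\I_M$, I would take the explicit $2$-chain in the bar resolution representing the relation $[a|b]-[b|a] \sim \partial(\text{3-chain}) + [h|*]$-type terms; concretely there is a standard element $\beta(a,b)\in\res_2$ with $\partial\beta(a,b) = [h] - ([a|b]-[b|a]) - (\text{terms involving }[1])$ after passing to $\res_1$. Tensoring with $P_M$ and using the sign convention $\partial''(P_M\otimes\xi)=-P_M\otimes\partial\xi$ (since $\dim P_M=1$), I set $X = -P_M\otimes\beta(a,b)$ up to sign, so that $\partial''X = P_M\otimes[h] + (\text{element of }B_{1,1}\text{ of the form }P_M\otimes([a|b]-[b|a]))$. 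This is not yet right: I actually need $\partial''X$ to equal $P_M\otimes[h]$ exactly, so I should instead choose $X\in B_{1,2}$ so that $\partial''X = P_M\otimes[h]$; this is possible because $P_M\otimes[h]$ represents $0$ in $E^1_{1,1}$ (as $[h]=0$ in $H_1(\I_M;\Z)$, $h$ being a commutator), hence lies in $\mathrm{im}\,\partial''$. Then $\partial'X\in B_{0,2}$ is computed by applying $\partial$ in $C_*(\B)$ to the cell $P_M$, i.e. $\partial'X = \sum_{N}[P_M:P_N]\,P_N\otimes(\text{the }\res_2\text{-part of }X)$, summed over the faces $N=N_1,N_2$. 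To pin down the $\res_2$-part, I would use the explicit cochain-level description: the contribution of $P_{N_i}\otimes[h_1|h_2]$ to $\Theta_A(\partial'X)$ is $\sum_{i<j}\rho_i(h_1)\rho_j(h_2)$ when $[N_i]=A$, and Proposition~\ref{propos_d2_LHS} (formulas \eqref{eq_d2_gen1}, \eqref{eq_d2_gen2}) identifies the total such contribution, for the relevant short exact sequence $1\to\CC_{N_i}\to\I_{N_i}\to H_1(\I_{N_i};\Z/2)\to1$, with $\psi_{N_i}(h)$ — but only after I know the right-hand side is the image of an admissible class. The cleanest route is: choose $X$ compatibly with the $\partial''$-preimage used in Proposition~\ref{propos_Theta_induce}, so that $\Theta_A(\partial'X) = \sum_{i}[P_M:P_{N_i}]\,\psi_{N_i}(h)$ when $[N_i]=A$.

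\textbf{Evaluating the sum and checking membership in $\Gamma$.} Now the combinatorics finishes the proof. If $M\in\M_1^{(2)}$, then for each endpoint $N_i$ the multiset $[N_i]$ is $[M]$ with one non-special or special class removed; removing the special class gives a $3$-element set, but one checks it is never equal to any $A\in\CH_0'$ with $A\subset[M]$ in the relevant way — actually $A\subset[M]$ forces $A$ to consist of three non-special classes, and $[N_i]$ always omits at least one of them, so $[N_i]\ne A$. Hence $\Theta_A(\partial'X)=0$ for all $A$, matching the "otherwise" line. If $M\in\M_1^{(1)}$ and $A\not\subset[M]$, the same reasoning gives $0$. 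If $M\in\M_1^{(1)}$ and $A\subset[M]$, then $[M]$ is a $3$-element set equal to $A$ (since type~$1$ four-component multicurves in $\M_1^{(1)}$... wait: $\dim P_M=1$ means $|M|=4$, $[M]$ has $4$ elements $\ba_1,\ba_2,\ba_3,\ba_1+\ba_2+\ba_3$), so $A\subset[M]$ means $A$ is one of the four $3$-subsets, and exactly the two endpoints $N_1,N_2$ with $[N_i]$ a $3$-subset containing... here I must use \eqref{eq_theta_equal}: the class $\theta_{\fA}$ is the same for all four relevant $3$-subsets, and correspondingly only two of the faces $N_i$ have $[N_i]\in\CH_0'$ while the relation $[P_M:P_{N_1}]+[P_M:P_{N_2}]$ contributes; the key identity is $\psi_{N_1}(h)=\psi_{N_2}(h)=\psi_M(h)$, which holds because $\psi_M$ is the restriction of $\psi_{N_i}$ (both are governed by the same cup-product $\theta_{\fA}$, via Proposition~\ref{propos_psi1} and the compatibility of the transgression with restriction along $\I_M\hookrightarrow\I_{N_i}$), and the two incidence coefficients combine to give a net coefficient $1$ in $\Z/2$. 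Finally, membership $\partial'X\in\Gamma$ follows because the value $\sum_{i<j}\rho_i(h_1)\rho_j(h_2)$ appearing is symmetric under permutations of $\rho_0,\dots,\rho_3$ in the relevant sense — precisely, $\partial'X$ is a sum of terms $P_N\otimes(\text{bar 2-chains})$ whose $\Theta^{\rho_0',\dots,\rho_3'}_A$-value depends only on the underlying element of $\CC_N$, by Proposition~\ref{propos_psi1}/\ref{propos_psi2}. The main obstacle I anticipate is the bookkeeping of signs and of which faces $N_i$ actually contribute — i.e. making the telescoping over a product of commutators rigorous while controlling that the extra $\partial'$-terms (from the bar-resolution correction chains on cells $P_M$, $M\in\M_1$) either lie in $\Gamma$ with zero $\Theta_A$-value or cancel in pairs. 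This is where Proposition~\ref{propos_d2_LHS}, Corollary~\ref{cor_rho_rho_zero}, and the identity \eqref{eq_theta_equal} all have to be orchestrated carefully.
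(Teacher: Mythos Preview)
Your high-level plan (write $h$ as a product of commutators, handle a single commutator explicitly, telescope) matches the paper's approach, and the induction step is essentially correct. But the core computation for a single commutator has two genuine errors.

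\textbf{First error: the face analysis for type~2.} You claim that for $M\in\M_1^{(2)}$ and $A\subset[M]$, neither endpoint $N_i$ of $P_M$ has $[N_i]=A$. This is false. In the notation of Section~\ref{subsection_type2}, $[M]$ has the four classes of $\alpha_0,\alpha_1,\alpha_2,\alpha_3$ (where $\alpha_3$ is special) and $A=\{a_1,a_2,a_3\}$ with $a_3=[\alpha_3]$; so $A$ \emph{does} contain the special class, and $[N_i]=A$ for exactly one endpoint. The vanishing in the type~2 case is not combinatorial at all: the contribution from that single endpoint is $\sum_{i\ne j}\rho_i(f_1)\rho_j(f_2)$, and this sum is zero by Corollary~\ref{cor_rho_rho_zero} (which uses that conjugation by $T_{\alpha_0}$ swaps $\rho_0\leftrightarrow\rho_3$, $\rho_1\leftrightarrow\rho_2$ on $\I_M$). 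You never invoke this corollary, so your type~2 argument has no content.

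\textbf{Second error: the type~1 computation.} You invoke a homomorphism ``$\psi_{N_i}$'' and the identity ``$\psi_{N_1}(h)=\psi_{N_2}(h)=\psi_M(h)$''. No such $\psi_{N}$ exists for $N\in\M_0'$; the homomorphism $\psi_M$ is defined only for four-component $M\in\M_1^{(1)}$ (Proposition~\ref{propos_psi1}). Moreover, for a fixed $A\subset[M]$ exactly \emph{one} endpoint, not two, has $[N_i]=A$ (the two endpoints correspond to two different 3-element subsets of $[M]$). The correct mechanism is: take the explicit chain
\[
X=-P_M\otimes\bigl([f_1|f_1^{-1}f_2^{-1}f_1f_2]+[f_2|f_2^{-1}f_1f_2]-[f_1|f_2]\bigr),
\]
check $\partial''X=P_M\otimes[h]$, and compute directly that $\Theta_A^{\rho_0,\ldots,\rho_3}(\partial'X)=\sum_{i\ne j}\rho_i(f_1)\rho_j(f_2)$ (the single contributing endpoint, three bar terms, using $\rho_j(h)=0$). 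This expression is manifestly symmetric under permutations of $\rho_0,\ldots,\rho_3$, which is what gives $\partial'X\in\Gamma$; and by the defining formula~\eqref{eq_psi1} of Proposition~\ref{propos_psi1} it equals $\psi_M([f_1,f_2])=\psi_M(h)$. Your appeal to ``compatibility of the transgression with restriction along $\I_M\hookrightarrow\I_{N_i}$'' is a red herring: the whole point is the explicit cocycle-level computation, and without the concrete $X$ above you cannot conclude anything about $\Theta_A(\partial'X)$.
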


\begin{proof}
Suppose that $\partial P_M=P_{N_1}-P_{N_2}$; then $N_1$ and~$N_2$ are the two oriented multicurves that belong to~$\M_0$ and are contained in~$M$.

Since $h\in [\I_M,\I_M]$, we have
\begin{equation*}
h=[f_{1},f_{2}][f_{3},f_{4}]\cdots [f_{2p-1},f_{2p}],
\end{equation*}
where $f_{k}\in \I_{M}$. Let us prove the lemma by induction on~$p$.

\textsl{Base of induction.} Suppose that $p=1$, that is, $h=f_1^{-1}f_2^{-1}f_1f_2$. Put
$$
X=-P_M\otimes\bigl([f_1|f_1^{-1}f_2^{-1}f_1f_2] +[f_2|f_2^{-1}f_1f_2]-[f_1|f_2]\bigr).
$$
Then $\partial''X=P_M\otimes[h]$. Suppose that $A\in \CH_0'$ and  $\rho_0,\ldots,\rho_3$  is any permutation of the four Birman--Craggs homomorphisms corresponding to~$A$. If $A\not\subset [M]$, then neither~$[N_1]$ nor~$[N_2]$ coincides with~$A$; hence $\Theta_{A}^{\rho_0,\rho_1,\rho_2,\rho_3}(\partial'X)=0$. If $A\subset [M]$, then exactly one of the two sets~$[N_1]$ and~$[N_2]$ coincides with~$A$; therefore,
$$
\Theta_{A}^{\rho_0,\rho_1,\rho_2,\rho_3}(\partial'X)=
\sum_{i\ne j}\rho_i(f_1)\rho_j(f_2).
$$
By Proposition~\ref{propos_psi1} and Corollary~\ref{cor_rho_rho_zero} the latter sum is equal to~$\psi_M(h)$ whenever $M\in\M_1^{(1)}$ and vanishes whenever $M\in\M_1^{(2)}$. Thus, $\partial'X\in\Gamma$ and~\eqref{eq_Theta_lem} holds.

\textsl{Induction step.} Assume the assertion of the lemma for all products of commutators of lengths smaller than $p$, and prove it for length exactly~$p$, where $p\ge 2$. We have 
$h=h_1h_2$, where
$$
h_1=[f_1,f_2],\qquad h_2=[f_3,f_4]\cdots [f_{2p-1},f_{2p}].
$$
Applying the induction hypothesis to~$h_1$ and~$h_2$, we obtain that there exist elements $X_s\in B_{1,2}$ such that $\partial'' X_s=P_M\otimes[h_s]$, $\partial' X_s\in \Gamma$,
and~\eqref{eq_Theta_lem} holds for $X_s$, $s=1,2$. We put 
$$
X=X_1+X_2+P_M\otimes[h_1|h_2].
$$
Then $\partial''X=P_M\otimes[h]$.
Since $h_1$ and~$h_2$ lie in the commutator subgroup of~$\I$, every Birman--Craggs homomorphism vanish on each of them, so the condition~$\partial'X\in\Gamma$ and formula~\eqref{eq_Theta_lem} follow from the same conditions for~$X_1$ and~$X_2$.
\end{proof}

\begin{propos}\label{propos_Delta}
Suppose that an element $U\in\Delta$ lies in the image of the differential
$\partial''\colon B_{1,2}\to B_{1,1}$.
Then there exists an element $X\in B_{1,2}$ such that $\partial'' X=U$, $\partial' X\in \Gamma$, and for all $A\in \CH_0'$,
\begin{equation}\label{eq_Delta_main}
\Theta_A(\partial' X)=\sum_{C\in\CH_1^{(1)},\, C\supset A}\Psi_C(U)+
\sum_{C\in\CH_1^{(2)},\, C\supset A}\Phi_{C,A}(U).
\end{equation}
\end{propos}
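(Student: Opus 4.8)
\textbf{Proof proposal for Proposition~\ref{propos_Delta}.}
The plan is to reduce everything to the generators of~$\Delta$ and use additivity. Concretely, an arbitrary element $U\in\Delta$ can be written as a finite sum $U=\sum_s U_s$ of type~1, type~2, and type~3 generators. However, being in $\Delta\cap\partial''(B_{1,2})$ is \emph{not} a property of the individual summands, so I cannot simply treat each generator separately. Instead, I would first analyze what it means for $U$ to lie in $\partial''(B_{1,1})$. The differential $\partial''\colon B_{1,2}\to B_{1,1}$ is, on each summand $B_{1,1}(C)$, essentially the boundary map of the bar resolution, so $P_M\otimes\xi\in\partial''(B_{1,2})$ forces $\xi$ to be a $1$-cycle in the bar complex for $\Z\I_M$ that bounds, i.e. $[\xi]=0$ in $H_1(\I_M;\Z)$; equivalently $\xi$ is a sum of $[h]$'s whose total class in $H_1(\I_M;\Z)$ vanishes. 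Collecting the contributions of $U$ in each orbit: in a type~1 orbit $C\in\CH_1^{(1)}$ the relevant part of $U$ is $P_M\otimes\zeta$ with $[\zeta]=0$ in $H_1(\I_M;\Z)$; by Corollary~\ref{cor_StabC} this means $\zeta$ is (up to a bar-boundary, which I can absorb) a sum $\sum[h]$ with all $h\in\CC_M=[\I_M,\I_M]\cdot\{\text{squares}\}$, and in fact — since $\CC_M$ equals the kernel of $\I_M\to H_1(\I_M;\Z/2)$ and the abelianization is $\Z^3$ — the hypothesis $[\zeta]=0$ over $\Z$ puts $\zeta$ into a sum of commutators. For a type~2 orbit $C\in\CH_1^{(2)}$ the part of $U$ in $B_{1,1}(C)$ is $\sum(P_{M_1}\otimes[h_1]-P_{M_2}\otimes[h_2])$ with $\sigma(h_1)=\sigma(h_2)$, and the two orbits $\orb_C^\pm$ are exchanged by some $g\in\hI$; here I would again use that the total class over $\Z$ vanishes together with Proposition~\ref{propos_Stab_5comp}/Corollary~\ref{cor_injection_H1} to organize the cycle. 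Type~3 orbits contribute a part on which all the $\Psi_C$, $\Phi_{C,A}$, and $\Theta_A$ vanish (the relevant multicurves carry no $A\in\CH_0'$ with the requisite structure), so those summands can be lifted arbitrarily and ignored.

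The heart of the argument is then the type~1 and type~2 local statements, for which Lemma~\ref{lem_Delta_1} is precisely designed. For a type~1 generator $P_M\otimes[h]$ with $h\in[\I_M,\I_M]$, Lemma~\ref{lem_Delta_1} gives an $X$ with $\partial''X=P_M\otimes[h]$, $\partial'X\in\Gamma$, and $\Theta_A(\partial'X)=\psi_M(h)=\Psi_C(U\text{-part})$ if $A\subset[M]=C$ and $0$ otherwise — exactly the first sum on the right of~\eqref{eq_Delta_main}. For a type~2 generator $P_{M_1}\otimes[h_1]-P_{M_2}\otimes[h_2]$ with $\sigma(h_1)=\sigma(h_2)$, I would write $h_2=g h_1' g^{-1}\cdot c$ where $g(M_1)=M_2$, $h_1'=g^{-1}h_2 g\in\I_{M_1}$ satisfies $\sigma(h_1')=\sigma(h_1)$ (using $\hsigma$-equivariance, Proposition~\ref{propos_eBC}), and $c:=h_1'^{-1}h_1\in\CC_{M_1}=[\I_{M_1},\I_{M_1}]$ by Corollary~\ref{cor_StabC}. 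Then
\begin{equation*}
P_{M_1}\otimes[h_1]-P_{M_2}\otimes[h_2]=\bigl(P_{M_1}\otimes[h_1']-P_{M_2}\otimes[gh_1'g^{-1}]\bigr)+P_{M_1}\otimes[c]-\bigl(P_{M_2}\otimes[gh_1'g^{-1}\cdot gc g^{-1}]-P_{M_2}\otimes[gh_1'g^{-1}]-P_{M_2}\otimes[gcg^{-1}]\bigr),
\end{equation*}
and the three bracketed pieces are handled as follows: $P_{M_1}\otimes[c]$ is a type~1-style term to which Lemma~\ref{lem_Delta_1} applies (here $M_1\in\M_1^{(2)}$ so its $\Theta_A$-contribution is $0$ by the ``otherwise'' clause of~\eqref{eq_Theta_lem}); the conjugation-correction term is a $\partial''$-boundary of an explicit bar $2$-chain supported on $P_{M_2}$ whose $\partial'$ has trivial $\Theta_A$ because Birman--Craggs homomorphisms vanish on commutators (so on $c$); and the leading term $P_{M_1}\otimes[h_1']-P_{M_2}\otimes[gh_1'g^{-1}]$ I lift by the explicit bar $2$-chain $-P_{M_2}\otimes\bigl([g|h_1'g^{-1}]-[gh_1'|g^{-1}]+[h_1'|g^{-1}]-[g|g^{-1}]\bigr)$ (the standard ``conjugation is chain-homotopic to identity'' $2$-chain in the bar resolution of $\hI$ applied after the orbit identification), whose $\partial'$ lands in $B_{0,2}(A)$-pieces and evaluates under $\Theta_A$ to $\sum_{i\ne j}\hrho_i(g)\rho_j(h_1')=\sum_{i\ne j}\hrho_i(g)\rho_j(h_1)$ (using $\sigma(h_1')=\sigma(h_1)$), matching $\Phi_{C,A}$ of the generator by~\eqref{eq_Phi}. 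Summing over all generators and using that $\Theta_A$, $\Psi_C$, $\Phi_{C,A}$ are homomorphisms yields~\eqref{eq_Delta_main}; one must also check $\partial'X\in\Gamma$, which follows because each local $\partial'X_s$ was verified to lie in $\Gamma$ and $\Gamma$ is a subgroup.

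The main obstacle I anticipate is the bookkeeping needed to split a general $U\in\Delta\cap\partial''(B_{1,2})$ into pieces to which Lemma~\ref{lem_Delta_1} and the type~2 device apply, while keeping track of the exact bar $2$-chains so that the $\Theta_A$-values add up correctly rather than merely up to controllable error. In particular: (i) reducing each orbit-component of $U$ to a genuine sum of commutators (type~1) or to the normal form $h_2=gh_1'g^{-1}c$ (type~2) uses that $\CC_M$ is the full commutator-and-squares subgroup and that over $\Z$ the squares are absorbed — this needs the precise statements of Corollary~\ref{cor_StabC} and Proposition~\ref{propos_eBC}, and care that the ``squares'' do not spoil integrality of $\rho_i(h_1)\rho_j(h_2)$; (ii) the conjugation $2$-chain must be taken in the bar resolution of $\hI$, not $\I$, since $g$ may lie in $\hI\setminus\I$, so one has to note that $B_{*,*}$ computes the same spectral sequence whether built over $\Z\I$ or restricted appropriately, or equivalently that the explicit formula above only ever produces tensors of elements of $\I$ on cells stabilized setwise, which is automatic because $g$ conjugates $\I_{M_1}$ to $\I_{M_2}$; (iii) verifying that nothing escapes into summands $B_{0,2}(A')$ with $A'\ne A$ and $A'\notin\CH_0'$, which is where the condition that the endpoints $N_1,N_2$ of $P_M$ have $[N_i]\in\CH_0'$ and the structure of $\Gamma$ are used. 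None of these steps is deep, but getting all the signs and bar-chain identities consistent is the delicate part; everything substantive has been prepared in Lemma~\ref{lem_Delta_1}, Corollaries~\ref{cor_StabC} and~\ref{cor_rho_rho_zero}, and Propositions~\ref{propos_psi1} and~\ref{propos_eBC}.
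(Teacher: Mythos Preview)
Your reduction to a single $C_0\in\CH_1$ and your treatment of the type~3 case are correct and match the paper. The type~1 case is also essentially right in spirit, though your wording ``puts $\zeta$ into a sum of commutators'' glosses over the fact that an individual $h_i\in\CC_M$ need not be a commutator; the paper handles this by an induction on the number of summands, combining two of them via an explicit bar $2$-chain built from an element of~$\I$ carrying one $M_i$ to another, until only one summand remains and the hypothesis $U\in\partial''(B_{1,2})$ forces that single $h$ into $[\I_M,\I_M]$, where Lemma~\ref{lem_Delta_1} applies.

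The genuine gap is in your type~2 argument. A single type~2 generator $P_{M_1}\otimes[h_1]-P_{M_2}\otimes[h_2]$ is \emph{not} in the image of~$\partial''$: since $M_1\in\orb_C^+$ and $M_2\in\orb_C^-$ lie in different $\I$-orbits, its image in $E^1_{1,1}$ is $[h_1]_{M_1}-[h_2]_{M_2}$ with the two terms in distinct direct summands, and neither need vanish. Consequently no $X\in B_{1,2}$ can satisfy $\partial''X=P_{M_1}\otimes[h_1']-P_{M_2}\otimes[gh_1'g^{-1}]$. Your proposed lift $-P_{M_2}\otimes\bigl([g|h_1'g^{-1}]-\cdots\bigr)$ does not live in $B_{1,2}=C_1(\B;\Z)\otimes_{\I}\res_2$, because $\res_*$ is the bar resolution over~$\Z\I$ and $g\in\hI\setminus\I$; your remark that ``the explicit formula only ever produces tensors of elements of~$\I$'' is simply false, as the first slot of $[g|h_1'g^{-1}]$ is~$g$. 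Passing to the bar resolution over~$\Z\hI$ changes the double complex and the spectral sequence, so that escape route is not available either.

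The paper avoids this by the same induction trick as in type~1: given $m$ type~2 generators supported on~$C_0$, it picks $f_+,f_-\in\I$ (not~$\hI$) with $f_{\pm}(M_1^{\pm})=M_2^{\pm}$, builds explicit $2$-chains $Z_{\pm}$ over~$\Z\I$ whose $\partial''$ merges the first two generators into one, and verifies that $\Theta_A\bigl(\partial'(Z_+-Z_-)\bigr)=\sum_{i\ne j}\bigl(\rho_i(f_+)+\rho_i(f_-)\bigr)\rho_j(h_{2,+})=\Phi_{C_0,A}(U_1-U)$. The element $g\in\hI$ enters only through the \emph{definition} of~$\Phi_{C,A}$ in~\eqref{eq_Phi}, never as an entry in a bar chain. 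At the base $m=1$, the hypothesis $U\in\partial''(B_{1,2})$ finally bites: each $h_\pm$ is then in the commutator subgroup, all $\rho_j(h_\pm)$ vanish, $\Phi_{C_0,A}(U)=0$, and Lemma~\ref{lem_Delta_1} finishes.
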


\begin{proof}
The column~$B_{1,*}$ regarded as a chain complex with differential~$\partial''$ splits into the direct sum of chain complexes~$B_{1,*}(C)$ indexed by multisets $C\in\CH_1$, where~$B_{1,*}(C)$ is generated by all elements $P_M\otimes Z$ with $[M]=C$. An element $U\in B_{1,1}$ belongs to~$\Delta\cap\partial''(B_{1,2})$ if and only if all its $C$-components with respect to this splitting belong to~$\Delta\cap\partial''(B_{1,2})$. Hence, it is sufficient to prove the claim of the proposition in case $U$ is a sum of elements of the form $P_{M}\otimes[h]$ with all $[M]$ being equal to the same~$C_0\in\CH_1$.

Consider the three cases:

\textsl{Case 1: $C_0\in\CH_1^{(1)}$}. Since $U\in\Delta$, we have
\begin{equation}\label{eq_U2}
U=\sum_{i=1}^m\varepsilon_i (P_{M_{i}}\otimes[h_i]),
\end{equation}
where $[M_{i}]=C_0$, $h_{i}\in\CC_{M_{i}}$, and $\varepsilon_i=\pm 1$. Then $\Psi_C(U)=0$ unless $C=C_0$, and 
$$
\Psi_{C_0}(U)=\sum_{i=1}^m\psi_{M_i}(h_i).
$$
Besides, $\Phi_{C,A}(U)=0$ for all $C\in\CH_1^{(2)}$.
Hence the desired formula~\eqref{eq_Delta_main} reads as
\begin{equation}\label{eq_Delta_main_2}
\Theta_A(\partial'X)=\left\{
\begin{aligned}
&\sum_{i=1}^m\psi_{M_i}(h_i)&&\text{if $A\subset C_0$,}\\
&0&&\text{if $A\not\subset C_0$.}
\end{aligned}
\right.
\end{equation}

Let us prove the existence of~$X$ satisfying $\partial''X=U$, $\partial'X\in\Gamma$, and~\eqref{eq_Delta_main_2} by induction on~$m$. 

\textsl{Base of induction.} Suppose, $m=1$, i.\,e. $U=P_M\otimes[h]$, where $[M]=C_0$ and~$h\in\CC_M$. Since $U$ lies in the image of~$\partial''$,  the element $h$ represents zero homology class in~$H_1(\I_M;\Z)$ and hence belongs to the commutator subgroup~$[\I_M,\I_M]$. So the base of induction is provided by Lemma~\ref{lem_Delta_1}.

\textsl{Induction step.} Assume the existence of~$X$ with the required properties for elements~$U$ of the form~\eqref{eq_U2} with strictly less than $m$ summands, and prove it for an element~$U$ with exactly $m$ summands, where $m\ge 2$. We may assume that $\varepsilon_1=1$. By Proposition~\ref{propos_orbits}, $M_2=f(M_1)$ for some $f\in\I$. 
We put
\begin{equation*}
U_1=P_{M_{1}}\otimes[h_{1}f^{-1}h_{2}^{\varepsilon_2}f]+\sum_{i=3}^m\varepsilon_i\left(P_{M_{i}}\otimes[h_{i}]\right).
\end{equation*}
Then
$$
U_1-U=\partial''Z,
$$
where
\begin{multline*}
Z=P_{M_{1}}\otimes
\left(
\bigl[f^{-1}|h_{2}^{\varepsilon_2}\bigr]-\bigl[f^{-1}h_{2}^{\varepsilon_2}f|f^{-1}\bigr]+ \bigl[h_{1}|f^{-1}h_{2}^{\varepsilon_2}f\bigr]
\right)
\\
{}+
\left\{
\begin{aligned}
&0&&\text{if $\varepsilon_2=1$,}\\
&P_{M_2}\otimes\left([1|1]-[h_{2}|h_{2}^{-1}]\right)&&\text{if $\varepsilon_2=-1$.}\\
\end{aligned}
\right.
\end{multline*}
Hence $U_1$ lies in the image of~$\partial''$. Also $U_1\in\Delta$, since $h_{1}f^{-1}h_{2}^{\varepsilon_2}f\in\CC_{M_1}$. Besides,
$$\psi_{M_1}(h_{1}f^{-1}h_{2}^{\varepsilon_2}f)=\psi_{M_1}(h_1)+\psi_{M_2}(h_2).$$

By the induction hypothesis, there is $X_1\in B_{1,2}$ such that $\partial'' X_1=U_1$, $\partial' X_1\in \Gamma$, $\Theta_A(\partial' X_1)=\sum_{i=1}^m\psi_{M_i}(h_i)$ whenever $A\subset C_0$,  and $\Theta_A(\partial' X_1)=0$ whenever $A\not\subset C_0$. Besides, since $h_1$ and~$h_2$ lie in the kernels of all Birman--Craggs homomorphisms, we see that $\partial' Z\in\Gamma$ and $\Theta_A(\partial'Z)=0$ for all $A\in \CH_0'$. Thus, the element 
\begin{equation*}
X=X_1- Z
\end{equation*}
satisfies $\partial''X=U$, $\partial'X\in\Gamma$, and equality~\eqref{eq_Delta_main_2} for all~$A$.

\textsl{Case 2: $C_0\in\CH_1^{(2)}$}. 
Since $U\in\Delta$, we have
\begin{equation}\label{eq_U}
U=\sum_{i=1}^m\varepsilon_i\left(P_{M_{i}^+}\otimes[h_{i,+}]-P_{M_{i}^-}\otimes[h_{i,-}]\right),
\end{equation}
where $M_i^{\pm}\in\orb_{C_0}^{\pm}$, $h_{i,\pm}\in\I_{M_{i}^{\pm}}$, $\sigma(h_{i,+})=\sigma(h_{i,-})$, and $\varepsilon_i=\pm 1$. Then $\Psi_C(U)=0$ for all~$C\in\CH_1^{(1)}$ and $\Phi_{C,A}(U)=0$ unless $C=C_0$. Hence the desired formula~\eqref{eq_Delta_main} reads as
\begin{equation}\label{eq_Delta_main_1}
\Theta_A(\partial'X)=\left\{
\begin{aligned}
&\Phi_{C_0,A}(U)&&\text{if $A\subset C_0$,}\\
&0&&\text{if $A\not\subset C_0$.}
\end{aligned}
\right.
\end{equation}

As in the previous case, we prove the existence of~$X$ satisfying $\partial''X=U$, $\partial'X\in\Gamma$, and~\eqref{eq_Delta_main_1}  by induction on~$m$.

\textsl{Base of induction.} Suppose that $m=1$.  Then 
\begin{equation*}
U=\pm(P_{M^+}\otimes [h_+]-P_{M^-}\otimes [h_-]).
\end{equation*}
Since $U$ belongs to the image of~$\partial''$ and $M^+$ and~$M^-$ lie in different $\I$-orbits, we see that both elements~$P_{M^{\pm}}\otimes[h_{\pm}]$ belong to the image of~$\partial''$. Consequently, $h_{\pm}$ represent zero homology classes in~$H_1(\I_{M^{\pm}};\Z)$ and hence belong to the commutator subgroups of~$\I_{M^{\pm}}$, respectively. Therefore, $h_+$ and~$h_-$ lie in the kernels of all Birman--Craggs homomorphisms. Thus, $\Phi_{C_0,A}(U)=0$. The existence of~$X$ with required properties now follows immediately from Lemma~\ref{lem_Delta_1} for the elements $P_{M^{\pm}}\otimes [h_{\pm}]$.

\textsl{Induction step.} Assume the existence of~$X$ satisfying $\partial''X=U$, $\partial'X\in\Gamma$, and~\eqref{eq_Delta_main_1} for elements~$U$ of the form~\eqref{eq_U} with strictly less than $m$ summands, and prove it for an element~$U$ with exactly $m$ summands, where $m\ge 2$. We may assume that $\varepsilon_1=1$. Let $f_+,f_-\in \I$ and $g\in \hI$ be elements such that $f_{\pm}(M_{1}^{\pm})=M_{2}^{\pm}$ and $g(M_{1}^+)=M_{1}^-$. Put
\begin{multline*}
U_1=P_{M_1^+}\otimes[h_{1,+}f_+^{-1}h_{2,+}^{\varepsilon_2}f_+]-P_{M_{1}^-}\otimes[h_{1,-}f_-^{-1}h_{2,-}^{\varepsilon_2}f_-]\\{}+\sum_{i=3}^m\varepsilon_i\left(P_{M_{i}^+}\otimes[h_{i,+}]-P_{M_{i}^-}\otimes[h_{i,-}]\right).
\end{multline*}
Then
$$
U_1-U=\partial''(Z_+-Z_-),
$$
where
\begin{multline*}
Z_{\pm}=P_{M_{1}^{\pm}}\otimes
\left(
\bigl[f_{\pm}^{-1}|h_{2,\pm}^{\varepsilon_2}\bigr]-\bigl[f_{\pm}^{-1}h_{2,\pm}^{\varepsilon_2}f_{\pm}|f_{\pm}^{-1}\bigr]+ \bigl[h_{1,\pm}|f_\pm^{-1}h_{2,{\pm}}^{\varepsilon_2}f_{\pm}\bigr]
\right)
\\
{}+
\left\{
\begin{aligned}
&0&&\text{if $\varepsilon_2=1$,}\\
&P_{M_{2}^{\pm}}\otimes\left([1|1]-[h_{2,{\pm}}|h_{2,{\pm}}^{-1}]\right)&&\text{if $\varepsilon_2=-1$.}\\
\end{aligned}
\right.
\end{multline*}
Hence $U_1$ lies in the image of~$\partial''$. Also $U_1\in\Delta$, since $h_{1,{\pm}}f_{\pm}^{-1}h_{2,\pm}^{\varepsilon_2}f_{\pm}\in\I_{M_{1}^{\pm}}$ and 
$\sigma\bigl(h_{1,+}f_+^{-1}h_{2,+}^{\varepsilon_2}f_+\bigr)=\sigma\bigl(h_{1,-}f_-^{-1}h_{2,-}^{\varepsilon_2}f_-\bigr)$.

By the induction hypothesis, there exists $X_1\in B_{1,2}$ such that $\partial'' X_1=U_1$, $\partial' X_1\in \Gamma$, $\Theta_A(\partial' X_1)=\Phi_{C_0,A}(U_1)$ whenever $A\subset C_0$,  and $\Theta_A(\partial' X_1)=0$ whenever $A\not\subset C_0$.

Suppose that $A\in \CH_0'$ and $\rho_0,\rho_1,\rho_2,\rho_3$ is a permutation of the four corresponding Birman--Craggs homomorphisms. If $A\not\subset C_0$, then we obviously have 
$$
\Theta_A^{\rho_0,\rho_1,\rho_2,\rho_3}\bigl(\partial'(Z_+-Z_-)\bigr)=0.
$$ 
If $A\subset C_0$, then using that $\rho_i(h_{k,+})=\rho_i(h_{k,-})$ for $i=0,1,2,3$ and $k=1,2$, we easily obtain that 
$$
\Theta_A^{\rho_0,\rho_1,\rho_2,\rho_3}\bigl(\partial'(Z_+-Z_-)\bigr)=\sum_{i\ne j}\bigl(\rho_i(f_+)+\rho_i(f_-)\bigr)\rho_j(h_{2,+})=\Phi_{C_0,A}(U_1-U).
$$
Thus, the element 
\begin{equation*}
X=X_1-Z_++Z_-
\end{equation*}
satisfies $\partial''X=U$, $\partial'X\in\Gamma$, and equality~\eqref{eq_Delta_main_1} for all~$A$.

\textsl{Case 3: $C_0\in\CH_1\setminus\left(\CH_1^{(1)}\cup\CH_1^{(2)}\right)$.} Then $\Psi_C(U)=0$ for all $C\in\CH_1^{(1)}$ and $\Phi_{C,A}(U)=0$ for all $C\in\CH_1^{(2)}$. Since $U$ belongs to the image of $\partial''$, we see that $U=\partial''X$ for some 
$$
X=\sum_{i=1}^m\pm P_{M_i}\otimes[g_i|h_i],
$$
where $M_i\in\M_1\setminus\left(\M_1^{(1)}\cup\M_1^{(2)}\right)$. Suppose that $A\in\CH_0'$. Then every multicurve~$M_i$ either does not contain an oriented multicurve~$N$ with $[N]=A$ or contains exactly two such multicurves. In both cases, we obtain that $\Theta_A^{\rho_0,\rho_1,\rho_2,\rho_3}\left(\partial'(P_{M_i}\otimes[g_i|h_i])\right)=0$ for any permutation of the Birman--Craggs homomorphisms corresponding to~$A$. Thus, $\partial'X\in\Gamma$ and $\Theta_A(\partial'X)=0$ for all $A\in\CH_0'$.
\end{proof}

\section{Scheme of the proof of Proposition~\ref{propos_d2}}\label{section_scheme}

The main difficulty in proving Proposition~\ref{propos_d2} consists in the fact that we have no good description of the kernel of the differential $d^1\colon E^1_{2,1}\to E^1_{1,1}$ and hence of the group~$E^2_{2,1}$. The reason is that the differential~$d^1$ mixes different summands in the direct sum decomposition~\eqref{eq_CL_2} for~$E^1_{2,1}$. This difficulty is overcome by first replacing the condition~$d^1y=0$ with a weaker system of linear conditions each of which can be checked by looking on the component of~$y$ in only one summand of decomposition~\eqref{eq_CL_2}, see Proposition~\ref{propos_1} below, and then formulating and proving a proper analogue of Proposition~\ref{propos_d2} for any  $y\in E^1_{2,1}$ satisfying these weaker conditions, see Proposition~\ref{propos_2} below. 

In formulations and proofs of Propositions~\ref{propos_1} and~\ref{propos_2} we extensively use homomorphisms constructed in Sections~\ref{section_several_hom} and~\ref{section_Delta}.

\begin{propos}\label{propos_1}
Suppose that $y\in E^1_{2,1}$ and $d^1y=0$. Then

\textnormal{(a)} $\sigma_D(y)=0$ for all $D\in\CH_2'$,

\textnormal{(b)} $\nu_D(y)$ is divisible by~$4$ for all $D\in\CH_2'$.
\end{propos}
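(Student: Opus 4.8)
The plan is to reduce everything to a local computation at a single top cell and then exploit the homomorphisms $\sigma_C$, $\sigma_{C,c}$, $\nu_{C,c}$, $\mu_C$ constructed in Section~\ref{section_several_hom}, which by design vanish on the image of $d^1$ once we feed them through the appropriate boundary. First I would write $y=\sum_D y_D$ according to the decomposition $E^1_{2,1}=\bigoplus_{D\in\CH_2}E^1_{2,1}(D)$, and observe that the relation $d^1y=0$, projected onto $E^1_{1,1}(C)$ for each $C\in\CH_1$, only involves the components $y_D$ with $D\supset C$; so the whole argument is a finite bookkeeping over the finitely many $D\in\CH_2'$ containing a given $A$ (Proposition~\ref{propos_H} bounds these). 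The key mechanism is the pair of identities already proved: Proposition~\ref{propos_sigma_d1} gives $\sigma_C(d^1y)=\sum_{D\supset C}\sigma_D(y)$, and Proposition~\ref{propos_nu_d1} gives $\nu_{C,c}(d^1y)=\sum\varepsilon_{D,C}[D\colon C]\nu_D(y)+2\sum[D\colon C]\mu_D(y)$. Since $d^1y=0$ forces the left-hand sides to vanish, these become systems of linear relations among the numbers $\sigma_D(y)$, $\nu_D(y)$, $\mu_D(y)$ over all relevant $D$.

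For part~(a): I would show that for each $D_0\in\CH_2'$ one can find some $C\in\CH_1^{(2)}$ with $C\subset D_0$ such that $D_0$ is the \emph{only} $D\in\CH_2'$ containing $C$ with the prescribed principal element; then Proposition~\ref{propos_sigma2_d1} (the refined version $\sigma_{C,c}(d^1y)=\sum\sigma_D(y)$ over $D\supset C$ with principal element $c$) collapses to $\sigma_{D_0}(y)=0$. In fact, using Proposition~\ref{propos_M2no'} one knows $\M_2\setminus\M_2'$ consists only of bounding-pair multicurves, for which the corresponding summand contributes nothing to $\sigma_{C,c}$ (the two sub-multicurves $M^\pm$ cancel, exactly as in the proof of Proposition~\ref{propos_sigma2_d1}). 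The heart is the combinatorial claim that every $D_0\in\CH_2'$ has a non-principal face $C$ inside it which lies in no other such $D$ with the same distinguished element — this is where I expect the main obstacle, because the naively expected statement fails: there may be several $D\in\CH_2'$ sharing a face $C$, and so one must instead argue by the Hatcher--Margalit weight $w(D)=\max_{A\subseteq D,\,A\in\CH_0'}n(A)$, exactly as sketched in Section~\ref{section_outline}. So the real proof of (a) is: if $\sigma_{D_0}(y)\ne 0$ for some $D_0$, then the relations $\sum_{D\supset C}\sigma_D(y)=0$ (over all $C\in\CH_1^{(2)}$, refined by principal element) force the existence of $D\in\CH_2'$ with $\sigma_D(y)\ne 0$ and $w(D)>w(D_0)$; iterating gives infinitely many $D$ with $\sigma_D(y)\ne0$, contradicting the finiteness of the support of $y$.

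For part~(b) the structure is parallel but the arithmetic is finer. From $\nu_{C,c}(d^1y)=0$ I get $\sum\varepsilon_{D,C}[D\colon C]\nu_D(y)=-2\sum[D\colon C]\mu_D(y)$, so modulo~$2$ the numbers $\nu_D(y)$ satisfy the same homogeneous relations as the $\sigma_D(y)$ did; running the same weight-function argument shows $\nu_D(y)\equiv 0\pmod 2$ for all $D$. Then, knowing each $\nu_D(y)$ is even, I divide the relation by $2$ and compare with the relations satisfied by $\mu_D(y)$ (and, where $D$ has a repeated class, with $\mu$ on bounding-pair multicurves, using Propositions~\ref{propos_psi_values}, \ref{propos_psi_nu}); this yields a second system that, again via the weight function, forces $\nu_D(y)/2\equiv 0\pmod 2$, i.e.\ $\nu_D(y)\equiv 0\pmod 4$. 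The main obstacle here is keeping track of the signs $\varepsilon_{D,C}$ and the incidence coefficients $[D\colon C]$ so that the mod-$2$ reductions genuinely cancel in pairs; but the key point is structural — as remarked in Section~\ref{section_outline}, there is no local obstruction, so the only way to conclude is the finiteness argument: a nonzero violation propagates to higher and higher weight and cannot terminate. I would also note that part~(b) for $D\notin\CH_2'$ is vacuous since, per the Remark in Section~\ref{section_outline}, $\nu_D$ is by convention trivial there.
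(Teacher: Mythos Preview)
Your outline for part~(a) is broadly correct and matches the paper: one sets $s_D=\sigma_D(y)$, uses Propositions~\ref{propos_sigma_d1} and~\ref{propos_sigma2_d1} to obtain a finite linear system in the~$s_D$, and then shows by the Hatcher--Margalit weight~$n(A)$ that the only finite solution is zero. (The paper uses relations indexed by both $\CH_1^{(1)}$ and $\CH_1^{(2)}$; your write-up emphasizes only the latter, but both are needed in Lemma~\ref{lem_sol_sigma}.)

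Part~(b), however, has a genuine gap. First, the evenness of~$\nu_D(y)$ in the paper does \emph{not} come from rerunning a weight argument on~$\nu_D(y)\bmod 2$. There is no homomorphism~$\nu_{C,c}$ for $C\in\CH_1^{(1)}$, so the $\nu_D(y)\bmod 2$ do not a priori satisfy the same system as the~$\sigma_D(y)$; your claim that ``modulo~2 the numbers $\nu_D(y)$ satisfy the same homogeneous relations as the $\sigma_D(y)$ did'' is unjustified. Instead the paper uses the pointwise identity (following from Proposition~\ref{propos_liftrho2})
\[
\nu_D(y)\equiv \sigma_D(y)(\omega_0)+\sigma_D(y)(\omega_1)\pmod 2,
\]
so evenness is a direct corollary of part~(a).

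Second, and more seriously, your plan to prove $\nu_D(y)/2\equiv 0\pmod 2$ ``again via the weight function'' misses the main point the paper itself flags in Section~\ref{section_outline}: the Hatcher--Margalit weight~$n$ does \emph{not} suffice here. The paper introduces a completely different ordering function~$F(A)=(F_1(A),F_2(A))$, built from a linear map $f\colon H\to\R$ with $f(x)=0$ and image of rank~$5$, ordered lexicographically. Moreover, the system of equations satisfied by $\lambda_D=\nu_D(y)/2\bmod 2$ (Lemma~\ref{lem_eq_lambda}) is not the system you describe: in addition to the type~$\CH_1^{(2)}$ relations coming from~$\nu_{C,c}$, one needs new relations indexed by $C\in\CH_1^{(1)}$ which are derived from the auxiliary homomorphisms $\xi_1,\xi_2\colon\I_M\to\Z$ of Section~\ref{subsection_type1} (via Proposition~\ref{propos_xi_eps}), not from any $\nu$- or $\mu$-homomorphism. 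Without these extra relations and the function~$F$, the propagation argument for the mod~$4$ step does not go through; the case analysis in Lemma~\ref{lem_sol_lambda} (including a separate treatment when $A_0$ has two elements) is substantially more delicate than in part~(a).
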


\begin{propos}\label{propos_2}
Suppose that $y\in E^1_{2,1}$ is an element such that $\sigma_D(y)=0$ and $\nu_D(y)$ is divisible by~$4$ for all~$D\in \CH_2'$. Then there exist elements $Y\in B_{2,1}$ and $X\in B_{1,2}$ satisfying the following conditions:
\begin{itemize}
\item $\partial''Y=0$ and $Y$ represents~$y$ in~$E^1_{2,1}$, 
\item $\partial'Y+\partial''X\in\Delta$, 
\item $\partial'X\in\Gamma$, 
\item for all $A\in\CH_0'$, we have $\Theta_A(\partial'X)=0$  and
\begin{multline}\label{eq_key2}
\sum_{C\in\CH_1^{(1)},\, C\supset A}\Psi_C(\partial'Y+\partial''X)+
\sum_{C\in\CH_1^{(2)},\, C\supset A}\Phi_{C,A}(\partial'Y+\partial''X)\\{}=
\sum_{a\in A}\mu_{[A,a]}(d^1y)+\sum_{C\in\CH_1^{(2)},\, C\supset A}\nu^+_{C,A}(d^1y)\pmod 2,
\end{multline}
where $[A,a]$ denotes the four-element multiset obtained from~$A$ by duplicating the element~$a$.
\end{itemize}
\end{propos}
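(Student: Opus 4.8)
The plan is to work directly with the bicomplex $B_{*,*}=C_*(\B;\Z)\otimes_{\I}\res_*$ and to build $Y$ and $X$ column by column with respect to the splitting $B_{p,q}=\bigoplus_{C\in\CH_p}B_{p,q}(C)$, using that $\partial''$ preserves columns while $\partial'$ maps $B_{p,q}(C)$ into $\bigoplus_{C'\subset C}B_{p-1,q}(C')$, so that $\partial'Y+\partial''X$ and $\partial'X$ may each be assembled from their column-$C$ pieces. First I would fix a convenient representative of $y$: using Proposition~\ref{propos_Stab_5comp} (so that $\I_K$ is free on the $w_k$) for $K\in\M_2'$, and the Birman--Lubotzky--McCarthy sequence for $K\in\M_2\setminus\M_2'$ (all of which contain a bounding pair, by Proposition~\ref{propos_M2no'}), write the orbit decomposition $y=\sum_{D\in\CH_2}y_D$ and choose $Y_0=\sum_K\pm P_K\otimes[h_K]\in B_{2,1}$ with $\partial''Y_0=0$ representing $y$. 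Put $W=\partial'Y_0=\sum_C W_C$ with $W_C\in B_{1,1}(C)$; the class of $W$ in $E^1_{1,1}$ is $d^1y$.

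Next I would analyse $W_C$ for the two ``bad'' families of columns. For $C\in\CH_1^{(1)}$ there is a single $\I$-orbit (Proposition~\ref{propos_orbits}(a)), so $W_C=P_M\otimes\xi_C$ for a fixed $M\in\M_1^{(1)}$, and its class is the $C$-component $z_C$ of $d^1y$. By Proposition~\ref{propos_sigma_d1} and the hypothesis $\sigma_D(y)=0$ we get $\sigma_*(z_C)=\sum_{D\supset C}\sigma_D(y)=0$; since $\CC_M=\ker(\sigma|_{\I_M})$, this means $z_C=[h_C]$ for some $h_C\in\CC_M$, i.e. $W_C$ represents the class of the type~1 generator $V_C:=P_M\otimes[h_C]$ of $\Delta$. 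For $C\in\CH_1^{(2)}$ there are two orbits $\orb_C^{\pm}$; Propositions~\ref{propos_sigma_d1} and~\ref{propos_sigma2_d1} together with $\sigma_D(y)=0$ force $\sigma(z_C^+)=\sigma(z_C^-)$, so $z_C^+\oplus z_C^-$ is represented by a $\Z$-combination $V_C$ of type~2 generators. For every other column $W_C$ is automatically a combination of type~3 generators and needs no correction. At the same time one records, using Propositions~\ref{propos_nu_d1} and~\ref{propos_triv_restr1}, Proposition~\ref{propos_psi_nu} and the identity~\eqref{eq_psi_nu}, the exact value of $\Psi_C$ (resp. $\Phi_{C,A}$) on the corrected piece $V_C$ in terms of $\mu_D(y)$, $\nu_D(y)$ and $\nu^+_{C,A}(d^1y)$; this is the point at which the divisibility of $\nu_D(y)$ by~$4$ enters, since it is what makes the relevant ``division by~$2$'' meaningful modulo~$2$.

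The technical core is then, for each bad column $C$, the construction of $X_C\in B_{1,2}(C)$ with $\partial''X_C=W_C-V_C$, $\partial'X_C\in\Gamma$, and $\Theta_A(\partial'X_C)=0$ for every $A\in\CH_0'$. This is an analogue of Lemma~\ref{lem_Delta_1} run in the opposite direction: one rewrites $\xi_C$ --- a $\Z$-combination of classes $[g]$ with $g$ ranging over the stabilizers $\I_{K'}\subseteq\I_M$ of the five-component multicurves $K'\supset M$ --- into $[h_C]$ by the bar-resolution moves $[g]+[h]-[gh]=\bar\partial[g|h]$ and $[g^{-1}]+[g]=\bar\partial\bigl([g|g^{-1}]+[1|1]\bigr)$, collecting the correction as an element $\eta_C$ of $\res_2\otimes_{\I}\Z$ and setting $X_C=P_M\otimes\eta_C$ (the type~2 columns are entirely analogous, with the rewriting performed over both orbits). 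Since $P_M$ is one-dimensional, $\partial'X_C=\pm(P_{N_1}-P_{N_2})\otimes\eta_C$, where $N_1,N_2$ are the two vertices of $P_M$; both lie in $\CH_0'$, they are distinct, and by~\eqref{eq_theta_equal} the four Birman--Craggs homomorphisms attached to $[N_1]$ and the four attached to $[N_2]$ restrict to $\I_M$ as the same quadruple $\rho_{0,M},\dots,\rho_{3,M}$ studied in Section~\ref{subsection_type1} (or Section~\ref{subsection_type2}). Hence $\Theta_A^{\rho_0,\dots,\rho_3}(\partial'X_C)$ is a sum of expressions $\sum_{j<k}\rho_{j,M}(g)\rho_{k,M}(h)$; one organises the rewriting so that the Birman--Craggs values of the relevant bar entries are controlled --- exploiting $h_C\in\CC_M$, the relation $\rho_{0,M}+\rho_{1,M}+\rho_{2,M}+\rho_{3,M}=0$ (Corollary~\ref{cor_StabC}), Corollary~\ref{cor_rho_rho_zero}, and the liftings of $\rho_{i,M}$ in Propositions~\ref{propos_liftrho} and~\ref{propos_liftrho2} --- so that these expressions are both permutation-independent (whence $\partial'X_C\in\Gamma$) and zero. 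Taking $Y=Y_0$ and $X=\sum_C X_C$ then yields $\partial'Y+\partial''X=\sum_C V_C+(\text{type~3 part})\in\Delta$, $\partial'X\in\Gamma$, and $\Theta_A(\partial'X)=0$; finally~\eqref{eq_key2} follows by evaluating $\Psi_C$ and $\Phi_{C,A}$ on the column pieces $V_C$ of $\partial'Y+\partial''X$, summing over $C\supset A$, and reconciling the resulting combination of $\psi_M(h_C)$'s and $\Phi_{C,A}(V_C)$'s with the right-hand side of~\eqref{eq_key2} via Propositions~\ref{propos_nu_d1} and~\ref{propos_psi_nu} and the hypothesis $4\mid\nu_D(y)$.

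I expect the main obstacle to be exactly the simultaneous control demanded in the third step. The raw pieces $W_C$ of $\partial'Y_0$ over the bad columns do not themselves lie in $\Delta$, so $X_C$ must at once carry $W_C$ onto a prescribed element of $\Delta$ \emph{and} have $\partial'X_C$ land in $\Gamma$ with vanishing $\Theta_A$; making one choice of $\eta_C$ serve both purposes forces a reorganisation of the bar rewriting so that every correction term's contribution to each $\Theta_A^{\rho_0,\dots,\rho_3}$ is a quadratic expression in Birman--Craggs homomorphisms that is visibly expressible through $\psi_M$ (via Propositions~\ref{propos_psi1} and~\ref{propos_psi2}) and telescopes to zero, and this has to be carried out compatibly with the delicate modulo-$4$ bookkeeping underlying~\eqref{eq_key2}.
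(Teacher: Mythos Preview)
Your organization is by columns $C\in\CH_1$: fix an arbitrary representative $Y_0$, then for each bad column construct $X_C$ correcting $W_C$ into $\Delta$ while forcing $\Theta_A(\partial'X_C)=0$. The paper proceeds the other way round: it keeps the decomposition $y=\sum_{D\in\CH_2}y_D$ and builds $Y$ and $X$ for each $D$ separately, choosing $Y$ so cleverly that $X$ is essentially trivial. For $D\in\CH_2'$ the paper selects $K^+,K^-\in\orb_D^{\pm}$ that are swapped by a hyperelliptic involution~$\iota$ and writes $y_D=[h_+]_{K^+}-[h_-]_{K^-}$; then $\sigma(h_+)=\sigma(h_-)$ is exactly $\sigma_D(y)=0$, so every type-2 piece of $\partial'Y$ is already a type-2 generator of~$\Delta$, and the only correction needed is the single bar term $X=-P_{M_0}\otimes[h_+h_-^{-1}\,|\,h_-]$ (if $M_0\in\M$). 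Since $h_+h_-^{-1}\in\CC$, one has $\Theta_A(\partial'X)=0$ automatically. For $D\notin\CH_2'$ one takes $X=0$. Thus the ``simultaneous control'' that you correctly identify as the obstacle is dissolved by the choice of~$Y$, not by rewriting~$X$.

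The second place where the $\iota$-trick is indispensable is the verification of~\eqref{eq_key2}. The paper needs $\psi_{M_0}(h_+h_-^{-1})$ and gets it from the factorization
\[
h_+h_-^{-1}=(h_+\,\iota h_-^{-1}\iota)\cdot(\iota h_-\iota\, h_-^{-1}).
\]
The first factor lies in $\CC_{K^+}$, so Proposition~\ref{propos_psi_nu} gives $\psi_{M_0}$ of it as $\tfrac12\nu_{\gamma^+}$, and the congruence $\nu_{\gamma^+}(h_+)+\nu_{\gamma^-}(h_-)\equiv0\pmod 4$ (the hypothesis you flagged) turns this into $\nu_{\gamma^+}(h_+)\bmod 2$. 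The second factor is $[\iota,h_-^{-1}]$, so the extended formula of Proposition~\ref{propos_psi2} applies and produces exactly the $\sum_{i\ne j}\hrho_i(\iota)\rho_j(h_\pm)$ that cancels $\Phi_{C,A}$. In your column-by-column scheme there is no canonical $h_C\in\CC_M$: two representatives of $z_C$ differ by an element of $[\I_M,\I_M]$, on which $\psi_M$ is generically nonzero (Proposition~\ref{propos_psi1}), so $\psi_M(h_C)$ and $\Theta_A(\partial'X_C)$ shift in tandem. Making their sum come out right for every $A\subset C$ simultaneously, and then matching the right-hand side of~\eqref{eq_key2}, is precisely what the $\iota$-device accomplishes; without it your ``organising the rewriting'' step is where the proof is, and it is not clear it can be carried through.
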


The proofs of Propositions~\ref{propos_1} and~\ref{propos_2} will be given in Sections~\ref{section_1} and~\ref{section_2}, respectively. Now, we deduce Proposition~\ref{propos_d2} from them.

\begin{proof}[Proof of Proposition~\ref{propos_d2}]
Every element in~$E^2_{2,1}$ can be represented by a cycle $y\in E^1_{2,1}$ satisfying $d^1y=0$. By Proposition~\ref{propos_1} the cycle~$y$ satisfies the conditions in Proposition~\ref{propos_2}. Let $Y\in B_{2,1}$ and~$X\in B_{1,2}$ be elements provided by this proposition. 

Since $d^1y=0$, the element $\partial'Y$ lies in the image of~$\partial''$. Hence,  $\partial'Y+\partial''X$ lies in the intersection of the group~$\Delta$ and the image of~$\partial''$. By Proposition~\ref{propos_Delta}, there exists an element
$X_1\in B_{1,2}$ such that $\partial''X_1=\partial'Y+\partial''X$, $\partial' X_1\in \Gamma$ and, for all $A\in \CH_0'$,
\begin{equation*}
\Theta_A(\partial' X_1)=\sum_{C\in\CH_1^{(1)},\, C\supset A}\Psi_C(\partial'Y+\partial''X)+
\sum_{C\in\CH_1^{(2)},\, C\supset A}\Phi_{C,A}(\partial'Y+\partial''X)=0.
\end{equation*}
(The latter equality is true, since $d^1y=0$ and hence the right-hand side of~\eqref{eq_key2} vanishes.)

Since $\partial''(X-X_1)=-\partial' Y$, we obtain using Fact~\ref{fact_differential} that the element $\partial'(X-X_1)$ represents the class~$d^2y$ in~$E^2_{0,2}$. For all $A\in \CH_0'$, we know that $\Theta_A(\partial'X)=0$ and it has been shown above that $\Theta_A(\partial'X_1)=0$. Since by Proposition~\ref{propos_Theta_induce} the homomorphism~$\Theta_A$ induces~$\btheta_A$ in page~$E^1$ and hence in page~$E^2$, we finally obtain that $\btheta_A(d^2y)=0$.
\end{proof}

\section{Proof of Proposition~\ref{propos_1}}\label{section_1}

\subsection{Part~(a)}
We put $s_D=\sigma_D(y)$ for $D\in\CH'_2(L)$. Obviously, $s_D=0$ for all but a finite number of sets~$D$.

\begin{lem}\label{lem_eq_sigma}
The elements $s_D\in\BB_3'$, where $D\in\CH_2'$, satisfy the following system of linear equations: 
\begin{itemize}
\item for each $C\in\CH_1^{(1)}$,
\begin{equation}\label{eq_ses1}
\sum_{D\in\CH_2',\,D\supset C}s_D=0,
\end{equation}
\item for each $C\in\CH_1^{(2)}$ and each non-special element~$c\in C$, 
\begin{equation}\label{eq_ses2}
s_{C\cup\{c+d\}}+s_{C\cup\{c-d\}}+s_{C\cup\{d-c\}}=0,
\end{equation}
where $d$ is the special element of~$C$.
\end{itemize}
\end{lem}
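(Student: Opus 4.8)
The equations in the lemma are nothing but the statement that $\sigma_C(d^1 y) = 0$ and $\sigma_{C,c}(d^1 y) = 0$ for all relevant $C$, so the whole proof consists in identifying these applications of Propositions~\ref{propos_sigma_d1} and~\ref{propos_sigma2_d1} and then running through the list of multisets in~$\CH$ that contain a given $C$, using the classification given in Proposition~\ref{propos_H}. First I would fix notation: for $C\in\CH_1$ let $\CH_2'(C)$ denote the set of $D\in\CH_2'$ with $D\supset C$; the point is that every $D\in\CH_2'$ containing a type~1 curve $C$ must itself contain a further simple closed curve splitting one of the two $4$-punctured spheres, and the short enumeration in Proposition~\ref{propos_H} (together with the remark there describing exactly which listed multisets contain a fixed $4$-element sub-multiset) shows $\CH_2'(C)$ is finite and explicitly describable. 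Thus all sums appearing below are finite, and the elements $s_D$ vanish for all but finitely many $D$.

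Second, I would prove~\eqref{eq_ses1}. Since $d^1y=0$, Proposition~\ref{propos_sigma_d1} applied to $C\in\CH_1^{(1)}$ gives
$$
0=\sigma_C(d^1y)=\sum_{D\in\CH_2',\,D\supset C}\sigma_D(y)=\sum_{D\in\CH_2',\,D\supset C}s_D,
$$
which is exactly~\eqref{eq_ses1}. There is nothing further to check here; the content has been pushed entirely into Proposition~\ref{propos_sigma_d1}.

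Third, for~\eqref{eq_ses2} I would apply Proposition~\ref{propos_sigma2_d1}. Fix $C\in\CH_1^{(2)}$ with special element $d$ and non-special element $c$. By Proposition~\ref{propos_sigma2_d1},
$$
0=\sigma_{C,c}(d^1y)=\sum_{D} s_D,
$$
where the sum runs over $D\in\CH_2'$ such that $D\supset C$ and the principal element of $D$ is exactly $c$. So the remaining task is purely combinatorial: I must check that the sets $D\in\CH_2'$ with $D\supset C$ and principal element $c$ are precisely the three sets $C\cup\{c+d\}$, $C\cup\{c-d\}$, $C\cup\{d-c\}$ (up to the one-sided-relation constraint that may kill some of these and hence reduce the sum to two terms, but then $s_D=0$ by definition for the missing index, so the stated three-term relation still holds). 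Geometrically: a $5$-component multicurve in $\M_2'$ refining $C$ and with principal component in homology class $c$ is obtained from $C$ by adding one more non-separating curve $\gamma$ that splits the $5$-punctured sphere complementary component of $C$ so that $c$ becomes adjacent to both resulting $3$-punctured spheres; a direct inspection (the same case analysis underlying Proposition~\ref{propos_H}, now tracking the "principal" bookkeeping) shows $[\gamma]$ must be one of $\pm c\pm d$ with the appropriate sign restrictions, yielding exactly the three listed classes. This gives~\eqref{eq_ses2}.

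\textbf{Main obstacle.} The only genuine work is the combinatorial verification in the third step — confirming that "$D\supset C$, $D\in\CH_2'$, principal element $=c$" is exactly the three-element family $\{C\cup\{c+d\},C\cup\{c-d\},C\cup\{d-c\}\}$, and checking that one-sided relations never leave fewer than two of these alive (so the relation is never vacuous in a misleading way). This is a finite check of the same flavour as the proof of Proposition~\ref{propos_H}, and I expect it to be routine once the adjacency picture of a type~2 multicurve (Fig.~\ref{fig_types}(c)) is drawn explicitly; the rest of the lemma is a formal consequence of Propositions~\ref{propos_sigma_d1} and~\ref{propos_sigma2_d1}.
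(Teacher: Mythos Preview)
Your proposal is correct and follows essentially the same route as the paper: equation~\eqref{eq_ses1} is exactly Proposition~\ref{propos_sigma_d1} applied to $C\in\CH_1^{(1)}$ with $d^1y=0$, and equation~\eqref{eq_ses2} is Proposition~\ref{propos_sigma2_d1} together with the observation that $C\cup\{c+d\}$, $C\cup\{c-d\}$, $C\cup\{d-c\}$ are precisely the $D\in\CH_2'$ containing~$C$ with principal element~$c$. The paper's proof is two sentences saying exactly this; your additional remarks about finiteness and the one-sided-relation caveat are harmless elaborations.
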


\begin{proof}
Equalities~\eqref{eq_ses1} follow from Proposition~\ref{propos_sigma_d1}. Equalities~\eqref{eq_ses2} follow from Proposition~\ref{propos_sigma2_d1}, since it is easy to see that $C\cup\{c+d\}$, $C\cup\{c-d\}$, and $C\cup\{d-c\}$ are exactly all the sets~$D\in\CH_2'$ such that $D\supset C$ and~$c$ is a principal element of~$D$.
\end{proof}

We say that a solution~$\{s_D\}$ for the system of linear equations~\eqref{eq_ses1}, \eqref{eq_ses2} is \textit{finite} if $s_D=0$ for all but a finite number of sets~$D$.

\begin{lem}\label{lem_sol_sigma}
The system of equations~\eqref{eq_ses1}, \eqref{eq_ses2} has no nonzero finite solutions.
\end{lem}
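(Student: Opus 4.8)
The plan is to treat $\{s_D\}$ as elements of the finite-dimensional vector space $\BB_3'$ over $\Z/2$, so it suffices to show that every coordinate of the solution vanishes; equivalently, we may replace each $s_D$ by an arbitrary $\Z/2$-linear functional applied to it and prove that the resulting scalar solution of the same system \eqref{eq_ses1}, \eqref{eq_ses2} is trivial. So I reduce at once to the case of a finite $\Z/2$-valued solution $\{s_D\}_{D\in\CH_2'}$.

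The key device is the Hatcher--Margalit weight function $w$ described in Section~\ref{section_outline}: for $A=\{a_1,a_2,a_3\}\in\CH_0'$ put $n(A)=n_1+n_2+n_3$ where $x=n_1a_1+n_2a_2+n_3a_3$, and for $D\in\CH_2'$ put $w(D)=\max\{n(A): A\subseteq D,\ A\in\CH_0'\}$. The heart of the argument is the assertion quoted in Section~\ref{section_outline}, which I would now prove in the precise form needed here: \emph{if $\{s_D\}$ is a finite solution and $s_{D_0}\neq 0$ for some $D_0$, then there exists $D$ with $s_D\neq 0$ and $w(D)>w(D_0)$.} Granting this, pick a solution with $s_{D_0}\neq 0$ of \emph{maximal} weight $w(D_0)$ among all $D$ with $s_D\neq 0$ (possible since the support is finite), and the assertion produces a strictly larger one, a contradiction; hence the only finite solution is zero.

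To establish the weight-increasing step I would look at $D_0$ and a maximizing subset $A_0=\{a_1,a_2,a_3\}\subseteq D_0$ with $n(A_0)=w(D_0)$, and examine the structure of $D_0$ from Proposition~\ref{propos_H}: $D_0$ is one of the (finitely many, up to the $102$ listed patterns) six-element multisets containing $A_0$, with the remaining elements built from $a_i\pm a_j$ and $\pm(a_i+a_j-a_k)$. The strategy is: locate inside $D_0$ an appropriate $C\in\CH_1^{(1)}$ or $C\in\CH_1^{(2)}$, apply the corresponding equation \eqref{eq_ses1} or \eqref{eq_ses2} to conclude that some \emph{other} $D\supset C$ also has $s_D\neq 0$, and check by a direct case analysis over the patterns of Proposition~\ref{propos_H} that at least one such $D$ can be chosen with $w(D)>w(D_0)$. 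Concretely, if $a_i$ is the element of $A_0$ carrying the largest coefficient $n_i$, one replaces in $D_0$ some generator $v$ by a generator $v'$ with $a_i+a_j-a_k$ or $a_i+a_j$ in place of $a_j$, producing a new Lagrangian triple $A'$ in $D$ with $n(A')>n(A_0)$; the equations \eqref{eq_ses1}, \eqref{eq_ses2} are exactly what licenses the exchange $s_{D_0}\neq0\Rightarrow s_D\neq0$ for such a neighboring $D$.

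The main obstacle is precisely this last case analysis: one must verify, uniformly across the fourteen families in Proposition~\ref{propos_H} and across the choice of which face $C\subset D_0$ to use, that a weight-increasing neighbor $D$ always exists and is linked to $D_0$ through one of the linear relations \eqref{eq_ses1}, \eqref{eq_ses2}. The bookkeeping — distinguishing type~1 faces from type~2 faces, tracking which of the three terms in \eqref{eq_ses2} increases the weight, and handling the multisets with a repeated element — is where the real work lies, though each individual subcase is an elementary computation with the coefficients $n_i$ and the incidence structure of the six-element multisets in Proposition~\ref{propos_H}. Once Lemma~\ref{lem_sol_sigma} is in hand, Lemma~\ref{lem_eq_sigma} gives that the $s_D=\sigma_D(y)$ form a finite solution, so they all vanish, which is part~(a).
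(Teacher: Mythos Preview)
Your overall strategy---use the Hatcher--Margalit weight $n(A)$ and exploit finiteness of the support to derive a contradiction---is exactly what the paper does, and your reduction to $\Z/2$-valued solutions is harmless. But what you have written is an outline rather than a proof, and it has two genuine gaps beyond the omitted case analysis.

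First, your weight $w(D)=\max\{n(A):A\subseteq D,\ A\in\CH_0'\}$ is not defined for every $D\in\CH_2'$. There exist $D\in\CH_2'$ containing no three-element subset in $\CH_0'$, namely $D=\{x,c_1,x-c_1,c_2,x-c_2\}$ with $x,c_1,c_2$ a Lagrangian basis. Your argument never reaches these, so even if the weight-increasing step were complete you would not conclude $s_D=0$ for such $D$. The paper handles them separately at the end using~\eqref{eq_ses1} for $C=\{c_1,x-c_1,c_2,x-c_2\}$.

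Second, your weight-increasing mechanism (``apply the corresponding equation, conclude that some other $D\supset C$ also has $s_D\neq 0$'') is too optimistic. A single equation~\eqref{eq_ses1} or~\eqref{eq_ses2} only says a sum of several $s_D$'s vanishes; it does not by itself force a \emph{particular} higher-weight term to be nonzero. In the paper's execution one must chain equations: for instance, for the sets $D_k^+=A_0\cup\{a_i+a_j,a_1+a_2+a_3\}$ one first kills $D_k^-=A_0\cup\{a_i+a_j,a_i+a_j-a_k\}$ via~\eqref{eq_ses1}, and only then does~\eqref{eq_ses2} force $s_{D_k^+}=0$. The paper organizes this more cleanly by maximizing $n(A)$ over $A\in\Xi$ rather than $w(D)$ over the support: this lets one fix $A_0$, list the nine $D\in\CH_2'$ containing $A_0$ but no higher-weight triple, and systematically clear them using both families of equations. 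Starting instead from a fixed $D_0$ and looking for a single upward move, as you propose, would require a more delicate case split over which of the nine patterns $D_0$ falls into.
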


\begin{proof}
For each set $A=\{a_1,a_2,a_3\}\in\CH_0'$, we put $n(A)=n_1+n_2+n_3$, where $n_i$ are the coefficients in the decomposition $x=n_1a_1+n_2a_2+n_3a_3$.

Assume that $\{s_D\}$ is a finite solution for the system of equations~\eqref{eq_ses1},~\eqref{eq_ses2}. Let $\Xi$  be the subset of~$\CH_0'$ consisting of all~$A$ that are contained in at least one set~$D$ such that $s_D\ne 0$. Then the set~$\Xi$ is finite. 

Let us prove that the set~$\Xi$ is empty. Assume the converse, and choose the set $A_0=\{a_1,a_2,a_3\}\in \Xi$ (any if several) with the greatest~$n(A_0)$.

Consider the $9$ sets $\{a_i-a_j,a_j,a_k\}$ and $\{a_i-a_j-a_k,a_j,a_k\}$, where $i,j,k$ is a permutation of~$1,2,3$. By Proposition~\ref{propos_H0}, all these sets belong to~$\CH_0'$, and it is easy to compute that $n(A)>n(A_0)$ for each of them. Hence $s_D= 0$ for every $D\in\CH_2'$ that contains at least one of these $9$ sets~$A$. Using Proposition~\ref{propos_H}, it is not hard to check that there are exactly $9$ sets $D\in\CH_2'$ that contain~$A_0$ and contain none of the $9$ listed sets $A$, namely,
\begin{itemize}
\item $3$ sets $D_k=A_0\cup\{a_i+a_k,a_j+a_k\}$, 
\item $3$ sets $D_k^+=A_0\cup\{a_i+a_j,a_i+a_j+a_k\}$,
\item $3$ sets $D_k^-=A_0\cup\{a_i+a_j,a_i+a_j-a_k\}$.
\end{itemize}

Equation~\eqref{eq_ses2} for $C= A_0\cup\{a_i+a_k\}$ and $c=a_k$ yields that $s_{D_k}=0$.

Equation~\eqref{eq_ses1} for $C=A_0\cup\{a_i+a_j-a_k\}$ yields that $s_{D_k^-}=0$.

Then equation~\eqref{eq_ses2} for $C= A_0\cup\{a_i+a_j\}$ and $c=a_i+a_j$ yields that $s_{D_k^+}+s_{D_k^-}=0$ and hence $s_{D_k^+}=0$.

Thus,  $s_D=0$ for all $D\in\CH_2'$ containing~$A_0$, which contradicts the assumption $A_0\in \Xi$. Hence, the set $\Xi$ is empty. Therefore, $s_D=0$ for all $D\in\CH_2'$ that contain at least one set $A\in\CH_0'$.

It is not hard to check that any set $D\in\CH_2'$ that contains no subsets belonging to~$\CH_0'$ has the form $D=\{x,c_1,x-c_1,c_2,x-c_2\}$, where 
$\{x,c_1,c_2\}$ is a basis of a Lagrangian subgroup~$L$ of~$H$. Consider equation~\eqref{eq_ses1} for $C=\{c_1,x-c_1,c_2,x-c_2\}$. It is easy to see that any set $D'\in\CH_2'$ that contains~$C$ and is different from~$D$ contains a subset belonging to~$\CH_0'$. Hence $s_{D'}=0$ for all such~$D'$. Therefore, $s_D=0$. 

Thus, $s_D=0$ for all $D\in\CH_0'$.
\end{proof}

Part~(a) of Proposition~\ref{propos_1} follows immediately from Lemmas~\ref{lem_eq_sigma} and~\ref{lem_sol_sigma}.

\subsection{Part~(b)}\label{subsection_b}
First, let us prove that the numbers~$\nu_D(y)$ are all even.
The modulo~$2$ reduction of every~$D\in\CH_2'$ has the form $\{\ba_1,\ba_2,\ba_3,\ba_1+\ba_2,\ba_1+\ba_2+\ba_3\}$ for certain~$\ba_1$, $\ba_2$, and~$\ba_3$. Then the modulo~$2$ reduction of the principal element~$d\in D$ is~$\ba_1+\ba_2$. Extend $\ba_1,\ba_2,\ba_3$ to a symplectic basis $\ba_1,\ba_2,\ba_3,\bb_1,\bb_2,\bb_3$ of~$H_{\Z/2}$, and let $\rho_0,\ldots,\rho_3$ be the  associated four  Birman--Craggs homomorphisms  numbered so that the corresponding quadratic functions $\omega_0,\ldots,\omega_3$ satisfy~\eqref{eq_omega_numeration}.

Suppose that $K\in\M_2$ is an oriented multicurve with $[K]=D$, and let $\delta$ be the principal component of it. By Proposition~\ref{propos_liftrho2} we have
$$
\sigma(h)(\omega_0)+\sigma(h)(\omega_1)=\rho_0(h)+\rho_1(h)=\nu_{\delta}(h)\mod 2
$$
for all $h\in \I_K$. It follows easily that 
\begin{equation}\label{eq_sigma_nu}
\sigma_D(y)(\omega_0)+\sigma_D(y)(\omega_1)=\nu_{D}(y)\mod 2
\end{equation}
for all $y\in E^1_{2,1}$. By part~(a) of Proposition~\ref{propos_1}, which has already been proved, we know that the left-hand side of~\eqref{eq_sigma_nu} vanishes and hence $\nu_D(y)$ is even.

Now, for all~$D\in\CH_2'$, we put 
$$
\lambda_D=\frac{\nu_D(y)}{2} \mod 2.
$$
Obviously, all but a finite number of elements~$\lambda_D$ are equal to zero.

\begin{lem}\label{lem_eq_lambda}
The elements $\lambda_D\in\Z/2$, where $D\in\CH_2'$, satisfy the following system of linear equations: 
\begin{itemize}
\item for each $C=\{c_0,c_1,c_2,c_3\}\in\CH_1^{(1)}$ such that $c_0=c_1+c_2+c_3$, 
\begin{equation}\label{eq_se2}
\lambda_{C\cup\{c_1+c_2\}}=
\lambda_{C\cup\{c_2+c_3\}}=
\lambda_{C\cup\{c_3+c_1\}},
\end{equation}
\item for each $C=\{c_0,c_1,c_2,c_3\}\in\CH_1^{(1)}$ such that $c_0+c_3=c_1+c_2$, \begin{equation}
\label{eq_se3}
\begin{split}
\lambda_{C\cup\{c_1+c_2\}}&=
\lambda_{C\cup\{c_1-c_3\}}+\lambda_{C\cup\{c_3-c_1\}}=
\lambda_{C\cup\{c_2-c_3\}}+\lambda_{C\cup\{c_3-c_2\}},
\end{split}
\end{equation}
\item for each $C=\{c_0,c_1,c_2,c_3\}\in\CH_1^{(2)}$ with the special element~$c_3$, 
\begin{multline}\label{eq_se1}
\lambda_{C\cup\{c_0+c_3\}}+\lambda_{C\cup\{c_0-c_3\}}+\lambda_{C\cup\{c_3-c_0\}}=
\lambda_{C\cup\{c_1+c_3\}}+\lambda_{C\cup\{c_1-c_3\}}+\lambda_{C\cup\{c_3-c_1\}}\\
{}=\lambda_{C\cup\{c_2+c_3\}}+\lambda_{C\cup\{c_2-c_3\}}+\lambda_{C\cup\{c_3-c_2\}}.
\end{multline}
\end{itemize}
\end{lem}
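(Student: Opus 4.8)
The plan is to derive each of the three families of relations \eqref{eq_se2}, \eqref{eq_se3}, \eqref{eq_se1} from the single identity $\nu_{C,c}(d^1y)=0$, which holds because $d^1y=0$. For a fixed $C\in\CH_1^{(2)}$ and a non-special element $c\in C$, Proposition~\ref{propos_nu_d1} expresses $\nu_{C,c}(d^1y)$ as an alternating sum over the three sets $D\in\CH_2'$ that contain $C$ and have principal element $c$ (namely $C\cup\{c+d\}$, $C\cup\{c-d\}$, $C\cup\{d-c\}$, where $d$ is the special element), plus twice a sum of $\mu_D$-terms over the $D\in\CH_2\setminus\CH_2'$ containing $C$. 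Dividing by $2$ and reducing mod $2$: the $\mu_D$-terms contribute $\sum[D\colon C]\mu_D(y)\bmod 2$, and the $\nu_D$-terms contribute $\sum\varepsilon_{D,C}[D\colon C]\lambda_D$. So from $\nu_{C,c}(d^1y)=0$ we get, for each non-special $c\in C$, an equality between $\sum_{D\supset C,\ \mathrm{princ}=c}\pm\lambda_D$ and one fixed quantity $\mu(C):=\sum_{D\in\CH_2\setminus\CH_2',\ D\supset C}[D\colon C]\mu_D(y)\bmod 2$ that does not depend on $c$. Running over the three non-special elements of $C$ then yields \eqref{eq_se1}, once we check that for each non-special $c$ the three relevant $D$'s really are $C\cup\{c\pm d\}$ and $C\cup\{d-c\}$ (an easy consequence of Proposition~\ref{propos_H}) and that the incidence and orbit signs $\varepsilon_{D,C}[D\colon C]$ can be normalized to $+1$ after a suitable choice of orientations and of the orbits $\orb^{\pm}$ — which is legitimate since \eqref{eq_se1} is a relation in $\Z/2$ and the $\lambda_D$ are sign-independent mod $2$ (recall $\nu_D$ changes sign under reorientation, hence $\lambda_D$ is well defined).

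For the type~1 relations I would similarly look at $\nu_{C',c'}(d^1y)=0$, but now for those $C'\in\CH_1^{(2)}$ obtained by enlarging a given $C\in\CH_1^{(1)}$: the point is that the sets $D\in\CH_2'$ containing a fixed $C\in\CH_1^{(1)}$ also contain various $C'\in\CH_1^{(2)}$. Concretely, for $C=\{c_0,c_1,c_2,c_3\}\in\CH_1^{(1)}$ with $c_0=c_1+c_2+c_3$, the sets $D\in\CH_2'$ with $D\supset C$ are $C\cup\{c_i+c_j\}$; each such $D$ is principal-five-component, and for a chosen non-principal element the corresponding $C'\in\CH_1^{(2)}$ sits inside exactly the $D$'s that \eqref{eq_se2} relates. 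So \eqref{eq_se2} should come out of writing $0=\nu_{C',c'}(d^1y)$ for the appropriate $C'\subset D$ and noting that, in this homologically "$a_1+a_2+a_3$'' situation, the three $D$'s all contain a common $C'$, forcing the three $\lambda_D$'s to be equal; the $\mu$-correction terms cancel pairwise or are identified across the three equations. The case $c_0+c_3=c_1+c_2$ giving \eqref{eq_se3} is handled the same way but the combinatorics of which $D$'s share a common $C'\in\CH_1^{(2)}$ is different, producing the "sum of two $\lambda$'s'' on the right-hand side; again I would read off the relevant sets from Proposition~\ref{propos_H} and Lemma~\ref{lem_eq_sigma}-style bookkeeping.

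The routine but somewhat delicate part is the sign and orientation bookkeeping: one must verify that the coefficients $\varepsilon_{D,C}[D\colon C]$ and the incidence coefficients $[D\colon C]$ appearing through Proposition~\ref{propos_nu_d1} are consistent across the several equations being combined, so that after reducing mod~$2$ the stated clean equalities \eqref{eq_se2}--\eqref{eq_se1} emerge with the right grouping of terms. Since everything is mod~$2$ and the $\lambda_D$ are sign-independent, this amounts to checking that each $\lambda_D$ enters with a nonzero coefficient wherever it should, which can be done by the same kind of explicit case analysis over the list in Proposition~\ref{propos_H} already used in the proof of Lemma~\ref{lem_sol_sigma}. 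I expect the main conceptual obstacle to be organizing this case analysis cleanly — in particular correctly matching, for each $C\in\CH_1^{(1)}$ of each of the two homological types, the auxiliary $C'\in\CH_1^{(2)}$ (and the choice of its non-special element) that produces the desired relation, and confirming that the $\mu_D$-correction terms over $\CH_2\setminus\CH_2'$ genuinely cancel or are common to all sides rather than surviving. Once that matching is fixed, each of \eqref{eq_se2}, \eqref{eq_se3}, \eqref{eq_se1} is an immediate consequence of $d^1y=0$ via Propositions~\ref{propos_nu_d1} (together with \ref{propos_triv_restr1} to guarantee $\mu$ is well defined and \ref{propos_psi_values} for the separating-twist values).
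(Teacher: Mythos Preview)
Your treatment of \eqref{eq_se1} is exactly what the paper does: apply Proposition~\ref{propos_nu_d1} to $C\in\CH_1^{(2)}$ for each of its three non-special elements, observe that the $\mu$-contribution $\sum_{D\in\CH_2\setminus\CH_2',\,D\supset C}[D\colon C]\mu_D(y)\bmod 2$ is independent of which non-special $c$ was chosen, and read off the three-way equality.

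However, your plan for \eqref{eq_se2} and \eqref{eq_se3} does not work. You write that for $C=\{c_0,c_1,c_2,c_3\}\in\CH_1^{(1)}$ with $c_0=c_1+c_2+c_3$ there is a $C'\in\CH_1^{(2)}$ that ``sits inside exactly the $D$'s that \eqref{eq_se2} relates''. There is no such $C'$. The three sets are $D_k=C\cup\{c_i+c_j\}$, and any $4$-element $C'\subset D_k$ other than $C$ itself necessarily contains the extra element $c_i+c_j$; hence $C'\subset D_k$ for that one $k$ only, and Proposition~\ref{propos_nu_d1} applied to $C'$ produces a relation involving $\lambda_{D_k}$ together with $\lambda$'s of sets \emph{not} of the form $C\cup\{\cdot\}$, never linking $\lambda_{D_1},\lambda_{D_2},\lambda_{D_3}$ to one another. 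The same obstruction applies to \eqref{eq_se3}. In short, Proposition~\ref{propos_nu_d1} is a tool attached to type~2 sets and carries no direct information across a fixed type~1 set $C$.

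The paper obtains \eqref{eq_se2} (and then \eqref{eq_se3} analogously) by a genuinely different device: it works with a concrete type~1 multicurve $M$ with $[M]=C$ and uses the two homomorphisms $\xi_1,\xi_2\colon\I_M\to\Z$ coming from the embedding $\I_M\hookrightarrow\F_2\times\F_2$ (Section~\ref{subsection_type1}). Writing $y=\sum_{i}\bigl([h_i^+]_{K_i^+}+[h_i^-]_{K_i^-}\bigr)+\dots$ with $K_i^\pm$ the six $5$-component multicurves containing $M$, the condition $d^1y=0$ gives $\sum_i\pm\bigl([h_i^+]+[h_i^-]\bigr)=0$ in $H_1(\I_M;\Z)$; applying $\xi_s$ and using Proposition~\ref{propos_xi_eps} (which computes $\xi_s$ on bounding-pair twists according to how the pair separates the four boundary components) yields directly $\pm\nu_{D_1}(y)\pm\nu_{D_3}(y)=0$ for $s=1$ and $\pm\nu_{D_2}(y)\pm\nu_{D_3}(y)=0$ for $s=2$, hence $\lambda_{D_1}=\lambda_{D_2}=\lambda_{D_3}$. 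The key input you are missing is precisely these $\xi_s$; they are not expressible through the $\nu_{C',c'}$'s and are what make the type~1 relations go through.
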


\begin{proof}
First, let us prove~\eqref{eq_se2}. Put $D_i=C\cup\{c_j+c_k\}$ for every permutation~$i,j,k$ of~$1,2,3$. Consider an oriented multicurve~$M=\alpha_0\cup\alpha_1\cup\alpha_2\cup\alpha_3$ such that~$[\alpha_i]=c_i$, $i=0,1,2,3$. Arrange~$M$ as the multicurve shown in Figs.~\ref{fig_psi} and~\ref{fig_lantern_type1}, and use the same notation for curves in these figures. Consider the six five-component oriented multicurves
\begin{align*}
K_1^+&=M\cup\gamma_1,& 
K_2^+&=M\cup\delta_1,& 
K_3^+&=M\cup\varepsilon_1,\\ 
K_1^-&=M\cup\gamma_2,& 
K_2^-&=M\cup\delta_2,& 
K_3^-&=M\cup\varepsilon_2,
\end{align*}
where the components $\gamma_i$, $\delta_i$, and~$\varepsilon_i$ are oriented so that their homology classes are $c_2+c_3$, $c_3+c_1$, and~$c_1+c_2$, respectively.
Without less of generality we may assume that $K_i^+\in\orb^+_{D_i}$ and~$K_i^-\in\orb^-_{D_i}$. Note that these are exactly all the six $\I$-orbits of multisets in~$\CH_2$ that contain~$C$. We can write $y$ in the form
$$
y =\sum_{i=1}^3\left([h_i^+]_{K_i^+}+[h_i^-]_{K_i^-}\right)+\ldots,
$$
where dots stay for elements~$[h]_D$ such that $D\not\supset C$. Since the incidence coefficients $[P_{K^+_i}\colon P_M]$ and $[P_{K^-_i}\colon P_M]$ are equal to each other, we have
$$
0=d^1y=\sum_{i=1}^3\pm\left([h_i^+]_{M}+[h_i^-]_{M}\right)+\dots,
$$
where dots stay for elements~$[h]_{M'}$ such that $[M']\ne C$. Therefore, the following equality holds in~$H_1(\I_M;\Z)$ for some choice of signs~$\pm$:
$$
\sum_{i=1}^3\pm\left([h_i^+]+[h_i^-]\right)=0.
$$
Applying the homomorphisms~$\xi_1$ and~$\xi_2$ introduced in Section~\ref{subsection_type1}, we get
\begin{equation}\label{eq_sum_xi_h}
\sum_{i=1}^3\pm\left(\xi_s(h_i^+)+\xi_s(h_i^-)\right)=0,\qquad s=1,2.
\end{equation}
It follows from Proposition~\ref{propos_Stab_5comp} that each stabilizer~$\I_{K_i^{\pm}}$ is generated by twists about bounding pairs that are disjoint from~$M$ and separate~$\alpha_0\cup\alpha_i$ from~$\alpha_j\cup\alpha_k$, where $i,j,k$ is a permutation of~$1,2,3$. Hence, by Proposition~\ref{propos_xi_eps} we have 
\begin{align*}
\xi_1(h_1^+)&=-\nu_{\gamma_1}(h_1^+),&
\xi_1(h_2^+)&=0,&
\xi_1(h_3^+)&=\nu_{\varepsilon_1}(h_3^+),\\
\xi_1(h_1^-)&=\nu_{\gamma_2}(h_1^-),&
\xi_1(h_2^-)&=0,&
\xi_1(h_3^-)&=-\nu_{\varepsilon_2}(h_3^-).
\end{align*}
Therefore
\begin{multline*}
\sum_{i=1}^3\pm\left(\xi_1(h_i^+)+\xi_1(h_i^-)\right)=
\pm\left(\nu_{\gamma_1}(h_1^+)-\nu_{\gamma_2}(h_1^-)\right)
\pm\left(\nu_{\varepsilon_1}(h_3^+)-\nu_{\varepsilon_2}(h_1^-)\right)\\
{}=\pm\nu_{D_1}(y)\pm\nu_{D_3}(y),
\end{multline*}
where signs~$\pm$ on different sides of equalities are not supposed to agree each other.
Since both~$\nu_{D_1}(y)$ and~$\nu_{D_3}(y)$ are even, equality~\eqref{eq_sum_xi_h} for $s=1$ reads as $\lambda_{D_1}=\lambda_{D_3}$. Similarly, equality~\eqref{eq_sum_xi_h} for $s=2$ reads as $\lambda_{D_2}=\lambda_{D_3}$. Thus, we obtain~\eqref{eq_se2}.

The proof of~\eqref{eq_se3} is completely similar, with the only difference that we have two possibilities to orient each of the curves~$\gamma_1$, $\gamma_2$, $\delta_1$, and~$\delta_2$.

Now, let us prove~\eqref{eq_se1}. For every~$i=0,1,2$, the sets $C\cup\{c_i+c_3\}$, $C\cup\{c_i-c_3\}$, and $C\cup\{c_3-c_i\}$ are all the three sets $D\in\CH_2'$ such that $D\supset C$ and the principal element of~$D$ is~$c_i$. Hence formula~\eqref{eq_nu_d1} in Proposition~\ref{propos_nu_d1} yields that
$$
\pm\nu_{C\cup\{c_i+c_3\}}(y)\pm\nu_{C\cup\{c_i-c_3\}}(y)\pm\nu_{C\cup\{c_3-c_i\}}(y)=2\sum_{D\in\CH_2\setminus\CH_2',\,D\supset C} \pm\mu_D(y)
$$
for some choice of signs~$\pm$. Since the reduction of the right-hand side modulo~$4$ is independent of~$i$, we obtain the required equality~\eqref{eq_se1}.
\end{proof}

\begin{lem}\label{lem_sol_lambda}
The system of equations~\eqref{eq_se2}--\eqref{eq_se1} has no nonzero finite solutions.
\end{lem}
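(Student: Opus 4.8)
The strategy mirrors the proof of Lemma~\ref{lem_sol_sigma}: we use the Hatcher--Margalit weight function~$n$ on~$\CH_0'$ to show that a nonzero finite solution would have to ``propagate'' to larger and larger values of~$n$, which is impossible. So suppose $\{\lambda_D\}$ is a finite solution of the system~\eqref{eq_se2}--\eqref{eq_se1}, and let $\Xi\subseteq\CH_0'$ be the set of all~$A$ contained in at least one~$D\in\CH_2'$ with $\lambda_D\ne 0$. Then $\Xi$ is finite; assume for contradiction that $\Xi\ne\varnothing$ and pick $A_0=\{a_1,a_2,a_3\}\in\Xi$ with $n(A_0)$ maximal, where $n(A)=n_1+n_2+n_3$ for $x=n_1a_1+n_2a_2+n_3a_3$.

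First I would repeat the local analysis from the proof of Lemma~\ref{lem_sol_sigma}: the $9$ sets $\{a_i-a_j,a_j,a_k\}$ and $\{a_i-a_j-a_k,a_j,a_k\}$ all lie in~$\CH_0'$ and all have weight strictly greater than $n(A_0)$, so $\lambda_D=0$ whenever $D$ contains one of them; hence by Proposition~\ref{propos_H} the only $D\in\CH_2'$ containing~$A_0$ on which $\lambda_D$ can be nonzero are the $9$ sets
$$
D_k=A_0\cup\{a_i+a_k,a_j+a_k\},\quad D_k^+=A_0\cup\{a_i+a_j,a_i+a_j+a_k\},\quad D_k^-=A_0\cup\{a_i+a_j,a_i+a_j-a_k\},
$$
for $i,j,k$ a permutation of $1,2,3$. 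Now I would run through the available equations to kill these nine. Equation~\eqref{eq_se1}, applied to the type~2 set $C=A_0\cup\{a_i+a_j\}\in\CH_1^{(2)}$ with special element~$a_i+a_j$ (so $c_0,c_1,c_2$ among $a_1,a_2,a_3$ and the ``third direction'' being $a_i+a_j$), relates sums of $\lambda$'s over triples such as $\{a_k+(a_i+a_j), a_k-(a_i+a_j), (a_i+a_j)-a_k\}$; since the classes $a_k\pm(a_i+a_j)$ together with $A_0\cup\{a_i+a_j\}$ span too large a weight in all but the relevant cases, most terms vanish and this pins down $\lambda_{D_k^+}$ against $\lambda_{D_k^-}$ (and against a third term which one checks vanishes). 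Equation~\eqref{eq_se2} or~\eqref{eq_se3} applied to the type~1 sets $C=A_0\cup\{a_i+a_k\}$, resp.\ $C=A_0\cup\{a_i+a_j-a_k\}$, then forces $\lambda_{D_k}=0$ and, combined with the previous step, $\lambda_{D_k^{\pm}}=0$. The exact bookkeeping of which of~\eqref{eq_se2} and~\eqref{eq_se3} applies (depending on the shape of the four-element set~$C$) and of the sign-free $\Z/2$ arithmetic is the routine part; the point is that every one of the nine survivors is reached by exactly one of these three equation-families, just as in Lemma~\ref{lem_sol_sigma}.

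This yields $\lambda_D=0$ for all $D\in\CH_2'$ containing~$A_0$, contradicting $A_0\in\Xi$; hence $\Xi=\varnothing$, i.e.\ $\lambda_D=0$ for every $D\in\CH_2'$ containing a subset in~$\CH_0'$. Finally, exactly as at the end of the proof of Lemma~\ref{lem_sol_sigma}, any $D\in\CH_2'$ containing no subset in~$\CH_0'$ has the form $D=\{x,c_1,x-c_1,c_2,x-c_2\}$ with $\{x,c_1,c_2\}$ a basis of a Lagrangian; applying~\eqref{eq_se2} (or~\eqref{eq_se3}) to $C=\{c_1,x-c_1,c_2,x-c_2\}$ and noting that every other $D'\in\CH_2'$ containing~$C$ contains a subset in~$\CH_0'$ (hence $\lambda_{D'}=0$) forces $\lambda_D=0$. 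So $\{\lambda_D\}\equiv 0$.

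\textbf{Main obstacle.} The conceptual steps are completely parallel to Lemma~\ref{lem_sol_sigma}, so the real work is the combinatorial verification that the three equation families~\eqref{eq_se2}, \eqref{eq_se3}, \eqref{eq_se1} between them cover all nine survivors~$D_k, D_k^{\pm}$ (and, separately, the ``no-$\CH_0'$-subset'' sets), including checking that each invoked equation has all its other terms supported on $D$'s of strictly larger weight --- equivalently, that the ``extra'' classes appearing in those equations always generate a subset of~$\CH_0'$ of larger $n$-value. This is the same kind of finite case-check as in the $\sigma$-case but slightly more delicate because~\eqref{eq_se3} and~\eqref{eq_se1} involve sums of two or three $\lambda$'s rather than single terms, so one must be careful that the weight comparisons actually isolate the desired term; I expect this to be the one place where one genuinely has to be careful rather than formal.
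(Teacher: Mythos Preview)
Your approach has a genuine gap: the Hatcher--Margalit weight~$n$ is \emph{not} sufficient here, and the paper says so explicitly in Section~\ref{section_outline}. The key difference from Lemma~\ref{lem_sol_sigma} is that equations~\eqref{eq_se2}--\eqref{eq_se1} are all \emph{equalities} between (sums of) $\lambda$'s, never statements that a single sum vanishes. If you carry out your local analysis honestly, here is what you get. First, $C=A_0\cup\{a_i+a_j\}$ is type~2 with special element~$a_k$ (not $a_i+a_j$, and not type~1 as you also write), so~\eqref{eq_se1} gives, after discarding terms of larger $n$-weight, $\lambda_{D_i}=\lambda_{D_j}=\lambda_{D_k^+}+\lambda_{D_k^-}$. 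Equation~\eqref{eq_se3} at $C=A_0\cup\{a_i+a_j-a_k\}$ gives $\lambda_{D_k^-}=0$. Equation~\eqref{eq_se2} at $C=A_0\cup\{a_1+a_2+a_3\}$ gives $\lambda_{D_1^+}=\lambda_{D_2^+}=\lambda_{D_3^+}$. These are \emph{all} the equations whose side-terms have strictly larger $n$-weight, and together they force only
\[
\lambda_{D_1}=\lambda_{D_2}=\lambda_{D_3}=\lambda_{D_1^+}=\lambda_{D_2^+}=\lambda_{D_3^+},\qquad \lambda_{D_k^-}=0,
\]
with the common value unconstrained. So the assignment $\lambda_{D_k}=\lambda_{D_k^+}=1$, $\lambda_{D_k^-}=0$ satisfies every local equation you can write down from~$A_0$, and your contradiction never materializes.

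The paper's proof replaces~$n$ by a function of a completely different nature: pick a linear $f\colon H\to\R$ with $f(x)=0$ and image of rank~$5$, set $F(A)=\bigl(\max_{a,a'\in A}|f(a)-f(a')|,\,\sum_{a\in A}|f(a)|\bigr)$ with the lexicographic order, and maximize~$F$ over the support set~$\Xi\subset\CH_0$ (note: not~$\CH_0'$). This weight breaks the symmetry among $a_1,a_2,a_3$, so many more neighbouring $D$'s acquire strictly larger weight; the list of survivors containing~$A_0$ then has $14$ (not~$9$) members, and the available instances of~\eqref{eq_se2}--\eqref{eq_se1}, together with one extra equation coming from a set~$C'$ \emph{not} containing~$A_0$, do kill them all. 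There is also a genuinely new Case~2, when the maximizer~$A_0$ has only two elements, which requires a separate infinite-descent argument (Lemma~\ref{lem_Xi} and Corollary~\ref{cor_Xi}) with its own auxiliary set~$\Upsilon$. Your sketch contains none of these ingredients.
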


\begin{proof}
Consider a linear function $f\colon H\to\R$ such that $f(x)=0$ and the image of~$f$ is a subgroup of~$\R$ of rank~$5$. The latter condition implies that $f(c)\ne 0$ unless $c$ is proportional to~$x$.

To each set $A\in\CH_0$ (not necessarily belonging to~$\CH_0'$), we assign a pair of nonnegative real numbers
\begin{align*}
F_1(A)&=\max_{a,a'\in A}|f(a)-f(a')|,\\
F_2(A)&=\sum_{a\in A}|f(a)|,
\end{align*}
and put $F(A)=\bigl(F_1(A),F_2(A)\bigr)$. 
Note that if we replace the function~$f$ with~$-f$, then the map~$F$ will not change. Hence, we may reverse the sign of~$f$ whenever it is convenient for us.

We endow the set $\R_{\ge 0}\times\R_{\ge 0}$ with the lexicographic order, that is, put 
$(r_1,r_2)\succ(r_1',r_2')$ whenever either $r_1>r_1'$ or $r_1=r_1'$ and $r_2>r_2'$.

Assume that $\{\lambda_D\}$ is a nonzero finite solution of the system of equations~\eqref{eq_se2}--\eqref{eq_se1}.
Let~$\Xi$ be the subset of~$\CH_0$ consisting of all~$A$ that are contained in at least one set~$D\in\CH_2'$ such that $\lambda_D=1$. (Note that we do not require that $A\in\CH_0'$ but we do require that $D\in\CH_2'$.) Since $\lambda_D=1$ for at least one set $D$ and every set $D\in\CH_2'$ contains at least two (in fact even at least three) different subsets $A\in\CH_0$, we obtain that $\Xi$ is nonempty and, moreover, contains at least one set~$A$ different from~$\{x\}$. On the other hand, $\lambda_D=0$ for all but a finite number of~$D$. Hence the set~$\Xi$ is finite. 

We choose $A_0\in\Xi$ (any if several) with the greatest~$F(A_0)$ with respect to~$\succ$. 
Obviously, $F(\{x\})=(0,0)$, and $F(A)\succ (0,0)$ for all other $A\in\CH_0$. Since $\Xi$ contains at least one set different from~$\{x\}$, it follows that $A_0\ne\{x\}$. So $A_0$ consists of either two or three elements. Consider these two cases.

\textsl{Case 1: $A_0$ consists of $3$ elements, i.\,e. belongs to~$\CH_0'$.} Suppose that $A_0=\{a_1,a_2,a_3\}$.  Then $x=n_1a_1+n_2a_2+n_3a_3$ for some strictly positive integer coefficients $n_1$, $n_2$, and~$n_3$. Since $f(c)\ne 0$ unless $c$ is proportional to~$x$, we see that the three values $f(a_i)$ are nonzero and the ratio of any two of them is irrational. Besides, we have 
$$
n_1f(a_1)+n_2f(a_2)+n_3f(a_3)=f(x)=0.
$$  
Hence at least one of the values~$f(a_i)$ is positive and at least one of the values~$f(a_i)$ is negative. Permuting the elements of~$A_0$ and reversing if necessary the sign of~$f$, we may achieve that 
$$
f(a_1)>f(a_2)>0>f(a_3).
$$
We put $r_i=|f(a_i)|$. Then $r_1>r_2>0$, $r_3>0$, $F_1(A_0)=r_1+r_3$, and $F_2(A_0)=r_1+r_2+r_3$. 

Consider the sets $A_1,\ldots,A_8$ listed in Table~\ref{table_A}. It follows easily from Proposition~\ref{propos_H0} that they all belong to~$\CH_0$. Compute the values~$F_1(A_j)$ for them. For each of these sets, except for~$A_4$ in case $r_1\ge r_2+r_3$, we have $F_1(A_j)>F_1(A_0)$ and hence $F(A_j)\succ F(A_0)$. In the only remaining case of the set~$A_4$ when $r_1\ge r_2+r_3$, we have $F_1(A_4)=r_1+r_3=F_1(A_0)$ and $F_2(A_4)=r_1+r_2+2r_3>F_2(A_0)$, so we also have $F(A_4)\succ F(A_0)$. Therefore, none of the sets $A_1,\ldots, A_8$ belongs to~$\Xi$. Thus, $\lambda_D=0$ for any $D$ that contains at least one of the sets $A_1,\ldots, A_8$. 

\begin{table}
\caption{The sets~$A_j$ and the values~$F_1(A_j)$}\label{table_A}
\begin{tabular}{|l|l|c|c|c|}
\hline
&\multicolumn{1}{c|}{$A_j$}&\multicolumn{1}{c|}{$F_1(A_j)$}\\
\hline
$A_1$&
$\begin{aligned}
&\{a_1+a_2,a_3,a_1\}&&\text{if }n_1>n_2\\
&\{a_1+a_2,a_3,a_2\}&&\text{if }n_1<n_2\\
&\{a_1+a_2,a_3\}&&\text{if }n_1=n_2
\end{aligned}$
& $r_1+r_2+r_3$ \\
\hline
$A_2$&
$\{a_1-a_3,a_2,a_3\}$ &
 $r_1+2r_3$\\
\hline
$A_3$&
$\{a_3-a_1,a_1,a_2\}$ &
 $2r_1+r_3$\\
\hline
$A_4$&
$\{a_2-a_3,a_1,a_3\}$ &
 $\max\{r_1+r_3,r_2+2r_3\}$\\
\hline
$A_5$&
$\{a_3-a_2,a_1,a_2\}$ &
 $r_1+r_2+r_3$\\
\hline
$A_6$&$\begin{aligned}
&\{a_1+a_2-a_3,a_3,a_1\}&&\text{if }n_1>n_2\\
&\{a_1+a_2-a_3,a_3,a_2\}&&\text{if }n_1<n_2\\
&\{a_1+a_2-a_3,a_3\}&&\text{if }n_1=n_2
\end{aligned}$ &
 $r_1+r_2+2r_3$\\
\hline
$A_7$& $\{a_3-a_1-a_2,a_1,a_2\}$ &  $2r_1+r_2+r_3$\\
\hline
$A_8$& $\begin{aligned}
&\{a_2+a_3-a_1,a_1,a_2\}&&\text{if }n_2>n_3\\
&\{a_2+a_3-a_1,a_1,a_3\}&&\text{if }n_2<n_3\\
&\{a_2+a_3-a_1,a_1\}&&\text{if }n_2=n_3
\end{aligned}$ &  $2r_1-r_2+r_3$\\
\hline
\end{tabular}
\end{table}

Using Proposition~\ref{propos_H}, it is not hard to check that there are exactly $14$ sets $D\in\CH_2'$ that contain~$A_0$ but contain none of the sets $A_1,\ldots, A_8$, namely:
\begin{align*}
D_1&=A_0\cup\{a_1-a_2,\,a_1+a_3\},&
D_2&=A_0\cup\{a_1-a_2,\,a_2+a_3\},\\
D_3&=A_0\cup\{a_2-a_1,\,a_1+a_3\},&
D_4&=A_0\cup\{a_2-a_1,\,a_2+a_3\},\\
D_5&=A_0\cup\{a_1+a_3,\,a_2+a_3\},&
D_6&=A_0\cup\{a_1-a_2,\,a_1-a_2+a_3\},\\
D_7&=A_0\cup\{a_2-a_1,\,a_1-a_2+a_3\},&
D_8&=A_0\cup\{a_1-a_2,\,a_1-a_2-a_3\},\\
D_9&=A_0\cup\{a_2-a_1,\,a_2-a_1-a_3\},&
D_{10}&=A_0\cup\{a_1+a_3,\,a_1+a_2+a_3\},\\
D_{11}&=A_0\cup\{a_1+a_3,\,a_1-a_2+a_3\},&
D_{12}&=A_0\cup\{a_1+a_3,\,a_2-a_1-a_3\},\\
D_{13}&=A_0\cup\{a_2+a_3,\,a_1+a_2+a_3\},&
D_{14}&=A_0\cup\{a_2+a_3,\,a_1-a_2-a_3\}.
\end{align*}

We put $\lambda_j=\lambda_{D_j}$. Let us prove that $\lambda_{j}=0$, $j=1,\ldots,14$. 

Equation~\eqref{eq_se2} for~$C=A_0\cup\{a_1+a_2+a_3\}$ reads as $\lambda_{10}=\lambda_{13}=0$.

Equation~\eqref{eq_se2} for~$C=A_0\cup\{a_1-a_2-a_3\}$ reads as $\lambda_{8}=\lambda_{14}=0$.

Equation~\eqref{eq_se2} for~$C=A_0\cup\{a_2-a_1-a_3\}$ reads as $\lambda_{9}=\lambda_{12}=0$.

Equation~\eqref{eq_se3} for~$C=A_0\cup\{a_1-a_2+a_3\}$ reads as $\lambda_{6}+\lambda_7=\lambda_{11}=0$.

Equations~\eqref{eq_se1} for~$C=A_0\cup\{a_1+a_3\}$, $C=A_0\cup\{a_2+a_3\}$, and $C=A_0\cup\{a_1-a_2\}$ read as
\begin{gather*}
\lambda_1+\lambda_3=\lambda_5=\lambda_{10}+\lambda_{11}+\lambda_{12},\\
\lambda_2+\lambda_4=\lambda_5=\lambda_{13}+\lambda_{14},\\
\lambda_1=\lambda_2=\lambda_6+\lambda_8,
\end{gather*}
respectively.
Hence $\lambda_5=0$ and $\lambda_1=\lambda_2=\lambda_3=\lambda_4=\lambda_6=\lambda_7$.

Finally, consider the two sets
\begin{gather*}
C'=\{a_1,a_2-a_1,a_3,a_2+a_3\},\\
D'=\{a_1,a_2-a_1,a_3,a_2+a_3,a_2+a_3-a_1\}
\end{gather*}
Applying Proposition~\ref{propos_H} to the set $\{a_1,a_2-a_1,a_3\}$, which belongs to~$\CH_0'$ by Proposition~\ref{propos_H0}, we see that $C'\in\CH_1^{(1)}$ and~$D'\in\CH_2'$. Equation~\eqref{eq_se2} for $C=C'$ reads as 
$\lambda_4=\lambda_{D'}$.
If $n_2\le n_3$, then $D'\supset A_8$ and hence $\lambda_{D'}=0$. If $n_2>n_3$, then $D'$ contains the subset $A'=\{a_1,a_2-a_1,a_2+a_3-a_1\}$, which by Proposition~\ref{propos_H0} belongs to~$\CH_0'$. We have 
\begin{gather*}
F_1(A')=2r_1-r_2+r_3> F_1(A_0).
\end{gather*}
Therefore $F(A')\succ F(A_0)$ and hence $A'\notin\Xi$. Consequently, we again obtain that $\lambda_{D'}=0$.

Thus, $\lambda_j =0$ for $j=1,\ldots,14$. Hence $\lambda_D=0$ for all $D\supset A_0$, which yields a contradiction, since $A_0\in\Xi$.

\textsl{Case 2: $A_0$ consists of $2$ elements.} Suppose that $A_0=\{a_1,a_2\}$. Then $x=n_1a_1+n_2a_2$ for some positive integers~$n_1$ and~$n_2$. 
We have $f(a_1)\ne 0$ and $n_1f(a_1)+n_2f(a_2)=0$. Hence $f(a_1)=n_2r$ and $f(a_2)=-n_1r$ for some nonzero~$r$. Without loss of generality we may assume that $r=1$. Then $$F(A_0)=(n_1+n_2,n_1+n_2).$$  

Let $\Upsilon$ be the set consisting of all elements~$c\in H$ satisfying the following conditions:
\begin{itemize}
\item $a_1,a_2,c$ is a basis of a Lagrangian subgroup of~$H$,
\item there exists $D\in\CH_2'$ such that  $\lambda_D=1$ and $D$ contains at least one of the sets $C_1=\{a_1,a_2,c,a_1-c\}$ and $C_2=\{a_1,a_2,c,a_2-c\}$.
\end{itemize}

Since the rank of the image of~$f$ is equal to~$5$, we have $f(c)\notin\Q$ for all~$c\in\Upsilon$. Since $\lambda_D=0$ for all but a finite number of~$D$, the set $\Upsilon$ is finite. 

\begin{lem}\label{lem_Xi}
Suppose that $c\in\Upsilon$. Then either $c+a_1\in\Upsilon$ or $c+a_2\in\Upsilon$.
\end{lem}

\begin{proof}
We may assume that there exists $D\in\CH_2'$ such that $\lambda_D=1$ and $D$ contains the set $C_1$.  Put $c'=a_1-c$. By Proposition~\ref{propos_H0} the set $A_1=\{a_2,c,c'\}$ belongs to~$\CH_0'$.
Since $A_1\subset C_1$ and $\lambda_D=1$ for at least one~$D$ containing~$C_1$, we obtain that $A_1\in\Xi$. Therefore $F_1(A_1)\le F_1(A_0)=n_1+n_2$. Since 
$$
f(c)+f(c')=f(a_1)=n_2,
$$ 
we have
$$
F_1(A_1)=\max\bigl\{|n_1+f(c)|,|n_1+n_2-f(c)|, |n_2-2f(c)|\bigr\}.
$$
If $f(c)$ was either negative or greater than $n_2$, we would obtain that $F_1(A_1)>n_1+n_2$. Hence $0\le f(c)\le n_2$. But $f(c)\notin\Q$.  Therefore $0<f(c)<n_2$ and hence $0<f(c')<n_2$. 

It follows from Proposition~\ref{propos_H0} that the sets $A_2=\{a_1-a_2,a_2\}$, $A_3=\{a_1,a_2-a_1\}$, $A_4=\{a_1,c,a_2-c\}$, $A_5=\{a_1,c',a_2-c'\}$ and $A_6=\{a_2,c,c'-a_2\}$ belong to~$\CH_0$. We have
\begin{align*}
F_1(A_2)&=n_1+2n_2,&
F_1(A_3)&=2n_1+n_2,\\
F_1(A_4)&=n_1+n_2+f(c),&
F_1(A_5)&=n_1+n_2+f(c').
\end{align*}
Each of these numbers is strictly greater than $F_1(A_0)$. Hence the sets~$A_2$, $A_3$, $A_4$, and~$A_5$ do not belong to~$\Xi$. Therefore $\lambda_D=0$ for any $D$ that contains at least one of them. 

Using Proposition~\ref{propos_H} for the set~$A_1$, we see that, firstly, $C_1\in\CH_1^{(2)}$, and secondly, there are exactly $5$ sets $D\in\CH_2'$ that contain~$C_1$ but contain none of the sets~$A_2$, $A_3$, $A_4$, and~$A_5$, namely, the sets
\begin{gather*}
D_0=\{a_1,a_2,c,c',a_1+a_2\},\\
\begin{aligned}
D_1&=\{a_1,a_2,c,c',a_2+c\},&&& D_2&=\{a_1,a_2,c,c',a_2+c'\},\\
D_3&=\{a_1,a_2,c,c',c-a_2\},&&& D_4&=\{a_1,a_2,c,c',c'-a_2\}.
\end{aligned}
\end{gather*}
Hence $\lambda_{D_i}=1$ for at least one of these five sets and $\lambda_D=0$ for all sets~$D\in \CH_2'$ that contain~$C_1$ and are different from $D_0,\ldots,D_4$.

It follows from Proposition~\ref{propos_H} for~$A_6$ that the set~$\{a_2,c,c'-a_2,a_1\}$ belongs to~$\CH_1^{(1)}$. Equation~\eqref{eq_se2} for this set yields that
$$
\lambda_{D_4}=\lambda_{\{a_1,a_2,c,c'-a_2,a_1-a_2\}}=0,
$$ 
where the latter equality is true, since the set $\{a_1,a_2,c,c'-a_2,a_1-a_2\}$ contains~$A_2$. Similarly, $\lambda_{D_3}=0$.

Now, equation~\eqref{eq_se1} for $C=C_1$ yields that $\lambda_{D_0}=\lambda_{D_1}=\lambda_{D_2}$. Since $\lambda_{D_i}$ must be equal to~$1$ for at least one of these three sets, we obtain that $\lambda_{D_0}=\lambda_{D_1}=\lambda_{D_2}=1$.

Finally, consider the set $C'=\{a_1,a_2,c',a_2+c\}$. It is easy to see that there is a four-component oriented multicurve with the set of homology classes~$C'$, and this oriented multicurve belongs to~$\M_1^{(1)}$. Hence $C'\in \CH_1^{(1)}$. Equation~\eqref{eq_se3} for $C=C'$ yields that 
$$
\lambda_{D_1}+\lambda_{\{a_1,a_2,c',a_2+c,-c\}}=\lambda_{\{a_1,a_2,c',a_2+c,c'-a_2\}}+\lambda_{\{a_1,a_2,c',a_2+c,a_2-c'\}}.
$$
But $\lambda_{\{a_1,a_2,c',a_2+c,a_2-c'\}}$=0, since the set $\{a_1,a_2,c',a_2+c,a_2-c'\}$ contains~$A_5$. Hence either $\lambda_{\{a_1,a_2,c',a_2+c,-c\}}=1$ or $\lambda_{\{a_1,a_2,c',a_2+c,c'-a_2\}}=1$. In both cases, we see that $c+a_2\in\Upsilon$.

Similarly, if there is $D\in\CH_2'$ such that  $\lambda_D=1$ and $D\supset C_2$, then we obtain that $c+a_1\in\Upsilon$.
\end{proof}

\begin{cor}\label{cor_Xi}
The set~$\Upsilon$ is empty.
\end{cor}

\begin{proof}
Suppose that $\Upsilon$ contained an element~$c$. Applying recursively Lemma~\ref{lem_Xi}, we would obtain that, for every~$k=1,2,\ldots$, there exists a partition $k=k_1+k_2$,  $k_1,k_2\ge 0$, such that $c+k_1a_1+k_2a_2\in\Upsilon$. So we would find infinitely many pairwise different elements of~$\Upsilon$, which is impossible.
\end{proof}

Let us proceed with the proof of Lemma~\ref{lem_sol_lambda}. Since $A_0=\{a_1,a_2\}$ belongs to~$\Xi$, there is at least one set $D\in\CH_2'$ that contains~$A_0$ and satisfies $\lambda_D=1$. Then $D$ is the set of homology classes of a multicurve~$M$ as in Fig.~\ref{fig_types}(d). Hence $D$ spans a Lagrangian subgroup of~$H$, and any three linearly independent elements of~$D$ constitute a basis of this Lagrangian subgroup. Therefore there is an element $c\in D$ such that $\{a_1,a_2,c\}$ is a basis of a Lagrangian subgroup. Since $\Upsilon$ is empty, we have $c\notin\Upsilon$; hence, $D$ contains neither $a_1-c$ nor~$a_2-c$. Besides, the sets $\{a_1-a_2,a_2\}$ and  $\{a_1,a_2-a_1\}$ do not belong to~$\Xi$,  since the values of~$F_1$ on both of them are strictly greater than $F_1(A_0)=n_1+n_2$. Therefore $D$ contains neither $a_1-a_2$ nor $a_2-a_1$,

Further, it follows from the definition of the set~$\CH$ (see Section~\ref{subsection_complex_cycles}) that $x$ can be written as a linear combination of elements of~$D$ with nonnegative coefficients so that the coefficient of~$c$ is strictly positive. Since $x=n_1a_1+n_2a_2$, we obtain that $D$ must contain an element~$d$ whose coefficient of~$c$ in the decomposition over the basis~$\{a_1,a_2,c\}$ is strictly negative. But either all the four homology classes $a_1,a_2,c,d$ or some three of them satisfy a two-sided relation, i.\,e., linear relation with coefficients $\pm 1$ such that  at least one coefficient is~$1$ and at least one coefficient is~$-1$. Since we have already proved that $d\ne a_1-c$ and $d\ne a_2-c$, we obtain that $d$ is one of the $3$ homology classes $a_1+a_2-c$,  $a_1-a_2-c$, and~$a_2-a_1-c$. In all the three cases, we see that all the four homology classes $a_1,a_2,c,d$ satisfy a two-sided relation and hence none of them is the principal element of~$D$.  Let $e$ be the fifth element of~$D$; then $e$ is principal. Hence, $e$ is either the sum or the difference of some two of the four elements $a_1,a_2,c,d$, and simultaneously either the sum or the difference of the rest two of the four elements $a_1,a_2,c,d$. Besides, $e$ is not among the classes $a_1-a_2$, $a_2-a_1$, $a_1-c$, $a_2-c$, $a_1-d$, and~$a_2-d$. (The last two classes are excluded, since $d\notin\Upsilon$.) A direct case analysis shows that the only possibility is $d=a_1+a_2-c$ and $e=a_1+a_2$. Thus, $D=\{a_1,a_2,a_1+a_2,c,a_1+a_2-c\}$.

 Equation~\eqref{eq_se3} for $C=\{a_1,a_2,c,a_1+a_2-c\}\in\CH_1^{(1)}$ yields that
$$
1=\lambda_D=\lambda_{\{a_1,a_2,c,a_1+a_2-c,a_1-c\}}+\lambda_{\{a_1,a_2,c,a_1+a_2-c,c-a_1\}}.
$$
Hence either $c$ or~$c-a_1$ belongs to~$\Upsilon$, which contradicts Corollary~\ref{cor_Xi}. This contradiction competes the proof of Lemma~\ref{lem_sol_lambda}.
\end{proof}

Part~(b) of Proposition~\ref{propos_1} follows immediately from Lemmas~\ref{lem_eq_lambda} and~\ref{lem_sol_lambda}.

\section{Proof of Proposition~\ref{propos_2}}\label{section_2}
We have 
\begin{equation}\label{eq_decompose_G}
E^1_{2,1}=\bigoplus_{D\in\CH_2}E^1_{2,1}(D),
\end{equation}
where $E^1_{2,1}(D)$ is the direct sum of $H_1(\I_K;\Z)$ for representatives~$K$ of $\I$-orbits of oriented multicurves with~$[K]=D$. Obviously, an element $y\in E^1_{2,1}$ satisfies the conditions in Proposition~\ref{propos_2} if and only if all components of~$y$ with respect to decomposition~\eqref{eq_decompose_G} satisfy the same conditions. Therefore, it is sufficient to prove the claim of Proposition~\ref{propos_2} for~$y$ lying in one of the summands~$E^1_{2,1}(D)$.

We consider two cases:

\textsl{Case 1: $y\in E^1_{2,1}(D)$ for $D\in\CH_2'$.} Choose oriented multicurves~$K^+$ and~$K^-$ in the orbits~$\orb^+_K$ and~$\orb^-_K$, respectively, so that $K^+$ and~$K^-$ share four components and are taken to each other by a hyperelliptic involution~$\iota$, see Fig.~\ref{fig_Kpm}. (In this figure, $\iota$ is the rotation by~$\pi$ around the horizontal axis and $K^{\pm}=\gamma^{\pm}\cup\alpha_1\cup\alpha_2\cup\alpha_3\cup\alpha_4$.) Recall that $\iota\in\hI$. We have 
$$
E^1_{2,1}(D)=H_1(K^+;\Z)\oplus H_1(K^-;\Z).
$$
Hence 
$$
y=[h_+]_{K^+}-[h_-]_{K^-}
$$
for certain elements $h_{\pm}\in\I_{K^{\pm}}$.

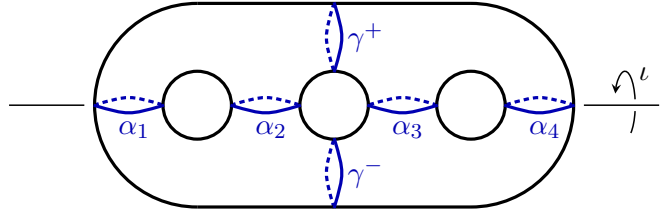
\begin{figure}
\begin{tikzpicture}[scale=.45]
\small
\definecolor{myblue}{rgb}{0, 0, 0.7}
\definecolor{mygreen}{rgb}{0, 0.4, 0}
\tikzset{every path/.append style={line width=.2mm}}
\tikzset{my dash/.style={dash pattern=on 2pt off 1.5pt}}

\begin{scope}
% Invisible lines

\draw[color=myblue,my dash, very thick] (0,-3.03) .. controls (-0.3,-2) .. (0,-.97); % e_0
\draw[color=myblue,my dash, very thick] (0,3.03) .. controls (-0.3,2) .. (0,.97); % e_0'
\draw[color=myblue,my dash, very thick] (-7.03,0) .. controls (-6,0.3) .. (-4.97,0); %e_1
\draw[color=myblue,my dash, very thick] (7.03,0) .. controls (6,0.3) .. (4.97,0); %e_2
\draw[color=myblue,my dash, very thick] (-3.03,0) .. controls (-2,0.3) .. (-.97,0); % e_0+e_1
\draw[color=myblue,my dash, very thick ] (3.03,0) .. controls (2,0.3) .. (.97,0); % e_0+e_2
%\draw[color=red, my dash] (2, -3.03) .. controls (1.7,-2) and (1.7,2) .. (2, 3.03);
%\draw[color=myblue, very thick,my dash] (-3.293,-.707) .. controls (-1.5,-2.2) and (1.5,-2.2) .. (3.293,-.707);
%\draw[color=red,my dash] (-3.293,.707) .. controls (-1.5,2.8) and (1.5,2.8) .. (3.293,.707);

% Border lines

\draw [very thick] (0,0) circle (1);
\draw [very thick] (4,0) circle (1);
\draw [very thick] (-4,0) circle (1);
\draw [very thick]  (-4,3)--(4,3);
\draw [very thick]  (-4,-3)--(4,-3);
\draw [very thick] (-4,3) arc (90:270:3); 
\draw [very thick] (4,3) arc (90:-90:3); 

\draw [] (-9.5,0) -- (-7.3,0);
\draw [] (9.5,0) -- (7.3,0);

\draw (8.5,0) + (-55: .3 and 1) arc (-55: -10: .3 and 1);
\draw [-stealth] (8.5,0) + (10: .3 and 1) arc (10: 160: .3 and 1) node [pos=.4, right] {$\iota$};

% Visible lines 

\draw[color=myblue, very thick] (0,-3.03) .. controls (0.3,-2) .. (0,-.97) node [pos=0.5, right=-2pt] {$\gamma^-$};  %e_0
\draw[color=myblue, very thick] (0,3.03) .. controls (0.3,2) .. (0,.97) node [pos=0.5, right=-2pt]{$\gamma^+$};  %e_0'

\draw[color=myblue, very thick] (-4.97,0) .. controls (-6,-0.3) .. (-7.03,0) node [pos=0.4,below=-1pt] {$\alpha_1$};%e_1
%\draw[color=red,->-=.55] (4.97,0) .. controls (6,-0.4) .. (7.03,0) node [pos=.3,below=-1pt] {$\alpha_2$}; %e_2
%\draw[color=red,->-=.55] (-7.03,0) .. controls (-6,-0.3) .. (-4.97,0)node [pos=0.4,below] {$\alpha_1$};%e_1 -
\draw[color=myblue, very thick] (7.03,0) .. controls (6,-0.3) .. (4.97,0) node [pos=0.4,below=-1pt] {$\alpha_4$};%e_2 -

\draw[color=myblue, very thick] (-3.03,0) .. controls (-2,-0.3) .. (-.97,0) node [pos=0.6,below=-1pt] {$\alpha_2$};%e_0+e_1
\draw[color=myblue, very thick] (3.03,0) .. controls (2,-0.3) .. (.97,0) node [pos=0.4,below=-1pt] {$\alpha_3$};%e_0+e_2

%\draw[color=red] (2, -3.03) .. controls (2.3,-2) and (2.3,2) .. (2, 3.03) node [pos=0.35, right=-1pt] {$\delta$};  %e_0

%\draw[color=red] (-2,-3.03) .. controls (-1.7,-1.5) and (-1.7,1.5) .. (-2,3.03) node [pos=.25,right=-2pt] {$u$};
%\draw[color=red,my dash] (-2,-3.03) .. controls (-2.3,-1.5) and (-2.3,1.5) .. (-2,3.03);

%\draw[color=red] (-3.293,-.707) .. controls (-1,-2.8) and (1,-2.8) .. (3.293,-.707) node [pos=.8,below=1pt] {$\delta_2$};
%\draw[color=red] (-3.293,.707) .. controls (-1,2.2) and (1,2.2) .. (3.293,.707)node [pos=.8,below] {$\delta$};
\end{scope}

\end{tikzpicture}
\caption{Multicurves~$K^+$ and~$K^-$}\label{fig_Kpm}
\end{figure}

Conditions $\sigma_D(y)=0$ and~$\nu_D(y)\equiv 0\pmod 4$ read as
\begin{gather*}
\sigma(h_+)=\sigma(h_-)\qquad\text{and}\qquad\nu_{\gamma^+}(h_+)+\nu_{\gamma^-}(h_-)\equiv 0\pmod 4,
\end{gather*}
respectively.

We put $M_0=\alpha_1\cup\alpha_2\cup\alpha_3\cup\alpha_4$ and $C_0=[M_0]$. This oriented multicurve may or may not belong to~$\M$. If it belongs to~$\M$, then it obviously belongs to~$\M_1^{(1)}$. In this case we assume that the orientation of~$P_{M_0}$ is chosen  so that $[P_{K^{\pm}}\colon P_{M_0}]=1$.

Let $M_1^+,\ldots,M_m^+$ be all submulticurves of~$K^+$ that belong to~$\M_1^{(2)}$. Then every multicurve~$M_i^+$ contains~$\gamma^+$. Besides, $M_i^-=\iota(M_i^+)$, $i=1,\ldots,m$, are all submulticurves of~$K^-$ that belong to~$\M_1^{(2)}$. Put $C_i=[M_i^{\pm}]$.
The multicurves $M_i^{\pm}$ lie in the two different orbits~$\orb_{C_i}^{\pm}$. Nevertheless, we cannot guarantee that $M_i^+\in\orb_{C_i}^+$ and $M_i^-\in\orb_{C_i}^-$; the opposite situation, $M_i^+\in\orb_{C_i}^-$ and $M_i^-\in\orb_{C_i}^+$, is also possible. (If we swapped the orbits~$\orb_{C_i}^{\pm}$, we would change the homomorphisms~$\nu_{C_i,A}^+$, which enter formula~\eqref{eq_key2}, so we would not like to do it.)   
We may assume that the orientations of the cells~$P_{M_i^{\pm}}$ are chosen so that $[P_{K^+}\colon P_{M_i^+}]=[P_{K^-}\colon P_{M_i^-}]=1$.

Represent the class $y$ by the cycle 
$$
Y=P_{K^+}\otimes[h_+]-P_{K^-}\otimes[h_-].
$$
Then
$$
\partial'Y=\sum_{i=1}^m (P_{M_i^+}\otimes[h_+]-P_{M_i^-}\otimes[h_-])+
\left\{
\begin{aligned}
&P_{M_0}\otimes([h_+]-[h_-])&&\text{if }M_0\in\M,\\
&0&&\text{if }M_0\notin\M.
\end{aligned}
\right.
$$

If $M_0\notin\M$, then $\partial'Y\in\Delta$, since $\sigma(h_+)=\sigma(h_-)$. Hence, we may put $X=0$.

If $M_0\in\M$, then we put 
$$
X=-P_{M_0}\otimes[h_+h_-^{-1}|h_-],
$$ 
and obtain that
$$
\partial'Y+\partial''X=\sum_{i=1}^m \bigl(P_{M_i^+}\otimes[h_+]-P_{M_i^-}\otimes[h_-]\bigr)+P_{M_0}\otimes[h_+h_-^{-1}].
$$
Since $\sigma(h_+h_-^{-1})=0$, it follows that $\partial'Y+\partial''X\in\Delta$,
$\partial'X\in\Gamma$, and $\Theta_A(\partial'X)=0$ for all $A\in\CH_0'$. 

Let us prove formula~\eqref{eq_key2}. Since none of the multicurves~$M_0,M_1^{\pm}\ldots,M_m^{\pm}$ contains a bounding pair, we have $\mu_{[A,a]}(d^1y)=0$ for all $a\in A$.

Obviously, if $A\not\subset D$, then both sides of~\eqref{eq_key2} vanish. So we may assume that $A\subset D$. Let $N\subset K^+$ be the multicurve such that $[N]=A$. Then $P_{N}$ is a vertex of the $2$-cell~$P_{K^+}$. Hence it is contained in exactly two of the edges of~$P_{K^+}$. But edges of~$P_{K^+}$ are exactly $P_{M_1^+} ,\ldots,P_{M_m^+}$, and also $P_{M_0}$ in case $M_0\in\M$.

First, suppose that $A\not\subset C_0$. Then $\Psi_C(\partial'Y+\partial''X)=0$ for all $C\in\CH_1^{(1)}$ such that $C\supset A$. We may assume that the two edges of~$P_K^+$ containing~$P_N$ are $P_{M_1^+}$ and~$P_{M_2^+}$.
By formula~\eqref{eq_Phi},
$$
\Phi_{C_1,A}(\partial'Y+\partial''X)=\Phi_{C_2,A}(\partial'Y+\partial''X)=\sum_{i\ne j}\hrho_i(\iota)\rho_j(h_+),
$$
and $\Phi_{C,A}(\partial'Y+\partial''X)=0$ for all other $C\supset A$. 
Therefore, 
$$
\sum_{C\in\CH_1^{(2)},\, C\supset A}\Phi_{C,A}(\partial'Y+\partial''X)=0.
$$

Now, let $\delta$ be the component of~$M_1^+$ that is not contained in~$N$. Since $A\not\subset C_0$, we have $\delta\ne \gamma^+$ and hence $\delta\subset M_1^-$. By Proposition~\ref{propos_triv_restr2}, $\nu_{\delta}(h_+)=\nu_{\delta}(h_-)=0$.
Therefore $\nu^+_{C_1, A}(d^1y)=0$, independently of which of the multicurves~$M_1^{\pm}$ belongs to~$\orb_{C_1}^+$. Similarly, $\nu^+_{C_2, A}(d^1y)=0$. Finally, we obviously have $\nu_{C,A}^+(d^1y)=0$ for all $C$ different from~$C_1$ and~$C_2$.
 
Thus, we obtain~\eqref{eq_key2}.

Second, assume that $A\subset C_0$. Then $N$ is contained in $M_0$ and in exactly one of the multicurves $M_1^+,\ldots,M_m^+$. We may assume that $N\subset M_1^+$; then $M_1^+=N\cup\gamma^+$. In this case the multicurve~$M_0$ must belong to~$\M$. We have
\begin{equation}\label{eq_d2_2}
\begin{split}
\sum_{C\in\CH_1^{(1)},\, C\supset A}\Psi_C(\partial'Y+\partial''X)&=\Psi_{C_0}(\partial'Y+\partial''X)=\psi_{M_0}\bigl(h_+h_-^{-1}\bigr) \\
{}&=\psi_{M_0}\bigl(h_+\iota h_-^{-1}\iota\bigr) +\psi_{M_0}\bigl(\iota h_-\iota h_-^{-1}\bigr).
\end{split}
\end{equation}
Since $h_+\iota h_-^{-1}\iota$ lies in~$\CC_{K^+}$, it follows from Proposition~\ref{propos_psi_nu} that 
\begin{equation}\label{eq_d2_4a}
\psi_{M_0}\bigl(h_+\iota h_-^{-1}\iota\bigr)=\frac{\nu_{\gamma^+}\bigl(h_+\iota h_-^{-1}\iota\bigr)}{2}=\frac{\nu_{\gamma^+}(h_+)-\nu_{\gamma^-}(h_-)}{2}\equiv \nu_{\gamma^+}(h_+)\equiv \nu_{\gamma^-}(h_-)\pmod 2,
\end{equation}
where the last two congruences use that $\nu_{\gamma^+}(h_+)+\nu_{\gamma^-}(h_-)$ is divisible by~$4$. 
On the other hand, by Proposition~\ref{propos_psi2},
\begin{equation}\label{eq_d2_4}
\psi_{M_0}(\iota h_-\iota h_-^{-1})=\sum_{i\ne j}\hrho_i(\iota)\rho_j(h_-)=\sum_{i\ne j}\hrho_i(\iota)\rho_j(h_+).
\end{equation}
Further, by formula~\eqref{eq_Phi},
\begin{equation}\label{eq_d2_1}
\sum_{C\in\CH_1^{(2)},\, C\supset A}\Phi_{C,A}(\partial'Y+\partial''X)=\Phi_{C_1,A}(\partial'Y+\partial''X)=\sum_{i\ne j}\hrho_i(\iota)\rho_j(h_+).
\end{equation}
Finally,
\begin{equation}\label{eq_d2_5}
\sum_{C\in\CH_1^{(2)}\colon C\supset A}\nu^+_{C, A}(d^1y)=
\nu^+_{C_1,A}(d^1y)=\left\{
\begin{aligned}
&\nu_{\gamma^+}(h_+)&&\text{if }M_1^+\in\orb^+_{C_1}\,,\\
-&\nu_{\gamma^-}(h_-)&&\text{if }M_1^-\in\orb^+_{C_1}\,.
\end{aligned}
\right.
\end{equation}
Combining~\eqref{eq_d2_2}--\eqref{eq_d2_5}, we again get~\eqref{eq_key2}.

\textsl{Case 2: $y\in E^1_{2,1}(D)$ for $D\in\CH_2\setminus\CH_2'$.} By Proposition~\ref{propos_M2no'},  some element~$c$ has multiplicity~$2$ in~$D$. Note that in this case we automatically have $\sigma_{D'}(y)=0$ and $\nu_{D'}(y)=0$ for all $D'\in\CH_2'$. Hence, no further restriction should be imposed on~$y$. So it is sufficient to prove the claim of the proposition in case $y=[h]_K$, where $[K]=D$ and $h\in\I_K$, since such elements generate~$E^1_{2,1}(D)$. Let $\gamma^+$ and~$\gamma^-$ be the two components of~$K$ in homology class~$c$, and~$\alpha_1$, $\alpha_2$, and~$\alpha_3$ be the three other components of~$K$. It follows from Proposition~\ref{propos_stab_generate} that~$\I_K$ is generated by twists about separating curves~$\delta$ that are disjoint from~$K$, and by the twist~$T_{\gamma^+,\gamma^-}$. So we may assume that $h$ is either~$T_{\delta}$ or~$T_{\gamma^+,\gamma^-}$. Arrange the multicurve~$K$ and the curve~$\delta$ (in case $h=T_{\delta}$) as in Fig.~\ref{fig_K2}.

\begin{figure}
\begin{tikzpicture}[scale=.45]
\small
\definecolor{myblue}{rgb}{0, 0, 0.7}
\definecolor{mygreen}{rgb}{0, 0.4, 0}
\tikzset{every path/.append style={line width=.2mm}}
\tikzset{my dash/.style={dash pattern=on 2pt off 1.5pt}}

\begin{scope}
% Invisible lines

\draw[color=myblue,my dash, very thick] (0,-3.03) .. controls (-0.3,-2) .. (0,-.97); % e_0
\draw[color=myblue,my dash, very thick] (0,3.03) .. controls (-0.3,2) .. (0,.97); % e_0'
\draw[color=myblue,my dash, very thick] (-7.53,0) .. controls (-6.5,0.3) .. (-5.47,0); %e_1
\draw[color=myblue,my dash, very thick] (7.03,0) .. controls (6,0.3) .. (4.97,0); %e_2
\draw[color=myblue,my dash, very thick] (3.03,0) .. controls (2,0.3) .. (.97,0); % e_0+e_1
%\draw[color=red,my dash ] (3.03,0) .. controls (2,0.3) .. (.97,0); % e_0+e_2
\draw[color=red, my dash] (-2.5, -3.03) .. controls (-2.8,-2) and (-2.8,2) .. (-2.5, 3.03);
%\draw[color=myblue, very thick,my dash] (-3.293,-.707) .. controls (-1.5,-2.2) and (1.5,-2.2) .. (3.293,-.707);
%\draw[color=red,my dash] (-3.293,.707) .. controls (-1.5,2.8) and (1.5,2.8) .. (3.293,.707);

% Border lines

\draw [very thick] (0,0) circle (1);
\draw [very thick] (4,0) circle (1);
\draw [very thick] (-4.5,0) circle (1);
\draw [very thick]  (-4.5,3)--(4,3);
\draw [very thick]  (-4.5,-3)--(4,-3);
\draw [very thick] (-4.5,3) arc (90:270:3); 
\draw [very thick] (4,3) arc (90:-90:3); 

\draw [] (-10,0) -- (-7.8,0);
\draw [] (9.5,0) -- (7.3,0);

\draw (8.5,0) + (-55: .3 and 1) arc (-55: -10: .3 and 1);
\draw [-stealth] (8.5,0) + (10: .3 and 1) arc (10: 160: .3 and 1) node [pos=.4, right] {$\iota$};

% Visible lines 

\draw [color=mygreen] (-3,0) arc (0:360:1.5)node [pos=.75, below=-1pt] {$\beta_1$} ;

\draw [color=mygreen] (1.5,0) arc (0:360:1.5) node [pos=.85, right=-1pt] {$\beta_2$} ;
\draw [color=mygreen] (0,1.8) arc (90:270:1.8) -- (4,-1.8) node [pos=.75,below=-1pt] {$\beta_3$} arc (-90:90:1.8) -- (0,1.8) ;

\draw[color=myblue, very thick] (0,-3.03) .. controls (0.3,-2) .. (0,-.97) node [pos=0.3, right=-2pt] {$\gamma^-$};  %e_0
\draw[color=myblue, very thick] (0,3.03) .. controls (0.3,2) .. (0,.97) node [pos=0.25, right=-2pt]{$\gamma^+$};  %e_0'

\draw[color=myblue, very thick] (-5.47,0) .. controls (-6.5,-0.3) .. (-7.53,0) node [pos=.5, below=-1pt] {$\alpha_1$};%e_1
%\draw[color=red,->-=.55] (4.97,0) .. controls (6,-0.4) .. (7.03,0) node [pos=.3,below=-1pt] {$\alpha_2$}; %e_2
%\draw[color=red,->-=.55] (-7.03,0) .. controls (-6,-0.3) .. (-4.97,0)node [pos=0.4,below] {$\alpha_1$};%e_1 -
\draw[color=myblue, very thick] (7.03,0) .. controls (6,-0.3) .. (4.97,0) node [pos=.3, below=-1pt] {$\alpha_3$};%e_2 -

\draw[color=myblue, very thick] (3.03,0) .. controls (2,-0.3) .. (.97,0) node [pos=.3, below=-1pt] {$\alpha_2$};%e_0+e_1
%\draw[color=red] (3.03,0) node [right=-2pt] {$\delta$} .. controls (2,-0.3) .. (.97,0) ;%e_0+e_2

\draw[color=red] (-2.5, -3.03) .. controls (-2.2,-2) and (-2.2,2) .. (-2.5, 3.03) node [pos=0.2, right=-1pt] {$\delta$};  %e_0

%\draw[color=red] (-2,-3.03) .. controls (-1.7,-1.5) and (-1.7,1.5) .. (-2,3.03) node [pos=.25,right=-2pt] {$u$};
%\draw[color=red,my dash] (-2,-3.03) .. controls (-2.3,-1.5) and (-2.3,1.5) .. (-2,3.03);

%\draw[color=red] (-3.293,-.707) .. controls (-1,-2.8) and (1,-2.8) .. (3.293,-.707) node [pos=.8,below=1pt] {$\delta_2$};
%\draw[color=red] (-3.293,.707) .. controls (-1,2.2) and (1,2.2) .. (3.293,.707)node [pos=.8,below] {$\delta$};

\end{scope}

\end{tikzpicture}
\caption{Multicurve $K$}\label{fig_K2}
\end{figure}
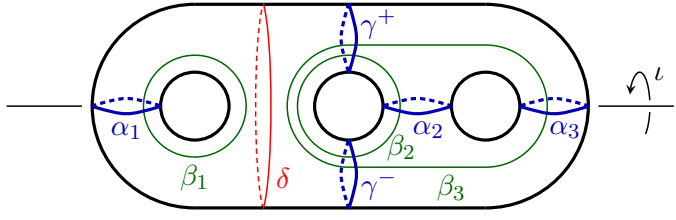

Let $M^+$ and~$M^-$  be the oriented multicurves obtained from~$K$ by removing the components~$\gamma^-$ and~$\gamma^+$, respectively. Put $C_0=[M^+]=[M^-]$. Then $M^+, M^-\in\M_1^{(2)}$ and hence $C_0\in\CH_1^{(2)}$. We may assume that $M^+\in\orb^+_{C_0}$ and $M^-\in\orb^-_{C_0}$. (As we have already mentioned, swapping~$\orb^+_{C_0}$ and~$\orb^-_{C_0}$ is not an innocent operation. However, we can swap~$\gamma^+$ and~$\gamma^-$ instead.) Besides, we choose the orientations of cells so that $[P_{K}\colon P_{M^+}]=1$. Then, according to our agreement, $[P_K:P_{M^-}]=-1$. 

Let $M_1,\ldots, M_m$ be all submulticurves of~$K$  that belong to~$\M_1$ and are different from~$M^{\pm}$; then each of the multicurves~$M_i$ contains the bounding pair~$\{\gamma^+,\gamma^-\}$. We endow every cell~$P_{M_i}$ with the orientation so that $[P_K\colon P_{M_i}]=1$.

Represent~$y$ by the cycle $Y=P_K\otimes[h]$. Then 
$$
\partial'Y=P_{M^+}\otimes[h]-P_{M^-}\otimes[h]+\sum_{i=1}^m P_{M_i}\otimes [h].
$$
We have $\partial'Y\in\Delta$, so we may take $X=0$. 

Let us prove formula~\eqref{eq_key2}.
Obviously,  both sides of~\eqref{eq_key2} vanish whenever $A\not\subset C_0$, so we assume that $A\subset C_0$.

We have $\Psi_C(\partial'Y)=0$ for all $C\in\CH_1^{(1)}$. Further, by Proposition~\ref{propos_Phi_CA_value}, we have $\Phi_{C,A}(\partial'Y)=0$ unless~$C= C_0$, and 
$$
\Phi_{C_0,A}(\partial'Y)=\sum_{i\ne j}\hrho_i(\iota)\rho_j(h),
$$
where~$\iota$ the rotation by~$\pi$ around the horizontal axis, see Fig~\ref{fig_K2}. 

Also, we have 
$$
\nu^+_{C_0,A}(\partial'Y)=\nu_{\varepsilon}(h),
$$
where $\varepsilon$ is the (unique) component of~$M^+$ whose homology class does not belong to~$A$, and $\nu^+_{C,A}(\partial'Y)=0$ unless $C=C_0$.

First, suppose that $c\in A$; then $\varepsilon\ne \gamma^+$. Hence exactly one of the multisets~$[M_s]$  is the set~$A$ with one element duplicated. By Proposition~\ref{propos_psi_values} we obtain that
$$
\sum_{a\in A}\mu_{[A,a]}(d^1y)=\pm\mu_{\gamma^+,\gamma^-}(h)=\left\{
\begin{aligned}
0&&&\text{if }h=T_{\gamma^+,\gamma^-},\\
\pm1&&&\text{if }h=T_{\delta}.
\end{aligned}
\right.
$$
Further, $\omega_i(c)=1$ for the four quadratic functions~$\omega_i$ corresponding to~$A$. Hence, by~\eqref{eq_eBC3} we obtain that $\hrho_i(\iota)=0$ for $i=0,\ldots,3$. Therefore $\Phi_{C_0,A}(\partial'Y)=0$. 
Finally, since $\varepsilon\ne \gamma^+$, Proposition~\ref{propos_phi_values}  implies that
$$
\nu_{\varepsilon}(h)=\left\{
\begin{aligned}
0&&&\text{if }h=T_{\gamma^+,\gamma^-},\\
1&&&\text{if }h=T_{\delta}.
\end{aligned}
\right.
$$
Thus, we obtain~\eqref{eq_key2}.

Second, suppose that $c\notin A$; then $\varepsilon=\gamma^+$. Hence $A=\{a_1,a_2,a_3\}$, where $a_i=[\alpha_i]$, $i=1,2,3$. Let $\ba_1,\ba_2,\ba_3,\bb_1,\bb_2,\bb_3$ be the modulo~$2$ homology classes of the curves $\alpha_1,\alpha_2,\alpha_3,\beta_1, \beta_2,\beta_3$ shown in Fig.~\ref{fig_K2}, respectively. Number the quadratic functions $\omega_0,\ldots,\omega_3$ so that formulae~\eqref{eq_omega_numeration} hold, and number the extended Birman--Craggs homomorphisms $\hrho_0,\ldots,\hrho_3$ accordingly. Then it follows from~\eqref{eq_BC1}, \eqref{eq_BC2}, and~\eqref{eq_eBC3} that 
\begin{gather*}
\rho_0(h)=\rho_1(h)=0,\qquad \rho_2(h)=\rho_3(h)=1,\qquad h\in\left\{T_{\gamma^+,\gamma^-},T_{\delta}\right\},\\
\hrho_0(\iota)=\hrho_1(\iota)=\hrho_2(\iota)=0,\qquad \hrho_3(\iota)=1.
\end{gather*}
Hence
$$
\Phi_{C_0,A}(\partial'Y)=1
$$
in both cases $h=T_{\gamma^+,\gamma^-}$ and $h=T_{\delta}$.
Further, $\nu_{\gamma^+}\bigl(T_{\gamma^+,\gamma^-}\bigr)=-1$ and   $\nu_{\gamma^+}(T_{\delta})=1$. Finally,  $\mu_{[A,a]}(d^1y)=0$ for all $a\in A$, since none of the multisets~$[M_1],\ldots,[M_m]$ coincides with~$[A,a]$. Thus, we again obtain~\eqref{eq_key2}.

\end{document}